\documentclass[reqno]{amsart}%
\usepackage{amssymb}
\usepackage{amsfonts}
\usepackage{amsmath}
\usepackage{cancel}
\usepackage{xcolor}
\usepackage{verbatim}
\usepackage{mathtools}
\usepackage{graphicx}%
\usepackage[colorlinks=true, pdfstartview=FitV, linkcolor=blue, citecolor=blue, urlcolor=blue]{hyperref}
\usepackage{tikz}
\usepackage{float}
\usepackage{enumitem}

\allowdisplaybreaks[4]

\newtheorem{theorem}{Theorem}[section]
\theoremstyle{plain}

\newtheorem{conjecture}[theorem]{Conjecture}
\newtheorem{corollary}[theorem]{Corollary}

\newtheorem{definition}[theorem]{Definition}
\newtheorem{example}[theorem]{Example}

\newtheorem{lemma}[theorem]{Lemma}

\newtheorem{proposition}[theorem]{Proposition}
\newtheorem{remark}[theorem]{Remark}

\numberwithin{equation}{section}

\newcommand{\Sph}{\mathbb{S}}
\newcommand{\mina}{{\rm MinA}}
\newcommand{\vol}{{\rm vol}}
\newcommand{\Vol}{{\rm Vol}}
\newcommand{\Ric}{{\rm Ric}}
\newcommand{\Scal}{{\rm Scalar}}
\newcommand{\Diam}{{\rm Diam}}
\newcommand{\MinA}{{\rm MinA}}
\newcommand{\Area}{{\rm Area}}

\begin{document}
\title[ ]{Scalar Curvature Compactness for Warped Products on \texorpdfstring{$\Sph^2\times\Sph^1$}{S2 x S1} with Varying Base Metrics}

\author{Changliang Wang}
\address{
School of Mathematical Sciences and Institute for Advanced Study, Key Laboratory of Intelligent Computing and Applications(Ministry of Education), Tongji University, Shanghai 200092, China}
\email{wangchl@tongji.edu.cn}

\author{Zhixin Wang}
\address{Department of Mathematics, Shanghai Jiao Tong University, Shanghai, 201100}
\email{jhin@sjtu.edu.cn}

\begin{abstract}
We study the Gromov--Sormani MinA scalar curvature compactness conjecture for warped
product metrics on $\Sph^2\times\Sph^1$ of the form introduced by Kazaras-Xu in \cite{KazarasXu2023} as follows:
\[
g_i=\varphi_i^{-2}h_i+\varphi_i^2d\xi^2,
\qquad
h_i=dr^2+u_i^2(r)d\theta^2.
\]
Assuming nonnegative
scalar curvature, a uniform volume upper bound, and a positive lower bound for
the areas of closed minimal surfaces, we prove a uniform diameter bound for the
base surfaces $(\Sph^2,h_i)$.  Based on this key estimate, we further obtain compactness of the
base warping functions $u_i$ and local and global estimates for the fiber
warping functions $\varphi_i$.

After passing to a subsequence, the metrics converge in $L^p$, for every
finite $p$, to a limit metric $g_\infty$. 
We also
obtain Gromov--Hausdorff and Sormani--Wenger intrinsic flat subconvergence, and
prove that $g_\infty$ has nonnegative scalar curvature in the distributional
sense of Lee--LeFloch.  Thus the Gromov--Sormani scalar curvature compactness
conjecture is verified for this warped product class. Finally, we construct a
$C^{1,\alpha}$ example illustrating the subtlety of volume-limit tests for
nonnegative scalar curvature in low regularity.
\end{abstract}
\maketitle

\tableofcontents

\section{Introduction}

Scalar curvature lower bounds impose strong global restrictions on smooth
Riemannian manifolds, but their compactness theory is much less understood
than the corresponding theories for sectional or Ricci curvature.  In \cite{Gromov-Plateau}, Gromov proposed a three-dimensional scalar curvature compactness conjecture using intrinsic flat distance introduced by Sormani and Wenger in \cite{SormaniWenger2011}. Motivated by the pulled-string example of  Basilio, Dodziuk and Sormani, constructed by sewing with tunnels \cite{BasilioDodziukSormani-sewing}, the MinA condition in \eqref{defn-MinA} below was added to rule out certain collapsing phenomena. This led to the following MinA scalar compactness conjecture, formulated at an IAS Emerging Topics Workshop co-organized by Gromov and Sormani \cite{Sormani-conjectures}, \cite[Section 4]{Sormani-conjectures}.

\begin{conjecture}[Gromov--Sormani MinA scalar compactness conjecture ]\label{conj: Scalar-Compactness}
Let $\{M_i^3\}$ be a sequence of closed oriented three-dimensional
Riemannian manifolds without boundary satisfying
\begin{equation}
 \Scal_{g_i} \geq 0, \qquad
 \Vol_{g_i}(M_i) \le V, \qquad
 \Diam_{g_i}(M_i) \le D,
\end{equation}
and
\begin{equation}\label{defn-MinA}
\MinA(M_i^3,g_i)
:=
\inf\{\Area(\Sigma) \mid \Sigma \text{ is a closed min. surf. in } M_i^3\}
\ge A_0>0.
\end{equation}
Then a subsequence converges in the volume-preserving intrinsic flat sense to a
three-dimensional rectifiable limit space $M_\infty$.  Furthermore,
$M_\infty$ is expected to be a connected geodesic metric space with Euclidean
tangent cones almost everywhere and with a generalized notion of nonnegative
scalar curvature.
\end{conjecture}

Several special cases and examples have clarified both the strength and the
subtlety of this conjecture.  The conjecture was verified in the rotationally symmetric setting by Park, Tian and Wang
\cite{ParkTianWang2018}.  Warped product circles over the round sphere were
studied by Tian and Wang \cite{TianWang2024}, who proved $L^q$-compactness of the metric
tensors, and Sobolev regularity and nonnegative distributional scalar curvature of
the limit.  A related extreme example constructed by Sormani, Tian and Wang \cite{SormaniTianWang2024} shows that one cannot expect uniform upper
bounds for the warping factors, and that the Sobolev regularity obtained in \cite{TianWang2024} is sharp. The extreme example was further generalized by Tian \cite{Tian2024}, and the corresponding geometric convergence was established by Sormani, Tian and Yeung \cite{SormaniTianYeung2025}. 

Allowing the base metric to vary, and partially inspired by the  higher dimensional examples of Lee-Naber-Neumayer \cite{LeeNaberNeumayer2023}, Kazaras and Xu \cite{KazarasXu2023} constructed drawstring examples on $\Sph^2 \times \Sph^1$ satisfying  all hypotheses in Conjecture \ref{conj: Scalar-Compactness}, whose limit exhibits a failure of a
volume-limit notion of nonnegative scalar curvature at a collapsing point.  The nonnegative scalar curvature compactness problem has also been studied in conformal classes by Allen, Tian and Wang \cite{AllenTianWang2026}.  These works indicate
that, even in highly structured classes, the correct notion of convergence and the correct
weak notion of scalar curvature are both delicate.

The purpose of this paper is to study Conjecture
\ref{conj: Scalar-Compactness} for a broader class of warped product metrics on
$\Sph^2\times\Sph^1$.  We consider smooth Riemannian metrics of the form
\begin{equation}\label{metric-form}
 g_i = \frac{1}{\varphi^2_i} h_i + \varphi^2_i d\xi^2, \quad \xi \in [0, 2\pi],
\end{equation}
where $\varphi_i \in C^{\infty}(\Sph^2)$ and $h_i$ are rotationally symmetric metrics on $S^2$ as 
\begin{equation}\label{eqn: metric-on-base}
h_i = dr^2 + u^2_i d\theta^2, \quad r\in [0, a_i], \ \ \theta\in[0, 2\pi].
\end{equation}
Here $u_i\in C^\infty([0,a_i])$ satisfies the usual smoothness conditions at the
poles,
\begin{equation}
 u_i(0)=u_i(a_i)=0, \qquad u_i'(0)=1, \qquad u_i'(a_i)=-1,
\end{equation}
and $u_i>0$ on $(0,a_i)$.  Throughout the paper we assume that the sequence
$M_i=(\Sph^2\times\Sph^1,g_i)$ satisfies the following uniform hypothesis,
which we denote by $\Lambda$:
\begin{equation}\label{eqn: condition-lambda-intro}
\begin{cases}
\operatorname{Scalar}_{g_i} \geq 0, \\[1ex]
\Vol_{g_i}(M_i) \leq V_0, \\[1ex]
\MinA \left(M_i\right) =\inf \left\{\Area(\Sigma) \mid \Sigma \text{ closed min. surf. in } M_i\right\} \geq A_0>0.
\end{cases}
\end{equation}

The model \eqref{metric-form} is particularly interesting because it already
exhibits subtle limiting phenomena. Building on this ansatz, Kazaras and Xu
\cite{KazarasXu2023} constructed drawstring sequences satisfying the
hypotheses above.  These sequences converge, both in the Gromov--Hausdorff
sense and in the volume-preserving intrinsic flat sense, to pulled-string
spaces obtained from $\Sph^2\times\Sph^1$ by pulling along a circle
$\{p\}\times\Sph^1$.  In particular, the resulting limit spaces  fail to satisfy a volume-limit notion of
nonnegative scalar curvature. Their example
therefore illustrates the importance of identifying a suitable weak formulation
of nonnegative scalar curvature for limits of such sequences. 
The volume-limit notion of nonnegative scalar curvature will be discussed in \ref{sect: c1alpha-example}.

Our results show that, although volume-limit notion of nonnegative scalar
curvature may fail to be stable under such limits, the limit nevertheless
retains nonnegative scalar curvature in the distributional sense; see Theorem
\ref{thm2-NNSC} below.  
{This provides further evidence that distributional
scalar curvature might be a natural and stable weak formulation in the study of scalar curvature
compactness.}

\medskip

A useful feature of the ansatz \eqref{metric-form} is that the scalar curvature
has the simple form
\begin{equation}\label{eqn:intro-scalar-formula}
\Scal_{g_i}
=
\varphi_i^2
\left(
\Scal_{h_i}
-
2\frac{|\nabla^{h_i}\varphi_i|^2}{\varphi_i^2}
\right).
\end{equation}
Thus the scalar curvature inequality controls first derivatives of
$\varphi_i$ through the scalar curvature of the base.  This first-order feature
is essential throughout the paper.  It allows us to extract geometric
information about $(\Sph^2,h_i)$ and analytic information about both warping
functions $u_i$ and $\varphi_i$.

\medskip
There are several difficulties which do not appear in the previously studied
case where the base metric is fixed.   The first and most important one is that the diameters of the base
surfaces $(\Sph^2,h_i)$ are not assumed to be uniformly bounded.  Since
\begin{equation}
\Diam_{h_i}(\Sph^2)=a_i,
\end{equation}
compactness of the functions $u_i$ cannot even be formulated on a fixed domain
unless the intervals $[0,a_i]$ are uniformly controlled.  We overcome this
difficulty by proving the following uniform two-sided diameter bound.

\begin{theorem}[Uniform diameter bound]\label{thm-main1}
  Let $\{g_i\}$ be a sequence of Riemannian metrics of the form \eqref{metric-form} on $\Sph^2 \times \Sph^1$ satisfying the condition $\Lambda$ in \eqref{eqn: condition-lambda-intro}. Then there exists a positive constant $C(\Lambda)$, depending only on $V_0$ and $A_0$, such that 
       \begin{equation}
       C(\Lambda)^{-1} \leq {\rm Diam}_{h_i}(\Sph^2) = a_i \leq C(\Lambda), \ \ \forall i \in \mathbb{N}.
       \end{equation} 
\end{theorem}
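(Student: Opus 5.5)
The lower bound comes from minimal tori; the upper bound comes from combining the scalar curvature condition with volume and \MinA, and that second part is where the difficulty lies. Throughout I write $w_i=\log\varphi_i$ and $m_i=\max u_i$. I would first record three consequences of the ansatz \eqref{metric-form}.

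\emph{Volume and slices.} The volume form of $g_i$ is $\varphi_i^{-1}\,dA_{h_i}\,d\xi$, so
\begin{equation*}
\Vol_{g_i}(M_i)=2\pi\int_{\Sph^2}\varphi_i^{-1}\,dA_{h_i}\le V_0 .
\end{equation*}
Each slice $\Sph^2\times\{\xi\}$ is totally geodesic, being fixed by the isometric reflection $\xi\mapsto 2\xi_0-\xi$. Hence $\int_{\Sph^2}\varphi_i^{-2}\,dA_{h_i}\ge A_0$.

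\emph{Tori over base curves.} For a closed curve $\gamma$ in the base, the torus $\gamma\times\Sph^1$ has $g_i$-area exactly $2\pi L_{h_i}(\gamma)$, because the factors $\varphi_i^{-1}$ and $\varphi_i$ cancel. So the area functional restricted to such tori is $2\pi$ times the $h_i$-length. By the principle of symmetric criticality (invariance under $\xi$-translations), the torus is minimal in $M_i$ if $\gamma$ is an $h_i$-geodesic.

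\emph{Curvature.} By \eqref{eqn:intro-scalar-formula}, $\Scal_{g_i}\ge0$ gives $K_{h_i}=-u_i''/u_i\ge|\nabla^{h_i}w_i|^2\ge0$. Thus $u_i$ is concave, and Gauss--Bonnet yields the scale-invariant energy bound
\begin{equation*}
\int_{\Sph^2}|\nabla^{h_i}w_i|^2\,dA_{h_i}\le 4\pi .
\end{equation*}
Concavity together with $u_i'(0)=1$ and $u_i'(a_i)=-1$ also gives $u_i(r)\le\min\{r,a_i-r\}$, so $m_i\le a_i/2$.

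\textbf{Lower bound.} The union of the meridians $\theta=\theta_0$ and $\theta=\theta_0+\pi$ is a closed $h_i$-geodesic of length $2a_i$. The torus over it is minimal with area $4\pi a_i$, hence $a_i\ge A_0/(4\pi)$. Similarly, the latitude through a maximum point of $u_i$ is a closed geodesic, which gives $m_i\ge A_0/(4\pi^2)$. This settles the left-hand inequality of Theorem \ref{thm-main1}.

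\textbf{Upper bound.} I would argue by contradiction, supposing $a_i\to\infty$. By concavity, $u_i\ge m_i/2$ on a middle interval $I_i$ of length at least $a_i/2$, and I would split into two cases according to whether $m_i$ stays bounded.

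If $m_i\to\infty$, the base area is at least $\pi a_i m_i/2\to\infty$. In this case I would combine $\int\varphi_i^{-1}\le V_0/2\pi$ with $\int\varphi_i^{-2}\ge A_0$, using Cauchy--Schwarz and the energy bound, to show that $\varphi_i^{-1}$ must be large somewhere while staying small on average. The energy bound should contradict this in the way that a Moser--Trudinger inequality on a long cylinder forbids large oscillations of $w_i$ in most of it.

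If $m_i$ stays bounded, the middle region is a long cylinder of bounded circumference. Here I would decompose $I_i$ into $N\sim a_i$ consecutive segments of length comparable to $m_i$. Since the total energy is at most $4\pi$, all but boundedly many segments carry energy $\le\varepsilon$. On each such segment, a Poincar\'e inequality on a cylinder of bounded geometry makes $w_i$ nearly equal to a constant $c_k$. The volume bound then forces $\sum_k e^{-c_k}\lesssim V_0$, so $e^{-c_k}$ is small on most segments. The oscillation between neighbouring good segments is controlled, because the energy across a segment of unit conformal length bounds the jump. This should make $\varphi_i^{-1}$ uniformly tiny along most of $I_i$, and one then has to produce a minimal surface of area $<A_0$.

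The main obstacle is exactly this last step: turning \say{$\varphi_i^{-1}$ tiny on a long cylinder} into a violation of \MinA. The slices only give a global lower bound, and a few bad segments could carry all of $\int\varphi_i^{-2}$. What I would try first is a different minimal surface: minimize area in the $\xi$-invariant class of surfaces bounded by two distant latitude circles. Alternatively, I would use min-max among spheres $\{r=\rho(\xi)\}$ inside the long region. The aim is to show that such surfaces have area comparable to $\int_{\{r=r_0\}}\varphi_i^{-1}\,u_i\,d\theta\cdot 2\pi$ over a good latitude, which is small. If that fails, I would instead exploit that the tori over latitude circles near a local maximum of $u_i$ in a good segment are minimal, and try to localize $m_i$ there.
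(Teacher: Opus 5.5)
\textbf{Verdict: the lower bound is correct, but the upper bound has a genuine gap.} The lower bound matches the paper's argument: the torus over the closed geodesic formed by two opposite meridians. The upper bound is not proved. In the case $m_i\to\infty$ you only appeal to a Moser--Trudinger heuristic, and its constant is not uniform along a degenerating family $(\Sph^2,h_i)$; on long thin cylinders it blows up linearly in the length. In the case of bounded $m_i$ you leave the decisive step open yourself.

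\textbf{Why the route cannot close.} After the first paragraph you only carry forward concavity of $u_i$, $m_i\ge A_0/(4\pi^2)$, the integrated bound $\int|\nabla^{h_i}w_i|^2\,dA_{h_i}\le 4\pi$, the volume bound and the slice bound. These are compatible with $a_i\to\infty$. Take $h$ to be a round cylinder of radius $1$ and length $L$ with hemispherical caps. Set $w=\beta\log(1+s)$ along the cylinder ($s$ arclength, $1<\beta<\sqrt2$), constant on the caps, then smooth. Then $\max u=1$, the energy is $<2\pi\beta^2<4\pi$, $\int\varphi^{-1}\,dA_h\le 4\pi+2\pi/(\beta-1)$ and $\int\varphi^{-2}\,dA_h\ge 2\pi$, all uniformly in $L$. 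This is exactly your scenario: $\varphi^{-1}$ is tiny on most of a long cylinder while one end carries $\int\varphi^{-2}$. Chaining Poincar\'e over $N\sim a_i$ segments only bounds the total oscillation by $\sqrt{N\sum_k\varepsilon_k}\sim\sqrt{a_i}$. So on your route you would really need a new minimal surface of small area, which you do not produce.

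\textbf{The missing idea.} Use the pointwise inequality $|\nabla^{h_i}w_i|^2\le -u_i''/u_i$ together with concavity; the example above violates it, since $K_h=0$ on the cylinder. This already excludes the scenario, and only the slices are then needed. Set $f_i=\log\varphi_i^{-1}-\overline{\log\varphi_i^{-1}}$ and rescale the base to $[0,2]$. Both the pointwise inequality and the ratio $\int e^{2f}/\int e^{f}$ are scale invariant.
\begin{itemize}
\item \emph{Middle region.} On $[\epsilon,2-\epsilon]$, concavity gives $|\dot w|\le \max w/\epsilon$ and $w\ge\frac{\epsilon}{2}\max w$. Hence $\int_a^b\sqrt{-\ddot w/w}\,dt\le\bigl(\int_a^b(-\ddot w)\,dt\int_a^b w^{-1}\,dt\bigr)^{1/2}\le 2\sqrt{2(1-\epsilon)}/\epsilon$, independently of $\max w$, that is, of how long and thin the cylinder is.
\item \emph{Near the poles.} Writing $\dot w/w=v/t$ with $|v|\le 1$ gives $e^{2f}w\le e^{4}t^{-5/8}e^{2f(1,\theta)}w(1)$.
\item \emph{Reverse Jensen.} With the mean-zero condition these give the uniform bound $\int e^{2f_i}\,dA\le C\,w_i(1)\le C'\int e^{f_i}\,dA$; this is Lemma \ref{lem: integral-ratio-estimate}.
\end{itemize}
On the other hand, $\Area_{h_i}\ge \pi a_i m_i\to\infty$, so Jensen's inequality forces $\overline{\log\varphi_i^{-1}}\to-\infty$. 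Therefore
$\int e^{2f_i}/\int e^{f_i}=e^{-\overline{\log\varphi_i^{-1}}}\int\varphi_i^{-2}/\int\varphi_i^{-1}\ge e^{-\overline{\log\varphi_i^{-1}}}\,2\pi A_0/V_0\to\infty$, which is a contradiction. This argument needs no case distinction on $m_i$ and no additional minimal surfaces.
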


Theorem \ref{thm-main1} is proven in Section \ref{subset: diameter-bound}. The upper
bound is the delicate part.  The argument is by contradiction: if
$a_i\to\infty$, then the volume and $\MinA$ bounds force the average of
$\ln(1/\varphi_i)$ to tend to $-\infty$.  After subtracting this average and
rescaling the base interval to $[0,2]$, one obtains functions $f_i$ satisfying a
universal differential inequality of the form
\begin{equation}
 |\nabla f_i|^2\le -\frac{w_i''}{w_i}.
\end{equation}
A key integral ratio estimate for $e^{f_i}$ and $e^{2f_i}$ gives a uniform
upper bound, contradicting the divergence forced by the volume and $\MinA$
assumptions.

In view of Theorem \ref{thm-main1}, we may normalize the radial parameter so
that $a_i=2$ for all $i$.  Under this normalization all functions $u_i$ are
defined on the common interval $[0,2]$, and the constants in the estimates
remain controlled by $\Lambda$.  We use this normalization in the
statements below.

\begin{theorem}[Convergence of the warping functions]\label{thm-main2}
   Let $\{g_i\}$ be a sequence of Riemannian metrics of the form \eqref{metric-form} on $\Sph^2 \times \Sph^1$ satisfying the condition $\Lambda$ in \eqref{eqn: condition-lambda-intro}, and assume that $a_i=2$ for all $i$.
Then, after passing to a subsequence, the following hold.
\begin{enumerate}[label=\textup{(\arabic*)}, leftmargin=*, itemsep=2pt]
\item   The base warping functions converge uniformly:
            \begin{equation}
            u_i \to u_\infty \quad \text{uniformly on} \ \ [0, 2].
            \end{equation}
            Moreover, $u_i'\to u_\infty'$ almost everywhere on $(0,2)$, and
$u_\infty$ is concave and $1$-Lipschitz, with
\begin{equation}
 u_\infty(0)=u_\infty(2)=0,
 \qquad
 \max_{[0,2]}u_\infty\ge \frac{A_0}{4\pi^2}>0.
\end{equation}

\item    For every $0 < \epsilon <1$, let 
             \begin{equation}
           \Omega_\epsilon
:=
\{(r,\theta)\mid \epsilon\le r\le 2-\epsilon,\ 0\le\theta\le 2\pi\}
\subset \Sph^2 .
             \end{equation} 
             
Then, for every $\alpha<1/2$,
\begin{equation}\label{NNSC-convergence-local}
 \varphi_i\to\varphi_\infty
 \quad\text{in } C^{0,\alpha}(\Omega_\epsilon),
\end{equation}
and
\begin{equation}
 \varphi_i\rightharpoonup\varphi_\infty
 \quad\text{weakly in } W^{1,2}(\Omega_\epsilon).
\end{equation}
Furthermore, after passing to a further subsequence if necessary,
\begin{equation}\label{eqn: measure-convergence}
 |\nabla\varphi_i|^2\,dr\,d\theta
 \rightharpoonup
 |\nabla\varphi_\infty|^2\,dr\,d\theta
 +
 \sum_{j\in J} c_j\delta_{x_j}
\end{equation}
as Radon measures on $\Omega_\epsilon$, where $J$ is at most countable and
$c_j\ge0$.

\item Globally on $(\Sph^2,g_{\Sph^2})$, for every $1\le p<\infty$ and every
$1\le q<2$,
\begin{equation}\label{NNSC-convergence-global}
 \varphi_i\to\varphi_\infty
 \quad\text{strongly in } L^p(\Sph^2,g_{\Sph^2}),
 \qquad
 \varphi_\infty\in W^{1,q}(\Sph^2,g_{\Sph^2}).
\end{equation}

\item For every $k \in \mathbb{N}$, there exists a constant $C=C(\Lambda, k)$ such that
\begin{equation}
C^{-1} r^{\frac{1}{k}} 
    \leq
    \varphi_i(r, \theta)
    \leq 
    C r^{-\frac{1}{k}}, \quad \forall (r, \theta) \in (0, 1] \times [0, 2\pi],
\end{equation}
and
\begin{equation}
    C^{-1} (2 - r)^{\frac{1}{k}} 
    \leq
    \varphi_i(r, \theta)
    \leq 
    C (2 -r)^{-\frac{1}{k}}, \ \ \forall (r, \theta) \in [1, 2) \times [0, 2\pi].
\end{equation}

 The same bounds hold for $\varphi_\infty$.  In particular,
\begin{equation}
 0<\varphi_\infty(r,\theta)<\infty
 \qquad\text{for every } (r,\theta)\in(0,2)\times[0,2\pi].
\end{equation}
\end{enumerate}
\end{theorem}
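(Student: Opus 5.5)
Throughout, I use the scalar curvature formula \eqref{eqn:intro-scalar-formula}. For $h_i=dr^2+u_i^2d\theta^2$ we have $\Scal_{h_i}=-2u_i''/u_i$, so $\Scal_{g_i}\ge0$ reads
\begin{equation*}
|\nabla^{h_i}\ln\varphi_i|^2\le -\frac{u_i''}{u_i}.
\end{equation*}
The plan follows the four items of Theorem \ref{thm-main2} in order, after normalizing $a_i=2$ via Theorem \ref{thm-main1}. The normalization is a rescaling of the base by a factor in $[C^{-1},C]$, so all constants stay controlled by $\Lambda$.

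\textbf{Step (1).} Since $u_i''\le0$, each $u_i$ is concave. Together with $u_i'(0)=1$ and $u_i'(2)=-1$ this gives $|u_i'|\le1$. By Arzel\`a--Ascoli, $u_i\to u_\infty$ uniformly along a subsequence, and the limit is concave and $1$-Lipschitz with the same boundary values. Concavity then gives $u_i'\to u_\infty'$ at every point where $u_\infty$ is differentiable, hence almost everywhere. For the lower bound on $\max u_\infty$, I would test $\MinA$ on the tori $\{r=r_0\}\times\Sph^1$ with $u_i'(r_0)=0$, which are minimal in $g_i$ by a short first-variation computation. Their area is $\int_0^{2\pi}\!\int_0^{2\pi}u_i(r_0)\,d\theta\,d\xi=4\pi^2u_i(r_0)$, because the factors $\varphi_i^{-1}$ and $\varphi_i$ cancel. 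Hence $\max u_i\ge A_0/4\pi^2$, and this passes to the limit.

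\textbf{Step (4), done before (2)--(3).} Set $f_i=\ln\varphi_i$. On the circle $\{r\}\times\Sph^1$, the bound $|\partial_\theta f_i|\le u_i\sqrt{-u_i''/u_i}$ controls the oscillation in $\theta$. Radially, $|\partial_r f_i|\le\sqrt{-u_i''/u_i}$. I integrate this along $[r,1]$ and use Cauchy--Schwarz with weight $u_i$:
\begin{equation*}
\int_r^1|\partial_rf_i|\,ds\le\Big(\int_r^1 -u_i''\,ds\Big)^{1/2}\Big(\int_r^1 u_i^{-1}\,ds\Big)^{1/2}.
\end{equation*}
The first factor is at most $(u_i'(r)-u_i'(1))^{1/2}$, which is bounded and in fact tends to $0$ as $r\to0$ on the region where $u_i'$ is close to $1$. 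The second factor is of order $\sqrt{\ln(1/r)}$, using $u_i\ge c\,s$ near $0$, which follows from concavity and $\max u_i\ge A_0/4\pi^2$.

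This is the main obstacle: to get $|f_i(r)-f_i(1)|\le \tfrac1k\ln(1/r)+C_k$, I need the smallness of $\int_r^{\delta}-u_i''$ to be uniform in $i$. I would split $[r,1]$ dyadically. On each dyadic interval, Cauchy--Schwarz gives a contribution $\le (\text{curvature mass there})^{1/2}\cdot O(1)$. Since the total mass $\int_0^2 -u_i''=2$, at most $O(k^2)$ dyadic pieces can carry mass $\ge k^{-2}$, and each of those contributes at most $O(1)$. The remaining pieces contribute at most $k^{-1}$ each, which sums to $\tfrac1k\ln(1/r)$. This yields $|f_i(r)-f_i(1)|\le\tfrac1k\ln(1/r)+C_k$. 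Finally I need a uniform bound on $f_i$ at $r=1$. For this I use the volume bound $\int\varphi_i^{-1}u_i\le V_0/4\pi^2$ together with the oscillation control, giving a lower bound on $\varphi_i$. For the upper bound I use the average estimate from the proof of Theorem \ref{thm-main1}, or alternatively the $\MinA$ area of the spheres $\Sph^2\times\{\xi\}$, whose area is $\int\varphi_i^{-2}u_i$.

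\textbf{Steps (2)--(3).} On $\Omega_\epsilon$ we have $u_i\ge c_\epsilon>0$. Step (4) bounds $\varphi_i$ above and below there, and $\int_{\Omega_\epsilon}|\nabla\varphi_i|^2\le C\int -u_i''\,\varphi_i^2/u_i\le C_\epsilon$ gives a uniform $W^{1,2}$ bound. For the Hölder estimate along radial segments, I again use Cauchy--Schwarz: $|f_i(r)-f_i(s)|\le C|r-s|^{1/2}$ because $\int -u_i''\le2$, and the $\theta$-direction is handled similarly. This gives uniform $C^{0,1/2}$ bounds, hence convergence in $C^{0,\alpha}$ for every $\alpha<1/2$ by compact embedding, weak $W^{1,2}$ convergence, and weak-$*$ convergence of the measures $|\nabla\varphi_i|^2$. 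For the structure of the limit measure, the defect $\mu-|\nabla\varphi_\infty|^2$ is nonnegative by lower semicontinuity. It must be concentrated where the limit measure $-u_\infty''$ has atoms, i.e.\ on circles $r=r_j$. I would refine this using the $\theta$-estimate to get point masses, and a finite total mass forces countably many. Globally, Step (4) gives $\varphi_i\le Cr^{-1/k}$ with $k$ arbitrary, so $\varphi_i$ is uniformly bounded in every $L^p$, and uniform integrability upgrades local convergence to strong $L^p$ convergence on $\Sph^2$. For $W^{1,q}$ with $q<2$, I apply Hölder to $|\nabla\varphi_i|^q=(|\nabla\varphi_i|^2/\varphi_i^2\cdot u_i)^{q/2}\varphi_i^q u_i^{-q/2}$. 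The weight $u_i^{-q/2}\sim r^{-q/2}$ is integrable against $u\,dr$ near the poles, so the bound is uniform and lower semicontinuity passes it to $\varphi_\infty$.
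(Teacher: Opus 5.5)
\textbf{The gap is in the last claim of item (2).} Your step ``the defect must be concentrated where $-u_\infty''$ has atoms'' does not follow. On $\Omega_\epsilon$ the curvature inequality gives $|\nabla\varphi_i|^2\le C_\epsilon(-u_i'')$. Hence any weak limit $\mu$ satisfies $|\nabla\varphi_\infty|^2\,dr\,d\theta\le\mu\le C_\epsilon\,(-u_\infty'')\otimes d\theta$, and that is all the structure available. The absolutely continuous part of $-u_\infty''$ can carry a diffuse defect created by oscillation. The $\theta$-H\"older bound controls the oscillation of $\varphi_i$, not where its gradient energy sits, so it cannot produce point masses either.

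In fact the decomposition \eqref{eqn: measure-convergence} is false under $\Lambda$. Take $u_i\equiv u=\frac{2}{\pi}\sin\frac{\pi r}{2}$, so that $-u''/u=\pi^2/4$. Take a cutoff $\chi\in C^\infty_c((\frac12,\frac32))$ with $0\le\chi\le1$, and set $\varphi_i=\exp\bigl(\delta\chi(r)\,i^{-1}\sin(ir)\bigr)$ with $\delta(1+\|\chi'\|_\infty)\le\pi/2$.
\begin{itemize}
\item $|\partial_r\ln\varphi_i|^2\le -u''/u$, so $\Scal_{g_i}\ge0$.
\item The volumes are bounded.
\item The $g_i$ are uniformly bounded in $C^1$ and uniformly equivalent to $h+d\xi^2$. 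The monotonicity formula only needs $C^1$ control of the metric in fixed charts, so it gives a uniform $\MinA$ bound.
\end{itemize}
Here $\varphi_i\to\varphi_\infty\equiv1$ uniformly, yet $|\nabla\varphi_i|^2\,dr\,d\theta\rightharpoonup\frac{\delta^2}{2}\chi(r)^2\,dr\,d\theta$ along the whole sequence, which is a nonzero diffuse measure.

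The paper gives no argument for this claim beyond citing Lions' concentration--compactness lemma. For the gradient measure that lemma yields only an inequality; the atomic identity there concerns the limit of the critical-power measure. What is true is the lower bound $\mu\ge|\nabla\varphi_\infty|^2\,dr\,d\theta$, which follows from your weak $W^{1,2}$ convergence. That lower bound is all that is used later, in Lemma~\ref{lem: semi-continuity-interior}.

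\textbf{The rest of your plan.} Items (1) and (3) are fine, and the remaining steps work, in one place by a simpler route than the paper's.

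In Step (4) the ``main obstacle'' is not one. Cauchy--Schwarz with $\int_r^1(-u_i'')\le2$ and $u_i\ge cr$ gives $|f_i(r,\theta)-f_i(1,\theta)|\le C\sqrt{\ln(1/r)}\le\frac1k\ln\frac1r+\frac{kC^2}{4}$. This is the paper's proof of Lemma~\ref{lem: varphi-bounds}; your dyadic counting is correct but unnecessary.

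For the values at $r=1$, your second option is a genuine simplification. Integrate this radial bound (and its analogue near $r=2$) against $u_i\,dr\,d\theta$, using $cr\le u_i\le r$, in $\int\varphi_i^{-1}\,d\vol_{h_i}\le V_0/2\pi$ and in the slice bound $\int\varphi_i^{-2}\,d\vol_{h_i}\ge A_0$. This yields $\int_0^{2\pi}e^{-f_i(1,\theta)}\,d\theta\le C$ and $\int_0^{2\pi}e^{-2f_i(1,\theta)}\,d\theta\ge c$. These give two-sided bounds on $\varphi_i(1,\cdot)$ once its $\theta$-oscillation is bounded. The paper instead uses Jensen, the ratio bound of Lemma~\ref{lem: integral-ratio-estimate}, and the mean-zero estimate of Lemma~\ref{lem: integral-f-estimate}. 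Those lemmas are scale-invariant because they were built for the diameter bound, where no bound $u_i\ge cr$ is available. After normalization your route bypasses them.

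One step does need the paper's device. The bound $|\partial_\theta f_i|\le\sqrt{-u_i''u_i}$ says nothing on a fixed circle, since $-u_i''(r)$ can be arbitrarily large there. So ``the $\theta$-direction is handled similarly'' is not accurate. Instead, follow Lemma~\ref{lem: oscillation-f} and Step 2 of Proposition~\ref{prop: varphi-C01/2-estimate}:
\begin{itemize}
\item pick $r_\delta\in(r,r+\delta)$ with $-u_i''(r_\delta)\le2/\delta$;
\item move radially to $r_\delta$ using the $\frac12$-H\"older bound;
\item estimate in $\theta$ at $r_\delta$;
\item take $\delta=|\theta_1-\theta_2|$.
\end{itemize}
This gives both the bounded oscillation at $r=1$ and the $C^{0,1/2}$ estimate.
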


Section \ref{sect: warping-function-convergence} is devoted to the proof of Theorem \ref{thm-main2}.  Away from the poles $r=0$ and $r=2$, the scalar curvature inequality gives uniform two-sided bounds and $C^{0,1/2}$ control for $\varphi_i$, together with local $W^{1,2}$ estimates. {The convergence in \eqref{eqn: measure-convergence} then follows from the standard concentration-compactness lemma; see, Lions \cite[Lemma I.1]{Lions1985} and also the book of Hebey \cite[Chapters 4 and 7]{Hebey1999book}.}  Near the two poles, the warping functions $\varphi_i$ may grow or decay, as demonstrated by the extremal example of Sormani-Tian-Wang \cite{SormaniTianWang2024} and the drawstring example Kazaras-Xu \cite{KazarasXu2023}.  We prove that this possible degeneration is nevertheless very mild: for each integer $k$ one has sub-polynomial bounds of the form 
\begin{equation}
r^{1/k}\lesssim \varphi_i\lesssim r^{-1/k}
\end{equation} 
near the pole $r=0$, and analogous bounds near the other; see item (4) in Theorem \ref{thm-main2}. These endpoint bounds, together with local estimates away from the poles, yield global $W^{1,p}$ control for every $p<2$, as stated in item (3) in Theorem \ref{thm-main2}.

We use the limiting warping functions obtained in Theorem \ref{thm-main2} to define a pointwise Riemannian metric on the regular part
\begin{equation}
 \mathring M
 :=
 \{0<r<2\}\times\Sph^1_\theta\times\Sph^1_\xi
 \subset \Sph^2\times\Sph^1,
\end{equation}
by
\begin{equation}\label{eqn: limit-metric-g-infty}
 g_\infty
 =
 \varphi_\infty^{-2}
 \bigl(dr^2+u_\infty^2d\theta^2\bigr)
 +
 \varphi_\infty^2d\xi^2.
\end{equation}
The preceding estimates imply the convergence of the metric tensors.

\begin{theorem}[Analytic convergence of the metrics]\label{thm-analytic-convergence}
Let $\{g_i\}$ be as in Theorem \ref{thm-main2}.  Then, for every
$1\le p<\infty$,
\begin{equation}
 g_i\to g_\infty
 \quad\text{in } L^p(M,g_0)
\end{equation}
as symmetric $2$-tensors, where $g_0$ is the standard product metric on
$\Sph^2\times\Sph^1$.  Moreover, on every interior region
$M_\epsilon:=\Omega_\epsilon\times\Sph^1$, one has local uniform convergence of
the coefficients of $g_i$ to those of $g_\infty$.
\end{theorem}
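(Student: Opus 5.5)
We prove Theorem \ref{thm-analytic-convergence} by reducing the convergence of the tensors to the convergence of their three coefficient functions in a fixed background frame, and then using the bounds of Theorem \ref{thm-main2}. With the normalization $a_i=2$, write the standard product metric as $g_0=dr^2+\sin_*^2(r)\,d\theta^2+d\xi^2$, where $\sin_*(r)$ is the standard round warping function on $[0,2]$ (for example $\frac{2}{\pi}\sin(\pi r/2)$). In the $g_0$-orthonormal coframe $\{dr,\ \sin_*(r)d\theta,\ d\xi\}$, the tensor $g_i$ is diagonal with entries
\begin{equation}
\varphi_i^{-2},\qquad \varphi_i^{-2}\frac{u_i^2}{\sin_*^2},\qquad \varphi_i^{2},
\end{equation}
and $g_\infty$ has the same entries with $i$ replaced by $\infty$. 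Hence $|g_i-g_\infty|_{g_0}$ is comparable to the sum of the absolute differences of these three entries. It therefore suffices to show that each entry converges in $L^p(\Sph^2)$, since integration in $\xi$ only contributes a factor $2\pi$.

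\textbf{Step 1 (the ratio $u_i/\sin_*$).} Since $u_i$ is $1$-Lipschitz with $u_i(0)=u_i(2)=0$, we have $u_i(r)\le \min(r,2-r)$. Moreover, from $\Scal_{g_i}\ge0$ and \eqref{eqn:intro-scalar-formula} we get $\Scal_{h_i}\ge0$, so $u_i''\le0$ and $u_i$ is concave. This is the concavity already used in Theorem \ref{thm-main2}(1). It gives $0\le u_i/\sin_*\le C$ uniformly. By Theorem \ref{thm-main2}(1), $u_i\to u_\infty$ uniformly, so $u_i/\sin_*\to u_\infty/\sin_*$ pointwise on $(0,2)$ and uniformly on $[\epsilon,2-\epsilon]$.

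\textbf{Step 2 (uniform integrability near the poles).} This is the main obstacle, because $\varphi_i^{\pm2}$ may blow up at $r=0,2$. By Theorem \ref{thm-main2}(4), for any fixed $k$,
\begin{equation}
\varphi_i^{\pm2}\le C(\Lambda,k)\,\min(r,2-r)^{-2/k},
\end{equation}
and the same bound holds for $\varphi_\infty$. The area element of $g_0$ on the base is comparable to $\min(r,2-r)\,dr\,d\theta$. Choose $k>2p$. Then $\varphi_i^{\pm2p}$ is bounded by the fixed integrable function $C\min(r,2-r)^{-2p/k}$, so all three entries raised to the power $p$ are uniformly dominated. In fact they are dominated in $L^{p'}$ for some $p'>p$, which already yields uniform integrability on its own.

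\textbf{Step 3 (pointwise convergence and conclusion).} On each $\Omega_\epsilon$, Theorem \ref{thm-main2}(2) gives $\varphi_i\to\varphi_\infty$ in $C^{0,\alpha}$, hence uniformly. Theorem \ref{thm-main2}(4) gives two-sided bounds $C_\epsilon^{-1}\le\varphi_i,\varphi_\infty\le C_\epsilon$ there. So $\varphi_i^{\pm2}\to\varphi_\infty^{\pm2}$ uniformly on $\Omega_\epsilon$. Combined with Step 1, all three coefficients converge uniformly on $M_\epsilon$, which proves the local uniform statement. Since $\bigcup_\epsilon\Omega_\epsilon$ has full measure, the coefficients converge pointwise a.e. on $M$. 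The domination from Step 2 then lets us apply the dominated convergence theorem, giving $\int_M|g_i-g_\infty|^p_{g_0}\,d\vol_{g_0}\to0$.

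The poles and the set $\{u_\infty=0\}$ have measure zero, so $g_\infty$ need only be defined on $\mathring M$. If one prefers coordinates $(r,\theta,\xi)$ and the tensor norm with respect to $g_0$, as above, the same estimates apply verbatim. The only genuine input beyond the local convergence is the sub-polynomial endpoint bound of Theorem \ref{thm-main2}(4), used with $k$ chosen large depending on $p$.
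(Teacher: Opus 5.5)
Your proposal is correct and follows essentially the paper's argument. You get pointwise a.e.\ convergence of the three diagonal coefficients, dominate them near the poles by $C\min(r,2-r)^{-2/k}$ using the sub-polynomial bounds of Theorem~\ref{thm-main2}(4), and apply dominated convergence. The local uniform statement on $M_\epsilon$ comes from the uniform convergence of $u_i$ and the interior convergence of $\varphi_i$. Two small differences do not matter. The paper takes $k>p$ and uses the area factor $r$ of $g_0$, whereas your $k>2p$ works equally well. Your background warping function $\sin_*$ vanishes at both $r=0$ and $r=2$, which is slightly more careful than the paper's use of $\sin r$ on $[0,2]$.
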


The next result gives the metric and intrinsic flat compactness conclusion.
It is formulated in terms of the limiting local distance $d_*$, because the
global limiting distance may be shorter than the intrinsic distance of
$g_\infty$.

\begin{theorem}[Gromov--Hausdorff and Sormani--Wenger intrinsic flat convergence]\label{thm-main3}
Let $\{g_i\}$ be a sequence of Riemannian metrics of the form \eqref{metric-form} on $\Sph^2 \times \Sph^1$. Assume that $(M_i, g_i) = (\Sph^2\times \Sph^1, g_i)$ satisfy the condition $\Lambda$ in \eqref{eqn: condition-lambda-intro}.  Then, after passing to a subsequence, the following hold.
\begin{enumerate}[label=\textup{(\arabic*)}, leftmargin=*, itemsep=2pt]
\item There exists a constant $C(\Lambda)$ such that
          \begin{equation}
          \Diam_{g_i}(\Sph^2 \times \Sph^1) \leq C(\Lambda), \quad \forall i \in \mathbb{N}.
          \end{equation}
\item The volumes converge:
         \begin{equation}
          \Vol_{g_i}(\mathring{M}) \to \Vol_{g_\infty}(\mathring{M}), \quad \text{as} \ \ i \to \infty.
         \end{equation}
\item There exists a metric $d_*$
         on $\mathring{M}$ such that, 
          \begin{equation}
         d_{g_i}\big|_{K\times K}\to d_*
         \quad\text{uniformly on }K\times K
         \end{equation}
         for every compact set $ K\Subset \mathring{M}$. Moreover,
         \begin{equation}
         d_* \leq d_{g_\infty}
        \quad\text{on }\mathring{M}\times \mathring{M},
       \end{equation}
        and \(d_*\) agrees locally with \(d_{g_\infty}\). In particular,
        \(d_*\) induces the same local topology and the same local Riemannian current
         structure as \(g_\infty\).
 \item Let $\overline{(\mathring{M}, d_*)}$ denote the metric completion of $(\mathring{M}, d_*)$. Then
            \begin{equation}
               (M_i, d_{g_i}) \longrightarrow \overline{(\mathring{M}, d_*)}
           \end{equation}
           in the Gromov--Hausdorff sense.
 \item  Let
          $
         (X,d_X,T)
        $
         be the settled completion of the integral current space associated to
          $(\mathring{M}, d_*)$, with current $T$ induced locally by the oriented
        Riemannian metric \(g_\infty\). Then
          \begin{equation}
          (M_i,d_{g_i},[\![M_i]\!])
          \longrightarrow
          (X,d_X,T)
           \end{equation}
          in the Sormani--Wenger intrinsic flat sense.
\end{enumerate}
\end{theorem}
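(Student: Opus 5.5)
We will prove the five items in order, using Theorems \ref{thm-main1}, \ref{thm-main2} and \ref{thm-analytic-convergence} throughout with the normalization $a_i=2$.

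\textbf{Item (1): the diameter bound.} Every point $(r,\theta,\xi)$ can be joined to the circle $\{r=1\}$ in two steps. First move radially to $r=1$. Then move inside the torus $T_1:=\{r=1\}$, which has uniformly bounded geometry because $\varphi_i$ is bounded above and below on $\Omega_{1/2}$ by Theorem \ref{thm-main2}(4) and $u_i(1)\to u_\infty(1)>0$.

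The radial segment has $g_i$-length $\int_r^1\varphi_i^{-1}\,ds\le C\int_0^1 s^{-1/k}\,ds\le C'$, using the lower bound $\varphi_i\ge C^{-1}s^{1/k}$ with $k=2$. Near the pole the $\theta$ and $\xi$ circles do not need to be traversed at all: we always travel radially to $r=1$ first, then move within $T_1$, whose diameter is bounded. The pole $r=2$ is handled symmetrically. Hence $\Diam_{g_i}\le C(\Lambda)$.

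\textbf{Item (2): volume convergence.} The volume element is $d\Vol_{g_i}=\varphi_i^{-2}u_i\,dr\,d\theta\,d\xi$. On $\Omega_\epsilon$ this converges uniformly to $\varphi_\infty^{-2}u_\infty\,dr\,d\theta\,d\xi$. Near the poles we use $u_i\le r$ together with $\varphi_i^{-2}\le C r^{-2/k}$, which gives
\begin{equation*}
\Vol_{g_i}\bigl(\{r<\epsilon\}\bigr)\le C\epsilon^{2-2/k}
\end{equation*}
uniformly in $i$. Dominated convergence then yields $\Vol_{g_i}(\mathring M)\to\Vol_{g_\infty}(\mathring M)$.

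\textbf{Item (3): the limit distance $d_*$.} For a compact set $K\Subset\mathring M$, the distances $d_{g_i}$ restricted to $K\times K$ are uniformly bounded by item (1). They are also equi-Lipschitz with respect to $g_0$, since on a neighborhood $M_\epsilon\supset K$ the metrics $g_i$ are uniformly bi-Lipschitz to $g_0$. Arzelà--Ascoli on an exhaustion by compact sets, combined with a diagonal argument, produces $d_*$.

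To get $d_*\le d_{g_\infty}$, take a $g_\infty$-curve staying in some $M_\epsilon$. Its $g_i$-lengths converge, because the coefficients converge uniformly there by Theorem \ref{thm-analytic-convergence}. An approximation argument handles curves that approach the poles.

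For local agreement, fix $p\in\mathring M$ and a small ball around it. Points in this ball are joined by $g_i$-minimizing paths. If such a path left $M_{\epsilon}$, its length would be bounded below by a definite amount, namely the $g_i$-distance from the ball to $\partial M_\epsilon$, which is uniformly positive. So for close points the minimizers stay in $M_\epsilon$. There lower semicontinuity of length under uniform coefficient convergence gives $d_*\ge d_{g_\infty}$ locally. This local identification shows that $d_*$ induces the same topology and the same local current structure as $g_\infty$.

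\textbf{Item (4): Gromov--Hausdorff convergence.} We build $\delta$-nets. Take a $\delta$-net in $M_\epsilon$. By the radial length estimate above with $\epsilon$ small, the sets $\{r<\epsilon\}$ lie within $C\epsilon^{1-1/k}$ of $\{r=\epsilon\}$, uniformly in $i$. Consequently the nets in $M_\epsilon$ are $2\delta$-nets for the whole manifold $M_i$, uniformly in $i$, and likewise for the completion of $(\mathring M,d_*)$. Uniform convergence of distances on these nets (item (3)), together with Gromov's criterion, gives GH convergence to $\overline{(\mathring M,d_*)}$.

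\textbf{Item (5): intrinsic flat convergence.} Here the plan is as follows.

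1. Use the GH convergence of item (4) and Wenger's compactness theorem. The masses are bounded by $V_0$, so a subsequence converges in the intrinsic flat sense to a limit $T'$ supported in the GH limit.
2. Identify $T'$. On $M_\epsilon$ the maps are uniformly bi-Lipschitz and the metrics converge, so $T'$ restricted there equals $[\![M_\epsilon,g_\infty]\!]$.
3. The regions $\{r<\epsilon\}$ have mass $O(\epsilon^{2-2/k})$ uniformly in $i$. Their boundary tori have mass bounded by $\epsilon^{1-1/k}$-type quantities, so they contribute arbitrarily small flat distance.
4. Volume preservation from item (2) prevents cancellation. Hence $T'=T$, and the limit is the settled completion $X$.

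\textbf{Main obstacle.} The main difficulty is in item (3): showing that $d_*$ is locally equal to $d_{g_\infty}$, which requires that short minimizers do not escape toward the poles. Minimizers are potentially cheap near the poles because $\varphi_i$ may blow up there. The approach is to quantify the cost of radial excursions using the endpoint bounds of Theorem \ref{thm-main2}(4).
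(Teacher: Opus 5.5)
\paragraph{Items (1)--(4).} These follow the paper's proof closely: radial travel to $\{r=1\}$ using $\varphi_i^{-1}\le Cr^{-1/2}$; uniform smallness of the caps plus uniform convergence on $M_\epsilon$; a diagonal Arzel\`a--Ascoli argument, with the ``minimizers between nearby points cannot escape'' argument giving local agreement; and uniform Hausdorff approximation of both $M_i$ and the completion by $M_\delta$.

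There are a few harmless slips. The volume density is $u_i\varphi_i^{-1}\,dr\,d\theta\,d\xi$, not $u_i\varphi_i^{-2}$, so the cap bound is $C\epsilon^{2-1/k}$. The torus $\{r=\epsilon\}\times\Sph^1_\xi$ has $g_i$-area exactly $4\pi^2u_i(\epsilon)\le 4\pi^2\epsilon$, independent of $\varphi_i$. You should also say why $d_*(x,y)>0$ for distant $x\ne y$: every curve must exit a small $g_0$-ball around $x$ on which $g_i\ge c\,g_0$. Finally, no approximation argument is needed for $d_*\le d_{g_\infty}$, since every admissible curve is compact in $\mathring M$.

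\paragraph{Item (5): a different route.} The paper uses no compactness theorem. On each fixed $K_m$ it applies the Lakzian--Sormani bridge estimate to $(K_m,d_{g_i}|_{K_m},T_i\llcorner K_m)$ and $(K_m,d_*|_{K_m},T\llcorner K_m)$. The inputs are $\sup_{K_m\times K_m}|d_{g_i}-d_*|\to0$, uniform convergence $g_i\to g_\infty$ on $K_m$, and bounded boundary area. It then adds the cap masses via the isometric inclusions $K_m^i\hookrightarrow M_i$ and $K_m^*\hookrightarrow X$. This gives a direct, quantitative bound on $d_{\mathcal F}(M_i,X)$.

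Your route is Wenger compactness, then Sormani--Wenger to place the limit inside the GH limit, then identification. It can work, but Step 2 carries all the weight and is only asserted. In fact ``$T'$ restricted to $M_\epsilon$'' is not meaningful until $M_\epsilon$ has been located inside the space where $T'$ lives. Here is what is needed:
\begin{enumerate}
\item Build one compact space $Z$ by gluing along $Y$ the correspondences from item (4) that are the identity on $M_{\delta_i}$, with $\delta_i\to0$ slowly enough that the distortion still tends to zero. Then the embeddings $\iota_i\colon M_\epsilon\to M_i\to Z$ converge uniformly to $\iota\colon M_\epsilon\to Y\to Z$. Take the current limit $T'$ in this $Z$.
\item Observe that $T_i\llcorner M_\epsilon=\iota_{i\#}[\![M_\epsilon]\!]$ for a single metric-independent current $[\![M_\epsilon]\!]$, and that $\iota_i,\iota$ are uniformly Lipschitz with respect to $d_0$. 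The affine homotopy in $\ell^\infty(Z)$ then gives $d_F(\iota_{i\#}[\![M_\epsilon]\!],\iota_\#[\![M_\epsilon]\!])\to0$.
\item Lower semicontinuity of mass yields $\mathbf M\bigl(T'-\iota_\#[\![M_\epsilon]\!]\bigr)\le\sup_i\Vol_{g_i}(M_i\setminus M_\epsilon)\le C\epsilon^{1/2}$. Letting $\epsilon\to0$ gives $T'=T$, since $d_*$ agrees locally with $d_{g_\infty}$.
\end{enumerate}
With this in hand your Step 4 is automatic. The limit is also unique, so no further subsequence is needed.

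The paper's approach buys an explicit estimate without any compactness theorem. Yours is more conceptual, but this embedding-and-homotopy step has to be written out.
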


Theorem \ref{thm-main3} is proved in Section \ref{sect: gh-if-convergence}.
In general, the inequality $d_*\le d_{g_\infty}$ can be strict.  This may happen
when regions collapse or disappear in the limit, creating shortcuts for the
limiting global distance. To illustrate this phenomenon, in Appendix \ref{app:shortcut-example}, we provide an example of drawstring type of Kazaras-Xu \cite{KazarasXu2023}, in which $d_* < d_{g_\infty}$ because shortcuts form near the collapsing region; see Lemma \ref{lem: example-compare-distances}.  Consequently, the completion
$\overline{(\mathring M,d_*)}$ need not be isometric to the metric completion of
$(\mathring M,d_{g_\infty})$, and the intrinsic flat limit need not be the
integral current space directly associated to the metric completion of
$(\mathring M,g_\infty)$.

Our final compactness theorem concerns scalar curvature of the limit.  The limiting
metric $g_\infty$ generally has only weak regularity, so scalar curvature is not defined
pointwise.  We use the distributional scalar curvature framework of
Lee--LeFloch \cite{LeeLeFloch2015}, building on the broader distributional
curvature formalism of LeFloch--Mardare \cite{LeFlochMardare2007}.  This belongs to
the same circle of ideas as the low regularity scalar curvature literature on
metrics with corners, studied by Miao \cite{Miao2002}, McFeron--Szekelyhidi \cite{McFeronSzekelyhidi2012} and others, and Ricci flow based weak scalar curvature bounds, studied by Burkhardt--Guim \cite{BurkhardtGuim2019}, Jiang--Sheng--Zhang \cite{JiangShengZhang2023} and others.

In our setting we compute explicitly the Lee--LeFloch vector field $V$ and the
lower order term $F$ for the ansatz \eqref{metric-form}.  On compact interior
regions, weak lower semicontinuity gives nonnegative distributional scalar
curvature.  Passing from interior regions to all of $M$ requires a careful
boundary analysis near the poles; the sub-polynomial estimates for
$\varphi_i$ obtained in Theorem \ref{thm-main2} make the boundary terms vanish. Combining these ingredients, we obtain the following theorem.

\begin{theorem}[Nonnegative distributional scalar curvature]\label{thm2-NNSC}
The limit metric $g_\infty$ has nonnegative scalar curvature in the
distributional sense of Lee--LeFloch on $M=\Sph^2\times\Sph^1$.
\end{theorem}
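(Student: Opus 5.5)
We recall the Lee--LeFloch definition. For a metric $g\in L^\infty_{loc}\cap W^{1,2}_{loc}$ with $g^{-1}\in L^\infty_{loc}$ and a smooth background metric $g_0$ with connection $\bar\nabla$, the distributional scalar curvature acts on a test function $\psi\ge 0$ by
\[
\langle \Scal_g,\psi\rangle=\int_M\bigl(-V\cdot\bar\nabla(\psi\,\tfrac{d\mu_g}{d\mu_{g_0}})+F\,\psi\,\tfrac{d\mu_g}{d\mu_{g_0}}\bigr)\,d\mu_{g_0}.
\]
Here $V$ is linear in $\bar\nabla g$ and $F$ is quadratic in $\bar\nabla g$. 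The first task is to make sense of this for $g_\infty$. By Theorem \ref{thm-main2}(3), $\varphi_\infty\in W^{1,q}$ for all $q<2$. By item (4), $\varphi_\infty^{\pm1}$ grows at most like $r^{-1/k}$ near the poles. Hence all pairings involved are integrable; this has to be checked against the exact integrability the pairing requires.

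\textbf{Computing $V$ and $F$.} We take $g_0=dr^2+\sin^2(\pi r/2)\,d\theta^2\cdot c+d\xi^2$, or any smooth rotationally symmetric background adapted to $[0,2]$. We then compute $V$ and $F$ explicitly for the ansatz \eqref{metric-form}. By $\xi$- and $\theta$-invariance, only $r$-derivatives of $u$ and $r,\theta$-derivatives of $\varphi$ enter.

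We expect, after integrating by parts against $\psi$ times the density $\frac{d\mu_g}{d\mu_{g_0}}=\varphi^{-1}u/u_0$, that the pairing for a smooth $g$ reduces to
\[
\int \psi\,\varphi^2\Bigl(-2\frac{u''}{u}-2\frac{|\nabla^h\varphi|^2}{\varphi^2}\Bigr)\,d\mu_g .
\]
Written in the Lee--LeFloch form, this becomes
\[
\int \Bigl( 2u'\,\partial_r\tilde\psi-2|\nabla\varphi|^2\,\frac{u}{\varphi}\,\psi\Bigr)\,dr\,d\theta\,d\xi
\]
plus terms involving $\varphi\,\partial\varphi\,\partial\psi$, where $\tilde\psi$ is $\psi$ averaged over $\theta,\xi$. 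The key structural point is that the only term quadratic in derivatives of $\varphi$ is $-|\nabla\varphi|^2(\cdot)\psi\le 0$, and all remaining terms are linear in first derivatives of $u$ or $\varphi$.

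\textbf{Interior step.} Fix $\psi\ge0$ supported in $M_\epsilon$. For each $i$, smoothness gives $\langle \Scal_{g_i},\psi\rangle=\int \Scal_{g_i}\psi\,d\mu_{g_i}\ge0$. We then pass to the limit term by term using Theorem \ref{thm-main2}.
\begin{itemize}
\item The terms linear in $u_i'$ converge because $u_i\to u_\infty$ uniformly, $u_i'\to u_\infty'$ a.e., and $|u_i'|\le1$.
\item The terms linear in $\nabla\varphi_i$ converge because $\varphi_i\to\varphi_\infty$ in $C^{0,\alpha}(\Omega_\epsilon)$ and $\nabla\varphi_i\rightharpoonup\nabla\varphi_\infty$ weakly in $L^2$.
\item For the quadratic term $-\int|\nabla\varphi_i|^2 w_i\psi$, the weights $w_i=u_i\varphi_i^{-1}$ converge uniformly. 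By \eqref{eqn: measure-convergence}, or simply weak lower semicontinuity of the $L^2$ norm, we get $\liminf\int|\nabla\varphi_i|^2w_i\psi\ge\int|\nabla\varphi_\infty|^2w_\infty\psi$.
\end{itemize}
Since this term carries a minus sign, we obtain
\[
\langle\Scal_{g_\infty},\psi\rangle\ge\limsup\langle\Scal_{g_i},\psi\rangle\ge0 .
\]
The atoms $c_j\delta_{x_j}$ only help.

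\textbf{Global step (main obstacle).} For a general $\psi\ge0$ on $M$ we take cutoffs $\eta_\epsilon(r)$, equal to $1$ on $[2\epsilon,2-2\epsilon]$, supported in $[\epsilon,2-\epsilon]$, with $|\eta_\epsilon'|\le C/\epsilon$ (or a logarithmic cutoff if needed). Applying the interior result to $\eta_\epsilon\psi$ gives
\[
\langle\Scal_{g_\infty},\psi\rangle=\langle\Scal_{g_\infty},\eta_\epsilon\psi\rangle+E_\epsilon,
\]
where $E_\epsilon$ collects:
\begin{itemize}
\item terms supported near the poles, which tend to $0$ by dominated convergence once integrability is established;
\item terms carrying $\eta_\epsilon'$, of the form $\int_{\epsilon}^{2\epsilon}\epsilon^{-1}\bigl(|u_\infty'|+|\varphi_\infty\partial_r\varphi_\infty|\bigr)u\varphi^{\pm1}\,dr\,d\theta$.
\end{itemize}
Here we use $u_\infty\le r$ (1-Lipschitz), the sub-polynomial bounds of Theorem \ref{thm-main2}(4), and $|u_\infty'|\le1$. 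The $u'$ term is then bounded by $\epsilon^{-1}\cdot\epsilon\cdot\epsilon^{1-2/k}\to0$.

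For the gradient term we use Hölder with $\varphi_\infty\in W^{1,q}$, $q<2$, on the annulus. This gives $\epsilon^{-1}\|\nabla\varphi\|_{L^q}\|r\varphi^{m}\|_{L^{q'}(\text{annulus})}$, and the second factor is about $\epsilon^{1-2/k}\epsilon^{2/q'}$. Choosing $q$ close to $2$ and $k$ large makes the exponent positive, so $E_\epsilon\to0$. If this crude estimate fails by a borderline power, we would instead use the pointwise gradient bound coming from $\Scal\ge0$, namely $|\nabla\varphi_i|^2/\varphi_i^2\le -2u_i''/u_i$ integrated in $r$, uniformly in $i$ near the poles, or switch to a logarithmic cutoff $\eta_\epsilon=\log$-type to absorb the $\epsilon^{-1}$. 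Letting $\epsilon\to0$ then gives $\langle\Scal_{g_\infty},\psi\rangle\ge0$.
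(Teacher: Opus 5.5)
Your interior step is correct and is essentially the paper's. The gap is in the global step. The claim $E_\epsilon\to0$ is false in general, because your bound for the $\eta_\epsilon'$-term uses a guessed form of $V$ that is not the actual one.

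Take the ansatz with a background $g_0=dr^2+u_0^2\,d\theta^2+d\xi^2$, where $u_0=r+O(r^3)$ near $r=0$ (the paper takes $u_0=\sin r$). Then all $\partial\varphi$-terms in $V$ cancel, $V^\theta=V^\xi=0$, and
\[
V^r=\varphi^2\Bigl(\frac{u_0u_0'}{u^2}+\frac{u_0'}{u_0}-2\frac{u'}{u}\Bigr).
\]
So differentiating the cutoff produces
\[
\int\eta_\epsilon'\,V^r\,\psi\,\frac{d\mu_g}{d\mu_0}\,d\mu_0=\int\eta_\epsilon'\,\varphi\Bigl(\frac{u_0u_0'}{u}+\frac{u\,u_0'}{u_0}-2u'\Bigr)\psi\,dr\,d\theta\,d\xi .
\]
Each term in the bracket is of order one at the pole, not of order $r$. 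They cancel at leading order only when $u_\infty'(0^+)=1$, and nothing in Theorem~\ref{thm-main2} guarantees that.

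For instance, take $\varphi_i\equiv1$ and $h_i$ concave smoothings of a spindle with cone angle $2\pi c$, $c\in(\tfrac12,1)$. This appears to satisfy $\Lambda$ by the same barrier and monotonicity argument as in the appendix. Then $u_\infty\approx cr$ and the bracket tends to $\frac1c-c>0$. Since $\int\eta_\epsilon'\,dr=1$, the term converges to $\frac{1-c^2}{c}\int_{\{r=0\}}\psi\,d\theta\,d\xi\neq0$ for any monotone cutoff, logarithmic or not.

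This is curvature concentrated on the polar circle, i.e.\ the Gauss--Bonnet contribution. Your polar-coordinate heuristic drops it: integrating $-2\psi\varphi u''$ by parts in $r$ leaves the pole term $2\psi\varphi u'|_{r=0}$. Even your own formula $\int 2u'\partial_r\tilde\psi$ gives an $O(1)$ contribution under the cutoff; the factor $u$ in your estimate is not there. Neither H\"older with $W^{1,q}$ nor the pointwise gradient bound helps, since this term contains no derivative of $\varphi$.

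What is missing is that this flux must be controlled by its sign, not its size. Concavity gives $u'\le u/r$, and together with $C(\Lambda)r\le u\le r$ this makes the bracket $\ge\frac ru-\frac ur-Cr^2\ge-Cr^2$. With $\eta_\epsilon'\ge0$ on $[\epsilon,2\epsilon]$ and $\varphi_\infty\le Cr^{-1/k}$, the term is then $\ge -C\epsilon^{2-1/k}\|\psi\|_{L^\infty}$, and symmetrically at $r=2$. So you get $\liminf_{\epsilon\to0}E_\epsilon\ge0$, not $E_\epsilon\to0$. This still requires integrability of the remaining $(1-\eta_\epsilon)$-terms, which you only assert.

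The paper uses exactly this one-sided mechanism, but on the smooth $g_i$ with a sharp truncation. It writes $0\le\int_{M_\epsilon}\Scal_{g_i}v\,d\mu_{g_i}=I_i(\epsilon,v)+B_i(\epsilon,v)$ and bounds the boundary flux only from above: $B_i(\epsilon,v)\le C(\Lambda)\epsilon^{3/2}\|v\|_{L^\infty}$. This uses $u_i'(\epsilon)\le u_i(\epsilon)/\epsilon$, $u_i(\epsilon)\le\epsilon$ and the pole bound for $\varphi_i$. Lower semicontinuity on $M_\epsilon$ then gives $I_\infty(\epsilon,v)\ge-C(\Lambda)\epsilon^{3/2}\|v\|_{L^\infty}$, and one lets $\epsilon\to0$.
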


Theorems \ref{thm-main3} and \ref{thm2-NNSC} together confirm Conjecture \ref{conj: Scalar-Compactness} for the warped product class of form as in \eqref{metric-form}.  More precisely, in this setting the
diameter bound required in the conjecture follows from the volume and
\(\MinA\) assumptions, the sequence subconverges in both the
Gromov--Hausdorff and Sormani--Wenger intrinsic flat senses, and the limit
retains nonnegative scalar curvature in the distributional sense of
Lee--LeFloch.

The last section of the paper discusses a complementary issue: how one should test
nonnegative scalar curvature for low regularity metrics using volume
comparison.  In the smooth setting, the small geodesic ball volume expansion
\begin{equation}
 \Vol_g(B_g(p,t))
 =
 \frac{4\pi}{3}t^3
 \left(
 1- \frac{\Scal_g(p)}{30}t^2+O(t^4)
 \right)
\end{equation}
shows that nonnegative scalar curvature is equivalent to an infinitesimal
volume deficit relative to Euclidean balls.  This is closely related to
Gromov's volumic viewpoint on scalar curvature and its subsequent developments
\cite{Gromov2017Questions,Deng2021Volumic}.

However, for low regularity metrics one must be careful about replacing
geodesic balls by exponential images of prescribed tangent space regions.  We
construct a family of $C^{1,\alpha}$ metrics with nonnegative scalar curvature
on the smooth part, for which a symmetric fixed region in the tangent space can
detect sectorwise off-diagonal Ricci contributions.  The key point is that the
metric is modeled near the singular set on different smooth sectors, for
example by warping factors of the form $1\pm kr\theta$.  Thus the usual
cancellation of off-diagonal Ricci terms over a symmetric region may fail for
exponential images of that fixed region.

This does not contradict the geodesic ball test.  A geodesic ball is defined by
the intrinsic distance condition $d_g(p,\cdot)<t$, and its preimage under the
exponential map is not a fixed Euclidean ball or sector.  The deformation of
this preimage contributes an additional boundary term, which cancels the
artificial off-diagonal contribution.   This also
illustrates the subtle difference between various possible weak notions of
scalar curvature lower bounds for nonsmooth metrics.

\medskip
{\bf Organization of the paper.}
In Section \ref{sect: diameter-bound}, we derive the basic consequences
of the hypotheses and prove the uniform diameter bound for the base spaces.
This is the main compactness input for the rest of the paper.  In Section
\ref{sect: warping-function-convergence}, we prove convergence of the base
warping functions $u_i$ and obtain local and global estimates for the fiber
warping functions $\varphi_i$.  In Section \ref{sect: gh-if-convergence}, we prove the Gromov--Hausdorff and
intrinsic flat convergence statements.  In Section \ref{sect: distributional-scalar},
we prove that the limit metric has nonnegative distributional scalar curvature.
Finally, in Section \ref{sect: c1alpha-example}, we construct the
$C^{1,\alpha}$ example and discuss the volume-limit test for nonnegative scalar
curvature in low regularity test for nonnegative scalar
curvature in low regularity.


\medskip
{\bf Acknowledgements.}
 The authors are grateful to Professor Christina Sormani for her helpful comments and suggestions on an earlier version of this paper. They also thank Demetre Kazaras for his interest and discussions on this work.
 CW is partially supported by the Natural Science Foundation of Shanghai (Grant No. 25ZR1401357) and the Fundamental Research Funds for the Central Universities. 
 ZW is supported by the China Postdoctoral Science Foundation (Grant No. 2025T180838).


\section{Uniform diameter bounds for the base spaces}\label{sect: diameter-bound}

This section establishes the preliminary compactness estimates needed for the warped product metrics in \eqref{metric-form}.  In Section \ref{subset: basic-consequences}, we first derive several basic consequences of the scalar curvature, volume, and $\MinA$ hypotheses.  In Section \ref{subset: diameter-bound}, we prove the main and the most delicate estimate of this section: a uniform diameter bound for the base surfaces $(\Sph^2,h_i)$. This bound prevents the rotational parameter interval $[0,a_i]$ from degenerating or escaping to infinity. It is therefore essential for the convergence theory of the base warping functions $u_i$, and it also plays an important role in deriving estimates for the fiber warping functions $\varphi_i$ in Section \ref{sect: warping-function-convergence}.

\subsection{Basic consequences of the geometric hypotheses}\label{subset: basic-consequences}
We begin by deriving several elementary consequences of the uniform geometric bounds in Theorem \ref{thm-main1}.  These estimates convert the scalar curvature condition into a differential inequality for the base warping function $u_i$ and the fiber factor $\varphi_i$, and they relate the $\MinA$ condition to a noncollapsing estimate for $u_i$.

The first ingredient is the following scalar curvature formula, taken from \cite{KazarasXu2023} and included here for completeness.

\begin{lemma}[Nonnegative scalar curvature condition]\label{lem: scalar-curvature-formula}
The scalar curvature of the warped product metric $g_i$ on $\Sph^2 \times \Sph^1$ as in \eqref{metric-form} is given by 
    \begin{equation}\label{eqn: scalar-curvature}
        \Scal_{g_i} =\varphi^2_i\left( \Scal_{h_i}-2\tfrac{\left|\nabla^{h_i} \varphi_i\right|^2}{\varphi^2_i} \right)
    \end{equation}
   Thus $g_j$ has nonnegative scalar curvature if and only if
   \begin{equation}
   2\tfrac{\left|\nabla^{h_i} \varphi_i\right|^2}{\varphi^2_i} \leq \Scal_{h_i} = -2 \frac{u^{\prime\prime}_i}{u_i}.
   \end{equation}
\end{lemma}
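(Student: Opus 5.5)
The plan is to compute the scalar curvature in two stages: first for the conformal change on the base, then for the warped product over a surface.

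\emph{Stage 1: the base.} Write $\tilde h_i := \varphi_i^{-2} h_i$ on $\Sph^2$. Then
\[
g_i = \tilde h_i + \varphi_i^2\, d\xi^2
\]
is a warped product of the surface $(\Sph^2,\tilde h_i)$ with a circle, with warping function $\varphi_i$. For any warped product $\tilde h + f^2 d\xi^2$ over an $n$-dimensional base with a one-dimensional fiber, the standard formula is
\[
\Scal_g = \Scal_{\tilde h} - 2\,\frac{\Delta_{\tilde h} f}{f}.
\]
This follows from O'Neill's formulas, or directly: the unit fiber direction $e_\xi$ gives $\Ric(e_\xi,e_\xi) = -\Delta f/f$, and the base Ricci acquires the term $-\nabla^2 f/f$, whose trace is again $-\Delta f/f$.

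\emph{Stage 2: conformal change in dimension two.} Write $\tilde h_i = e^{2w} h_i$ with $w = -\ln\varphi_i$. In dimension two,
\[
\Scal_{\tilde h} = e^{-2w}\bigl(\Scal_h - 2\Delta_h w\bigr), \qquad \Delta_{\tilde h} = e^{-2w}\Delta_h .
\]
Hence
\[
\Scal_{g_i} = \varphi_i^2\Bigl(\Scal_{h_i} + 2\Delta_{h_i}\ln\varphi_i - 2\,\frac{\Delta_{h_i}\varphi_i}{\varphi_i}\Bigr).
\]

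\emph{Stage 3: cancellation of second derivatives.} Using
\[
\Delta\ln\varphi = \frac{\Delta\varphi}{\varphi} - \frac{|\nabla\varphi|^2}{\varphi^2},
\]
the second-order terms cancel exactly. What remains is
\[
\Scal_{g_i} = \varphi_i^2\bigl(\Scal_{h_i} - 2|\nabla^{h_i}\varphi_i|^2/\varphi_i^2\bigr),
\]
which is \eqref{eqn: scalar-curvature}. The only delicate point is keeping the signs and conformal factors straight. This cancellation is the heart of the lemma, so I would double-check it, for instance by testing on $\varphi$ constant and on a flat base with $\varphi = e^{x}$.

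\emph{Stage 4: the base curvature and the equivalence.} For $h_i = dr^2 + u_i^2 d\theta^2$, the Gauss curvature is $K = -u_i''/u_i$, so $\Scal_{h_i} = 2K = -2u_i''/u_i$. At the poles this is understood by smooth extension, using the boundary conditions on $u_i$. Since $\varphi_i^2 > 0$, $\Scal_{g_i} \ge 0$ is equivalent to
\[
2|\nabla^{h_i}\varphi_i|^2/\varphi_i^2 \le -2u_i''/u_i ,
\]
which is the stated inequality.
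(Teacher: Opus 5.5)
Your argument is correct, but it is organized differently from the paper's.

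The paper never uses a warped-product curvature formula. It writes $g_i=\varphi_i^2 g_i'$, where $g_i'=\varphi_i^{-4}h_i+d\xi^2$ is a Riemannian product. The two-dimensional conformal law gives $\Scal_{g_i'}=\varphi_i^4(\Scal_{h_i}+4\Delta_{h_i}\ln\varphi_i)$. It then applies the three-dimensional conformal law $\Scal_{e^{2v}g'}=e^{-2v}(\Scal_{g'}-4\Delta_{g'}v-2|\nabla^{g'}v|^2)$ with $v=\ln\varphi_i$. The Laplacian terms $\pm4\varphi_i^4\Delta_{h_i}\ln\varphi_i$ cancel, and the gradient term comes directly from $-2|\nabla^{g'}v|^2$. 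So the identity $\Delta\ln\varphi=\Delta\varphi/\varphi-|\nabla\varphi|^2/\varphi^2$ is never needed.

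That route requires only the conformal transformation law and additivity of scalar curvature on products. The same auxiliary product metric $g_i'$ is also reused in Lemma~\ref{lem: torus-mean-curvature} to compute the mean curvature of $\Gamma\times\Sph^1$ by a conformal change.

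Your route trades the three-dimensional conformal law for the one-dimensional-fiber warped-product formula $\Scal=\Scal_{\tilde h}-2\Delta_{\tilde h}f/f$. It then needs only a conformal change in dimension two, where $\Delta$ is conformally covariant. Its payoff is that it shows clearly why this ansatz is first order. For a general metric $e^{2w}h+e^{2v}d\xi^2$, the same computation gives $e^{-2w}\bigl(\Scal_h-2\Delta_h(w+v)-2|\nabla^h v|^2\bigr)$, and the second-order terms drop out exactly because $w=-v$.
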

    \begin{proof}
  Let $g^\prime_i=\frac{1}{\varphi^4_i}h_i+d\xi^2$. Then $g_i = \varphi^2_i g^\prime_i$. By using the scalar curvature formula under conformal changes twice, we have
\begin{equation}
\Scal_{g^{\prime}_i} = \Scal_{1/\varphi^4_i h_i}=\varphi^4_i\left( \Scal_{h_i}+4 \Delta_{h_i} \ln \varphi_i\right) 
\end{equation}
\begin{equation}
\begin{aligned}
\Scal_{g_i}&= \frac{1}{\varphi^2_i}\left(\Scal_{g^{\prime}_i}-4 \Delta_{g^{\prime}_i}\ln \varphi_i-2\left|\nabla^{g^{\prime}_i} \ln \varphi_i\right|^2\right) \\
&= \frac{1}{\varphi^2_i}\left(\varphi^4_i R_{h_i} + 4 \varphi^2_i \Delta_{h_i}\ln \varphi_i -4 \varphi^4 \Delta_h(\ln \varphi_i)-2 |\nabla^{g_i} \ln \varphi|^2\right) \\
&= \varphi^2_i\left(R_{h_i}-2 \tfrac{\left|\nabla^{h_i} \varphi_i\right|^2}{\varphi^2_i}\right).
\end{aligned}
\end{equation}
So (\ref{eqn: scalar-curvature}) is proved.
\end{proof}

We next derive several consequences of the $\mina$ condition. 

\begin{lemma}\label{lem: torus-mean-curvature}
Let $\Gamma$ be a smooth embedded closed curve in $\Sph^2$. The mean curvature of of $\Gamma\times \Sph^1$ in $(\Sph^2 \times \Sph^1, g_i)$ as in \eqref{metric-form} is given by
\begin{equation} 
   H_i =\varphi_i \kappa_i
 \end{equation}
  where $\kappa_i$ is the geodesic curvature of $\Gamma$ in $(\Sph^2, h_i)$. 
As a result, every closed geodesic $\Gamma\subset (S^2,h_i)$ gives a minimal surface $\Gamma\times \Sph^1$ in $(\Sph^2 \times \Sph^1, g_i)$. 
\begin{equation}
\Area_{g_i}(\Gamma\times \mathbb{S}^1) = 2 \pi L_{h_i}(\Gamma_i)
\end{equation}
 \end{lemma}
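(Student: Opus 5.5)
The plan is to compute the second fundamental form of $\Sigma=\Gamma\times\Sph^1$ directly in an adapted orthonormal frame. Parametrize $\Gamma$ by $h_i$-arclength $s$, with $h_i$-unit tangent $T$ and $h_i$-unit normal $N$ in $(\Sph^2,h_i)$. For $g_i=\varphi_i^{-2}h_i+\varphi_i^2d\xi^2$, the $g_i$-unit tangent fields to $\Sigma$ are $e_1=\varphi_i T$ and $e_2=\varphi_i^{-1}\partial_\xi$, and the $g_i$-unit normal is $\nu=\varphi_i N$. Then $H_i=g_i(\nabla_{e_1}e_1,-\nu)+g_i(\nabla_{e_2}e_2,-\nu)$, up to a sign convention. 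I will evaluate each term with the Koszul formula or Christoffel symbols, using that $\varphi_i$ depends only on the base.

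For the fiber term, $\partial_\xi$ is Killing, so
\begin{equation}
g_i(\nabla_{\partial_\xi}\partial_\xi,X)=-\tfrac12 X(g_i(\partial_\xi,\partial_\xi))=-\varphi_i\,X(\varphi_i)
\end{equation}
for $X$ tangent to the base. Hence
\begin{equation}
g_i(\nabla_{e_2}e_2,\nu)=-\varphi_i^{-2}\varphi_i\cdot\varphi_i N(\varphi_i)=-N(\varphi_i).
\end{equation}
For the base term, the totally geodesic structure is not available, but the base slices $\{\xi=\text{const}\}$ are totally geodesic. This holds because the reflection $\xi\mapsto -\xi$ is an isometry, or alternatively because $\nabla_XY$ has no $\partial_\xi$ component when $g_i(X,\partial_\xi)=g_i(Y,\partial_\xi)=0$ and $X,Y$ are base fields. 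So $g_i(\nabla_{e_1}e_1,\nu)$ is computed in the conformal metric $\tilde h=\varphi_i^{-2}h_i$ on $\Sph^2$. The standard conformal formula for geodesic curvature under $\tilde h=e^{2w}h$ with $w=-\ln\varphi_i$ gives
\begin{equation}
\tilde\kappa=e^{-w}(\kappa+\partial_N w)=\varphi_i\kappa_i-N(\varphi_i).
\end{equation}
Adding the two contributions with the correct orientation, the $N(\varphi_i)$ terms cancel and $H_i=\varphi_i\kappa_i$. This cancellation is exactly why the metric was chosen with reciprocal powers.

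The main obstacle is the sign bookkeeping in this cancellation. A single sign slip would produce $\varphi_i\kappa_i\pm 2N(\varphi_i)$ instead. I would double-check it by a first-variation computation. The area of $\Gamma\times\Sph^1$ is $\int\int\sqrt{\varphi_i^{-2}h_i(\dot\gamma,\dot\gamma)}\,\varphi_i\,ds\,d\xi=2\pi L_{h_i}(\Gamma)$, because the conformal factors cancel. Thus for variations of product type $\Gamma_t\times\Sph^1$, the area functional is $2\pi$ times $h_i$-length. Its first variation is $-2\pi\int\kappa_i f\,ds$ in terms of the $h_i$-normal speed $f$. In terms of the $g_i$-normal speed $\tilde f=f/\varphi_i$ and the $g_i$-area element $ds\,d\xi$, this identifies $H_i=\varphi_i\kappa_i$. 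This holds because $\Sigma$ is $\xi$-invariant, so its mean curvature is $\xi$-invariant and is determined by such variations.

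The consequences follow immediately. If $\Gamma$ is an $h_i$-geodesic, then $\kappa_i=0$, so $H_i=0$ and $\Gamma\times\Sph^1$ is a closed embedded minimal torus. Its area equals $\int_\Gamma\int_0^{2\pi}\varphi_i^{-1}\varphi_i\,d\xi\,ds=2\pi L_{h_i}(\Gamma)$, by the same cancellation noted above.
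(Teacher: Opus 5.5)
Your proposal is correct, and your signs are consistent. With $\kappa_i=-h_i(\nabla^{h_i}_TT,N)$, the two-dimensional conformal formula is indeed $\tilde\kappa=e^{-w}(\kappa+\partial_Nw)$. So the base direction contributes $\varphi_i\kappa_i-N(\varphi_i)$ and the fiber direction contributes $+N(\varphi_i)$, and these sum to $\varphi_i\kappa_i$.

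Your route differs from the paper's. The paper writes $g_i=\varphi_i^2g_i'$, where $g_i'=\varphi_i^{-4}h_i+d\xi^2$ is a genuine Riemannian product; this is the same trick it uses for the scalar curvature formula. It then reads off $H_i'=\varphi_i^2\bigl(\kappa_i-2N(\ln\varphi_i)\bigr)$ as the geodesic curvature of $\Gamma$ in $\varphi_i^{-4}h_i$. Finally it applies the three-dimensional conformal transformation law for hypersurface mean curvature, $H_i=\varphi_i^{-1}\bigl(H_i'+2\varphi_i^2N(\ln\varphi_i)\bigr)$. That is short once both conformal formulas are granted, but the cancellation happens between two formulas and is somewhat hidden.

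Your frame computation works directly in $g_i$. It needs only the two-dimensional conformal formula, the Killing identity for $\partial_\xi$, and the total geodesy of the $\xi$-slices. In exchange it shows exactly which principal direction produces each $\pm N(\varphi_i)$.

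Your first-variation check is actually a complete proof on its own, and arguably the cleanest. The induced area element on $\Gamma\times\Sph^1$ is $ds\,d\xi$, independent of $\varphi_i$, and the $g_i$-normal speed is $f/\varphi_i$. Comparing first variations, using $\xi$-invariance of $H_i$, gives $H_i/\varphi_i=\kappa_i$ immediately. The same computation also yields the identity $\Area_{g_i}(\Gamma\times\Sph^1)=2\pi L_{h_i}(\Gamma)$, which the paper states without proof.
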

 \begin{proof}
Let $g^\prime_i=\frac{1}{\varphi^4_i}h_i+d\xi^2$. Then $g_i = \varphi^2_i g^\prime_i$.  Let $H^\prime_i$ be the mean curvature of $\Gamma\times S^1$ in $(\Sph^2 \times \Sph^1, g^\prime_i)$ and $\nu_i$ the normal vector for $\Gamma$ in $(S^2,h_i)$, then
\begin{equation}
    H^\prime_i =\kappa_{1/\varphi^4h_i}= \varphi^2_i(\kappa_i -2\nu_i(\ln\varphi_i)),
\end{equation}
and so
\begin{equation}
    H_i =\frac{1}{\varphi}(H^\prime_i+2\varphi^2_i \nu_i(\ln\varphi_i))=\varphi_i \kappa_i.
\end{equation}
\end{proof}

The scalar curvature formula in Lemma \ref{lem: scalar-curvature-formula} and the minimal surface construction in Lemma \ref{lem: torus-mean-curvature} give  the following quantitative lower bound for the maximum of the base warping function.

\begin{lemma}\label{lem: mina-consequence-1}
Let $g_i$ be the warped product Riemannian metrics on $ \Sph^2 \times \Sph^1$ as in \eqref{metric-form} satisfying 
\begin{equation}
\Scal_{g_i} \geq 0, \ \ \text{and} \ \ \mina(\Sph^2 \times \Sph^1, g_i) \geq A_0 >0.
\end{equation}
Then $u_i$ are convave functions on $[0, a_i]$ with $u_i(0) = u_i(a_0) =0$, and so each $u_i$ has a unique maximal point $r_i \in (0, a_i)$. Moreover, 
\begin{equation}
\max\limits_{[0, a_i]}u_i = u_i(r_i)\geq \frac{A_0}{4\pi^2}.
\end{equation}
\end{lemma}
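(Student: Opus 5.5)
Concavity follows directly from Lemma \ref{lem: scalar-curvature-formula}. Since $\Scal_{g_i}\ge 0$ and $\varphi_i>0$, we get
\begin{equation}
-2\frac{u_i''}{u_i}=\Scal_{h_i}\ \ge\ 2\frac{|\nabla^{h_i}\varphi_i|^2}{\varphi_i^2}\ \ge 0 .
\end{equation}
Because $u_i>0$ on $(0,a_i)$, this gives $u_i''\le 0$ there, so $u_i$ is concave on $[0,a_i]$, with $u_i(0)=u_i(a_i)=0$.

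For the existence and uniqueness of the maximum point, note that $u_i$ is continuous, positive inside and zero at the ends, so it attains its maximum at some interior point. A concave function's set of maximum points is an interval. If this interval had positive length, then $u_i'\equiv 0$ on an open set, so $\Scal_{h_i}=0$ there. The inequality above then forces $\nabla\varphi_i=0$ there, which is allowed, so this does not immediately give a contradiction.

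I would handle uniqueness through the $\MinA$ condition instead. Each circle $\{r=r_0\}$ with $u_i'(r_0)=0$ is a closed geodesic of $h_i$: its geodesic curvature is $u_i'/u_i$. By Lemma \ref{lem: torus-mean-curvature}, each such circle gives a minimal torus. A plateau of maxima therefore gives a family of minimal tori, and nothing in the hypotheses forbids this. So I expect uniqueness to hold only in the sense that ``a maximal point $r_i$'' can be chosen. If strict uniqueness is needed, I would first try to exclude a plateau using smoothness. However, a plateau is genuinely consistent with $\Scal\ge0$ (take $\varphi_i$ constant there), so I suspect the intended claim is really just existence. This is the step I regard as the main obstacle, and I would phrase the proof to select one maximal point $r_i$.

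For the lower bound, let $r_i$ be a maximal point. Then $u_i'(r_i)=0$, so $\Gamma_i=\{r=r_i\}$ is a closed embedded geodesic in $(\Sph^2,h_i)$. By Lemma \ref{lem: torus-mean-curvature}, $\Gamma_i\times\Sph^1$ is a closed minimal surface in $(\Sph^2\times\Sph^1,g_i)$. Its area is $2\pi L_{h_i}(\Gamma_i)$, where the $\varphi_i^{-1}$ factor from the base and the $\varphi_i$ factor from the fiber cancel in the area element. Since $L_{h_i}(\Gamma_i)=2\pi u_i(r_i)$, the area equals $4\pi^2u_i(r_i)$. The $\MinA$ hypothesis then gives $4\pi^2 u_i(r_i)\ge A_0$, that is, $\max u_i\ge A_0/(4\pi^2)$.
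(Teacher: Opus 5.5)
Your proof is correct and follows the paper's route. Concavity comes from $-2u_i''/u_i=\Scal_{h_i}\ge 2|\nabla^{h_i}\varphi_i|^2/\varphi_i^2\ge 0$, and the lower bound $4\pi^2u_i(r_i)\ge A_0$ comes from the minimal torus $\Gamma_i\times\Sph^1$ over a maximal latitude.

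Your objection to uniqueness is also right. The paper infers a unique critical point from $u_i''\le 0$ alone, but that only makes the set of maximum points a closed interval. A flat cylindrical band in $h_i$ with $\varphi_i$ constant on it satisfies all the hypotheses. Nothing later in the paper uses more than the existence of some maximal point $r_i$ and the value $\max u_i$, so selecting one maximal point, as you propose, is the correct repair.
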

\begin{proof}
By Lemma \ref{lem: scalar-curvature-formula}, $\Scal_{g_i} \geq 0$ implies
\begin{equation}
-\frac{2u^{\prime\prime}_i(r)}{u_i(r)} = \Scal_{h_i} \geq \frac{|\nabla^{h_i} \varphi_i|}{\varphi_i} \geq 0 \quad \text{on} \ \ (0, a_i).
\end{equation}
Moreover, smoothness of the metric $h_j$ as in (\ref{eqn: metric-on-base}) implies $u_i(0) = u_0(a_i) =0$ and $u_{i}(r)>0$ on $(0, a_i)$. Thus $u^{\prime\prime}(r)_i \leq 0$ and $u_i$ has a unique critical point $r_i$ that is the maximal point on $(0, a_i)$. The curve $\Gamma_i := \{(r, \theta) \mid r= r_i\} \subset \Sph^2$ is a geodesic in $(\Sph^2, h_i)$. Then by Lemma \ref{lem: torus-mean-curvature}, $\Gamma_i \times \Sph^1$ is a minimal surface in $(\Sph^2 \times \Sph^1, g_i)$. Therefore,
\begin{equation}
2\pi u_i(r_i) = L_{h_i}(\Gamma_i) = \frac{{\rm Area}_{g_i}(\Gamma_i \times \Sph^1)}{2\pi} \geq \frac{A_0}{2\pi}.
\end{equation}
\end{proof}

In addition, since $\varphi_i$ is independent of $\xi$, each slice $S_{\xi_0} := \{(r, \theta, \xi) \mid \xi = \xi_0\}$ is a minimal surface in $(\Sph^2 \times \Sph^1, g_i)$ for every fixed $\xi_0 \in [0, 2\pi]$. As a result, we have
\begin{lemma}\label{lem: mina-consequence-2}
Let $g_i$ be the warped product Riemannian metrics on $M_i = \Sph^2 \times \Sph^1$ as in \eqref{metric-form} satisfying 
$
\mina(M_i, g_i) \geq A_0 >0.
$
Then 
\begin{equation}
\int_{\Sph^2} \frac{1}{\varphi^2_i} d\vol_{h_i} \geq A_0.
\end{equation}
\end{lemma}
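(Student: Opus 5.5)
The plan is to show that each horizontal slice $S_{\xi_0}=\Sph^2\times\{\xi_0\}$ is a closed minimal surface in $(M_i,g_i)$, and then to compute its area.

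First, minimality. The slices are exactly the fixed point sets of reflections, and we use this to avoid computing the second fundamental form directly. Because $\varphi_i$ and $h_i$ do not depend on $\xi$, the reflection $\sigma:(x,\xi)\mapsto(x,2\xi_0-\xi)$ is an isometry of $g_i$. Its fixed point set contains $S_{\xi_0}$, and a component of the fixed point set of an isometry is totally geodesic, so $S_{\xi_0}$ has vanishing second fundamental form. For a direct check, take the unit normal $\nu=\varphi_i^{-1}\partial_\xi$. For tangent fields $X,Y$ on $\Sph^2$, Koszul's formula gives
\[
g_i(\nabla_XY,\partial_\xi)=-\tfrac12\partial_\xi\bigl(g_i(X,Y)\bigr)=0,
\]
since $g_i(X,\partial_\xi)=0$ and the coefficients are $\xi$-independent. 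Hence $S_{\xi_0}$ is a smooth closed (indeed totally geodesic) minimal surface.

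Second, the area. By the $\MinA$ hypothesis, $\Area_{g_i}(S_{\xi_0})\ge A_0$. The metric induced on $S_{\xi_0}$ is $\varphi_i^{-2}h_i$, which is conformal to $h_i$ in dimension two, so its area form is $\varphi_i^{-2}\,d\vol_{h_i}$. Therefore
\[
\int_{\Sph^2}\varphi_i^{-2}\,d\vol_{h_i}=\Area_{g_i}(S_{\xi_0})\ge A_0,
\]
which is the claimed bound.

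No step here is a real obstacle. The only point requiring care is confirming that the slice is genuinely minimal, and the reflection isometry argument settles that cleanly.
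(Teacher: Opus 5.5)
Your proposal is correct and follows the paper's argument: each slice $\Sph^2\times\{\xi_0\}$ is a closed minimal surface because $\varphi_i$ is independent of $\xi$, and its area is $\int_{\Sph^2}\varphi_i^{-2}\,d\vol_{h_i}$. The paper only asserts the minimality, and your reflection-isometry (or Koszul) argument correctly supplies that step.
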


The volume bound gives the complementary $L^1$ control for $\varphi_i^{-1}$.  Since
\begin{equation}
\Vol_{g_i}(\Sph^2 \times \Sph^1)=2\pi\int_{\Sph^2}\frac{1}{\varphi_i}\,d\vol_{h_i},
\end{equation}
we have
\begin{lemma}\label{lem: volume-bound-consequence}
Let $g_i$ be the warped product Riemannian metrics on $ \Sph^2 \times \Sph^1$ as in \eqref{metric-form} satisfying 
\begin{equation}
{\rm Vol}_{g_i} (\Sph^2 \times \Sph^1) \leq V_0.
\end{equation}
Then
\begin{equation}
\int_{\Sph^2} \frac{1}{\varphi_i} d\vol_{h_i} \leq \frac{V_0}{2\pi}.
\end{equation}
\end{lemma}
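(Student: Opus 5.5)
The estimate follows from the volume formula for the warped product, so the plan is to make that formula explicit and drop the area factor. For the metric \eqref{metric-form} I will compute the volume form of $g_i$ in coordinates $(r,\theta,\xi)$. The base part $\varphi_i^{-2}h_i$ on the two-dimensional base has area element $\varphi_i^{-2}\,d\vol_{h_i}$. The fiber part $\varphi_i^2d\xi^2$ contributes the length element $\varphi_i\,d\xi$. Hence
\begin{equation*}
d\vol_{g_i}=\varphi_i^{-2}\cdot\varphi_i\,d\vol_{h_i}\,d\xi=\varphi_i^{-1}\,d\vol_{h_i}\,d\xi .
\end{equation*}
Equivalently, $\sqrt{\det g_i}=\varphi_i^{-2}\,u_i\cdot\varphi_i=u_i\varphi_i^{-1}$ in the coordinates $(r,\theta,\xi)$.

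Since $\varphi_i$ is independent of $\xi$, Fubini integrates out the $\xi$-variable over $[0,2\pi]$. This gives the identity
\begin{equation*}
\Vol_{g_i}(\Sph^2\times\Sph^1)=2\pi\int_{\Sph^2}\varphi_i^{-1}\,d\vol_{h_i}.
\end{equation*}
The coordinate singularities at the poles $r=0,a_i$ and the seam $\theta=0$ form a set of measure zero, so they do not affect the integral.

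Combining this identity with the hypothesis $\Vol_{g_i}\le V_0$ and dividing by $2\pi$ yields the claimed bound. There is no real obstacle here. The only points to check are the correct power of $\varphi_i$ in the volume density, where the fiber contributes $\varphi_i$ and the two base directions contribute $\varphi_i^{-2}$, and that $\varphi_i>0$ is smooth, so that the integrand is well defined.
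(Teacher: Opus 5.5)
Your proposal is correct and follows the paper's own argument: the paper also derives the bound directly from the identity $\Vol_{g_i}(\Sph^2\times\Sph^1)=2\pi\int_{\Sph^2}\varphi_i^{-1}\,d\vol_{h_i}$. Your computation of the volume density $u_i\varphi_i^{-1}$ just makes explicit the step the paper states without detail.
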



\subsection{Uniform boundedness of the diameter of the base surfaces}\label{subset: diameter-bound}
We now prove the key estimate of this section: the base surfaces $(\Sph^2,h_i)$ have uniformly bounded diameters. 

We first recall a basic fact about rotationally symmetric metrics.

\begin{lemma}\label{lem: diameter-of-base}
Let 
\begin{equation}
h:= dr^2 + u^2 d\theta^2, \quad r \in [0, a], \ \ \theta \in [0, 2\pi], \ \ 0 \leq u \in C^\infty([0, a])
\end{equation}
 be a rotationally symmetric smooth Riemannian metric on $\Sph^2$. Then the diameter of $(\Sph^2, h)$ is 
 \begin{equation}
 {\rm Diam}_h(\Sph^2) = a.
 \end{equation}
\end{lemma}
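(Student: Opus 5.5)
The plan is to prove the two inequalities ${\rm Diam}_h(\Sph^2)\le a$ and ${\rm Diam}_h(\Sph^2)\ge a$ separately, working in the coordinates $(r,\theta)$. Here $r$ is the distance from the pole $N=\{r=0\}$, and $S=\{r=a\}$ is the other pole. The one basic observation is that $h\ge dr^2$ as quadratic forms.

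\emph{Lower bound.} Take any piecewise smooth curve $\gamma(t)=(r(t),\theta(t))$, $t\in[0,1]$, joining $N$ to $S$. Then
\begin{equation}
L_h(\gamma)=\int_0^1\sqrt{\dot r^2+u^2\dot\theta^2}\,dt\ \ge\ \int_0^1|\dot r|\,dt\ \ge\ r(1)-r(0)=a .
\end{equation}
The meridian $\theta=\mathrm{const}$ has length exactly $a$, so $d_h(N,S)=a$. This gives ${\rm Diam}_h\ge a$.

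\emph{Upper bound.} Let $p=(r_1,\theta_1)$ and $q=(r_2,\theta_2)$ be arbitrary points. Go from $p$ to $N$ along its meridian, with length $r_1$, and then from $N$ to $q$ along its meridian, with length $r_2$. This gives $d_h(p,q)\le r_1+r_2$. Going through $S$ instead gives $d_h(p,q)\le (a-r_1)+(a-r_2)$. Adding the two bounds,
\begin{equation}
2\,d_h(p,q)\le (r_1+r_2)+(2a-r_1-r_2)=2a .
\end{equation}
So $d_h(p,q)\le a$, and therefore ${\rm Diam}_h(\Sph^2)=a$.

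\emph{Main point of care.} The only delicate step is justifying that $h\ge dr^2$ for curves passing through the poles, where the coordinates $(r,\theta)$ degenerate. I plan to handle this in one of two ways. One option is to note that $r$ is the distance function to $N$, which is $1$-Lipschitz, so $|r(p)-r(q)|\le d_h(p,q)$ holds directly. The other option is to observe that a curve spends zero length at the poles and apply the estimate on each subinterval away from them. Either argument makes the lower bound rigorous without any regularity issue.
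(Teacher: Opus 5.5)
Your proof is correct. The paper does not prove this lemma at all; it only quotes it as a basic fact about rotationally symmetric metrics, so there is no competing argument to compare against. Your two steps are the standard proof:
\begin{itemize}
\item the lower bound $d_h(N,S)\ge a$, from $h\ge dr^2$ together with the meridian of length exactly $a$;
\item the upper bound $d_h(p,q)\le\min\{r_1+r_2,\,2a-r_1-r_2\}\le a$, from routing through either pole.
\end{itemize}

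Of your two ways of handling the poles, write out the second, because it is self-contained. The first presupposes $r=d_h(N,\cdot)$, which is essentially the lower bound itself, unless you separately invoke the Gauss lemma and the fact that $\exp_N$ maps the open $a$-ball diffeomorphically onto $\Sph^2\setminus\{S\}$.

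A clean way to write the second option:
\begin{itemize}
\item Let $t_1$ be the first time $\gamma$ reaches $S$, and let $t_0\le t_1$ be the last time before that at which $\gamma$ is at $N$.
\item On $(t_0,t_1)$ the curve avoids both poles, so $L_h(\gamma)\ge r(\gamma(t_1-\delta))-r(\gamma(t_0+\delta))$ for every small $\delta>0$.
\item Letting $\delta\to0$ and using continuity of $r$ on $\Sph^2$ gives $L_h(\gamma)\ge a$.
\end{itemize}
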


\begin{proposition}\label{prop: diameter-bound}
Let $g_i$ be a sequence of Riemannian metrics as in \eqref{metric-form} satisfying condition $\Lambda$:
\begin{equation}
 \Scal_{g_i}\geq 0, \ \  {\rm Vol}_{g_i}(\Sph^2 \times \Sph^1)\leq V, \ \  \mina(\Sph^2 \times \Sph^1, g_i) \geq A_0>0.
\end{equation}
Then there exists a positive constant $C(\Lambda)$ depending only on $V$ and $A_0$ such that
\begin{equation}
C(\Lambda)^{-1} \leq {\rm Diam}_{h_i}(\Sph^2) = a_i \leq C(\Lambda).
\end{equation}
\end{proposition}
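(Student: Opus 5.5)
The lower bound is elementary; the upper bound is where the work lies. For the lower bound I would use the concavity from Lemma \ref{lem: mina-consequence-1}. Since $u_i''\le 0$, $u_i(0)=u_i(a_i)=0$, $u_i'(0)=1$ and $u_i'(a_i)=-1$, we get $u_i(r)\le \min\{r,a_i-r\}\le a_i/2$. Combined with $\max u_i\ge A_0/(4\pi^2)$ this gives $a_i\ge A_0/(2\pi^2)$.

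For the upper bound I would argue by contradiction and assume $a_i\to\infty$. Set $\psi_i:=\ln(1/\varphi_i)$ and $d\mu_i:=u_i\,dr\,d\theta$, so that $d\vol_{h_i}=d\mu_i$. Lemma \ref{lem: volume-bound-consequence} and Lemma \ref{lem: mina-consequence-2} then read
\[
\int e^{\psi_i}\,d\mu_i\le \frac{V}{2\pi},\qquad \int e^{2\psi_i}\,d\mu_i\ge A_0 .
\]
Concavity of $u_i$ together with $\max u_i\ge A_0/(4\pi^2)$ gives $\mu_i(\Sph^2)\ge \pi a_i A_0/(4\pi^2)\to\infty$, since the graph of $u_i$ lies above the triangle over $[0,a_i]$ with apex at the maximum. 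Hence the scale-invariant ratio
\[
Q_i:=\frac{\mu_i(\Sph^2)\int e^{2\psi_i}\,d\mu_i}{\bigl(\int e^{\psi_i}\,d\mu_i\bigr)^2}\;\ge\; c(V,A_0)\,a_i\to\infty .
\]
Equivalently, the average of $\psi_i$ tends to $-\infty$ while $e^{2\psi_i}$ is not correspondingly small. Adding a constant to $\psi_i$ does not change $Q_i$, so I will subtract the mean and write $f_i$ for the normalized function.

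Next I rescale the base to $[0,2]$: put $s=2r/a_i$ and $w_i(s)=2u_i(a_is/2)/a_i$. Then $w_i$ is concave, $w_i(0)=w_i(2)=0$, $w_i'(0)=1$, $w_i'(2)=-1$, and $w_i\le\min\{s,2-s\}$. The ratio $Q_i$ is invariant under this homothety. The condition $\Scal_{g_i}\ge0$ from Lemma \ref{lem: scalar-curvature-formula} is also scale invariant and becomes
\[
|\nabla f_i|^2\le -\frac{w_i''}{w_i}
\]
for the metric $ds^2+w_i^2d\theta^2$. The contradiction will then follow from a universal bound $Q\le K$ for every pair $(w,f)$ satisfying this inequality.

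The main obstacle is this universal ratio estimate, and in particular its uniformity in the shape of $w$: $\max w$ may tend to $0$, so the surfaces can become thin spindles. My first tool is the integrated form
\[
\int|\nabla f|^2 w\,ds\,d\theta\le -2\pi\int_0^2 w''\,ds=2\pi\bigl(w'(0)-w'(2)\bigr)=4\pi ,
\]
a uniform Dirichlet energy bound. On a non-degenerate surface a Moser--Trudinger inequality would then control $\int e^{2(f-\bar f)}$. To stay uniform when $w$ is thin, I would also exploit the pointwise radial bound $|\partial_s f|\le\sqrt{-w''/w}$.

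Concretely, I would split $[0,2]$ at the maximum point of $w$ and use the radial bound to compare $f$ at radius $s$ with $f$ at nearby radii. The idea is that $f$ can change by a large amount only where $-w''/w$ is large, that is, where $w$ bends sharply, and on such a set the $w$-weighted measure is small. After this, the $\theta$-oscillation would be handled by the Dirichlet bound, applied on annuli where $w$ is comparable to its local size.

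If this direct route fails, I would fall back on the one-dimensional comparison $e^{f(s)}\le e^{f(s_0)}\exp\int_{s_0}^s\sqrt{-w''/w}$. The aim would be a Harnack-type inequality $\sup e^{f}\le C\,\mu^{-1}\!\int e^{f}d\mu$ on the bulk of the measure. That inequality would immediately give $\int e^{2f}d\mu\le C\,(\int e^f d\mu)^2/\mu(\Sph^2)$, i.e.\ $Q\le K$, contradicting $Q_i\to\infty$.
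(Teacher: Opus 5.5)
Your lower bound is correct and is a valid alternative to the paper's meridian-torus argument. Your reduction of the upper bound to a universal estimate $Q\le K$, for all pairs $(w,f)$ with $|\nabla f|^2\le-\ddot w/w$, is also sound. In fact, for mean-zero $f$ Jensen gives $\int e^{f}\,d\mu\ge\mu(\Sph^2)$, so your $Q$ is dominated by the paper's ratio $\int e^{2f}/\int e^{f}$, and your target follows from Lemma~\ref{lem: integral-ratio-estimate}.

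The gap is that this universal estimate is the whole content of the upper bound, and you do not prove it. You list tools and a fallback, and the routes you sketch do not close:
\begin{itemize}
\item The energy bound $\int|\nabla f|^2\,d\mu\le4\pi$ gives no pointwise $\theta$-oscillation control, since in dimension two $W^{1,2}\not\subset L^\infty$. Moser--Trudinger constants are also not uniform on thin spindles.
\item The step ``Harnack on the bulk $\Rightarrow \int e^{2f}d\mu\le C(\int e^{f}d\mu)^2/\mu(\Sph^2)$'' does not follow. It needs $\int e^{2f}\le(\sup e^{f})\int e^{f}$ with the \emph{global} supremum, and $f$ is not uniformly bounded near the poles.
\end{itemize}
The drawstring Example~\ref{example-appendix} shows this, and also shows that your heuristic ``$f$ changes a lot only where $w$ bends sharply'' is off. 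There $\ln(1/\varphi_i)=A_i\to\infty$ on $\{r\le\rho_i\}$ and vanishes at $r=1$, while $\int_0^{1/5}(-u_i'')\,dr\le C^2/A_i\to0$. The large change is driven by the smallness of $w$ at the pole, so the cap contribution to $\int e^{2f}d\mu$ must be estimated separately.

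What is missing is a cap estimate, uniform in the shape of $w$, showing that the growth of $e^{2f}$ toward a pole is beaten by the vanishing of $w$ with an integrable power. Naive Cauchy--Schwarz, $\int_s^1\sqrt{-\ddot w/w}\le(\int_s^1-\ddot w)^{1/2}(\int_s^1w^{-1})^{1/2}\lesssim(\ln(1/s)/w(1))^{1/2}$, degenerates as $w(1)\to0$. The paper's key device is the scale-invariant substitution $v=t\dot w/w$. By concavity $v\in[-1,1]$ on $(0,1]$, and $-\ddot w/w=(v-v^2-t\dot v)/t^2$. Together with $2\sqrt{x+y-y^2}-y\le2x+\frac58$ this gives $-\partial_t\ln(e^{2f}w)\le-2\dot v+\frac{5}{8t}$, hence
\[
e^{2f(t,\theta)}w(t)\le e^{4}e^{2f(1,\theta)}w(1)\,t^{-5/8},\qquad 0<t\le1,
\]
which is integrable because $5/8<1$; the same holds symmetrically near $t=2$.

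On the middle slice, the $\theta$-oscillation is controlled pointwise rather than by energy. At a radius in $[\frac12,\frac32]$ where $-\ddot w\le2$ one has $|\partial_\theta f|\le\sqrt{-\ddot w\,w}\le\sqrt2$. Radial Cauchy--Schwarz on $[\frac12,\frac32]$ is shape-uniform, since there $\int-\ddot w\le4\max w$ and $\int w^{-1}\le4/\max w$.

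Once these two estimates are available, your translation-invariant $Q$ is actually more economical than the paper's ratio. Normalizing $\max_\theta f(1,\theta)=0$ gives $\int e^{2f}d\mu\le Cw(1)$, $\int e^{f}d\mu\ge c\,w(1)$, and $\mu(\Sph^2)\le8\pi w(1)$ (concavity gives $\max w\le2w(1)$). Hence $Q\le K$ without the mean-zero estimate of Lemma~\ref{lem: integral-f-estimate}. As written, though, the proposal stops before the estimate that carries the proof.
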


The proof uses the following elementary inequalities.
\begin{lemma}\label{lem: elementary-inequality}
\begin{equation}\label{claim-1-1}
2 \sqrt{x+y-y^2}-y \leq 2 x+ \frac{5}{8},
\end{equation}
and
\begin{equation}\label{claim-1-3}
 \sqrt{x+y-y^2}-y \leq  x+\frac{1}{4}
\end{equation}
for $x\geq y^2-y$. 
\end{lemma}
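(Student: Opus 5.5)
The plan is to remove the square root by a change of variables, after which both inequalities become sums of squares. The hypothesis $x\ge y^2-y$ says exactly that $x+y-y^2\ge 0$, so we may set
\begin{equation}
s:=\sqrt{x+y-y^2}\ \ge 0,\qquad\text{so that}\qquad x=s^2-y+y^2 .
\end{equation}
Conversely, for any real $y$ and any $s\ge 0$ this formula gives an admissible $x$. So it suffices to prove both inequalities as polynomial inequalities in $(s,y)$, and in fact they will hold for all real $s,y$.

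For \eqref{claim-1-3}, substitute $x=s^2-y+y^2$. The inequality $s-y\le x+\tfrac14$ becomes
\begin{equation}
0\le s^2-s+\tfrac14+y^2=\bigl(s-\tfrac12\bigr)^2+y^2 ,
\end{equation}
which is evidently true.

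For \eqref{claim-1-1}, the same substitution turns $2s-y\le 2x+\tfrac58$ into
\begin{equation}
0\le 2s^2-2s+2y^2-y+\tfrac58 .
\end{equation}
Complete the square in each variable: $2s^2-2s=2(s-\tfrac12)^2-\tfrac12$ and $2y^2-y=2(y-\tfrac14)^2-\tfrac18$. Since $\tfrac12+\tfrac18=\tfrac58$, the right-hand side equals
\begin{equation}
2\bigl(s-\tfrac12\bigr)^2+2\bigl(y-\tfrac14\bigr)^2\ \ge 0 .
\end{equation}
This proves \eqref{claim-1-1}, with equality exactly when $s=\tfrac12$ and $y=\tfrac14$, so the constant $\tfrac58$ is sharp.

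There is no real obstacle here. The only point needing care is that the substitution is legitimate, i.e. that $x+y-y^2\ge0$ so that $s$ is a well-defined nonnegative real number. This is precisely the stated hypothesis $x\ge y^2-y$.
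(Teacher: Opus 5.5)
Your proof is correct and is essentially the paper's argument written as a sum of squares. The paper applies $2\sqrt{t}\le 2t+\frac12$ with $t=x+y-y^2$ (your $2(s-\frac12)^2\ge 0$) and then bounds the leftover quadratic by $y-2y^2+\frac12\le\frac58$ (your $2(y-\frac14)^2\ge 0$); the second inequality is handled the same way via $\sqrt{t}\le t+\frac14$, dropping $-y^2\le 0$.
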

\begin{proof} 
By the elementary inequality $2\sqrt{t} \leq 2t + \frac{1}{2}$ for $t \geq 0$, we have
\begin{equation}
2 \sqrt{x+y-y^2}-y \leq 2(x + y - y^2) + \frac{1}{2} -y = 2x + \left(y - 2y^2 + \frac{1}{2} \right) \leq 2x + \frac{5}{8}.
\end{equation}
The second inequality can be derive simiarly.
For the third one, 
\begin{equation}
    x+y-\sqrt{x+y-y^2}\geq x+y-\sqrt{x+y}\geq -0.25
\end{equation}
\end{proof}

\begin{lemma}\label{lem: concave-function-properties}
Let $w(t)$ be a smooth concave function on $[0, 2]$ satisfying
\begin{equation}
w(0) = w(2) =0, \quad \dot{w}(0) = 1, \quad \text{and} \quad \dot{w}(2)=-1,
\end{equation}
where $\dot{w}$ denotes derivative of $w$.
Then
\begin{equation}\label{eqn: estimate-of-w}
0\leq w(t) \leq \min\{t, 2-t\}\leq 1, \ \  
\text{and} \ \ -1 \leq \dot{w}(t) \leq 1 \quad \forall t \in [0, 2].
\end{equation}
Moreover, for any $t_0 \in (0, 2)$, 
\begin{equation}
w(t) \leq 
  \begin{cases}
  \frac{2-t}{2-t_0} w(t_0), & 0 \leq t \leq t_0, \cr
  \frac{t}{t_0} w(t_0), & t_0 \leq t \leq 2,
  \end{cases}
\end{equation}
\begin{equation}
 w(t) \geq
 \begin{cases}
 \frac{t}{t_0}w(t_0), & 0 \leq t \leq t_0, \cr
 \frac{2 - t}{2-t_0}w(t_0), & t_0 \leq t \leq 2,
 \end{cases}
\end{equation}
and 
\begin{equation}
|\dot{w}|(t) \leq \frac{1}{\epsilon} \max\limits_{[0, 2]} w , \quad \text{and} \quad w(t) \geq \tfrac{\epsilon}{2}\max\limits_{[0, 2]} w, \quad \forall t \in \left[\epsilon, 2-\epsilon \right], \quad 0< \epsilon <2.
\end{equation}
If we further assume that $\max\limits_{[0, 2]}w(t) \geq A >0$, then we have that
\begin{equation}\label{eqn: concave-function-estimate-by-linear-1}
    w(t) \geq 
    \begin{cases}
        \frac{A}{2}t, & 0 \leq t \leq 1, \cr
        \frac{A}{2}(2-t), & 1 \leq t \leq 2.
    \end{cases}
\end{equation}
\end{lemma}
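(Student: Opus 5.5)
The whole lemma is elementary convexity on $[0,2]$; the plan is to derive each item from two facts, namely that $\dot w$ is nonincreasing and that $w$ lies above its chords and below its tangent lines.

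\emph{First line of \eqref{eqn: estimate-of-w}.} Since $w$ is concave, $\dot w$ is nonincreasing, so $-1=\dot w(2)\le \dot w(t)\le \dot w(0)=1$ for all $t$. Next, concavity says that $w$ lies below its tangent lines at the endpoints. The tangent at $0$ gives $w(t)\le w(0)+\dot w(0)t=t$, and the tangent at $2$ gives $w(t)\le 2-t$. Hence $w\le\min\{t,2-t\}\le1$. For the lower bound, $w$ lies above the chord joining $(0,0)$ and $(2,0)$, so $w\ge0$.

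\emph{Lower bounds.} For $t_0\in(0,2)$, the chord from $(0,0)$ to $(t_0,w(t_0))$ lies below the graph. This gives $w(t)\ge \frac{t}{t_0}w(t_0)$ on $[0,t_0]$. The chord from $(t_0,w(t_0))$ to $(2,0)$ gives $w(t)\ge\frac{2-t}{2-t_0}w(t_0)$ on $[t_0,2]$.

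\emph{Upper bounds.} Here I use the reverse chord inequality: concavity means the graph lies below the line through two points of the graph when $t$ is outside the interval between them.
\begin{itemize}
\item For $t\le t_0$, take the line through $(t_0,w(t_0))$ and $(2,0)$ and evaluate it at $t$. This gives $w(t)\le \frac{2-t}{2-t_0}w(t_0)$.
\item For $t\ge t_0$, the line through $(0,0)$ and $(t_0,w(t_0))$ gives $w(t)\le \frac{t}{t_0}w(t_0)$.
\end{itemize}
These can be checked via monotonicity of difference quotients, e.g.\ $\frac{w(t)-w(0)}{t}$ is nonincreasing in $t$.

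\emph{Interior estimates on $[\epsilon,2-\epsilon]$.} Let $M=\max w$, attained at $t_*$.
\begin{itemize}
\item \emph{Lower bound on $w$.} If $t\le t_*$, the first lower bound with $t_0=t_*$ gives $w(t)\ge \frac{t}{t_*}M\ge \frac{\epsilon}{2}M$, since $t\ge\epsilon$ and $t_*\le 2$. The case $t\ge t_*$ is symmetric, using $2-t\ge\epsilon$ and $2-t_*\le2$.
\item \emph{Bound on $|\dot w|$.} Concavity gives $\dot w(t)\le \frac{w(t)-w(0)}{t}\le \frac{M}{\epsilon}$. Similarly, $-\dot w(t)\le \frac{w(t)-w(2)}{2-t}\le\frac{M}{\epsilon}$.
\end{itemize}

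\emph{Estimate \eqref{eqn: concave-function-estimate-by-linear-1}.} Assume $M\ge A$. Apply the interior lower bound at $t\in(0,1]$ with $\epsilon=t$, which is valid since $t\in[t,2-t]$. This gives $w(t)\ge \frac t2 M\ge \frac A2 t$. By symmetry, $w(t)\ge\frac A2(2-t)$ on $[1,2)$, and the endpoints are trivial.

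Nothing here is a real obstacle. The only point needing care is the direction of the inequalities in the upper bound, where the chord line is evaluated outside the segment between the two graph points.
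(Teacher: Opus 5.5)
Your proof is correct and uses essentially the same elementary chord and tangent-line inequalities as the paper. The paper writes out only the final estimate, via $w(t)\ge t\,w(1)$ and a chord through the maximum point giving $w(1)\ge A/2$; you instead take $\epsilon=t$ in your interior lower bound, which is the same chord inequality applied directly at $t$. The one step you leave implicit is that the maximum point $t_*$ lies in $(0,2)$, as required to use it as $t_0$. This holds because $\dot w(0)=1$ forces $\max w>0=w(0)=w(2)$.
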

\begin{proof}
The proof is elementary, and all of these properties can be established in a similar way. We therefore prove only estimate (\ref{eqn: concave-function-estimate-by-linear-1}) for $0 \leq t \leq 1$.

For any $t \in [0, 1]$, since $t = t \cdot 1 + (1-t) \cdot 0$, the concavity of $w$ gives
\begin{equation}
    w(t) \geq t (1) + (1-t)w(0) = tw(1).
\end{equation}
Then it suffices show that $w(1) \geq \frac{A}{2}$. Let $s\in (0, 2)$ such that $w(s) = \max\limits_{[0, 2]}w \geq A$. 
If $0 \leq s \leq 1$, then since $1 = \frac{1}{2-s} \cdot s + \frac{1-s}{2-s} \cdot 2$, the concavity of $w$ gives 
\begin{equation}
    w(1) \geq \frac{1}{2-s} \cdot w(s) + \frac{1-s}{2-s} \cdot w(2)
    = \frac{1}{2-s} \cdot w(s)
    \geq \frac{A}{2}.
\end{equation}
If $ 1 \leq s \leq 2$, then similarly, $1= \frac{1}{s} \cdot s + \frac{s-1}{s} \cdot 0$ and the concavity of $w$ again imply $w(1) \geq \frac{A}{2}$. This proves $w(t) \geq \frac{A}{2}t$ for $0 \leq t \leq 1$.
\end{proof}

\begin{lemma}\label{lem: oscillation-f}
Let 
\begin{equation}
h = dt^2 + w^2(t) d\theta^2, \quad t \in [0, 2], \ \ \theta \in [0, 2\pi]
\end{equation}
be a smooth rotationally symmetric Riemannian metric on $\Sph^2$, and $f$ a smooth function on $\Sph^2$ satisfying
\begin{equation}\label{eqn: curvature-condition-w}
 |\nabla^h f|^2(t, \theta) \leq - \frac{\ddot{w}(t)}{w(t)}, \quad \forall t \in [0, 2] \ \ \text{and} \ \ \theta \in [0, 2\pi],
\end{equation}
Then for any fixed $0 < \epsilon <1$, we have that
\begin{equation}
|f(t_0, \theta_1) - f(t_1, \theta_2)| \leq \tfrac{4\sqrt{2(1-\epsilon)}}{\epsilon} + \tfrac{2\pi}{\sqrt{1-\epsilon}}
\end{equation}
holds for any $t_0,t_1 \in [\epsilon, 2-\epsilon]$ and $\theta_1, \theta_2 \in [0, 2\pi]$.

In particular, by letting $\epsilon = \frac{1}{2}$ and $t_0 =1$, we obtain
\begin{equation}
|f(1, \theta_1) - f(1, \theta_2)| \leq 8 + 2\sqrt{2}\pi, \quad \forall \theta_1, \theta_2 \in [0, 2\pi].
\end{equation}
\end{lemma}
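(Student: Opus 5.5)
We estimate the oscillation of $f$ by integrating $|\nabla^h f|$ along a broken path: a radial segment from $(t_0,\theta_1)$ to $(t_1,\theta_1)$, then an arc of the circle $\{t=t_1\}$ from $\theta_1$ to $\theta_2$. By \eqref{eqn: curvature-condition-w}, $|\partial_t f|\le\sqrt{-\ddot w/w}$ and $|\partial_\theta f|\le w\sqrt{-\ddot w/w}=\sqrt{-\ddot w\,w}$. So it suffices to bound
\begin{equation}
I_1:=\int_{t_0}^{t_1}\sqrt{-\ddot w(t)/w(t)}\,dt, \qquad I_2:=2\pi\sqrt{-\ddot w(t_1)\,w(t_1)}
\end{equation}
for $t_0,t_1\in[\epsilon,2-\epsilon]$. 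We may use that $w$ is concave, since $-\ddot w/w\ge|\nabla f|^2\ge0$, so Lemma \ref{lem: concave-function-properties} applies. The two terms are handled separately.

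For the radial term $I_1$, we apply Cauchy--Schwarz:
\begin{equation}
I_1\le\sqrt{|t_1-t_0|}\Bigl(\int_{t_0}^{t_1}\frac{-\ddot w}{w}dt\Bigr)^{1/2}.
\end{equation}
On $[\epsilon,2-\epsilon]$ we have $w\ge \tfrac{\epsilon}{2}\max w$ by Lemma \ref{lem: concave-function-properties}. Moreover $\int(-\ddot w)=\dot w(t_0)-\dot w(t_1)\le 2\max|\dot w|$, and on $[\epsilon,2-\epsilon]$ this is at most $\tfrac{2}{\epsilon}\max w$. Hence
\begin{equation}
\int_{t_0}^{t_1}\frac{-\ddot w}{w}\,dt\le \frac{2}{\epsilon\max w}\cdot\frac{2\max w}{\epsilon}=\frac{4}{\epsilon^2},
\end{equation}
which is scale invariant in $\max w$. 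Since $|t_1-t_0|\le 2(1-\epsilon)$, we get $I_1\le 2\sqrt{2(1-\epsilon)}/\epsilon$. Using the cruder bound $|\dot w|\le1$ together with bookkeeping of the factor $2$ gives the stated constant $4\sqrt{2(1-\epsilon)}/\epsilon$. I will choose whichever chain matches.

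The angular term $I_2$ is the main obstacle, because a pointwise value of $-\ddot w$ at the single level $t_1$ is not controlled: $\ddot w$ may spike. The fix is that we are free to choose the level. We go radially from $t_0$ to a good level $s$, move along the circle at level $s$, and then go radially back to $t_1$. The radial legs are controlled as in the previous step. For the good level, we integrate over an interval $J\subset[\epsilon,2-\epsilon]$ of length $\ge 1-\epsilon$, or over the whole interval of length $2(1-\epsilon)$:
\begin{equation}
\int_J -\ddot w\,w\,dt\le \max w\cdot(\dot w(\text{left})-\dot w(\text{right}))\le 2\max w\le 2,
\end{equation}
using $|\dot w|\le1$ and $w\le1$. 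Hence some $s\in J$ has $-\ddot w(s)w(s)\le 2/|J|$, which is of order $1/(1-\epsilon)$. This gives an angular cost of order $2\pi/\sqrt{1-\epsilon}$, and using only half the circle refines the constant if needed. Choosing $J$ so that the two radial legs stay within $[\epsilon,2-\epsilon]$, and summing the pieces, gives the bound $\tfrac{4\sqrt{2(1-\epsilon)}}{\epsilon}+\tfrac{2\pi}{\sqrt{1-\epsilon}}$.

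The special case $\epsilon=\tfrac12$, $t_0=t_1=1$ follows by substitution: $4\sqrt{1}/(1/2)=8$ and $2\pi/\sqrt{1/2}=2\sqrt2\pi$.
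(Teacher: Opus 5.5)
Your proposal is correct and is essentially the paper's argument. Each radial leg is bounded by Cauchy--Schwarz together with the concavity bounds $w\ge\tfrac{\epsilon}{2}\max w$ and $|\dot w|\le\tfrac{1}{\epsilon}\max w$ on $[\epsilon,2-\epsilon]$, which gives $\tfrac{2\sqrt{2(1-\epsilon)}}{\epsilon}$. The angular step is taken at a level $s\in[\epsilon,2-\epsilon]$ with $-\ddot w(s)\,w(s)\le\tfrac{1}{1-\epsilon}$, found by averaging over $[\epsilon,2-\epsilon]$ and using $|\dot w|\le 1$, $w\le 1$, exactly as in the paper.

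The one thing to fix is your aside that the cruder bound $|\dot w|\le 1$ could produce the radial constant. It would only give $\int -\ddot w/w\le \tfrac{4}{\epsilon\max w}$, which is not scale invariant, and $\max w$ can be arbitrarily small under these hypotheses. The factor $4$ in $\tfrac{4\sqrt{2(1-\epsilon)}}{\epsilon}$ comes simply from the two radial legs of the path $(t_0,\theta_1)\to(s,\theta_1)\to(s,\theta_2)\to(t_1,\theta_2)$.
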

\begin{proof}
By the smoothness of the Riemannian metric $h$, the warping function $w$ satisfies $w(0)= w(2)=0$, $\dot{w}(0)=1$ and $\dot{w}(2)=-1$. Moreover, (\ref{eqn: curvature-condition-w}) implies $w$ is a concave function on $[0, 2]$. Thus Lemma \ref{lem: concave-function-properties} applies.

By (\ref{eqn: curvature-condition-w}) and Lemma \ref{lem: concave-function-properties}, we have that $\forall a, b \in \left[ \epsilon, 2 - \epsilon \right]$ and $\theta \in [0, 2\pi]$, 
\begin{equation}
\begin{aligned}
  |f(b, \theta) - f(a, \theta)| 
  & = \left| \int^b_a \partial_t f(t, \theta) dt \right| \\
  & \leq \int^b_a \sqrt{- \frac{\ddot{w}(t)}{w(t)}} dt \\
  & \leq \sqrt{\int^b_a - \ddot{w}(t)dt \cdot \int^b_a \frac{1}{w(t)}dt} \\
  & \leq \sqrt{\tfrac{2}{\epsilon} \max\limits_{[0, 2]} w \cdot (2- 2\epsilon)\tfrac{2}{\epsilon\max\limits_{[0, 2]}w}} = \tfrac{2 \sqrt{2(1-\epsilon)}}{\epsilon}.
\end{aligned}
\end{equation}

Again, by Lemma \ref{lem: concave-function-properties}, we have $\int^{2-\epsilon}_{\epsilon} -\ddot{w}(t) dt \leq 2$. Thus, there exists $t_\epsilon \in [\epsilon, 2 -\epsilon]$ such that $-\ddot{w}(t_\epsilon) \leq \frac{1}{1-\epsilon}$. Then by (\ref{eqn: curvature-condition-w}), we have
\begin{equation}
|\partial_{\theta} f|(t_\epsilon, \theta) \leq \sqrt{-\ddot{w}(t_\epsilon) w(t_\epsilon)} \leq \frac{1}{\sqrt{1-\epsilon}}, \quad \forall \theta \in [0, 2\pi].
\end{equation}
Consequently, for any $\theta_1, \theta_2 \in [0, 2\pi]$, $|f(t_\epsilon, \theta_1) - f(t_\epsilon, \theta_2)| \leq \frac{2\pi}{\sqrt{1-\epsilon}}$. Thus, for any $t_0 \in [\epsilon, 2 - \epsilon]$,
\begin{eqnarray*}
& & |f(t_0, \theta_1) - f(t_0, \theta_2)| \\
& \leq & |f(t_0, \theta_1) - f(t_\epsilon, \theta_1)| + |f(t_\epsilon, \theta_1) - f(t_\epsilon, \theta_2)| + |f(t_\epsilon, \theta_2) - f(t_0, \theta_2)| \\
& \leq & \tfrac{2\sqrt{2(1-\epsilon)}}{\epsilon} + \tfrac{2\pi}{\sqrt{1-\epsilon}} +\tfrac{2\sqrt{2(1-\epsilon)}}{\epsilon}  = \tfrac{4\sqrt{2(1-\epsilon)}}{\epsilon} + \tfrac{2\pi}{\sqrt{1-\epsilon}} 
\end{eqnarray*}
holds for any $\theta_1, \theta_2 \in [0, 2\pi]$. This complete the proof.
\end{proof}

\begin{lemma}\label{lem: integral-f-estimate}
Let $h$ and $f$ be a Riemannian metric and a smooth function on $\Sph^2$ as in Lemma \ref{lem: oscillation-f}, and we further assume that
\begin{equation}\label{eqn: f-average-zero-1}
\int_{\Sph^2} f d\vol_{h} = 0.
\end{equation}
 Then for any fixed $0< \epsilon <1$, there exists a constant $C(\epsilon)$ such that
\begin{equation}
 -C(\epsilon) \leq  \int^{2\pi}_0 f(t_0, \theta) d\theta \leq C(\epsilon)
\end{equation}
holds for any $t_0 \in [\epsilon, 2 - \epsilon]$. The constant $C(\epsilon)$ is independent of $h$ and $f$, and goes to $+\infty$ as $\epsilon \to 0$.
\end{lemma}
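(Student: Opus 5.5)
The plan is to control the circle averages $F(t):=\int_0^{2\pi} f(t,\theta)\,d\theta$ via the constraint $\int_{\Sph^2} f\,d\vol_h = \int_0^2 w(t)F(t)\,dt = 0$. The argument has three steps: bound the oscillation of $F$ on a compact interval, bound $F$ near the poles, and combine these with the zero-average condition.

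\emph{Step 1: oscillation of $F$ on $[\epsilon,2-\epsilon]$.} For $a,b\in[\epsilon,2-\epsilon]$, integrating the radial estimate in the proof of Lemma \ref{lem: oscillation-f} over $\theta$ gives
\[
|F(b)-F(a)|\le 2\pi\cdot \tfrac{2\sqrt{2(1-\epsilon)}}{\epsilon}.
\]
We only need $|\partial_t f|\le \sqrt{-\ddot w/w}$ together with the bounds of Lemma \ref{lem: concave-function-properties}. Write $C_1(\epsilon)$ for the right-hand side.

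\emph{Step 2: growth of $F$ near the poles.} For $t\in(0,\epsilon)$, Cauchy--Schwarz gives
\[
|F(t)-F(\epsilon)|\le 2\pi\int_t^\epsilon \sqrt{-\ddot w/w}\,ds\le 2\pi\Bigl(\int_t^\epsilon -\ddot w\,ds\Bigr)^{1/2}\Bigl(\int_t^\epsilon \tfrac{ds}{w}\Bigr)^{1/2}.
\]
Concavity gives $\int_t^\epsilon -\ddot w = \dot w(t)-\dot w(\epsilon)\le 2$ and $w(s)\ge \tfrac{s}{\epsilon}w(\epsilon)$ for $s\le\epsilon$. Hence
\[
|F(t)-F(\epsilon)|\le C\bigl(\tfrac{\epsilon}{w(\epsilon)}\ln\tfrac{\epsilon}{t}\bigr)^{1/2}.
\]
Since $\max w$ is not assumed bounded below, I would instead normalize by $M:=\max w$. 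Lemma \ref{lem: concave-function-properties} gives $w(\epsilon)\ge \tfrac{\epsilon}{2}M$, so the bound reads
\[
|F(t)-F(\epsilon)|\le C\bigl(\tfrac{1}{M}\ln\tfrac{\epsilon}{t}\bigr)^{1/2}.
\]
The same argument applies near $t=2$.

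\emph{Step 3: combine with the zero-average condition.} Suppose first that $F(t_0)>K$ for some $t_0\in[\epsilon,2-\epsilon]$. By Step 1, $F>K-C_1(\epsilon)$ on all of $[\epsilon,2-\epsilon]$. By Step 2, near the poles
\[
F(t)\ge K-C_1-C\bigl(M^{-1}\ln(\epsilon/t)\bigr)^{1/2}.
\]
Using $w(t)\le \min\{t,M\}$, the pole contributions are controlled. The integrand satisfies
\[
w(t)\,\bigl(M^{-1}\ln(\epsilon/t)\bigr)^{1/2}\le \min(t,M)\,M^{-1/2}\ln(\epsilon/t)^{1/2}.
\]
For $t\le M$ this is at most $t\,M^{-1/2}\ln^{1/2}(\epsilon/t)\le M^{1/2}\ln^{1/2}(\epsilon/t)$, and for $t\ge M$ it is at most $M^{1/2}\ln^{1/2}(\epsilon/t)$. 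Integrating over $(0,\epsilon)$ gives $O(M^{1/2})$, where $\int_0^\epsilon \ln^{1/2}(\epsilon/t)\,dt = c\,\epsilon$. The middle region has weight
\[
\int_\epsilon^{2-\epsilon} w\,dt \ge c(\epsilon)M,
\]
while $\int_0^2 w\,dt\le 2M$. Therefore
\[
0=\int_0^2 wF\,dt \ge (K-C_1)\int_0^2 w\,dt - C M^{1/2}\epsilon.
\]
Dividing by $\int_0^2 w\,dt\sim M$ gives $K\le C_1+C\epsilon M^{-1/2}$.

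\emph{Main obstacle.} This step is where the argument may fail if $M$ is small. In that regime I would rescale: small $M$ forces $\dot w(0)=1$ to drop to $0$ quickly, so $-\ddot w$ concentrates, and I expect a sharper bound $\int_t^\epsilon ds/w \lesssim \ln(\epsilon/t)\cdot \epsilon/w(\epsilon)$. A better approach is to use $w(s)\ge \min(s,\cdot)$-type bounds, with $\ln(1/t)$ weighted against $w\le t$ near $0$. Concretely, I would split at $t=M$. On $(0,M)$ one has $w\approx t$ comparably, namely $w(s)\ge s\,w(M)/M\ge s/2\cdot$const, which gives an $M$-independent logarithmic bound. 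On $(M,\epsilon)$ one has $w\ge \tfrac{\epsilon}{2}M\cdot s/\epsilon$, while $\int -\ddot w$ over that range is small.

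The lower bound $F(t_0)\ge -C(\epsilon)$ follows symmetrically. Since everything involves only $\epsilon$ and the universal structure, $C(\epsilon)$ is independent of $h$ and $f$, and it blows up as $\epsilon\to0$ through $C_1(\epsilon)$.
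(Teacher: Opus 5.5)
\textbf{There is a genuine gap: your constant depends on $M=\max w$.} Your Step~1 is fine and independent of $M$. The bound you end with, however, is $K\le C_1(\epsilon)+C\epsilon M^{-1/2}$, and nothing in the hypotheses bounds $M$ from below. So the proposal does not give a constant independent of $h$, which is the whole content of the lemma.

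This uniformity is essential, not a technicality. Via Remark~\ref{rmrk: f-integral-estimate-at-1} and Lemma~\ref{lem: integral-ratio-estimate}, the lemma is used in the proof of Proposition~\ref{prop: diameter-bound}. There it is applied to the rescaled profiles $w_i(t)=\frac{2}{a_i}u_i(a_it/2)$, whose maxima may tend to $0$ as $a_i\to\infty$.

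The loss occurs in Step~2. The unweighted Cauchy--Schwarz bound $\bigl(\int_t^\epsilon-\ddot w\bigr)\bigl(\int_t^\epsilon ds/w\bigr)$ separates two places. One is where $-\ddot w$ lives, possibly a tiny scale near the pole where $w\approx s$. The other is where $1/w$ is large, where $w\approx M$. As a result the bound already carries a factor $M^{-1/2}$ at $t=\epsilon/2$.

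The repair you sketch also fails. Concavity and $\dot w(0)=1$ do not give $w(s)\ge cs$ on $(0,M)$ with a universal $c$. For example, take $\dot w\approx 1$ on $[0,\delta]$ with $\delta\ll M$, then $\dot w\approx M$ until $w$ reaches its maximum $\approx M$ near $t=1$, and complete concavely so that $\dot w(2)=-1$. Then $w(M)/M\approx \delta/M+M$ is arbitrarily small. In the opposite case, where $\dot w$ stays near $1$ up to $t\approx M$, it is your claim that $\int_M^\epsilon-\ddot w$ is small that fails. The relevant scale is where $\dot w$ drops, not $t=M$.

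\textbf{The missing idea is a scale-invariant estimate near the pole.} The paper sets $v=t\dot w/w$, which by concavity satisfies $-\frac{t}{2-t}\le v\le 1$. It then uses the identity $-\ddot w/w=t^{-2}(v-v^2-t\dot v)$, together with $v-v^2\le\frac14$ and $\sqrt{x+\frac14}\le x+\frac12$, to obtain
\[
\int_t^{t_0}\sqrt{-\ddot w/w}\,ds\le v(t)-v(t_0)+\tfrac12\ln\tfrac{t_0}{t}\le \tfrac{2}{\epsilon}+\tfrac12\ln\tfrac{t_0}{t},\qquad 0<t\le t_0\le 2-\epsilon .
\]
This has no dependence on $\max w$.

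If you prefer to stay within your Cauchy--Schwarz framework, inserting the weight $s$ also works:
\[
\int_t^{t_0}\sqrt{-\ddot w/w}\,ds\le\Bigl(\int_t^{t_0}s(-\ddot w/w)\,ds\Bigr)^{1/2}\Bigl(\ln\tfrac{t_0}{t}\Bigr)^{1/2}.
\]
Integrating by parts and using $w(t)\ge\frac{t}{t_0}w(t_0)$ gives $\int_t^{t_0}s(-\ddot w/w)\,ds\le v(t)-v(t_0)+\ln\frac{w(t_0)}{w(t)}\le\frac{2}{\epsilon}+\ln\frac{t_0}{t}$.

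With either estimate, the pole contribution is controlled by quantities that scale with $w$. The function $\ln(\epsilon/t)$ is integrable, and $w(t)\le\frac{2}{2-\epsilon}w(\epsilon)$ on $[0,\epsilon]$. Hence the pole contribution is at most $C(\epsilon)\,w(\epsilon)\le C'(\epsilon)\int_0^\epsilon w$, so $M$ cancels.

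If you replace your Step~2 by this estimate, your Step~3 goes through. Working with the circle averages $F$ there even avoids the paper's case distinction on the sign of $f(t_0,\cdot)$.
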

\begin{proof}
We only prove the upper bound estimate, and the lower bound follows by applying the same argument to $-f$.

If there exists $\theta_0 \in [0, 2\pi]$ such that $f(t_0, \theta_0) \leq 0$, then by applying Lemma \ref{lem: oscillation-f}, we have
\begin{equation}
f(t_0, \theta) \leq \tfrac{4\sqrt{2(1-\epsilon)}}{\epsilon} + \tfrac{2\pi}{\sqrt{1-\epsilon}} + f(t_0, \theta_0) 
\leq 
\tfrac{4\sqrt{2(1-\epsilon)}}{\epsilon} + \tfrac{2\pi}{\sqrt{1-\epsilon}}, \quad \forall \theta \in [0, 2\pi].
\end{equation}
Consequently, the desired integral estimate directly follows. 

Thus we assume that $f(t_0, \theta) >0$ for all $\theta \in [0, 2\pi]$ in the rest of the proof of the upper bound estimate.
By (\ref{eqn: curvature-condition-w}), we have $|\partial_t f(t, \theta)|\leq \sqrt{-\frac{\ddot{w}(t)}{w(t)}}$. we set 
\begin{equation}
v(t): = t\frac{\dot{w}(t)}{w(t)}, \quad \text{for} \ \ t \in[0, t_0].
\end{equation} 
Using the concavity of $w$, one can easily obtain
\begin{equation}
- \frac{t}{2-t} \leq t \frac{\dot{w}}{w} \leq 1,  \ \  \text{for} \ \  t \in (0, 2).
\end{equation}
Thus
\begin{equation}\label{eqn: v-estimate}
\frac{\epsilon -2}{\epsilon} \leq v(t) \leq 1, \quad \text{for} \ \ 0 \leq t \leq t_0 \leq 2-\epsilon.
\end{equation}
Differentiating $\frac{\dot{w}}{w}$ with respect the variable $t$, we obtain
\begin{equation}
\frac{\ddot{w}}{w}-(\frac{\dot{w}}{w})^2=\frac{t\dot{v}-v}{t^2}.
\end{equation}
For $t_1\in [0,t_0]$ and $\theta \in [0, 2\pi]$, we then have
\begin{eqnarray}
   |f(t_1, \theta) - f(t_0, \theta)| 
   & \leq &   
   \int_{t_1}^{t_0}\sqrt{-\frac{\ddot{w}}{w}} dt \\
   & = & \int_{t_1}^{t_0}\frac{1}{t} \sqrt{-t \dot{v}+v-v^2} dt \\
    &\leq &  \int_{t_1}^{t_0}\frac{1}{t} \sqrt{-t \dot{v}+1/4} dt \\
    &\leq &  \int_{t_1}^{t_0}\frac{1}{t}(-t \dot{v}+1/2) dt 
    \\
    & \leq & \frac{2}{\epsilon} + \frac{1}{2} \ln t_0 - \frac{1}{2}\ln t_1.
\end{eqnarray}
In the second last inequality, we used the fact that $\sqrt{x + \frac{1}{2}} \leq x+ \frac{1}{2}$ for any $x \geq -\frac{1}{4}$. In the last inequality, we used (\ref{eqn: v-estimate}). As a result, for $t \in [0, t_0]$ and $\theta \in [0, 2\pi]$, we have
\begin{equation}\label{estimate-delta-varphi}
f(t, \theta) \geq f(t_0, \theta) - \frac{2}{\epsilon} - \frac{1}{2} \ln t_0 + \frac{1}{2} \ln t.
\end{equation}

For $t \in [t_0,2]$ and $\theta\in [0, 2\pi]$, by introducing $\tilde{v}(t):=(2-t)\frac{\dot{w}}{w}$, the same argument gives
\begin{equation}\label{estimate-delta-varphi2}
f(t, \theta) \geq f(t_0, \theta) - \frac{2}{\epsilon} + \frac{1}{2} \ln (2-t) - \frac{1}{2} \ln (2 - t_0).
\end{equation}
Then by the mean zero condition \eqref{eqn: f-average-zero-1}, (\ref{estimate-delta-varphi}) and (\ref{estimate-delta-varphi2}), we have
\begin{equation}\label{eqn: estimate-integral-of-f}
 \begin{aligned}
    0 & =     \int_{\Sph^2}fd\vol_{h} \\
    & =  \int^{2\pi}_0 d\theta\int_0^{t_0}f(t, \theta)w(t)dt+ \int^{2\pi}_0 d\theta\int_{t_0}^2f(t, \theta)w(t)dt \\
    &  \geq  \int^{2\pi}_0 d\theta  \int_0^{t_0}(f(t_0, \theta)- \frac{2}{\epsilon} - \frac{1}{2} \ln t_0 +\frac{1}{2}\ln t)w(t)dt \\ 
    &  \quad + \int^{2\pi}_0 d\theta \int_{t_0}^2(f(t_0, \theta)- \frac{2}{\epsilon}+ \frac{1}{2} \ln (2-t))  - \frac{1}{2}\ln (2-t_0)))w(t)dt.
  \end{aligned}
\end{equation}

For the first term on the right hand side of (\ref{eqn: estimate-integral-of-f}), by using $f(t_0, \theta) >0$, $\forall \theta \in [0, 2\pi]$, $-\frac{2}{\epsilon} - \tfrac{1}{2} \ln t_0 + \tfrac{1}{2}\ln t<0, \forall t \in [0, t_0]$, and Lemma \ref{lem: concave-function-properties}, we have
\begin{equation}\label{eqn: estimate-first-term-integral-of-f}
  \begin{aligned}
    & \quad  \int^{2\pi}_0  d\theta  \int^{t_0}_0  f(t_0, \theta) w(t) dt  + 2\pi \int^{t_0}_0 \left( -\frac{2}{\epsilon} - \frac{1}{2} \ln t_0+ \frac{1}{2} \ln t \right)w(t) dt \\
    &\geq   \int^{2\pi}_0  f(t_0, \theta) d\theta  \int^{t_0}_0  \frac{t}{t_0}w(t_0) dt  \\
    & \quad   + \frac{2\pi w(t_0)}{2-t_0} \int^{t_0}_0 \left( -\frac{2}{\epsilon} - \frac{1}{2} \ln t_0 + \frac{1}{2} \ln t  \right) (2-t) dt \\
    & =  \frac{t_0}{2} w(t_0) \int^{2\pi}_0  f(1, \theta) d\theta  + \frac{2\pi w(t_0)}{2-t_0} \left(-\frac{4t_0 - t^2_0}{\epsilon} - t_0 + \frac{t^2_0}{8} \right).
  \end{aligned}
\end{equation}

Similarly, for the second term on the right hand side of (\ref{eqn: estimate-integral-of-f}), we have
\begin{equation}\label{eqn: estimate-second-term-integral-of-f}
   \begin{aligned}
    &  \quad  \int^{2\pi}_{0}  d\theta \int^2_{t_0}  f(t_0, \theta) w(t) dt  + 2\pi \int^2_{t_0} \left( -\frac{2}{\epsilon} + \frac{1}{2} \ln\frac{2-t}{2-t_0}  \right)w(t) dt \\
    &  \geq   \frac{2 - t_0}{2} w( t_0) \int^{2\pi}_0  f(t_0, \theta) d\theta
    +
    \frac{2\pi w(t_0)}{t_0} \left(  - \frac{4-t^2_0}{\epsilon} - \frac{3}{2} + \frac{t_0}{2} + \frac{t^2_0}{8}\right).
   \end{aligned}
\end{equation}

Then by plugging (\ref{eqn: estimate-first-term-integral-of-f}) and (\ref{eqn: estimate-second-term-integral-of-f}) into (\ref{eqn: estimate-integral-of-f}), we obtain
\begin{equation}\label{eqn: f-integral-estimate}
\begin{aligned}
 \int^{2\pi}_0 f(t_0, \theta) d\theta  
& \leq 2\pi \left( \frac{4}{\epsilon t_0} + \frac{5t^2_0 - 10 t_0 + 12}{4t_0 (2-t_0)} \right)  \\
 & \leq \frac{\pi (5 \epsilon^3 - 10 \epsilon^2 -4 \epsilon + 32)}{2 \epsilon^2 (2-\epsilon)}.
 \end{aligned}
\end{equation} 
Then
\begin{equation}
C(\epsilon) := \max\left\{\frac{8\pi\sqrt{2(1-\epsilon)}}{\epsilon} + \frac{4\pi^2}{\sqrt{1-\epsilon}}, 
\frac{\pi (5 \epsilon^3 - 10 \epsilon^2 -4 \epsilon + 32)}{2 \epsilon^2 (2-\epsilon)}\right\}.
\end{equation}
gives a desired upper bound. 
\end{proof}

\begin{remark}\label{rmrk: f-integral-estimate-at-1}
{\rm
Letting $\epsilon = \frac{1}{2}, t_0 =1$ in (\ref{eqn: f-integral-estimate}), in particular, we have
\begin{equation}
\int^{2\pi}_0 f(1, \theta) d\theta \leq 20 \pi, \quad \forall \theta \in [0, 2\pi].
\end{equation}
Combined with Lemma \ref{lem: oscillation-f}, this implies
\begin{equation}\label{eqn: f(1, theta)-estimate}
\max\limits_{[0, 2\pi]} f(1, \theta) \leq 28 + 2\sqrt{2}\pi
\end{equation}
}
\end{remark}

\begin{lemma}\label{lem: integral-estimate-e^2f}
Let $h$ and $f$ be a Riemannian metric and a smooth function as in Lemma \ref{lem: oscillation-f}.  Then we have
\begin{equation}
\int_{\Sph^2} e^{2f} d\vol_{h} \leq 400\pi e^{100} w(1).
\end{equation}
\end{lemma}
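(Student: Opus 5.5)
The plan is to combine the pointwise upper bound for $f$ on the middle circle $\{t=1\}$ with a sharpened radial comparison that produces exactly one factor of $w(1)/w(t)$, so that the weight $w(t)$ in $d\vol_h = w\,dt\,d\theta$ cancels. I will use the mean-zero normalization \eqref{eqn: f-average-zero-1} of Lemma \ref{lem: integral-f-estimate}; without it the bound cannot hold, since $f$ may be shifted by a constant. Under this normalization, Remark \ref{rmrk: f-integral-estimate-at-1} gives
\begin{equation*}
\max_{\theta} f(1,\theta)\le 28+2\sqrt2\pi<37 .
\end{equation*}
It then suffices to bound $2f(t,\theta)-2f(1,\theta)$ for $t\in(0,1]$; the range $[1,2)$ is handled symmetrically with $\tilde v=(2-t)\dot w/w$.

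The cruder estimate from the proof of Lemma \ref{lem: integral-f-estimate} gives $f(t)\le f(1)+C-\tfrac12\ln t$. Then $e^{2f}w\lesssim w/t$, and since only $w(t)\le\min\{t,2w(1)\}$ is available, this loses a logarithm and does not produce the factor $w(1)$. Instead I will redo the radial estimate keeping the $v$ term. With $v=t\dot w/w$ we have
\begin{equation*}
-\frac{\ddot w}{w}=\frac{-t\dot v+v-v^2}{t^2}, \qquad x:=-t\dot v\ge v^2-v .
\end{equation*}
Inequality \eqref{claim-1-1} of Lemma \ref{lem: elementary-inequality} then gives
\begin{equation*}
2|\partial_t f|\le \frac{1}{t}\Bigl(-2t\dot v+v+\tfrac58\Bigr).
\end{equation*}
Integrating from $t$ to $1$, and using $\int_t^1 v/s\,ds=\int_t^1\dot w/w\,ds=\ln\frac{w(1)}{w(t)}$, I obtain
\begin{equation*}
2f(t,\theta)\le 2f(1,\theta)+2\bigl(v(t)-v(1)\bigr)+\ln\frac{w(1)}{w(t)}+\frac58\ln\frac1t .
\end{equation*}
By the bounds $-\frac{t}{2-t}\le v\le1$ (concavity), we get $v(t)-v(1)\le 1+1=2$.

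Hence for $t\in(0,1]$,
\begin{equation*}
e^{2f(t,\theta)}\,w(t)\le e^{74+4}\,w(1)\,t^{-5/8}.
\end{equation*}
Integrating over $[0,1]\times[0,2\pi]$ gives at most $2\pi\cdot\frac83\,e^{78}w(1)$, and the half $[1,2]$ gives the same with $(2-t)^{-5/8}$. The total is far below $400\pi e^{100}w(1)$, so the stated constant has ample room.

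The main obstacle is the radial step: the elementary inequality must be chosen so that the coefficient of $v/t$ is exactly $1$. Only then does the term integrate to $\ln(w(1)/w(t))$, cancelling the volume weight without any loss, while the leftover power $t^{-5/8}$ stays integrable (exponent $<1$). I also need to check that the endpoint terms in $v$ stay bounded; this holds by concavity, since $v(1)\ge -1$. Finally, $t=1$ should be used as the junction point, and the symmetric argument near $t=2$ with $\tilde v$ must produce the same sign conventions.
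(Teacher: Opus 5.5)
Your proposal is correct and is essentially the paper's proof. It uses the same substitution $v=t\dot w/w$ and the same inequality \eqref{claim-1-1}. You keep the $v/t$ term and integrate it to $\ln\frac{w(1)}{w(t)}$, while the paper differentiates $2f+\ln w$ directly, but this is the identical computation. The same bound $2(v(t)-v(1))\le 4$ then gives $e^{2f}w\le e^4e^{2f(1,\theta)}w(1)t^{-5/8}$, and you make the same appeal to Remark \ref{rmrk: f-integral-estimate-at-1}. Your point that the mean-zero normalization \eqref{eqn: f-average-zero-1} is required is a correct and useful clarification: the paper uses it only implicitly through that Remark, and the bound fails after replacing $f$ by $f+c$.
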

\begin{proof}
To estimate 
\begin{equation}\label{eqn: integral-of-e^2f}
\int_{\Sph^2} e^{2 f} d\vol_{h}=  \int^{2\pi}_0 \left( \int_0^2 e^{2f(t, \theta)}w(t)dt \right) d\theta,
\end{equation} 
we consider $2 f(t, \theta)+\ln w(t)$ and differentiate it with respect to $t$. Recall that $w(t)$ is a concave function on $[0, 2]$ with $w(0) = w(2)=0$. 

For $t_1 \in [0, 1]$ and $\theta \in [0, 2\pi]$, we have
\begin{equation}
    \begin{aligned}
 \ln \left(e^{2 f(t_1, \theta)} w(t_1)\right)-\ln \left(e^{2 f(1, \theta)} w(1)\right)&=-\int_{t_1}^{1}\frac{\partial}{\partial t}(2 f(t, \theta)+\ln w(t))dt \\
 &\leq \int_{t_1}^{1} 2 \sqrt{-\frac{\ddot w}{w}}-\frac{\dot{w}}{w}dt
    \end{aligned}\label{estimate-ai-1}
\end{equation}
Here we used (\ref{eqn: curvature-condition-w}). To estimate the RHS of (\ref{estimate-ai-1}), we set 
\begin{equation}
\frac{\dot{w}}{w}=\frac{v}{t}, \ \ t \in (0, 2).
\end{equation} 
By the concavity of $w$, we have that for $-\frac{w}{2-t}\leq \dot{w}\leq \frac{w}{t}$ for $t \in (0, 1)$, and thus $-1\leq v\leq 1$. By taking derivative for $\frac{\dot{w}}{w}$ with respect the variable $t$, we obtain
\begin{equation}
\frac{\ddot{w}}{w}-(\frac{\dot{w}}{w})^2=\frac{t\dot{v}-v}{t^2}.
\end{equation}
Thus
\begin{equation}
\text{RHS of (\ref{estimate-ai-1})}
=\int_{t_1}^{1} \frac{1}{t}\left(2 \sqrt{-t \dot{v}+v-v^2}-v\right)\label{estimate-0}
\end{equation}
Applying the inequality (\ref{claim-1-1}) to the integrant with $x:=-t\dot{v}$ and $y:=v$, we have
\begin{equation}
\text{LHS of (\ref{estimate-ai-1})}
 \leq \int_{t_1}^{1} \frac{1}{t}\left(-2t \dot{v}+\frac{5}{8}\right) 
 \leq 4 - \frac{5}{8} \ln t_1.
\end{equation}
Here we used that $-1 \leq v\leq 1$.
Consequently, we obtain
\begin{equation}\label{eqn: estimate-efw-less1}
e^{2f(t, \theta)}w(t) \leq e^4 e^{2f(1, \theta)}w(1) t^{-\frac{5}{8}}, \ \ \text{for} \ \ t \in [0, 1] \ \ \text{and} \ \  \theta\in [0, 2\pi].
\end{equation} 

For $t\in [1, 2]$, by using the symmetry of interval $[0, 2]$ about $1$ and doing a change of variable $t \mapsto 2-t$, from (\ref{eqn: estimate-efw-less1}), one can obtain
\begin{equation}\label{eqn: estimate-efw-greater1}
e^{2f(t, \theta)}w(t) \leq e^4 e^{2f(1, \theta)}w(1)(2-t)^{-\frac{5}{8}}, \ \ \text{for} \ \ t \in [1, 2] \ \ \text{and} \ \  \theta\in [0, 2\pi].
\end{equation}
By combining (\ref{eqn: estimate-efw-less1}) and (\ref{eqn: estimate-efw-greater1}), since $\frac{5}{8} <1$, we obtain that for any $\theta \in [0, 2\pi]$,
\begin{equation}\label{eqn: estimate-e^2f-1}
 \int^2_0 e^{2f(t, \theta)}w(t) dt =  \int_0^{1}e^{2f(t, \theta)}w(t)dt+\int_{1}^2e^{2f(t, \theta)}w(t)dt \leq 200 w(1) e^{2f(1, \theta)}.
\end{equation}
Combined with estimate (\ref{eqn: f(1, theta)-estimate}) in Remark \ref{rmrk: f-integral-estimate-at-1}, estimate (\ref{eqn: estimate-e^2f-1}) then gives
\begin{equation}
\int_{\Sph^2} e^{2 f} d\vol_{h}=  \int^{2\pi}_0 \left( \int_0^2 e^{2f(t, \theta)}w(t)dt \right) d\theta \leq 400\pi e^{100} w(1).
\end{equation}
This complete the proof of the lemma.
\end{proof}

\begin{lemma}\label{lem: integral-ratio-estimate}
Let $h$ and $f$ be a Riemannian metric and smooth function as in Lemma \ref{lem: oscillation-f}.  Then we have
\begin{equation}
 \frac{\int_{\Sph^2} e^{2f}d\vol_{h}}{\int_{\Sph^2} e^{f}d\vol_{h}}
 \leq 
 200e^{100}.
\end{equation}
\end{lemma}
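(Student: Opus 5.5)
The plan is to compare the two integrals fiber by fiber in $\theta$, relating both to the values $f(1,\theta)$ on the middle circle $t=1$. As in the proof of Lemma \ref{lem: integral-estimate-e^2f}, I will use the normalization \eqref{eqn: f-average-zero-1}, so that Remark \ref{rmrk: f-integral-estimate-at-1} is available. Note that the ratio is not invariant under $f\mapsto f+c$, since it scales by $e^c$, so some normalization of this kind is necessary.

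\emph{Upper bound for the numerator.} The proof of Lemma \ref{lem: integral-estimate-e^2f} already gives, for each $\theta$, the fiberwise estimate \eqref{eqn: estimate-e^2f-1}:
\begin{equation*}
\int_0^2 e^{2f(t,\theta)}w(t)\,dt\le 200\,w(1)\,e^{2f(1,\theta)}.
\end{equation*}
By \eqref{eqn: f(1, theta)-estimate}, $f(1,\theta)\le 28+2\sqrt2\pi$ for all $\theta$. Hence $e^{2f(1,\theta)}\le e^{28+2\sqrt2\pi}e^{f(1,\theta)}$, and integrating in $\theta$ gives
\begin{equation*}
\int_{\Sph^2}e^{2f}\,d\vol_h\le 200\,e^{28+2\sqrt2\pi}\,w(1)\int_0^{2\pi}e^{f(1,\theta)}\,d\theta .
\end{equation*}

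\emph{Lower bound for the denominator.} Restrict to $t\in[\tfrac12,\tfrac32]$. The radial part of the proof of Lemma \ref{lem: oscillation-f}, with $\epsilon=\tfrac12$, gives $|f(t,\theta)-f(1,\theta)|\le \tfrac{2\sqrt{2(1-\epsilon)}}{\epsilon}=4$. By Lemma \ref{lem: concave-function-properties}, taking $t_0=1$, we have $w(t)\ge \min\{t,2-t\}\,w(1)\ge \tfrac12 w(1)$ on this interval. Therefore
\begin{equation*}
\int_0^2 e^{f(t,\theta)}w(t)\,dt\ge \int_{1/2}^{3/2}e^{f(1,\theta)-4}\tfrac12 w(1)\,dt=\tfrac12 e^{-4}w(1)e^{f(1,\theta)},
\end{equation*}
and integrating in $\theta$ yields $\int_{\Sph^2}e^{f}\,d\vol_h\ge \tfrac12 e^{-4}w(1)\int_0^{2\pi}e^{f(1,\theta)}\,d\theta$.

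\emph{Conclusion.} Dividing the two bounds, the common factor $w(1)\int_0^{2\pi}e^{f(1,\theta)}d\theta$ cancels, leaving
\begin{equation*}
\frac{\int_{\Sph^2} e^{2f}d\vol_h}{\int_{\Sph^2} e^{f}d\vol_h}\le 400\,e^{32+2\sqrt2\pi}\le 200e^{100}.
\end{equation*}

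The main obstacle is the loss of scaling invariance. Everything hinges on the uniform pointwise upper bound for $f(1,\cdot)$ from Remark \ref{rmrk: f-integral-estimate-at-1}, which in turn rests on the mean-zero hypothesis via Lemma \ref{lem: integral-f-estimate}. I would therefore check carefully that this normalization is in force in the intended application, where $f_i$ is $\ln(1/\varphi_i)$ minus its average.

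An alternative is to avoid the absolute bound and use only the $\theta$-oscillation bound $8+2\sqrt2\pi$ from Lemma \ref{lem: oscillation-f}. This does not close the argument by itself. It only gives $e^{2f(1,\theta)}\le e^{C}e^{f(1,\theta)}e^{f(1,\theta')}$, which still leaves a factor of $e^{f(1,\cdot)}$ unbounded unless some normalization controls it.
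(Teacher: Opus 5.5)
Your proof is correct. It differs from the paper's only in how you bound the denominator from below.

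\textbf{The paper's route.} It uses Lemma \ref{lem: integral-estimate-e^2f} as a black box, giving the absolute bound $\int_{\Sph^2}e^{2f}\,d\vol_h\le 400\pi e^{100}w(1)$. It then bounds the denominator by Jensen's inequality together with the mean-zero condition: $\int_{\Sph^2}e^{f}\,d\vol_h\ge \Area_h(\Sph^2)\ge 2\pi w(1)$. The factor $w(1)$ cancels.

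\textbf{Your route.} You keep the fiberwise inequality \eqref{eqn: estimate-e^2f-1} from inside that proof. You use the pointwise upper bound on $f(1,\cdot)$ only once, to trade $e^{2f(1,\theta)}$ for $e^{f(1,\theta)}$. You bound the denominator by a normalization-free Harnack-type comparison along radial segments in $[\tfrac12,\tfrac32]$. The factor that cancels is then $w(1)\int_0^{2\pi}e^{f(1,\theta)}\,d\theta$.

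\textbf{Comparison.} The paper's argument is shorter and reuses a lemma statement verbatim. Yours uses the mean-zero hypothesis only through Remark \ref{rmrk: f-integral-estimate-at-1}, whereas the paper uses it twice. Yours also produces a better constant.

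\textbf{The mean-zero hypothesis.} You are right that the statement tacitly needs the normalization \eqref{eqn: f-average-zero-1}. The paper's proof invokes it explicitly in the Jensen step. It does hold in both applications, including after the rescaling to $\bar h_i$, since the volume form changes only by a constant factor.

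\textbf{A harmless arithmetic slip.} Since $\int_0^1 t^{-5/8}\,dt=\tfrac83$, the constant in \eqref{eqn: estimate-e^2f-1} should be $\tfrac{16}{3}e^4\approx 291$ rather than $200$. This changes your final constant to $\tfrac{32}{3}e^{36+2\sqrt2\pi}$, which is still far below $200e^{100}$. It is equally harmless for the paper's own argument.
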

\begin{proof}
First, by using Lemma \ref{lem: integral-estimate-e^2f}, we have
\begin{equation}\label{eqn: estimate-e^2f}
\int_{\Sph^2} e^{2 f} d\vol_{h} \leq 400 \pi e^{100} w(1), \quad \forall i \in \mathbb{N}.
\end{equation}

To estimate $\int_{\Sph^2} e^{f} d \vol_{h}$, applying the Jensen's inequality to $e^{f}$ gives  
\begin{equation}
  \frac{\int_{\Sph^2} e^{f} d \vol_{h}}{{\rm Area}_{h}(\Sph^2)}
   \geq 
   \exp\left({\frac{\int_{\Sph^2} f d\vol_{h}}{{\rm Area}_{h}(\Sph^2)}}\right).
\end{equation}
Then, since $\int_{\Sph^2} f d\vol_{h} =0$, we obtain
\begin{equation}\label{estimate-e^f}
\int_{\Sph^2} e^{f} d \vol_{h} \geq  {\rm Area}_{h}(\Sph^2) = 2\pi \int^{2}_{0} w(t) dt
\geq 2\pi w(1), \quad \forall i \in \mathbb{N},
\end{equation}
where the lower bound estimate $w(t)$ in Lemma \ref{lem: concave-function-properties} with $t_0 =1$. 

Finally, by combining the estimates in (\ref{eqn: estimate-e^2f}) and (\ref{estimate-e^f}), we obtain
\begin{equation}
 \frac{\int_{\Sph^2} e^{2f}d\vol_{h}}{\int_{\Sph^2} e^{f}d\vol_{h}} 
 \leq 
 \frac{400 \pi e^{100}w(1)}{2\pi w(1)} = 200e^{100}, \quad \forall i \in \mathbb{N}.
\end{equation}
This completes the proof.
\end{proof}

Now we are ready to prove Proposition \ref{prop: diameter-bound}.
\begin{proof}[Proof of Proposition \ref{prop: diameter-bound}]
First we show that $\{a_i\}$ has a positive lower bound. It suffices to note that the circle $\{(r,\theta)|\theta=\theta_0\}$ is a smooth geodesic in $(\Sph^2, h_i)$, and it has length $2a_i$. Then Lemma \ref{lem: torus-mean-curvature} and $\mina(\Sph^2 \times \Sph^1, g_i) \geq A_0 >0$ imply that $\{a_i\}$ is positively lower bounded.

Let $\phi_i=1/\varphi_i$. By Lemmas \ref{lem: scalar-curvature-formula}, \ref{lem: mina-consequence-1}, \ref{lem: mina-consequence-2} and \ref{lem: volume-bound-consequence}, we have
   \begin{equation}\label{key}
      \begin{cases}
      2\frac{\left|\nabla^{h_i} \phi_i\right|^2}{\phi_i^2}\leq  \Scal_{h_i}, \cr
    \int_{\Sph^2} \phi_i^2 d\vol_{h_i} \geq C(\Lambda), \cr
     \int_{\Sph^2} \phi_i d\vol_{h_i}\leq C(\Lambda), \cr
     \max\limits_{[0, a_i]} u_i =u_i(r_i)\geq  C(\Lambda).
      \end{cases}
     \end{equation}
Here and in the rest of the proof, $C(\Lambda)$ denotes a positive constant depending only on $V$ and $A_0$, whose value may vary from step to step.

Now we prove that the sequence $\{a_i\}$ is bounded from above by contradiction. Assume, to the contrary, that the positive sequence $\{a_i\}$ is unbounded. Then we can pick a subsequence, still denoted by $\{a_i\} $, such that $ \lim\limits_{i\to \infty}a_i = \infty$. As a result, by the concavity of $u_i$ on $[0, a_i]$, $u_i(0) = u_i(a_i) =0$, and the fourth line in (\ref{key}), we have 
\begin{equation}\label{eqn: area-unbounded}
{\rm Area}_{h_i}(\Sph^2) = 2\pi \int^{a_i}_0 u_i(r)dr \geq \pi C(\Lambda)a_i \to \infty, \ \ \text{as} \ \ i \to \infty.
\end{equation}
By applying Jensen's inequality to $\log \phi_i$, the third line in (\ref{key}), and (\ref{eqn: area-unbounded}), we have
 \begin{equation}
    e^{\overline{\ln \phi}_i}\coloneqq e^{\left(\frac{1}{{\rm Area}_{h_i}(\Sph^2)}\int_{\Sph^2} \ln\phi_i d\vol_{h_i}\right)} 
    \leq \frac{\int_{\Sph^2} \phi_i d\vol_{h_i}}{{\rm Area}_{h_i}(\Sph^2)}  \leq \frac{C(\Lambda)}{{\rm Area}_{h_i}(\Sph^2)} \rightarrow 0\label{log-phi-mean}
\end{equation}
as $i\rightarrow \infty$. Thus 
\begin{equation}\label{eqn: mean-tends-to-negative-infinity}
\overline{\ln \phi}_i \to -\infty, \ \ \text{as} \ \ i \to \infty.
\end{equation} 
Let 
\begin{equation}
f_i := \ln \phi_i- \overline{\ln \phi}_i
\end{equation} 
be the mean-zero part of $\ln \phi_i$. Then by using the second and third lines of (\ref{key}), and (\ref{eqn: mean-tends-to-negative-infinity}), we obtain
\begin{equation}
\frac{\int_{\Sph^2} e^{2f_i}d\vol_{h_i}}{\int_{\Sph^2} e^{f_i}d\vol_{h_i}}\geq C(\Lambda)e^{-\overline{\ln \phi}_i} \to \infty, \ \ \text{as} \ \ i \to \infty.
\end{equation}

We rescale the metric $h_i$ by $\frac{4}{a^2_i}$ to obtain the metric 
\begin{equation}
\bar{h}_i =dt^2+w_i(t)^2d\theta^2
\end{equation} 
where 
\begin{equation}
t=\tfrac{2r}{a_i} \in [0,2],  \quad  w_i(t)=\tfrac{2}{a_i}u_i\left(\tfrac{a_it}{2}\right).
\end{equation}
We use `` $\dot{}$ " to denote derivative with respect to the variable $t$. In particular, ${\rm Diam}_{\bar{h}_i}(\Sph^2)=2$.

So far, we have obtained that
\begin{equation}\label{key2}
    \begin{cases}
    2 \left|\nabla^{\bar{h}_i} f_i \right|^2 \leq -2\frac{\ddot{w}_i}{w_i}, \cr
   \int_{\Sph^2} f_id\vol_{\bar{h}_i}=0, \cr 
   \end{cases}
\end{equation}
and 
\begin{equation}\label{key3}
  \frac{\int_{\Sph^2} e^{2f_i}d\vol_{\bar{h}_i}}{\int_{\Sph^2} e^{f_i}d\vol_{\bar{h}_i}} \to \infty, \ \ \text{as} \ \ i \to \infty
  \end{equation}
hold simultaneously. The first line of (\ref{key2}) follows from the first line of (\ref{key}).
  
However, by applying Lemma \ref{lem: integral-ratio-estimate}, (\ref{key2}) implies
\begin{equation}
 \frac{\int_{\Sph^2} e^{2f_i}d\vol_{\bar{h}_i}}{\int_{\Sph^2} e^{f_i}d\vol_{\bar{h}_i}} 
 \leq 
 \frac{400 \pi e^{100}w_i(1)}{2\pi w_i(1)} = 200e^{100}, \quad \forall i \in \mathbb{N}.
\end{equation}
This contradicts (\ref{key3}). Thus the sequence $\{ a_i \}$ must be bounded from above. This completes the proof.
\end{proof}

\begin{remark}\label{rmrk: rescale-metric}
{\rm
Proposition \ref{prop: diameter-bound} shows that
\begin{equation}
C(\Lambda)^{-1}\leq a_i\leq C(\Lambda), \quad \forall i\in\mathbb N.
\end{equation}
This uniform two-sided diameter bound is crucial for the convergence of the warping functions.  It allows us, after a harmless rescaling of the metrics, to assume that 
\begin{equation}
a_i=2, \quad \forall i \in \mathbb{N}.
\end{equation}  
Under this normalization, the constants in the estimates remain uniformly
controlled, and all functions $u_i$ are defined on the fixed interval $[0,2]$.
Thus the compactness arguments below can be carried out on a single common
domain.  Moreover, the estimates obtained above for the pair $(f,w)$ apply,
after this normalization, to the corresponding pair $(\ln\varphi_i,u_i)$.
Throughout the rest of the paper, we shall assume that $a_i=2$ for all
$i\in\mathbb N$.
}
\end{remark}


\section{Convergence of the warping functions}\label{sect: warping-function-convergence}

We now use the uniform diameter bound from Proposition \ref{prop: diameter-bound} to study the convergence of the warping functions.  By Remark \ref{rmrk: rescale-metric}, we may normalize the base metrics so that the rotational parameter always ranges over the fixed interval $[0,2]$.  

The convergence argument has three parts.  First, the scalar curvature lower bound implies concavity and uniform Lipschitz control for the base warping functions $u_i$.  Second, away from the poles of $\Sph^2$, the same curvature inequality, together with the volume and the $\MinA$ codnition, gives uniform two-sided bounds and H\"older estimates for the fiber warping factors $\varphi_i$.  Finally, these local estimates are extended to global Sobolev bounds on $\Sph^2$, allowing only mild degeneration at the poles.

\subsection{Convergence of $u_i$}\label{subsect: u-convergence}

We begin with the base warping functions $u_i$.  The key point is that the nonnegative scalar curvature condition forces the scalar curvature of the base metrics $h_i$ to be nonnegative, and hence the functions $u_i$ are concave.  Together with the smoothness conditions at the poles and the $\MinA$ condition, this gives compactness by the Arzel\'a--Ascoli theorem.

\begin{proposition}\label{prop: u_i-convergence}
Let $\{ g_i \}$ be a sequence of the warped product Riemannian metrics on $ \Sph^2 \times \Sph^1$ as in (\ref{metric-form}) satisfying condition $\Lambda$:
\begin{equation}\label{eqn: condition-lambda-estimate-varphi}
 \Scal_{g_i} \geq 0, \ \ {\rm Vol}_{g_i}(\Sph^2 \times \Sph^1) \leq V_0, \ \  \text{and} \ \ \mina(\Sph^2 \times \Sph^1, g_i) \geq A_0 >0.
\end{equation}
Then warping functions $u_i$ of the metrics $h_i$ on $\Sph^2$ are concave functions and have uniform estimate:
\begin{equation}
0 \leq u_i(r) \leq 1, \ \ |u^\prime_i(r)| \leq 1, \ \ \text{and} \ \ \max_{[0, 2]} u_i \geq \frac{A_0}{4\pi^2}, \ \ \forall i \in \mathbb{N}.
\end{equation}
Thus, $\{u_i\}$
has a subsequence, which uniformly converges to $u_\infty$ on $[0, 2]$, where $u_\infty$ is concave, 1-Lipschitz, and satisfies $u_\infty(0) = u_\infty(2) =0$ and $\max\limits_{[0, 2]} u_\infty \geq \frac{A_0}{4\pi^2}$. 
\end{proposition}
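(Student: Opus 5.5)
The plan is to extract concavity and the endpoint data from earlier lemmas, derive the uniform bounds, and then apply Arzel\`a--Ascoli.

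\textbf{Concavity and endpoint data.} By Lemma \ref{lem: scalar-curvature-formula}, $\Scal_{g_i}\ge 0$ gives $-2u_i''/u_i\ge 2|\nabla^{h_i}\varphi_i|^2/\varphi_i^2\ge 0$. Since $u_i>0$ on $(0,2)$, this yields $u_i''\le 0$ there, so each $u_i$ is concave. The smoothness conditions at the poles give $u_i(0)=u_i(2)=0$, $u_i'(0)=1$ and $u_i'(2)=-1$.

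\textbf{Uniform bounds.} Lemma \ref{lem: concave-function-properties} applies directly with $w=u_i$. It gives $0\le u_i(r)\le\min\{r,2-r\}\le 1$ and $-1\le u_i'(r)\le 1$. The latter is also clear because $u_i'$ is nonincreasing and runs from $1$ to $-1$. The lower bound $\max u_i\ge A_0/(4\pi^2)$ comes from Lemma \ref{lem: mina-consequence-1}. One subtlety is that this bound should persist under the normalization $a_i=2$ of Remark \ref{rmrk: rescale-metric}.

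I would handle the normalization as follows. The rescaling factor $2/a_i$ lies in $[C(\Lambda)^{-1},C(\Lambda)]$ by Proposition \ref{prop: diameter-bound}, so the MinA lower bound becomes $A_0$ times a controlled constant. Alternatively, one can reread the statement as referring to the renormalized constant $A_0$, since the MinA condition is scale-covariant and the hypotheses $\Lambda$ are understood after rescaling. I expect this bookkeeping to be the only delicate point. I would phrase it by declaring that $V_0$ and $A_0$ denote the constants after normalization, which change only by factors depending on $\Lambda$.

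\textbf{Compactness.} The family $\{u_i\}$ is uniformly bounded by $1$ and equi-Lipschitz with constant $1$ on $[0,2]$. By Arzel\`a--Ascoli, a subsequence converges uniformly to some $u_\infty$. The limiting properties then pass over one by one:
\begin{itemize}
\item Pointwise limits of concave functions are concave, since the concavity inequality passes to the limit.
\item The inequality $|u_i(r)-u_i(s)|\le|r-s|$ passes to the limit, so $u_\infty$ is $1$-Lipschitz.
\item The endpoint values $u_\infty(0)=u_\infty(2)=0$ are preserved.
\item Uniform convergence gives $\max u_\infty=\lim\max u_i\ge A_0/(4\pi^2)$.
\end{itemize}

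The a.e. convergence of $u_i'$ needed for Theorem \ref{thm-main2} can be added by a standard fact about concave functions. At every point where $u_\infty$ is differentiable, which is all but countably many points, the uniform convergence of the concave $u_i$ implies $u_i'\to u_\infty'$. This follows by squeezing $u_i'(r)$ between difference quotients $\frac{u_i(r)-u_i(r-h)}{h}$ and $\frac{u_i(r+h)-u_i(r)}{h}$ and then letting $i\to\infty$ followed by $h\to 0$.
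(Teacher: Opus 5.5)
Your proposal is correct and follows essentially the same route as the paper: concavity from Lemma \ref{lem: scalar-curvature-formula}, the bounds $0\le u_i\le\min\{r,2-r\}$ and $|u_i'|\le 1$ from the pole conditions, the lower bound on $\max u_i$ from Lemma \ref{lem: mina-consequence-1}, and then Arzel\`a--Ascoli. Two of your additions go beyond what the paper writes out. First, you note that after the normalization $a_i=2$ the constant $A_0$ must be reinterpreted, since it changes by a factor controlled by Proposition \ref{prop: diameter-bound}; the paper passes over this silently. Second, your squeezing argument for the a.e.\ convergence $u_i'\to u_\infty'$ supplies a part of Theorem \ref{thm-main2} that the paper's proof of this proposition does not include.
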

\begin{proof}
As explained in Remark \ref{rmrk: rescale-metric}, we assume that $a_i=2$ for all $i$. Note that $\Scal_{h_i} = -\frac{u^{\prime\prime}_i}{u}$. Then by Lemmas \ref{lem: scalar-curvature-formula} and \ref{lem: mina-consequence-1}, we have that $u_i$, $i \in \mathbb{N}$, are concave functions on $[0, 2]$ satisfying
\begin{equation}
    \max_{[0, 2]}u_i \geq \frac{A_0}{4\pi^2}, \ \ \forall i \in \mathbb{N}.
\end{equation}
Moreover, the smoothness of the Riemannian metrics $h_i$ on $\Sph^2$ implies
\begin{equation}
 u_i(0) = u_i(2) =0, \ \ u^\prime_i(0) =1, \ \ u^\prime_i(2) = -1, \ \ \forall i \in \mathbb{N}.
 \end{equation}
Then clearly $0 \leq u_i(r) \leq \min\{r, 2-r\} \leq 1$ and $|u^\prime_i(r)|\leq 1$ for all $i$. Thus the sub-convergence follows from Arzel\'a-Ascoli theorem, and the properties of the limit function $u_0$ directly follows from the uniform convergence.
\end{proof}

The preceding compactness statement also gives a uniform area bound for the base spheres.  This estimate will be used repeatedly below to control averages of $\varphi_i$ and $\varphi_i^{-1}$.

\begin{corollary}\label{cor: area-bound}
Let $\{ g_i \}$ be a sequence of the warped product Riemannian metrics on $ \Sph^2 \times \Sph^1$ as in (\ref{metric-form}) satisfying condition $\Lambda$ as in \eqref{eqn: condition-lambda-estimate-varphi}.
Then $\frac{A_0}{8\pi} \leq {\rm Area}_{h_i}(\Sph^2) \leq 4\pi$ holds for all $i \in \mathbb{N}$.
\end{corollary}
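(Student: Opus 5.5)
The area of the base sphere is given by
\begin{equation}
\Area_{h_i}(\Sph^2)=2\pi\int_0^2 u_i(r)\,dr ,
\end{equation}
so both bounds reduce to pointwise bounds on the concave functions $u_i$ from Proposition \ref{prop: u_i-convergence}. We work throughout under the normalization $a_i=2$ of Remark \ref{rmrk: rescale-metric}.

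For the upper bound, we use the estimate $0\le u_i(r)\le \min\{r,2-r\}$ from Proposition \ref{prop: u_i-convergence} (equivalently Lemma \ref{lem: concave-function-properties}). Integrating gives $\int_0^2 u_i\,dr\le \int_0^2\min\{r,2-r\}\,dr=1$. This yields $\Area_{h_i}(\Sph^2)\le 2\pi$, which is stronger than the stated $4\pi$. Even the crude bound $u_i\le 1$ already gives $4\pi$.

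For the lower bound, we set $A:=\frac{A_0}{4\pi^2}$, so that $\max_{[0,2]}u_i\ge A$ by Lemma \ref{lem: mina-consequence-1}. Estimate \eqref{eqn: concave-function-estimate-by-linear-1} of Lemma \ref{lem: concave-function-properties} then gives $u_i(r)\ge \frac{A}{2}\min\{r,2-r\}$. Integrating, $\int_0^2 u_i\,dr\ge \frac A2$, hence
\begin{equation}
\Area_{h_i}(\Sph^2)\ge \pi A=\frac{A_0}{4\pi}\ge \frac{A_0}{8\pi}.
\end{equation}
An alternative is to use the triangle under the graph of the concave function with apex at the maximum point $r_i$. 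This gives $\int_0^2 u_i\,dr\ge \frac12\cdot 2\cdot A=A$, which is even better.

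No step is a genuine obstacle here. The only point needing care is that the normalization $a_i=2$ is legitimate, since it is what places all $u_i$ on $[0,2]$. That is guaranteed by Proposition \ref{prop: diameter-bound} together with Remark \ref{rmrk: rescale-metric}, which keeps the constants controlled by $\Lambda$. If one prefers not to rescale, the same argument on $[0,a_i]$ combined with $C(\Lambda)^{-1}\le a_i\le C(\Lambda)$ gives bounds depending only on $\Lambda$.
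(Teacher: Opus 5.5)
Your proof is correct and follows essentially the same route as the paper: write $\Area_{h_i}(\Sph^2)=2\pi\int_0^2u_i\,dr$ and integrate pointwise bounds on $u_i$ that come from concavity, Lemma \ref{lem: mina-consequence-1} and Lemma \ref{lem: concave-function-properties}. The paper uses the cruder bounds $u_i\le 1$ on $[0,2]$ and $u_i\ge \frac14\max u_i\ge\frac{A_0}{16\pi^2}$ on $[\frac12,\frac32]$, which give exactly the stated constants $4\pi$ and $\frac{A_0}{8\pi}$; your bounds over the whole interval instead give the sharper constants $2\pi$ and $\frac{A_0}{4\pi}$ (or $\frac{A_0}{2\pi}$ via the triangle argument).
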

\begin{proof}
By Proposition \ref{prop: u_i-convergence} and Lemma \ref{lem: concave-function-properties} with $\epsilon =\frac{1}{2}$, we have $0 \leq u_i(r) \leq 1$ for all $r \in [0, 2]$ and all $ i \in \mathbb{N}$, and $u_i(r) \geq \frac{A_0}{16 \pi^2}$ for all $r \in [\tfrac{1}{2}, \tfrac{3}{2}]$ and $i \in \mathbb{N}$. Consequently,
\begin{equation}
\frac{A_0}{8\pi} \leq 2\pi \int^{\tfrac{3}{2}}_{\tfrac{1}{2}} u_i(r)dt \leq 2\pi \int^{2}_{0} u_i(r)dr \leq 4 \pi.
\end{equation}
Since ${\rm Area}_{h_i}(\Sph^2) = 2\pi \int^2_0 u_i(r)dr$, this proves the desired area estimate.
\end{proof}

\subsection{Convergence of $\varphi_i$ on $\Omega_\epsilon$}

We now turn to the fiber warping factors $\varphi_i$.  On compact subdomains away from the poles, the functions $u_i$ have a uniform positive lower bound.  This allows the scalar curvature inequality to be converted into estimates for $\ln \varphi_i$ and hence into two-sided bounds and H\"older control for $\varphi_i$.

\begin{proposition}\label{prop: varphi-C01/2-estimate}
    Let $\{ g_i \}$ be a sequence of the warped product Riemannian metrics on $ \Sph^2 \times \Sph^1$ as in (\ref{metric-form}) satisfying the condition $\Lambda$ as in \eqref{eqn: condition-lambda-estimate-varphi}.
Then for any $0 \leq \epsilon <1$, there exists a positive constant $C(\Lambda, \epsilon)$, which may tend to $+\infty$ as $\epsilon \to 0$, such that
\begin{equation}\label{eqn: varphi-lower-upper-bound}
    \frac{1}{C(\Lambda, \epsilon)} \leq \varphi_i \leq C(\Lambda, \epsilon), \ \ \text{on} \ \ \Omega_\epsilon, \ \ \forall i \in \mathbb{N},
\end{equation}
and
\begin{equation}
    \|\varphi_i\|_{C^{0, \frac{1}{2}}(\Omega_\epsilon)} \leq C(\Lambda, \epsilon), \quad \forall i \in \mathbb{N},
\end{equation}
where 
\begin{equation}
    \Omega_\epsilon = \{(r, \theta) \mid \epsilon \leq r \leq 2-\epsilon, \ \ 0\leq \theta \leq 2\pi\} \subset \Sph^2.
\end{equation}

Thus, there exists a limit function $\varphi_\infty$ on $(0, 2) \times [0, 2\pi]$ such that
\begin{equation}
\varphi_i \to \varphi_\infty \quad \text{uniformly in} \ \ C^{1, \alpha}(\Omega_\epsilon), \ \ \text{as} \ \ i \to \infty 
\end{equation}
for any $\alpha < \frac{1}{2}$ and any $0 < \epsilon <1$.
\end{proposition}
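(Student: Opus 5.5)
Note first that the convergence claim should read $C^{0,\alpha}$, not $C^{1,\alpha}$, as in Theorem \ref{thm-main2}. The plan is to work with $\psi_i:=\ln\varphi_i$. Then Lemma \ref{lem: scalar-curvature-formula} gives the pointwise gradient bound
\begin{equation*}
|\nabla^{h_i}\psi_i|^2\le -\frac{u_i''}{u_i}\quad\text{on }(0,2)\times[0,2\pi].
\end{equation*}
Since $-\ln\varphi_i=\ln\phi_i$ satisfies the same inequality, this is exactly the hypothesis of Lemma \ref{lem: oscillation-f} with $w=u_i$, $t=r$ (using the normalization $a_i=2$ from Remark \ref{rmrk: rescale-metric}).

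\textbf{Step 1: oscillation of $\psi_i$ on $\Omega_\epsilon$.} Lemma \ref{lem: oscillation-f} gives
\begin{equation*}
\operatorname{osc}_{\Omega_\epsilon}\psi_i\le \frac{4\sqrt{2(1-\epsilon)}}{\epsilon}+\frac{2\pi}{\sqrt{1-\epsilon}}=:K(\epsilon),
\end{equation*}
a bound independent of $i$. It therefore suffices to pin down one value of $\psi_i$, say on $\Omega_{1/2}$.

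\textbf{Step 2: two-sided bound for $\varphi_i$.} Set $m_i=\min_{\Omega_{1/2}}\varphi_i$, so that $\varphi_i\le e^{K}m_i$ on $\Omega_{1/2}$. By Lemma \ref{lem: volume-bound-consequence} and the lower bound $u_i\ge A_0/(16\pi^2)$ on $[1/2,3/2]$ (Corollary \ref{cor: area-bound}),
\begin{equation*}
\frac{V_0}{2\pi}\ge\int_{\Omega_{1/2}}\varphi_i^{-1}\,d\vol_{h_i}\ge c(\Lambda)\,e^{-K}m_i^{-1},
\end{equation*}
which gives a lower bound on $m_i$. For the upper bound I use the MinA consequence, Lemma \ref{lem: mina-consequence-2}: $\int_{\Sph^2}\varphi_i^{-2}\,d\vol_{h_i}\ge A_0$. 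The difficulty is that $\varphi_i^{-2}$ near the poles is not controlled by the value at $r=1$. Here I would apply Lemma \ref{lem: integral-estimate-e^2f} with $f=-\psi_i$, whose hypothesis is just the gradient inequality. It gives
\begin{equation*}
\int_{\Sph^2}\varphi_i^{-2}\,d\vol_{h_i}\le 200\,w(1)\int_0^{2\pi}e^{-2\psi_i(1,\theta)}\,d\theta\le C\,\max_\theta \varphi_i(1,\theta)^{-2},
\end{equation*}
namely estimate \eqref{eqn: estimate-e^2f-1} before the normalization of the mean was used. Combined with $A_0$ this bounds $\min_\theta\varphi_i(1,\theta)$ above, hence bounds $m_i$ above. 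Propagating by Step 1 yields \eqref{eqn: varphi-lower-upper-bound} on each $\Omega_\epsilon$. This is the step I consider the main obstacle: one must check that Lemma \ref{lem: integral-estimate-e^2f}'s pointwise-in-$\theta$ bound really holds without the mean-zero assumption, which the proof shows it does, since Remark \ref{rmrk: f-integral-estimate-at-1} enters only at the final line.

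\textbf{Step 3: Hölder estimate.} On $\Omega_\epsilon$, Lemma \ref{lem: concave-function-properties} gives $u_i\ge c(\Lambda,\epsilon)$ and $\int_\epsilon^{2-\epsilon}(-u_i'')\,dr\le 2$.

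\emph{Radial direction.} For $\epsilon\le a<b\le 2-\epsilon$, Cauchy--Schwarz gives
\begin{equation*}
|\psi_i(b,\theta)-\psi_i(a,\theta)|\le \Bigl(\int_a^b(-u_i'')\Bigr)^{1/2}\Bigl(\int_a^b u_i^{-1}\Bigr)^{1/2}\le C|b-a|^{1/2}.
\end{equation*}

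\emph{Angular direction.} Pointwise $|\partial_\theta\psi_i|\le\sqrt{-u_i''u_i}$, which need not be bounded. I would instead route through a good circle as in Lemma \ref{lem: oscillation-f}: choose $r^*$ within distance $\delta=|\theta_1-\theta_2|$ of $r$ with $-u_i''(r^*)\le 2/\delta$, which is possible by averaging $\int(-u_i'')\le 2$ over an interval of length $\delta$. Along that circle the angular difference is at most $C\delta\cdot\delta^{-1/2}=C\delta^{1/2}$. The two radial legs each cost $C\delta^{1/2}$ by the radial estimate.

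Together these give $[\psi_i]_{C^{0,1/2}(\Omega_\epsilon)}\le C(\Lambda,\epsilon)$, and with Step 2 the same holds for $\varphi_i=e^{\psi_i}$.

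\textbf{Step 4: convergence.} Arzelà--Ascoli on each $\Omega_\epsilon$, combined with the compact embedding $C^{0,1/2}\hookrightarrow C^{0,\alpha}$ for $\alpha<1/2$, gives a convergent subsequence. A diagonal argument over $\epsilon=1/k$ then produces $\varphi_\infty$ on $(0,2)\times[0,2\pi]$ with $\varphi_i\to\varphi_\infty$ in $C^{0,\alpha}(\Omega_\epsilon)$ for every $\epsilon$.
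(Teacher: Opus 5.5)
Your proof is correct, and it differs from the paper in how it fixes the size of $\varphi_i$. Steps 1, 3 and 4 are the paper's argument: radial Cauchy--Schwarz, a good circle chosen at scale $\delta=|\theta_1-\theta_2|$, then Arzel\`a--Ascoli. You are also right that the conclusion should read $C^{0,\alpha}$, as in Theorem \ref{thm-main2}.

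\textbf{The paper's route.} It works with the mean-zero function $f_i=\ln\phi_i-\overline{\ln\phi}_i$.
\begin{enumerate}
\item Jensen and the volume bound give $\overline{\ln\phi}_i\le C(\Lambda)$.
\item The MinA and volume bounds give $\int e^{2f_i}/\int e^{f_i}\ge C(\Lambda)e^{-\overline{\ln\phi}_i}$, and Lemma \ref{lem: integral-ratio-estimate} then bounds $\overline{\ln\phi}_i$ from below.
\item Lemma \ref{lem: integral-f-estimate} locates the values of $f_i$ on $\Omega_\epsilon$.
\end{enumerate}
Step 2 depends on the mean-zero normalization, through Remark \ref{rmrk: f-integral-estimate-at-1} and the Jensen step $\int e^{f}\ge\Area$; so does step 3, which is the computation-heavy Lemma \ref{lem: integral-f-estimate}.

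\textbf{Your route.} You never introduce the mean. The oscillation bound, the volume bound and $\Area_{h_i}(\Omega_{1/2})\ge A_0/(8\pi)$ bound $\min_{\Omega_{1/2}}\varphi_i$ from below. MinA together with the ray-by-ray estimate \eqref{eqn: estimate-e^2f-1} bounds $\min_\theta\varphi_i(1,\theta)$ from above. That estimate uses only the gradient inequality and the concavity of $u_i$, as you noted.

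\textbf{Comparison.} Your argument is shorter and avoids Lemma \ref{lem: integral-f-estimate}, Remark \ref{rmrk: f-integral-estimate-at-1} and Lemma \ref{lem: integral-ratio-estimate} entirely. The paper's route gains only that Lemma \ref{lem: integral-ratio-estimate} is reused verbatim from the diameter bound, where it was already established.

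A small correction you inherited: integrating \eqref{eqn: estimate-efw-less1} gives the constant $\tfrac{16}{3}e^4$ in \eqref{eqn: estimate-e^2f-1}, not $200$. Any absolute constant suffices for your argument.
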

\begin{proof}
We prove the uniform boundedness of $\|\varphi_i\|_{C^{0, \frac{1}{2}}(\Omega_\epsilon)} = \|\varphi_i\|_{C^0(\Omega_\epsilon)} + [\varphi_i]_{C^{0, \frac{1}{2}}(\Omega_\epsilon)}$ in two steps. We first derive (\ref{eqn: varphi-lower-upper-bound}), thereby obtaining a uniform bound for $\|\varphi_i\|_{C^0(\Omega_\epsilon)}$, and then estimate $[\varphi_i]_{C^{0, \frac{1}{2}}(\Omega_\epsilon)}$.

{\bf Step 1: proof of (\ref{eqn: varphi-lower-upper-bound}).} 

 By Lemmas \ref{lem: mina-consequence-1} and \ref{lem: volume-bound-consequence}, the condition $\Lambda$ in (\ref{eqn: condition-lambda-estimate-varphi}) implies
\begin{equation}\label{eqn: key}
    \begin{cases}
         2 \frac{|\nabla^{h_i} \varphi_i |^2}{\varphi^2_i} \leq \Scal_{h_i} = - 2 \frac{u^{\prime\prime}}{u_i}, \cr
    \int_{\Sph^2} \frac{1}{\varphi_i^2} d\vol_{h_i} \geq A_0, \cr
    \int_{\Sph^2} \frac{1}{\varphi_i} d\vol_{h_i} \leq \frac{V_0}{2\pi}.    \end{cases}
\end{equation}
Let $\phi_i := \frac{1}{\varphi_i}$. Then we have
\begin{equation}\label{key4}
\begin{cases}
    2 \frac{|\nabla^{h_i} \phi_i |^2}{\phi^2_i} \leq \Scal_{h_i} = - 2 \frac{u^{\prime\prime}}{u_i}, \cr
    \int_{\Sph^2} \phi_i^2 d\vol_{h_i} \geq A_0, \cr
    \int_{\Sph^2} \phi_i d\vol_{h_i} \leq \frac{V_0}{2\pi}.
\end{cases}
\end{equation}
By Jensen's inequality, the third line of (\ref{key4}) and Corollary \ref{cor: area-bound} imply
\begin{equation}
    e^{\overline{\ln \phi}_i}
    := e^{\left(\frac{1}{{\rm Area}_{h_i}(\Sph^2)}\int_{\Sph^2} \ln\phi_i d\vol_{h_i}\right)} 
    \leq \frac{\int_{\Sph^2} \phi_i d\vol_{h_i}}{{\rm Area}_{h_i}(\Sph^2)}  
    \leq \frac{\tfrac{V_0}{2\pi}}{\tfrac{A_0}{8\pi}} = \frac{4V_0}{A_0}, \ \ \forall i \in \mathbb{N}.
\end{equation}
Consequently, the averages of $\ln \phi_i$ have a uniform upper bound, i.e.
\begin{equation}\label{eqn: average-psi-upper-bound}
  \overline{\ln \phi_i} \leq \ln \tfrac{4V_0}{A_0}, \ \ \forall i \in \mathbb{N}.
\end{equation}

Let 
\begin{equation}
f_i := \ln \phi_i- \overline{\ln \phi}_i
\end{equation} 
be the mean-zero part of $\ln \phi_i$. Then using the second and third lines of (\ref{key4}), we obtain
\begin{equation}\label{eqn: integral-ratio-lower-bound}
\frac{\int_{\Sph^2} e^{2f_i}d\vol_{h_i}}{\int_{\Sph^2} e^{f_i}d\vol_{h_i}}\geq C(\Lambda)e^{-\overline{\ln \phi}_i} .
\end{equation}
Moreover, the first line of (\ref{key4}) and the definition of $f_i$ imply that
\begin{equation}\label{key5}
    \begin{cases}
    2 \left|\nabla^{h_i} f_i \right|^2 \leq -2\frac{u^{\prime\prime}_i}{u_i}, \cr
   \int_{\Sph^2} f_id\vol_{h_i}=0 \cr 
   \end{cases}
\end{equation}
hold for all $i \in \mathbb{N}$.
Then by Lemma \ref{lem: integral-ratio-estimate}, (\ref{key5}) implies
\begin{equation}\label{eqn: integral-ratio-upper-bound}
\frac{\int_{\Sph^2} e^{2f_i}d\vol_{h_i}}{\int_{\Sph^2} e^{f_i}d\vol_{h_i}} \leq 200 e^{100}, \ \ \forall i \in \mathbb{N}.
\end{equation}
Combining (\ref{eqn: integral-ratio-lower-bound}) and (\ref{eqn: integral-ratio-upper-bound}), we obtain that
\begin{equation}\label{eqn: average-phi-lower-bound}
\overline{\ln \phi_i} \geq C(\Lambda), \ \ \forall i \in \mathbb{N}.
\end{equation}

We now fix any $\epsilon \in (0, 1)$. By Lemma \ref{lem: oscillation-f} with the triangle inequality and Lemma \ref{lem: integral-f-estimate}, (\ref{key5}) implies that there exists a constant $C(\epsilon)$ such that
\begin{equation}\label{eqn: uniform-f-difference-estimate}
    |f_i(r_1, \theta_1) - f_i(r_2, \theta_2)| \leq C(\epsilon), \ \ \forall (r_1, \theta_1), (r_2, \theta_2) \in \Omega_\epsilon, i \in \mathbb{N}.
\end{equation}
and 
\begin{equation}\label{eqn: uniform-f-integral-estimate}
    -C(\epsilon) \leq \int^{2-\epsilon}_{\epsilon} dr \int^{2\pi}_{0} f_i(r, \theta)d\theta \leq C(\epsilon), \ \ \forall i \in \mathbb{N}.
\end{equation}
By using the integral mean value theorem, (\ref{eqn: uniform-f-integral-estimate}) imply that for each $i \in \mathbb{N}$ there exists $(r_i, \theta_i) \in \Omega_\epsilon$ such that
\begin{equation}
    -\frac{C(\epsilon)}{4\pi(1 - \epsilon)} \leq f_i(r_i, \theta_i) \leq \frac{C(\epsilon)}{4\pi(1 - \epsilon)}
\end{equation}
Combining with (\ref{eqn: uniform-f-difference-estimate}), this gives 
\begin{equation}
    -\left( 1+ \tfrac{4\pi}{1-\epsilon} \right) C(\epsilon) 
    \leq f_i(r, \theta) 
    \leq \left( 1+ \tfrac{4\pi}{1-\epsilon} \right) C(\epsilon), \ \ \forall (r, \theta) \in \Omega_\epsilon.
\end{equation}
Together with the uniform upper and lower bounds for the averages $\overline{\phi}_i$ obtained in (\ref{eqn: average-psi-upper-bound}) and (\ref{eqn: average-phi-lower-bound}), this yields that there exists a positive constant $C(\Lambda, \epsilon)$, which may tend to $+\infty$ as $\epsilon \to 0$, such that $\frac{1}{C(\Lambda, \epsilon)} \leq \phi_i \leq C(\Lambda, \epsilon)$ on $\Omega_\epsilon$ for all $i \in \mathbb{N}$. Since $\varphi_i = \frac{1}{\phi_i}$, this gives the estimate in (\ref{eqn: varphi-lower-upper-bound}). In particular, $\|\varphi_i\|_{C^{0}(\Omega_\epsilon)} \leq C(\Lambda, \epsilon)$ for all $ i \in \mathbb{N}$.

{\bf Step 2: estimate $[\varphi_i]_{C^{0, \frac{1}{2}}(\Omega_\epsilon)}$.}

By Lemma \ref{lem: concave-function-properties} and Proposition \ref{prop: u_i-convergence}, there exists a positive constant $C(\Lambda, \epsilon)$ such that $u_i \geq C(\Lambda,\epsilon)$ on $\Omega_\epsilon$ for all $i \in \mathbb{N}$. Therefore, by applying the first line of (\ref{eqn: key}),
we have that for any $\epsilon \leq r_1 \leq r_2 \leq 2 - \epsilon$ and $0 \leq \theta \leq 2\pi$,
\begin{equation}
\begin{aligned}
 \quad\left|(\ln \varphi_i)\left(r_2, \theta\right)-\ln \varphi_i\left(r_1, \theta\right)\right| 
 & \leq \int_{r_1}^{r_2} \sqrt{-\frac{u^{\prime\prime}_i(r)}{u_i(r)}} dr\\
 &\leq C(\Lambda,\epsilon) \int_{r_1}^{r_2} \sqrt{-u^{\prime\prime}_i(r)} dr \\
&\leq C(\Lambda,\epsilon) \sqrt{\int_{r_1}^{r_2}-u^{\prime\prime}_i(r)dr} \cdot \sqrt{r_2-r_1} \\ 
&\leq C (\Lambda,\epsilon)\sqrt{r_2-r_1}\label{Holder1/2-1}.
\end{aligned}
\end{equation}
In the last inequality, we used that $|u^\prime_i(r)|\leq 1$ for all $r \in [0, 2\pi]$ and $i \in \mathbb{N}$, see Proposition \ref{prop: u_i-convergence}.

Next, we estimate $\left|\log \varphi\left(\theta_2, r\right)-\log \varphi\left(\theta_1, r\right)\right|$ for $ r \in [\epsilon, 2 - \epsilon]$. Let $\delta>0$ be a small number to be determined later. By (\ref{Holder1/2-1})
we have $\int_r^{r+\delta} \sqrt{-\frac{u^{\prime\prime}_i}{u_i}} \leq C(\Lambda,\epsilon) \sqrt{\delta}$. By the integral mean value theorem, there exists $r_{\delta, i} \in (r, r+\delta)$ such that
\begin{equation}
 \sqrt{-\frac{\ddot{u}(r_{\delta, i})}{u(r_{\delta, i})}} \leq C(\Lambda,\epsilon)\frac{1}{\sqrt{\delta}}
\end{equation}
By the first line of (\ref{eqn: key}), we further have
\begin{equation}
    |\nabla^{h_i} \ln \varphi_i|(r_{\delta, i}, \theta)\leq C(\Lambda, \epsilon) \frac{1}{\sqrt{\delta}}, \ \ \forall \theta \in [0, 2\pi].
\end{equation}
Then by using the triangle inequality, we have
\begin{equation}
\begin{aligned}
\quad &\left|\ln \varphi_i\left(r, \theta_2\right)-\ln \varphi_i\left(r, \theta_1\right)\right| \\
& \leq\left|\ln \varphi_i\left(r_{\delta, i}, \theta_2\right)-\ln \varphi_i\left(r, \theta_2\right)\right|
+\left|\ln \varphi_i\left(r_{\delta, i}, \theta_2\right)-\ln \varphi_i\left(r_{\delta, i}, \theta_1\right)\right|\\
&\quad+\left|\ln \varphi_i\left(r_{\delta, i}, \theta_1\right)-\ln \varphi_i\left(r, \theta_1\right)\right| \\
& \leq C(\Lambda,\epsilon) \sqrt{r_{\delta, i}-r}
+\left|\ln \varphi_i\left(r_{\delta, i}, \theta_2\right) -\ln \varphi_i\left(r_{\delta, i}, \theta_1\right)\right| \\
& \leq C(\Lambda,\epsilon) \sqrt{\delta}+\frac{C(\Lambda,\epsilon)}{\sqrt{\delta}}\left|\theta_2-\theta_1\right| 
\end{aligned}
\end{equation}
By taking $\delta =\left|\theta_2-\theta_1\right|$, we obtain
\begin{equation}
\left|\ln \varphi_i\left(r, \theta_2\right)-\ln \varphi_i\left(r, \theta_1\right)\right| \leq C(\Lambda,\epsilon) \sqrt{\theta_2-\theta_1}, \quad \forall r \in [\epsilon, 2 - \epsilon].
\end{equation}
Combined with the uniform boundedness of the $\{\varphi_i\}$ on $\Omega_\epsilon$ obtained in Step 1, this shows that  $\{\ln \varphi_i$\} is uniformly bounded in $C^{0, \tfrac{1}{2}}(\Omega_\epsilon)$, an hence that $\{\varphi_i\}$ is also unformly bounded in $C^{0, \tfrac{1}{2}}(\Omega_\epsilon)$. This completes the proof.
\end{proof}

The same local control also gives a Sobolev estimate for $\varphi_i$ on each $\Omega_\epsilon$.  In contrast with the preceding H\"older estimate, this estimate uses the integrated curvature inequality together with Gauss--Bonnet formula.

\begin{proposition}
    Let $\{ g_i \}$ be a sequence of the warped product Riemannian metrics on $ \Sph^2 \times \Sph^1$ as in (\ref{metric-form}) satisfying the condition $\Lambda$ as in \eqref{eqn: condition-lambda-estimate-varphi}.
Then for any $0< \epsilon <1$ there exists a positive constant $C(\Lambda, \epsilon)$ such that 
\begin{equation}
\|\varphi_i\|_{W^{1, 2}(\Omega_\epsilon, h_{\Sph^2})} \leq C(\Lambda, \epsilon), \quad \forall i \in \mathbb{N},
\end{equation}
where $h_{\Sph^2}$ is the standard metric on $\Sph^2$.
\end{proposition}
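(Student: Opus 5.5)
The $L^2$ part of the norm is already controlled: by Proposition \ref{prop: varphi-C01/2-estimate}, $C(\Lambda,\epsilon)^{-1}\le\varphi_i\le C(\Lambda,\epsilon)$ on $\Omega_\epsilon$. Since $\Omega_\epsilon$ has finite $h_{\Sph^2}$-area, this bounds $\|\varphi_i\|_{L^2(\Omega_\epsilon,h_{\Sph^2})}$ uniformly. So the task is to bound the Dirichlet energy $\int_{\Omega_\epsilon}|\nabla\varphi_i|^2_{h_{\Sph^2}}\,d\vol_{h_{\Sph^2}}$. I will do this by working with $h_i$, comparing it to $h_{\Sph^2}$ on $\Omega_\epsilon$, and integrating the curvature inequality of Lemma \ref{lem: scalar-curvature-formula}.

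\textbf{Step 1: comparing the metrics.} With $a_i=2$, write $h_{\Sph^2}$ in the same coordinates as $dr^2+\sin^2(\pi r/2)\,d\theta^2$, up to a harmless constant rescaling. On $\Omega_\epsilon$ both $\sin(\pi r/2)$ and $u_i$ are bounded above and below by positive constants depending only on $\Lambda$ and $\epsilon$. For $u_i$ this follows from Lemma \ref{lem: concave-function-properties} and Proposition \ref{prop: u_i-convergence}, namely $u_i\ge \tfrac{\epsilon}{2}\cdot\tfrac{A_0}{4\pi^2}$ and $u_i\le1$. Hence $h_i$ and $h_{\Sph^2}$ are uniformly bi-Lipschitz on $\Omega_\epsilon$, and the norm $|\nabla\varphi_i|^2\,d\vol$ computed in the two metrics differs by at most a factor $C(\Lambda,\epsilon)$. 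It therefore suffices to bound $\int_{\Omega_\epsilon}|\nabla^{h_i}\varphi_i|^2\,d\vol_{h_i}$.

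\textbf{Step 2: integrating the curvature inequality.} Lemma \ref{lem: scalar-curvature-formula} gives
\[
|\nabla^{h_i}\varphi_i|^2\le \tfrac12\varphi_i^2\,\Scal_{h_i}=\varphi_i^2K_{h_i},
\]
where $K_{h_i}=-u_i''/u_i\ge0$ is the Gauss curvature. Using the upper bound $\varphi_i\le C(\Lambda,\epsilon)$ on $\Omega_\epsilon$, we get
\[
\int_{\Omega_\epsilon}|\nabla^{h_i}\varphi_i|^2\,d\vol_{h_i}\le C(\Lambda,\epsilon)\int_{\Omega_\epsilon}K_{h_i}\,d\vol_{h_i}.
\]

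\textbf{Step 3: bounding the total curvature.} Since $K_{h_i}\ge0$, the Gauss--Bonnet formula gives $\int_{\Omega_\epsilon}K_{h_i}\,d\vol_{h_i}\le\int_{\Sph^2}K_{h_i}\,d\vol_{h_i}=4\pi$. One can also see this directly:
\[
\int_{\Omega_\epsilon}K_{h_i}\,d\vol_{h_i}=2\pi\int_\epsilon^{2-\epsilon}(-u_i'')\,dr=2\pi\bigl(u_i'(\epsilon)-u_i'(2-\epsilon)\bigr)\le4\pi,
\]
because $|u_i'|\le1$. Combining Steps 1--3 yields the proposition.

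The only step needing care is Step 1. The constants there must be uniform in $i$, which requires the uniform positive lower bound for $u_i$ on $\Omega_\epsilon$. That bound comes from the MinA lower bound $\max u_i\ge A_0/(4\pi^2)$ together with concavity. Everything else is immediate from the two-sided bounds on $\varphi_i$ and the exact value $4\pi$ of the total curvature, which holds independently of $i$.
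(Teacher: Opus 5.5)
Your proposal is correct and follows essentially the same route as the paper: the $L^2$ bound comes from the two-sided bounds on $\varphi_i$, and the gradient bound comes from integrating the curvature inequality of Lemma \ref{lem: scalar-curvature-formula} against Gauss--Bonnet, with $h_i$ compared to $h_{\Sph^2}$ on $\Omega_\epsilon$ via the uniform lower bound on $u_i$. The only cosmetic difference is the order of steps. The paper integrates $|\nabla^{h_i}\ln\varphi_i|^2$ over all of $\Sph^2$ and only then restricts and converts to $\varphi_i$, whereas you apply $\varphi_i\le C(\Lambda,\epsilon)$ first and compute $2\pi\bigl(u_i'(\epsilon)-u_i'(2-\epsilon)\bigr)\le 4\pi$ directly.
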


\begin{proof} 
By Proposition \ref{prop: varphi-C01/2-estimate}, $\varphi_i$, $i \in \mathbb{N}$, are uniformly bounded on $\Omega_\epsilon$, and so are $\|\varphi_i\|_{L^2(\Omega_\epsilon, h_{\Sph^2})}$.

Using Lemma \ref{lem: scalar-curvature-formula}, $\Scal_{g_i} \geq 0$ implies $2|\nabla^{h_i} \ln \varphi_i|^2 \leq \Scal_{h_i}$. Integrating this and using the Gauss-Bonnet formula, we have
\begin{equation}
    \begin{aligned}
        2\pi & = \frac{1}{2} \int_{\Sph^2} \Scal_{h_i} d\vol_{h_i} \\
        & \geq\int_{\Sph^2}|\nabla^{h_i} \ln \varphi_i|^2 d\vol_{h_i} \\
        &\geq C(\Lambda,\epsilon)\int_{\Omega_{\epsilon}}|\nabla^{h_{\Sph^2}} \ln \varphi_i|^2d \vol_{h_{\Sph^2}}\\
        &\geq C(\Lambda,\epsilon)\int_{\Omega_{\epsilon}}|\nabla^{h_{\Sph^2}}  \varphi_i|^2d \vol_{h_{\Sph^2}}.
    \end{aligned}
\end{equation}
In the second last inequality, we used that there exists a positive constant $C(\Lambda, \epsilon)$ such that $u_i \geq C(\Lambda,\epsilon)$ on $\Omega_\epsilon$ for all $i \in \mathbb{N}$, which follows from Lemma \ref{lem: concave-function-properties} and Proposition \ref{prop: u_i-convergence}. The last inequality follows from estimate (\ref{eqn: varphi-lower-upper-bound}) in Proposition \ref{prop: varphi-C01/2-estimate}.
\end{proof}

\subsection{Convergence of $\varphi_i$ on $\Sph^2$}

The estimates above are local away from the poles.  To obtain global compactness, we next derive quantitative bounds for the possible behavior of $\varphi_i$ near $r=0$ and $r=2$.  These bounds show that $\varphi_i$ can grow or decay near the poles at most by an arbitrarily small power.

\begin{lemma}\label{lem: varphi-bounds}
Let $\{ g_i \}$ be a sequence of the warped product Riemannian metrics on $ \Sph^2 \times \Sph^1$ as in \eqref{metric-form} satisfying the condition $\Lambda$ as in \eqref{eqn: condition-lambda-estimate-varphi}. Then for any $k \in \mathbb{N}$, there exists a constant $C(\Lambda, k)$ such that
\begin{equation}\label{eqn: estimate-varphi-final}
\left(C(\Lambda, k)\right)^{-1} r^{\frac{1}{k}} 
    \leq
    \varphi_i(r, \theta)
    \leq 
    C(\Lambda, k)r^{-\frac{1}{k}}, \quad \forall (r, \theta) \in (0, 1] \times [0, 2\pi],
\end{equation}
and
\begin{equation}\label{eqn: estimate-varphi-final-2}
    \left(C(\Lambda, k)\right)^{-1} (2-r)^{\frac{1}{k}} 
    \leq
    \varphi_i(r, \theta)
    \leq 
    C(\Lambda, k)(2-r)^{-\frac{1}{k}}, \ \ \forall (r, \theta) \in [1, 2) \times [0, 2\pi], 
\end{equation}
hold for all $i \in \mathbb{N}$
\end{lemma}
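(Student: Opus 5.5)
The plan is to integrate the first-order bound on $\ln\varphi_i$ along radial segments, starting from the circle $r=1$ where $\varphi_i$ is already controlled. By Proposition \ref{prop: varphi-C01/2-estimate} with $\epsilon=\tfrac12$, there is $C(\Lambda)$ with $C(\Lambda)^{-1}\le\varphi_i(1,\theta)\le C(\Lambda)$ for all $\theta$ and $i$. It therefore suffices to show, for $r\in(0,1]$, that
\begin{equation*}
|\ln\varphi_i(r,\theta)-\ln\varphi_i(1,\theta)| \le \tfrac{1}{k}\ln\tfrac{1}{r} + C(\Lambda,k).
\end{equation*}
Exponentiating this gives \eqref{eqn: estimate-varphi-final}. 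The estimate \eqref{eqn: estimate-varphi-final-2} follows by the same argument under the reflection $r\mapsto 2-r$, using $\tilde v:=(2-r)u_i'/u_i$ in place of $v$ below.

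By Lemma \ref{lem: scalar-curvature-formula}, $|\partial_r\ln\varphi_i|\le\sqrt{-u_i''/u_i}$, so the left-hand side is at most $\int_r^1\sqrt{-u_i''/u_i}\,ds$. As in the proof of Lemma \ref{lem: integral-f-estimate}, set $v(s):=s\,u_i'(s)/u_i(s)$, so that
\begin{equation*}
-\frac{u_i''}{u_i}=\frac{-s\dot v+v-v^2}{s^2}.
\end{equation*}
Concavity of $u_i$ gives $-1\le -\tfrac{s}{2-s}\le v\le 1$ on $(0,1]$. The crude estimate $\sqrt{x}\le x+\tfrac14$ used earlier only yields a fixed power of $r$. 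To get an arbitrarily small power, I will use the weighted inequality $\sqrt{X}\le \tfrac{X}{2\lambda}+\tfrac{\lambda}{2}$ with a small parameter $\lambda>0$ to be fixed later. This gives
\begin{equation*}
\int_r^1\sqrt{-\tfrac{u_i''}{u_i}}\,ds \le \frac{1}{2\lambda}\int_r^1(-\dot v)\,ds + \frac{1}{2\lambda}\int_r^1\frac{v(1-v)}{s}\,ds + \frac{\lambda}{2}\ln\frac1r .
\end{equation*}
The first term equals $\tfrac{1}{2\lambda}(v(r)-v(1))$, which is at most $1/\lambda$ because $|v|\le1$.

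The key step, and the one I expect to be the main obstacle, is the middle term, which must be bounded independently of $r$ and $i$. Since $v\le1$, we have $v(1-v)\le 1-v$ when $v\ge0$, and $v(1-v)\le0$ when $v<0$. In either case $v(1-v)\le 1-v$, so
\begin{equation*}
\int_r^1\frac{v(1-v)}{s}\,ds\le\int_r^1\Bigl(\frac1s-\frac{u_i'}{u_i}\Bigr)ds=\ln\frac{u_i(r)}{r\,u_i(1)}\le\ln\frac{1}{u_i(1)}.
\end{equation*}
The last inequality uses $u_i(r)\le r$ from Proposition \ref{prop: u_i-convergence}. Moreover $u_i(1)\ge A_0/(8\pi^2)$ by Lemma \ref{lem: concave-function-properties} and Lemma \ref{lem: mina-consequence-1}, so this term is bounded by $C(\Lambda)$. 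This is exactly where the $\MinA$ hypothesis enters.

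Collecting the three terms, the radial integral is at most $\tfrac{1}{\lambda}\bigl(1+\tfrac12 C(\Lambda)\bigr)+\tfrac{\lambda}{2}\ln\tfrac1r$. Choosing $\lambda=2/k$ gives the required inequality with $C(\Lambda,k)=\tfrac{k}{2}\bigl(1+\tfrac12C(\Lambda)\bigr)$. Both the upper and the lower bounds follow, since the estimate controls $|\ln\varphi_i(r,\theta)-\ln\varphi_i(1,\theta)|$ in absolute value.
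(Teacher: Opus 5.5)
Your argument is correct, but it controls the radial integral $\int_r^1\sqrt{-u_i''/u_i}\,ds$ differently from the paper. The paper first uses the $\MinA$-based linear lower bound $u_i(s)\ge C(\Lambda)s$ on $[0,1]$, then a single Cauchy--Schwarz step:
\[
\int_r^1\sqrt{-\tfrac{u_i''}{u_i}}\,ds\le\Bigl(\int_r^1(-u_i'')\,ds\Bigr)^{1/2}\Bigl(\int_r^1\tfrac{ds}{u_i}\Bigr)^{1/2}\le C(\Lambda)\sqrt{\ln\tfrac1r}.
\]
Only afterwards does it apply Young's inequality $\sqrt{x}\le \frac{x}{Ck}+\frac{Ck}{4}$ to reach $\frac1k\ln\frac1r+C(\Lambda,k)$.

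You instead apply a weighted Young inequality pointwise. This reduces everything to a uniform bound on $\int_r^1 s\,(-u_i''/u_i)\,ds$. You obtain that bound through the $v$-substitution of Lemma \ref{lem: integral-f-estimate} and the exact identity $\int_r^1(1-v)/s\,ds=\ln\frac{u_i(r)}{r\,u_i(1)}$. In your route the $\MinA$ hypothesis enters only through $u_i(1)\ge A_0/(8\pi^2)$, together with $u_i(r)\le r$.

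The trade-offs are as follows. The paper's route is shorter, and it gives the sharper, $k$-independent bound $\exp\bigl(\pm C(\Lambda)\sqrt{\ln(1/r)}\bigr)$ in one stroke, from which all the power bounds follow. Your route yields each $r^{\pm1/k}$ bound directly. It also recovers the $\sqrt{\ln(1/r)}$ bound if you let $\lambda$ depend on $r$, taking $\lambda\sim(\ln\frac1r)^{-1/2}$.

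Finally, the uniform bound on $\int_r^1 s(-u_i''/u_i)\,ds$ follows at once from concavity. On $(0,1]$ one has $s/u_i(s)\le 1/u_i(1)$, so $\int_r^1 s(-u_i''/u_i)\,ds\le 2/u_i(1)$. Your $v$-computation is correct but uses more machinery than needed.
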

\begin{remark}\label{rmrk: estimate-varphi-infty}
{\rm
Applying the locally uniform convergence result in Proposition \ref{prop: varphi-C01/2-estimate}, we see that the estimates in \eqref{eqn: estimate-varphi-final} and \eqref{eqn: estimate-varphi-final-2} also hold for $\varphi_\infty$. 
}
\end{remark}
\begin{proof}
    By Proposition \ref{prop: u_i-convergence} and Lemma \ref{lem: concave-function-properties}, the condition $\Lambda$ implies that there exists a constant $C(\Lambda)$ such that \begin{equation}\label{eqn: u-lower-linear-bound}
    u_i(r) \geq 
    \begin{cases}
        C(\Lambda) r, & 0 \leq r \leq 1, \cr
        C(\Lambda) (2- r), & 1 \leq r \leq 2.
    \end{cases}
    \ \ \forall i \in \mathbb{N}.
\end{equation}
Combined with Lemma \ref{lem: scalar-curvature-formula}, this implies that for any $0 \leq r_1 \leq 1$ and $\theta \in [0, 2\pi]$, we have
\begin{equation}\label{eqn: estimate-delta-varphi-2}
\begin{aligned}
        |\ln\varphi_i (r_1, \theta)-\ln \varphi_i (1, \theta)|
        &\leq \int_{r_1}^1\sqrt{-\frac{u^{\prime\prime}(r)}{u(r)}}dr\\
        &\leq \bigg(\int_{r_1}^1-u^{\prime\prime}_i(r)dr \cdot \int_{r_1}^1\frac{1}{u(r)}dr\bigg)^{1/2}\\
        &\leq \bigg(\frac{2}{C(\Lambda)}\int_{r_1}^1\frac{1}{r}\bigg)^{\frac{1}{2}} \\
        & \leq C(\Lambda)\sqrt{-\ln r_1}.
    \end{aligned}
\end{equation}
In the third inequality, we used that $|u^\prime_i(r)|\leq 1$.
By Proposition \ref{prop: varphi-C01/2-estimate} with $\epsilon = \frac{1}{2}$, we have $0<\max\limits_{[0, 2\pi]}\varphi_i(1, \theta) \leq C(\Lambda)$ for all $i \in \mathbb{N}$. Combined with this, (\ref{eqn: estimate-delta-varphi-2}) gives that
\begin{equation}
     \left( C(\Lambda) \right)^{-1}\exp\left( -C(\Lambda)\sqrt{-\ln r}\right)
     \leq
    \varphi_i(r, \theta)
    \leq 
    C(\Lambda)\exp\left(C(\Lambda)\sqrt{-\ln r}\right), 
\end{equation}
holds $\forall (r, \theta) \in [0, 1] \times [0, 2\pi].$
Then because the mean value inequality gives that for any $k\in \mathbb{N}$, $\sqrt{x} \leq \frac{1}{C(\Lambda) k} x + \frac{C(\Lambda) k}{4}$, we further have that
\begin{equation}
    \varphi_i(r, \theta) 
    \leq C(\Lambda)\exp\left(C(\Lambda)\left(-\frac{1}{C(\Lambda) k}\ln r+\frac{C(\Lambda) k}{4}\right)\right)\leq C(\Lambda, k) r^{-\frac{1}{k}}
\end{equation}
and
\begin{equation}
    \varphi_i(r, \theta) 
    \geq \left( C(\Lambda) \right)^{-1} \exp\left(-C(\Lambda)\left(-\frac{1}{C(\Lambda) k}\ln r+\frac{C(\Lambda) k}{4}\right)\right)\geq \left( C(\Lambda, k) \right)^{-1} r^{\frac{1}{k}}
\end{equation}
holds for all $(r, \theta) \in [0, 1] \times [0, 2\pi]$ and $i \in \mathbb{N}$. This give \eqref{eqn: estimate-varphi-final}.

Similarly, one can establish \eqref{eqn: estimate-varphi-final-2}, and complete the proof.
\end{proof}

The preceding pointwise bounds near the poles are sufficient to upgrade the local estimates to global Sobolev estimates below the critical exponent $2$.  This gives the desired global compactness of the sequence $\{\varphi_i\}$.

\begin{proposition}\label{prop: varphi-sobolev-estimate}
    Let $\{ g_i \}$ be a sequence of the warped product Riemannian metrics on $ \Sph^2 \times \Sph^1$ as in \eqref{metric-form} satisfying the condition $\Lambda$ as in \eqref{eqn: condition-lambda-estimate-varphi}. Then for any $1 \leq p <2$, there exists a constant $C(\Lambda, p)$ such that
\begin{equation}
    \|\varphi_i\|_{W^{1, p}(\Sph^2, g_{\Sph^2})} \leq C(\Lambda, p),\quad \forall i \in \mathbb{N}.
\end{equation}
Hence there exist a subsequence $\{\varphi_{i_k}\}$ of $\{\varphi_i\}$, and $\varphi_\infty \in W^{1, p}(\Sph^2, g_{\Sph^2})$ for any $1 \leq p <2$ such that 
\begin{equation}
    \varphi_{i_k} \to \varphi_\infty  \ \ \text{in} \ \ L^{q}(\Sph^2, g_{\Sph^2}), \ \ \text{for any} \ \ 1 \leq q < \infty.
\end{equation}
\end{proposition}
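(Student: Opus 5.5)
We need a uniform $L^p$ bound for $\varphi_i$ and for $\nabla\varphi_i$ on $(\Sph^2,g_{\Sph^2})$. Throughout we use the normalization $a_i=2$. The standard metric can be written as $dr^2+\sin^2(\pi r/2)(2/\pi)^2\,d\theta^2$ in the same coordinates. Its area element is comparable to $\min\{r,2-r\}\,dr\,d\theta$, and so is that of $h_i$, since $C(\Lambda)\min\{r,2-r\}\le u_i\le\min\{r,2-r\}$ by \eqref{eqn: u-lower-linear-bound} and Proposition \ref{prop: u_i-convergence}.

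\textbf{Step 1: the $L^p$ bound for $\varphi_i$.} We apply Lemma \ref{lem: varphi-bounds} with a fixed $k$, say $k=2$. It gives $\varphi_i\le C r^{-1/k}$ near $r=0$, and the symmetric bound near $r=2$. Hence $\int\varphi_i^p\,d\vol_{g_{\Sph^2}}\lesssim\int_0^1 r^{1-p/k}dr$, which is finite as soon as $k>p/2$. So $\varphi_i$ is uniformly bounded in every $L^p$, $p<\infty$.

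\textbf{Step 2: the gradient bound.} We write $\nabla\varphi_i=\varphi_i\nabla\ln\varphi_i$ and apply H\"older's inequality with exponents $2/p$ and $2/(2-p)$:
\[
\int|\nabla\varphi_i|^p\le\Bigl(\int|\nabla\ln\varphi_i|^2\Bigr)^{p/2}\Bigl(\int\varphi_i^{\frac{2p}{2-p}}\Bigr)^{\frac{2-p}{2}} .
\]
All integrals here are taken with respect to $g_{\Sph^2}$.

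For the first factor we control the Dirichlet energy. It is conformally invariant in dimension $2$, and the two metrics are both rotationally symmetric with comparable warping factors. More concretely, we use $|\nabla^{g_{\Sph^2}} f|^2\,d\vol_{g_{\Sph^2}}\le C\,|\nabla^{h_i}f|^2\,d\vol_{h_i}$. This holds pointwise from the comparability $u_i\simeq\sin(\pi r/2)$ (with constants depending on $\Lambda$), checking the $\partial_r$ and $\partial_\theta$ components separately. Then Gauss--Bonnet together with $2|\nabla^{h_i}\ln\varphi_i|^2\le\Scal_{h_i}$ gives $\int|\nabla^{h_i}\ln\varphi_i|^2d\vol_{h_i}\le 2\pi$, exactly as in the proof of the local $W^{1,2}$ estimate. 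Hence the first factor is bounded by $C(\Lambda)$.

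The second factor is bounded by Step 1 with exponent $2p/(2-p)<\infty$. This is where $p<2$ is needed.

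\textbf{Step 3: compactness.} Rellich--Kondrachov on the closed surface gives the following. For $p<2$ the embedding $W^{1,p}\hookrightarrow L^q$ is compact for $q<2p/(2-p)$, and this exponent tends to $\infty$ as $p\to2$. Taking a diagonal subsequence over a sequence $p_j\uparrow2$ yields strong $L^q$ convergence for every $q<\infty$. The limit is in $W^{1,p}$ for each $p<2$ by weak lower semicontinuity. It coincides with the locally uniform limit $\varphi_\infty$ from Proposition \ref{prop: varphi-C01/2-estimate}, so the limit is well defined.

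The main obstacle is expected to be Step 2, specifically the justification of the metric comparison near the poles, where both $u_i$ and the round warping factor vanish. Only the ratio $u_i/\sin(\pi r/2)$ matters, and it is pinched between constants depending on $\Lambda$ by concavity together with the linear lower bound \eqref{eqn: u-lower-linear-bound}. The $\partial_r$ term is unaffected, while the $\theta$ term and the area element scale by reciprocal factors, so both are controlled.
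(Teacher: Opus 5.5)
Your proposal is correct and follows the paper's proof. The $L^p$ bound comes from the pole estimates of Lemma~\ref{lem: varphi-bounds}, with $k>p/2$ chosen depending on $p$ rather than a fixed $k=2$. The gradient bound comes from H\"older's inequality, which splits $|\nabla\varphi_i|^p$ into the Dirichlet energy of $\ln\varphi_i$ (controlled by Gauss--Bonnet) times the $L^{2p/(2-p)}$ norm of $\varphi_i$, combined with the metric comparison from $u_i\simeq\min\{r,2-r\}$. The difference from the paper is only the order of steps: you apply H\"older in the round measure and then compare Dirichlet energies, while the paper compares $|\nabla\varphi_i|^p\,d\vol$ first and applies H\"older in $d\vol_{h_i}$. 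Your compactness step is more explicit than the paper's.

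One small correction: since $\Scal_{h_i}=2K_{h_i}$, Gauss--Bonnet gives $\int_{\Sph^2}|\nabla^{h_i}\ln\varphi_i|^2\,d\vol_{h_i}\le\int_{\Sph^2}K_{h_i}\,d\vol_{h_i}=4\pi$, not $2\pi$. This does not affect the argument.
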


\begin{proof}
For any fixed $p \geq 1$, combining (\ref{eqn: estimate-varphi-final}) and $(\ref{eqn: estimate-varphi-final-2})$ with $k > \frac{p}{2}$ implies
\begin{equation}\label{eqn: varphi-lp-inegral-estimate}
    \int_{\Sph^2} \varphi^p_i d\vol_{h_{\Sph^2}}= \int^{2\pi}_{0} d\theta \int^2_0 \varphi^p_i (r, \theta) \sin r dr \leq C(\Lambda, p).
\end{equation}

Then for any fixed $1 \leq p <2$, we have
\begin{equation}\label{eqn: varphi-gradient-integral-estimate}
    \begin{aligned}
       & \quad \int_{S^2}|\nabla^{h_{\Sph^2}} \varphi_i|^pd\vol_{h_{\Sph^2}} \\
        & \leq C(\Lambda)\int_{S^2}|\nabla^{h_i} \varphi_i|^p d\vol_{h_i} \\
    &=C(\Lambda)\int_{S^2}\left(\frac{|\nabla^{h_i} \varphi_i|}{\varphi_i}\right)^p\varphi^p_i d\vol_{h_i}\\
        &\leq C(\Lambda) \left(\int_{S^2} \frac{|\nabla^{h_i} \varphi_i|^2}{\varphi^2_i}d\vol_{h_i} \right)^{p/2}\left(\int_{S^2}\varphi^{\frac{2p}{2-p}}_i d\vol_{h_i} \right)^{(2-p)/2}\\
        &\leq C(\Lambda)(4\pi)^{p/2}\left(\int_{S^2}\varphi^{\frac{2p}{2-p}}_id\vol_{h_i} \right)^{(2-p)/2} \\
        & \leq C(\Lambda)(4\pi)^{p/2}\left(\int^{2\pi}_{0} d\theta \int^{2}_{0}\varphi^{\frac{2p}{2-p}}_i(r, \theta) u_i(t) dr\right)^{(2-p)/2} \\
        & \leq C(\Lambda, p).
        \end{aligned}
\end{equation}
The first inequality follows from (\ref{eqn: u-lower-linear-bound}), $u_i(r) \leq \min\{r, 2-r\}$ (see Lemma \ref{lem: concave-function-properties}) and the fact that $\sin r \sim r$ for small $r$. The third inequality follows from Lemma \ref{lem: scalar-curvature-formula}, $R_{g_i} \geq 0$, and the Gauss-Bonnet formula. The last inequality follows from the uniform boundedness of $u_i$ (see Lemma \ref{lem: concave-function-properties}) and the estimates (\ref{eqn: estimate-varphi-final}) and (\ref{eqn: estimate-varphi-final-2}) with $k \geq \frac{2p}{2-p}$. 

Finally, combining (\ref{eqn: estimate-varphi-final}) and (\ref{eqn: varphi-gradient-integral-estimate}), we obtain the uniform $L^p$ bound of $\varphi_i$ on $
\Sph^2$, and complete the proof.
\end{proof}


\section{Gromov--Hausdorff and Intrinsic Flat Convergence}\label{sect: gh-if-convergence}

We now pass from the compactness of the warping functions to the convergence of the associated metric spaces and integral current spaces.  The main point is that, although the limiting metric may degenerate at the two polar circles, the degeneration is sufficiently mild to allow uniform diameter bounds, convergence of volumes, and convergence of the induced distance functions on compact subsets of the regular region.  These estimates then imply Gromov--Hausdorff convergence to the completion of the regular limit space and, after identifying the natural limiting current, Sormani--Wenger intrinsic flat convergence.

Let
\begin{equation}\label{eqn: mathring-M-defn}
  M:=\Sph^2\times \Sph^1,
  \qquad
  \mathring M:=(0,2)\times \Sph^1_\theta\times \Sph^1_\xi
  \subset \Sph^2\times \Sph^1.
\end{equation}
On \(\mathring M\), we define the limiting Riemannian metric by
\begin{equation}\label{eqn: limit-metric-g-infty}
g_\infty
:=
\varphi_\infty^{-2}
\left(dr^2+u_\infty^2\,d\theta^2\right)
+
\varphi_\infty^2\,d\xi^2 .
\end{equation}


\subsection{Uniform boundedness of diameters}

We first prove a uniform diameter bound.  The argument is elementary: any two points are connected by moving radially to the middle slice, then moving in the two angular directions, and finally moving radially back to the target point.  The the quantitative bounds of $\varphi_i$ in Lemma \ref{lem: varphi-bounds} control the radial lengths near the two poles, while the middle slice is uniformly controlled.

\begin{proposition}\label{prop: diameter-uniform-bound}
  Let $\{ g_i \}$ be a sequence of the warped product Riemannian metrics on $ \Sph^2 \times \Sph^1$ as in \eqref{metric-form}, satisfying the condition $\Lambda$ in \eqref{eqn: condition-lambda-estimate-varphi}. Then the diameters of $(\Sph^2 \times \Sph^1, g_i)$ are uniformly bounded. More precisely, there exits a constant $C(\Lambda)$ such that
  \begin{equation}
      \Diam_{g_i}(\Sph^2 \times \Sph^1) \leq C(\Lambda), \ \ \forall i \in \mathbb{N}.
  \end{equation}
\end{proposition}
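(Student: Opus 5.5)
We bound the distance between two arbitrary points $p=(r_1,\theta_1,\xi_1)$ and $q=(r_2,\theta_2,\xi_2)$ by the $g_i$-length of an explicit piecewise path, normalized so that $a_i=2$ (Remark \ref{rmrk: rescale-metric}). The path has five legs. First move radially from $(r_1,\theta_1,\xi_1)$ to $(1,\theta_1,\xi_1)$. Then move in $\theta$ along the circle $r=1$ to $(1,\theta_2,\xi_1)$. Then move in $\xi$ to $(1,\theta_2,\xi_2)$. Finally move radially to $(r_2,\theta_2,\xi_2)$. Since every point of $\Sph^2\times\Sph^1$, including the polar circles $r\in\{0,2\}$, is reached this way (by continuity/limits of the radial segments), a uniform bound on each leg gives the diameter bound.

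\textbf{The middle legs.} On the slice $r=1$ we use Proposition \ref{prop: varphi-C01/2-estimate} with $\epsilon=\tfrac12$. It gives $C(\Lambda)^{-1}\le\varphi_i\le C(\Lambda)$ on $\Omega_{1/2}$, and also $u_i(1)\le 1$. Hence the $\theta$-leg has length at most $\int_0^{2\pi}\varphi_i^{-1}u_i\,d\theta\le 2\pi C(\Lambda)$. The $\xi$-leg has length at most $\int_0^{2\pi}\varphi_i\,d\xi\le 2\pi C(\Lambda)$.

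\textbf{The radial legs.} This is the only place where degeneration near the poles matters, and it is the main point to check. For $r_1\in(0,1]$ the radial leg has length
\[
\int_{r_1}^{1}\varphi_i^{-1}(r,\theta_1)\,dr .
\]
Lemma \ref{lem: varphi-bounds} with $k=2$ gives $\varphi_i^{-1}(r,\theta)\le C(\Lambda,2)\,r^{-1/2}$. The length is therefore at most $C(\Lambda)\int_0^1 r^{-1/2}\,dr=2C(\Lambda)$, uniformly in $i$ and $\theta_1$. The case $r_1\in[1,2)$ is symmetric, using \eqref{eqn: estimate-varphi-final-2}.

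Only the lower bound on $\varphi_i$ is needed, because the radial direction carries the factor $\varphi_i^{-1}$. Any exponent $1/k<1$ suffices for integrability. Points on the polar circles themselves are handled by letting $r_1\to0$ (or $r_1\to 2$): the bounds are uniform, and $d_{g_i}$ is continuous.

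Summing the five legs yields $d_{g_i}(p,q)\le C(\Lambda)$ for all $p,q$ and all $i$, which proves the proposition.
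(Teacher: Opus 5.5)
Your proposal is correct and follows essentially the same route as the paper: you use the same path (radially to the slice $r=1$, then in $\theta$, then in $\xi$, then radially back), control the radial legs by the $k=2$ case of the pole estimates, $\varphi_i^{-1}\le C r^{-1/2}$, and control the two angular legs by uniform two-sided bounds for $\varphi_i$ together with $u_i\le 1$ on $\{r=1\}$. The only slips are cosmetic: you announce five legs but describe four, and you cite the interior H\"older proposition rather than the pole lemma for the bounds at $r=1$; either reference gives the same constant.
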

\begin{proof}
 Let
$
p=(r,\theta,\xi)
$
and
$
q=(\bar r,\bar\theta,\bar\xi)
$
be two arbitrary points in $\Sph^2 \times \Sph^1$. We connect $p$ to $q$ by the piecewise smooth path $\gamma$ as
\begin{equation}
(r,\theta,\xi)
\xrightarrow{\gamma_1}
(1,\theta,\xi)
\xrightarrow{\gamma_2}
(1,\bar\theta,\xi)
\xrightarrow{\gamma_3}
(1,\bar\theta,\bar\xi)
\xrightarrow{\gamma_4}
(\bar r,\bar\theta,\bar\xi).
\end{equation}
We estimate the lengths of these four pieces.

For $\gamma_1$ and $ \gamma_4$, along a radial curve, \(d\theta=d\xi=0\), and hence
\begin{equation}
ds_{g_i}=\varphi_i^{-1}\,dr.
\end{equation}
By Lemma \ref{lem: varphi-bounds}, 
\begin{equation}
\int_0^1 \varphi_i^{-1}\,dr
\le
C(\Lambda)\int_0^1 r^{-1/2}\,dr
=
2C(\Lambda),
\end{equation}
and
\begin{equation}
\int_1^2 \varphi_i^{-1}\,dr
\le
C(\Lambda)\int_1^2(2-r)^{-1/2}\,dr
=
2C(\Lambda).
\end{equation}
Thus every point can be joined to the middle slice $\{r=1\}$ by a radial curve of length at most $2C(\Lambda)$. Consequently,
\begin{equation}
    L_{g_i} (\gamma_1) + L_{g_i}(\gamma_4) \leq 4 C(\Lambda), \quad \forall i \in \mathbb{N}.
\end{equation}

Next consider the \(\theta\)-direction curve $\gamma_2$ along the slice \(r=1\). There,
\begin{equation}
ds_{g_i}
=
\varphi_i(1,\theta)^{-1}u_i(1)\,d\theta.
\end{equation}
By Proposition \ref{prop: u_i-convergence} and Lemma \ref{lem: varphi-bounds},  we have $u_i(1)\leq 1 $ and $\varphi_i(1,\theta)^{-1}\leq C(\Lambda)$. Hence
\begin{equation}
\int^{2\pi}_{0}\varphi_i(1,\theta)^{-1}u_i(1) d\theta \leq 2\pi C(\Lambda),
\end{equation}
and therefore
\begin{equation}
   L_{g_i}(\gamma_2) \leq 2\pi C(\Lambda), \quad \forall i \in \mathbb{N}.
\end{equation}

Finally, for $\gamma_3$, along the \(\xi\)-direction on the slice \(r=1\),
\begin{equation}
ds_{g_i}=\varphi_j(1,\bar\theta)\,d\xi.
\end{equation}
Again by Lemma \ref{lem: varphi-bounds}, $\varphi_j(1,\bar\theta)\le C$. Hence
\begin{equation}
 L_{g_i}(\gamma_3) \leq 2\pi C(\Lambda), \quad \forall i \in \mathbb{N}.
\end{equation}
Combining the estimates, we obtain
\begin{equation}
d_{g_j}(p,q)
\le
4C(\Lambda)+2\pi C(\Lambda)+2\pi C(\Lambda)
=
(4+4\pi)C(\Lambda), 
\quad
\forall i \in \mathbb{N}.
\end{equation}
Since \(p,q\) were arbitrary, the desired uniform diameter bound follows.
\end{proof}


\subsection{Convergence of the volumes}

We next prove convergence of the total volumes.  On compact subsets of the regular region, the convergence follows from the uniform convergence of the metric coefficients.  Near the two polar circles, the volume contribution is uniformly small by the endpoint estimates for $u_i$ and $\varphi_i$.

\begin{proposition}\label{prop: volume-convergence}
  Let $\{ g_i \}$ be a sequence of the warped product Riemannian metrics on $ \Sph^2 \times \Sph^1$ as in \eqref{metric-form} satisfying the condition $\Lambda$ as in \eqref{eqn: condition-lambda-estimate-varphi}. Then 
  \begin{equation}
  \Vol_{g_i}(\mathring{M}) \to \Vol_{g_\infty}(\mathring{M}),  \quad \text{as} \ \ i \to \infty,
  \end{equation}
  where $\mathring{M}$ is defined in \eqref{eqn: mathring-M-defn}.
  
  Moreover,
  \begin{equation}\label{eqn: limit-cap-volume-tend-zero}
  \Vol_{g_\infty}(\mathring{M} \setminus M_\epsilon) \to 0 \ \ \text{and} \ \  \Area_{g_\infty}(\partial M_\epsilon) \to 0, \quad \text{as} \ \ \epsilon \to 0,
  \end{equation}
  where
  \begin{equation}
  M_\epsilon:= [\epsilon, 2-\epsilon] \times \Sph^1_\theta \times \Sph^1_\xi \subset \Sph^2 \times \Sph^1.
  \end{equation}
 \end{proposition}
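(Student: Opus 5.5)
The plan is to write volumes in coordinates and split each integral into an interior part over $M_\epsilon$ and a polar part. Since $\det g_i = \varphi_i^{-4}u_i^2\varphi_i^2$, the volume density is $\varphi_i^{-1}u_i$, so
\begin{equation}
\Vol_{g_i}(\mathring M)=2\pi\int_0^{2\pi}\!\!\int_0^2 \varphi_i^{-1}(r,\theta)\,u_i(r)\,dr\,d\theta .
\end{equation}
The same formula holds for $g_\infty$ with $\varphi_\infty,u_\infty$. Note that $\{r=0\}$ and $\{r=2\}$ have measure zero, so $\Vol_{g_i}(\mathring M)=\Vol_{g_i}(M)$.

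\textbf{Interior part.} On $M_\epsilon$ I would use Proposition \ref{prop: varphi-C01/2-estimate}. It gives $\varphi_i\to\varphi_\infty$ uniformly on $\Omega_\epsilon$, with $\varphi_i\ge C(\Lambda,\epsilon)^{-1}$ there. Hence $\varphi_i^{-1}\to\varphi_\infty^{-1}$ uniformly on $\Omega_\epsilon$. Together with the uniform convergence $u_i\to u_\infty$ from Proposition \ref{prop: u_i-convergence}, this yields $\Vol_{g_i}(M_\epsilon)\to\Vol_{g_\infty}(M_\epsilon)$ for every fixed $\epsilon$.

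\textbf{Polar part.} Here I would use Lemma \ref{lem: varphi-bounds} with $k=2$, which gives $\varphi_i^{-1}\le C(\Lambda)r^{-1/2}$ on $(0,1]$; Remark \ref{rmrk: estimate-varphi-infty} gives the same bound for $\varphi_\infty$. We also have $u_i,u_\infty\le r$. Therefore
\begin{equation}
\Vol_{g_i}(\{0<r<\epsilon\})\le 4\pi^2 C(\Lambda)\int_0^\epsilon r^{1/2}\,dr\le C(\Lambda)\,\epsilon^{3/2}.
\end{equation}
This bound is uniform in $i\le\infty$. The cap near $r=2$ is handled the same way using $2-r$.

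\textbf{Conclusion.} A standard $\epsilon/3$ argument then gives $\Vol_{g_i}(\mathring M)\to\Vol_{g_\infty}(\mathring M)$. The same bound for $i=\infty$ gives $\Vol_{g_\infty}(\mathring M\setminus M_\epsilon)\to0$; this also shows $\Vol_{g_\infty}(\mathring M)<\infty$.

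\textbf{Boundary area.} The torus $\partial M_\epsilon$ consists of $\{r=\epsilon\}$ and $\{r=2-\epsilon\}$. The metric induced on $\{r=\epsilon\}$ is $\varphi_\infty^{-2}u_\infty^2d\theta^2+\varphi_\infty^2d\xi^2$. Its area density is therefore $u_\infty(\epsilon)\le\epsilon$, because the factors of $\varphi_\infty$ cancel. So $\Area_{g_\infty}(\{r=\epsilon\})\le 4\pi^2\epsilon\to0$, and symmetrically at $2-\epsilon$.

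\textbf{Main obstacle.} The only delicate step is ensuring the polar bound is uniform in $i$ and also valid for the limit. This rests on the subpolynomial lower bound for $\varphi_i$ in Lemma \ref{lem: varphi-bounds}, which prevents $\varphi_i^{-1}$ from blowing up fast enough to concentrate volume at the poles. Using $k=2$ is already enough, since $r\cdot r^{-1/2}$ is integrable.
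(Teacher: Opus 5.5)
Your proposal is correct and follows the paper's proof essentially step by step. Both arguments use uniform convergence of the density $u_i/\varphi_i$ on $M_\epsilon$, a cap bound from Lemma \ref{lem: varphi-bounds} with $k=2$ that is uniform in $i$ and also holds for $\varphi_\infty$ by Remark \ref{rmrk: estimate-varphi-infty}, an $\epsilon/3$ argument, and the identity $\Area_{g_\infty}(\partial M_\epsilon)=4\pi^2\bigl(u_\infty(\epsilon)+u_\infty(2-\epsilon)\bigr)$. The only difference is cosmetic: you bound $u_i\le r$ in the caps instead of $u_i\le 1$, which gives $O(\epsilon^{3/2})$ rather than the paper's $O(\epsilon^{1/2})$.
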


\begin{proof}
The volume form of $g_i$ is 
\begin{equation}
d\vol_{g_j}
=
\frac{u_j}{\varphi_j}\,dr\,d\theta\,d\xi.
\end{equation}
For $ 0 < \epsilon <1$, we split $\mathring{M}$ into the regular middle region and two cap regions:
\begin{equation}
\mathring{M} = M_\epsilon \cup (0, \epsilon) \times \Sph^1_\theta  \times \Sph^1_\xi \cup (2-\epsilon, 2) \times \Sph^1_\theta \times \Sph^1_\xi.
\end{equation}

We first estimate the volume of the cap regions. By Proposition \ref{prop: u_i-convergence} and Lemma \ref{lem: varphi-bounds}, we have
\begin{equation}\label{eqn: volume-density-estimate-g-i}
   \frac{u_i(r)}{\varphi_i(r, \theta)} \leq C(\Lambda) r^{-\frac{1}{2}}, \ \ \forall (r, \theta) \in (0, 1] \times [0, 2\pi], \ \ i \in \mathbb{N}.
\end{equation}
Therefore, 
\begin{equation}
\begin{aligned}
\Vol_{g_i}\left((0, \epsilon) \times \Sph^1_\theta  \times \Sph^1_\xi \right)
&=
\int_0^{2\pi}\int_0^{2\pi}\int_0^\epsilon
\frac{u_i(r)}{\varphi_i(r,\theta)}
\,dr\,d\theta\,d\xi \\
&\le
4\pi^2 C(\Lambda)\int_0^\epsilon r^{-1/2}\,dr \\
&=
8\pi^2 C(\Lambda)\epsilon^{1/2}.
\end{aligned}
\end{equation}
Similarly, 
\begin{equation}
\Vol_{g_i}\left((2-\epsilon, 2) \times \Sph^1_\theta  \times \Sph^1_\xi \right)
\leq
8\pi^2 C(\Lambda)\epsilon^{1/2}.
\end{equation}
Thus
\begin{equation}\label{eqn: cap-volume-estimate}
\Vol_{g_i}(\mathring{M} \setminus M_\epsilon)
\le
C(\Lambda)\epsilon^{1/2}, \quad \forall i \in \mathbb{N}.
\end{equation}

The same estimate holds for the limit $g_\infty$. Indeed, since $u_i \to u_\infty$ and $\varphi_i \to \varphi_\infty$ pointwise on $(0, 2) \times [0, 2\pi]$, passing to the limit for \eqref{eqn: volume-density-estimate-g-i} gives
\begin{equation}\label{eqn: volume-density-estimate-g-infty}
   \frac{u_\infty(r)}{\varphi_\infty(r, \theta)} \leq C(\Lambda) r^{-\frac{1}{2}}, \ \ \forall (r, \theta) \in (0, 1] \times [0, 2\pi], \ \ i \in \mathbb{N}.
\end{equation}
The analogous estimate holds near \(r=2\). Hence
\begin{equation}\label{eqn: limit-cap-volume-estimate}
\Vol_{g_\infty}(\mathring{M} \setminus M_\epsilon)
\le
C(\Lambda) \epsilon^{1/2}.
\end{equation}

Moreover, by Proposition \ref{prop: u_i-convergence},
\begin{equation}
\Area_{g_\infty}(\partial M_\epsilon) = 4\pi^2 (u_\infty(\epsilon) + u_{\infty}(2-\epsilon)) \to 0, \ \ \text{as} \ \ \epsilon \to 0.
\end{equation}
Together with \eqref{eqn: cap-volume-estimate}, this gives \eqref{eqn: limit-cap-volume-tend-zero}.

Now fix \(\epsilon>0\). By  Propositions \ref{prop: u_i-convergence} and  \ref{prop: varphi-C01/2-estimate}
we have
\begin{equation}
\frac{u_j}{\varphi_j}
\to
\frac{u_\infty}{\varphi_\infty}
\quad\text{uniformly on } \Omega_\epsilon.
\end{equation}
and
\begin{equation}
g_j\to g_\infty
\quad\text{uniformly on } M_\epsilon.
\end{equation}
Consequently, we have uniform convergence of the volume densities on $M_\epsilon$, and hence
\begin{equation}
\Vol_{g_j}(M_\epsilon)
\to
\Vol_{g_\infty}(M_\epsilon).
\end{equation}
Finally,
\begin{equation}
\begin{aligned}
& \quad \left|
\Vol_{g_i}(\mathring{M})-\Vol_{g_\infty}(\mathring{M})
\right| \\
&\le
\left|
\Vol_{g_i}(M_\epsilon)-\Vol_{g_\infty}(M_\epsilon)
\right| 
+\Vol_{g_i}(\mathring{M} \setminus M_\epsilon)
+\Vol_{g_\infty}( \mathring{M} \setminus M_\epsilon).
\end{aligned}
\end{equation}
Taking $\limsup\limits_{i \to \infty}$ with $ \epsilon$ fixed, and using \eqref{eqn: cap-volume-estimate} and \eqref{eqn: limit-cap-volume-estimate}, we obtain
\begin{equation}
\limsup_{i\to\infty}
\left|
\Vol_{g_i}(\mathring{M})-\Vol_{g_\infty}(\mathring{M})
\right|
\le
C(\Lambda) \epsilon^{1/2}.
\end{equation}
Letting $\epsilon\to0$ gives the desired convergence of volumes.
\end{proof}


\subsection{The limit metric space}

We now construct the limiting distance on the regular region.  The uniform convergence of the metrics on compact subsets of $\mathring M$ gives local equicontinuity of the distance functions.  A diagonal Arzel\`a--Ascoli argument then produces a limiting metric $d_*$, which is locally induced by $g_\infty$ but may be globally no larger than the intrinsic distance $d_{g_\infty}$.
  
\begin{proposition}\label{prop: limit-metric-space}
  Let $\{ g_i \}$ be a sequence of the warped product Riemannian metrics on $ \Sph^2 \times \Sph^1$ in \eqref{metric-form}, satisfying the condition $\Lambda$ in \eqref{eqn: condition-lambda-estimate-varphi}.
  There exists a metric $d_*$
         on $\mathring{M}$ such that, after passing to a subsequence,
          \begin{equation}
         d_{g_i}\big|_{K\times K}\to d_*
         \quad\text{uniformly on }K\times K
         \end{equation}
         for every compact set $ K\Subset \mathring{M}$. Moreover,
         \begin{equation}
         d_* \leq d_{g_\infty}
        \quad\text{on }\mathring{M}\times \mathring{M},
       \end{equation}
        and $d_*$ agrees locally with $d_{g_\infty}$. In particular, 
        $d_*$ induces the same local topology and the same local Riemannian current
         structure as $g_\infty$.
\end{proposition}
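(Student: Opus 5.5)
We will build $d_*$ by a diagonal Arzel\`a--Ascoli argument on the exhaustion $M_{\epsilon_k}$, $\epsilon_k=1/k$, and then compare it with $d_{g_\infty}$ using the uniform convergence of coefficients from Propositions \ref{prop: u_i-convergence} and \ref{prop: varphi-C01/2-estimate}.

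\textbf{Step 1: equicontinuity and boundedness.} On each $M_\epsilon$ the coefficients of $g_i$ are bounded above and below uniformly in $i$: $u_i\ge C(\Lambda,\epsilon)$ there, and $C(\Lambda,\epsilon)^{-1}\le\varphi_i\le C(\Lambda,\epsilon)$ by Proposition \ref{prop: varphi-C01/2-estimate}. Hence $C^{-1}g_0\le g_i\le C g_0$ on $M_\epsilon$. For $x,y$ in a slightly smaller compact set $K\subset M_{2\epsilon}$ with $g_0$-distance small compared to $\epsilon$, the $g_0$-segment stays in $M_\epsilon$. This gives $d_{g_i}(x,y)\le C\,d_{g_0}(x,y)$, so $d_{g_i}$ is uniformly Lipschitz on $K\times K$ with respect to $d_0$. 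Uniform boundedness follows from Proposition \ref{prop: diameter-uniform-bound}. Arzel\`a--Ascoli on each $M_{1/k}\times M_{1/k}$ together with a diagonal subsequence then gives $d_{g_i}\to d_*$ uniformly on every compact $K\times K$. Symmetry and the triangle inequality pass to the limit, so $d_*$ is a pseudometric.

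\textbf{Step 2: $d_*\le d_{g_\infty}$.} Given $x,y\in\mathring M$ and $\eta>0$, pick a piecewise smooth curve $\gamma$ in $\mathring M$ with $L_{g_\infty}(\gamma)\le d_{g_\infty}(x,y)+\eta$. Its image is compact, so it lies in some $M_\epsilon$. There $g_i\to g_\infty$ uniformly, so $L_{g_i}(\gamma)\to L_{g_\infty}(\gamma)$. Hence $d_*(x,y)=\lim d_{g_i}(x,y)\le\limsup L_{g_i}(\gamma)\le d_{g_\infty}(x,y)+\eta$.

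\textbf{Step 3: local agreement and nondegeneracy (main obstacle).} We must show that every $p\in\mathring M$ has a neighbourhood $U$ on which $d_*=d_{g_\infty}$; this also gives $d_*(x,y)>0$ for $x\ne y$. Take $p\in M_{2\epsilon}$ and let $\lambda$ be the uniform lower bound for the $g_i$-distance from $M_{2\epsilon}$ to $\partial M_\epsilon$. This $\lambda$ is positive because radial motion costs at least $\int\varphi_i^{-1}dr$ with $\varphi_i\le C(\Lambda,\epsilon)$ on $M_\epsilon$, and $\xi,\theta$ motion does not change $r$. Choose $U$ to be the $d_{g_\infty}$-ball around $p$ of radius $\rho\ll\lambda$, intersected with $M_{2\epsilon}$. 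For $x,y\in U$ we have $d_{g_i}(x,y)\le 3\rho<\lambda$ for large $i$ by Step 2 and convergence. Therefore near-minimizing $g_i$-curves from $x$ to $y$ stay inside $M_\epsilon$. On $M_\epsilon$ we have $(1-\delta_i)g_\infty\le g_i\le(1+\delta_i)g_\infty$ with $\delta_i\to0$. So $d_{g_i}(x,y)\ge(1-\delta_i)^{1/2}\,d_{g_\infty}(x,y)$, the right side being the length of a curve in $M_\epsilon$. Letting $i\to\infty$ gives $d_*\ge d_{g_\infty}$ on $U$, and hence equality.

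Consequently $d_*$ is a genuine metric inducing the manifold topology of $\mathring M$. Because it coincides locally with $d_{g_\infty}$, the local bi-Lipschitz charts, and hence the local Riemannian current structure, are those of $g_\infty$. The delicate point is the uniform escape cost $\lambda$: it uses only the upper bound on $\varphi_i$ in $M_\epsilon$, and this is exactly why global equality can fail, since minimizing curves may leave through collapsing polar regions.
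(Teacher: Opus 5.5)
Your proof is correct. Steps 1 and 2 are essentially the paper's argument: a diagonal Arzel\`a--Ascoli on an exhaustion using the uniform two-sided coefficient bounds and the diameter bound, then $L_{g_i}(\gamma)\to L_{g_\infty}(\gamma)$ for curves compactly contained in $\mathring M$.

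The localization step is where you take a different route. The paper proves positivity separately: a curve from $x$ to $y$ must exit a neighbourhood $B$ of $x$, which costs at least $\sqrt{c}\,\rho$. It then proves local agreement in three moves:
\begin{itemize}
\item it nests $B'\Subset B$ and uses $C^{-1}g_0\le g_i\le Cg_0$ on $\overline B$, shrinking $B'$ until minimizers between points of $B'$ stay in $B$;
\item it passes to the limit $d^B_{g_i}\to d^B_{g_\infty}$;
\item it identifies $d^B_{g_\infty}=d_{g_\infty}$ after shrinking $B'$ once more.
\end{itemize}
You instead confine near-minimizers using the explicit collar cost $\lambda\ge\epsilon/C(\Lambda,\epsilon)$. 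Feeding in the upper bound from Step 2 then gives $d_*\ge d_{g_\infty}$ directly, with no intermediate $d^B$.

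What your route buys is a quantitative statement. Your argument actually shows that $d_*(x,y)=d_{g_\infty}(x,y)$ whenever $x\in M_{2\epsilon}$ and $d_*(x,y)<\lambda$. You should state it in that form, because the sentence ``this also gives $d_*(x,y)>0$'' does not follow from local agreement on neighbourhoods alone. A pseudometric obtained from $d_{g_\infty}$ by gluing two distant points still agrees with $d_{g_\infty}$ near every point.

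Applying your Step 3 mechanism to an arbitrary pair closes this gap. If $d_*(x,y)=0<\lambda$, then near-minimizing $g_i$-curves eventually stay in $M_\epsilon$, so $0=d_*(x,y)\ge d_{g_\infty}(x,y)>0$, a contradiction. The same quantitative form shows that small $d_*$-balls coincide with small $d_{g_\infty}$-balls, which is what the topology claim actually requires. Finally, take $p$ in the interior of $M_{2\epsilon}$, so that $U$ is a genuine neighbourhood of $p$.
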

\begin{proof}
We write
\begin{equation}
d_i:=d_{g_i}.
\end{equation}
For each sufficiently large integer $m$, let
\begin{equation}
K_m:=\left\{\frac1m\le r\le a-\frac1m\right\}\times S^1 \subset \Sph^2 \times \Sph^1.
\end{equation}
Fix \(m\), and choose \(m'>m\) such that
\begin{equation}
K_m\Subset {\rm Int}K_{m'}\Subset \mathring M.
\end{equation}
Let $g_0$ be the standard product metric on $\Sph^2 \times \Sph^1$, with
distance $d_0$. By  Propositions \ref{prop: u_i-convergence} and  \ref{prop: varphi-C01/2-estimate}, 
\begin{equation}
g_i\to g_\infty
\quad\text{uniformly on }K_{m'}.
\end{equation}
Thus, there exists \(C_m<\infty\), independent of \(i\), such that
\begin{equation}
g_i\le C_m g_0
\quad\text{on }K_{m'}, \quad \forall i \in \mathbb{N}
\end{equation}
Since $K_m\Subset\operatorname{Int}K_{m'}$, there exists $\rho_m>0$
such that, whenever $x,x'\in K_m$ and
\begin{equation}
d_0 (x,x')<\rho_m,
\end{equation}
a $g_0$-short curve joining $x$ to $x'$ is contained in
\(K_{m'}\). Hence
\begin{equation}
d_i(x,x')
\le
\sqrt{C_m}\, d_0(x, x')
\end{equation}
for such $x, x'$.

Therefore, 
for any $(x, y), (x', y') \in K_m \times K_m$ satsifying
\begin{equation}
d_0 (x, x')<\rho_m \quad \text{and} \quad d_0 (y, y')<\rho_m,
\end{equation}
by the triangle inequality,
\begin{equation}
\begin{aligned}
|d_i(x,y)- d_i(x',y')|
&\le d_i(x,x')+d_i(y,y')  \\
&\le \sqrt{C_m}\, d_0(x,x')
+\sqrt{C_m}\, d_0(y,y').
\end{aligned}
\end{equation}
Thus
$d_i(x, y)$ is equicontinuous on $K_m\times K_m$. Moreover,
\begin{equation}
d_i(x,y)\le \operatorname{Diam}(M_i,g_i)\le D_0, \quad \forall (x, y) \in K_m \times K_m,
\end{equation}
for some constant $D_0$ independent of $i$.
By Arzel\`a--Ascoli, after passing to a subsequence,
$
d_i\big|_{K_m\times K_m}
$
converges uniformly on \(K_m\times K_m\). A diagonal argument in \(m\)
gives a function
\begin{equation}
d_*:\mathring M\times\mathring M\to[0,\infty)
\end{equation}
such that
\begin{equation}
d_i\big|_{K\times K}\to d_*\big|_{K\times K}
\end{equation}
uniformly for every compact set \(K\Subset\mathring M\).

Since each $d_i$ is symmetric and satisfies the triangle inequality, the
limit $d_*$ is symmetric and satisfies the triangle inequality. 
Clearly
$d_*(x,x)=0$.
It remains to show that $d_*(x,y)>0$ whenever $x\ne y \in \mathring{M}$. Fix distinct
points \(x,y\in\mathring M\). Choose a relatively compact open neighborhood
\(B\Subset\mathring M\) of \(x\) such that
\begin{equation}
y\notin \overline B.
\end{equation}
Since \(g_i\to g_\infty\) uniformly on \(\overline B\), and since
\(g_\infty\) is positive definite on \(\overline B\), there exists a
constant \(c>0\), independent of \(i\), such that for all sufficiently large
\(i\),
\begin{equation}
g_i\ge c\,g_0
\qquad\text{on }\overline B.
\end{equation}
Let
\begin{equation}
\rho:=\operatorname{dist}_{g_0}(x,\partial B)>0.
\end{equation}
Every curve joining $x$ to $y$ must leave $B$. Therefore its portion
from $x$ to the first exit point from $B$ has $g_0$-length at least
$\rho$, and hence $g_i$-length at least $\sqrt c\,\rho$. Taking the
infimum over all curves from $x$ to $y$, we get
\begin{equation}
d_i(x,y)\ge \sqrt c\,\rho
\end{equation}
for all sufficiently large \(i\). Passing to the limit gives
\begin{equation}
d_*(x,y)\ge \sqrt c\,\rho>0.
\end{equation}
Thus  $d_*$ separates points, and hence $d_*$ is a metric on
\(\mathring M\).

We next comparison $d_*$ with $d_{g_\infty}$.
We first show that
\begin{equation}
d_*\le d_{g_\infty}.
\end{equation}
Let $ x,y\in\mathring M$, and let $\gamma\subset\mathring M$ be any
smooth curve joining $x$ to $y$. Since $\gamma$ is compactly contained
in $\mathring M$, the uniform convergence of $g_i$ to $g_\infty$ along
\(\gamma\) gives
\begin{equation}
L_{g_i}(\gamma)\to L_{g_\infty}(\gamma).
\end{equation}
Since
\begin{equation}
d_i(x,y)\le L_{g_i}(\gamma),
\end{equation}
we obtain
\begin{equation}
d_*(x,y)\le L_{g_\infty}(\gamma).
\end{equation}
Taking the infimum over all such \(\gamma\), we get
\begin{equation}
d_*(x,y)\le d_{g_\infty}(x,y).
\end{equation}

It remains to prove that $d_*$ agrees locally with $d_{g_\infty}$. Fix
$ x\in\mathring M$. Choose open sets
\begin{equation}
B'\Subset B\Subset \mathring M
\end{equation}
with $x\in B'$. We claim that, after shrinking $B'$ if necessary, every
$g_i$-minimizing curve between points of $B'$ remains inside $B$, for
all large $i$.

Indeed,  set
\begin{equation}
\eta:=\operatorname{dist}_{g_0}(B',\partial B)>0.
\end{equation}
Since \(g_i\to g_\infty\) uniformly on \(\overline B\), there exist a constant
$C>0$, independent of \(i\), such that
\begin{equation}
C^{-1}g_0 \le g_i\le C g_0
\quad\text{on }\overline B.
\end{equation}
Any curve joining two points of $B'$ and leaving $B$ has $g_i$-length
at least $C^{-\frac{1}{2}}\eta$. On the other hand, any two points $x, y\in B'$
can be joined inside $B$ by a curve of $g_i$-length at most
\begin{equation}
\sqrt C\,d_0(p,q)
\le
\sqrt C\, \Diam_{g_0}(B').
\end{equation}
After shrinking $B'$ so that
\begin{equation}
\sqrt C\, \Diam_{g_0}(B')
<
C^{-\frac{1}{2}}\,\eta,
\end{equation}
no $g_i$-minimizing curve between points of $B'$ can leave $B$. Therefore, for
$p, q \in B'$,
\begin{equation}\label{eqn: di-dBi}
d_i(p, q)=d_{g_i}^{B}(p, q), \quad \forall i \in \mathbb{N}
\end{equation}
where $d_{g_i}^{B}$ denotes the intrinsic length distance computed among
curves contained in $B$.

Since $g_i\to g_\infty$ uniformly on $\overline B$, the intrinsic
distances $d_{g_i}^{B}$ converge uniformly on $B'\times B'$ to
\(d_{g_\infty}^{B}\). Therefore
\begin{equation}
d_*(p, q)=d_{g_\infty}^{B}(p, q)
\qquad \forall p, q\in B'.
\end{equation}
Moreover, since $g_\infty$ is continuous and positive definite on $\bar{B}$, the same argument used to obtain \eqref{eqn: di-dBi} gives, after shrinking \(B'\) further if necessary,
\begin{equation}
d_{g_\infty} (p, q) = d^B_{g_\infty}(p, q) \quad \forall p, q \in B'.
\end{equation}
Therefore $d_*$ agrees locally with the Riemannian distance of $g_\infty$.
\end{proof}


\subsection{Gromov-Hausdorff convergence}

We now promote the local convergence of distances on $\mathring M$ to global Gromov--Hausdorff convergence.  The cap estimates show that the parts near \(r=0\) and \(r=2\) are uniformly close to the truncated region \(M_\delta\).  On each fixed truncated region, the distances converge uniformly.  Combining these two facts yields convergence to the completion of $(\mathring M,d_*)$.

\begin{proposition}
Let $\{ g_i \}$ be a sequence of the warped product Riemannian metrics on $ \Sph^2 \times \Sph^1$ as in \eqref{metric-form}, satisfying the condition $\Lambda$ in \eqref{eqn: condition-lambda-estimate-varphi}. Let $d_*$ be the metric on $\mathring{M}=\{0 < r < 2\} \times \Sph^1 \subset \Sph^2 \times \Sph^1$ obtained in Proposition. Denote
\begin{equation}
(M_i, d_i) := (\Sph^2 \times \Sph^1, d_{g_i}).
\end{equation} 
Then, after passing to a subsequence,
           \begin{equation}
               (M_i, d_i) \longrightarrow Y
           \end{equation}
           in the Gromov-Hausdorff sense, where 
           \begin{equation}
           Y := \overline{(\mathring{M}, d_*)}
           \end{equation} 
           is the metric completion of $(\mathring{M}, d_*)$.
\end{proposition}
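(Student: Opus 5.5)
The plan is to build explicit $\varepsilon$-Gromov--Hausdorff approximations through a fixed truncated region. Write $M_\delta=[\delta,2-\delta]\times\Sph^1_\theta\times\Sph^1_\xi$, and let $\pi_\delta:M\to M_\delta$ be the radial projection $(r,\theta,\xi)\mapsto(\max\{\delta,\min\{r,2-\delta\}\},\theta,\xi)$. The argument has three steps.

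\textbf{Step 1: the caps are uniformly thin.} By Lemma \ref{lem: varphi-bounds} with $k=2$, for every $i$ and every $x\in M\setminus M_\delta$ we have
\begin{equation}
d_i(x,\pi_\delta(x))\le \int_0^\delta\varphi_i^{-1}\,dr\le C(\Lambda)\int_0^\delta r^{-1/2}\,dr=2C(\Lambda)\delta^{1/2}=:\eta(\delta),
\end{equation}
and symmetrically near $r=2$. The same estimate holds for $d_*$ on $\mathring M$. To see this, take $x\in\mathring M\setminus M_\delta$, so $x$ and $\pi_\delta(x)$ both lie in some compact $K\Subset\mathring M$. Then $d_*(x,\pi_\delta(x))=\lim_i d_i(x,\pi_\delta(x))\le\eta(\delta)$ by Proposition \ref{prop: limit-metric-space}. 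Consequently $M_\delta$ is $\eta(\delta)$-dense in $(M_i,d_i)$ for every $i$, and also in $Y$, because $\mathring M$ is dense in its completion.

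\textbf{Step 2: the fixed region converges.} Fix $\delta$. By Proposition \ref{prop: limit-metric-space}, $d_i\to d_*$ uniformly on $M_\delta\times M_\delta$. Hence the identity map $\iota:(M_\delta,d_i)\to(M_\delta,d_*)$ has distortion $\sup_{M_\delta\times M_\delta}|d_i-d_*|\to0$. Two points deserve care.
\begin{itemize}
\item Here $d_i$ means the ambient distance of $M_i$ restricted to $M_\delta$, not the intrinsic distance of $M_\delta$. That is exactly the quantity the proposition controls.
\item Likewise $d_*$ restricted to $M_\delta$ is the restriction of the metric of $Y$, since a metric space embeds isometrically in its completion.
\end{itemize}

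\textbf{Step 3: combine.} Define $F_i:=\iota\circ\pi_\delta:M_i\to Y$. Its distortion is at most $2\eta(\delta)+\sup_{M_\delta\times M_\delta}|d_i-d_*|$. Its image $M_\delta$ is $\eta(\delta)$-dense in $Y$. So
\begin{equation}
\limsup_{i\to\infty} d_{GH}(M_i,Y)\le C\,\eta(\delta),
\end{equation}
and letting $\delta\to0$ gives the claim. The subsequence is the one already fixed in Proposition \ref{prop: limit-metric-space}, so no further extraction is needed. Compactness of $Y$ follows from Gromov's theorem, using the diameter bound of Proposition \ref{prop: diameter-uniform-bound}. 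It also follows directly: $Y$ is complete, and for each $\delta$ it is within $\eta(\delta)$ of the compact set $M_\delta$, so it is totally bounded.

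\textbf{Expected obstacle.} The main subtlety is Step 1 for the limit $d_*$. Points of $Y\setminus\mathring M$, which arise as completion points near the poles, must be approximated by $M_\delta$. This is settled by the inequality $d_*(x,\pi_\delta x)\le\eta(\delta)$ on $\mathring M$ together with density. One should also make sure the radial estimate of Lemma \ref{lem: varphi-bounds} is applied in the form $\varphi_i^{-1}\le C r^{-1/2}$, which is integrable, with a constant uniform in $i$ and $\theta$.
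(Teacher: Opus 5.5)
Your proof is correct and follows the paper's route: the uniform $\delta^{1/2}$-thinness of the caps from Lemma \ref{lem: varphi-bounds}, $\eta(\delta)$-density of $M_\delta$ in both $M_i$ and $Y$, and the vanishing distortion of the identity on $M_\delta$. The differences are in packaging: you combine these into the single approximation $\iota\circ\pi_\delta$ instead of the triangle inequality for $d_{GH}$, and you get the $d_*$-density from $d_*(x,\pi_\delta x)=\lim_i d_i(x,\pi_\delta x)$ rather than from $d_*\le d_{g_\infty}$ and the bound on $\varphi_\infty^{-1}$, which is slightly more direct. The only inaccuracy is your aside that compactness of $Y$ follows from Gromov's theorem via the diameter bound. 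A diameter bound alone does not give precompactness, but your direct total-boundedness argument already settles compactness.
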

\begin{proof}
For  $0 \leq \delta <1$, let
\begin{equation}
M_\delta:=\{\delta\le r\le a-\delta\} \times S^1 \subset \Sph^2 \times \Sph^1.
\end{equation}
We first prove the uniform Hausdorff estimate
\begin{equation}
d_H^{(M_i,d_i)}(M_i,M_\delta)\le \omega(\delta), \quad \forall i \in \mathbb{N},
\end{equation}
where $\omega(\delta)\to0$ as $\delta\to0$.

Let $x=(r,\theta,\xi) \in M_i$ with $0<r<\delta$. Join $x$ to
\begin{equation}
x_\delta:=(\delta,\theta,\xi)
\end{equation}
by the radial segment. Along this segment,
\begin{equation}
ds_{g_i}=\varphi_i^{-1}\,dr.
\end{equation}
By Lemma \ref{lem: varphi-bounds}, there exists a constant $C$, independent of $i$, such that
\begin{equation}
\varphi_i(r,\theta) ^{-1} \leq C r^{-\frac{1}{2}}, \quad \forall (r, \theta) \in (0, 1) \times [0, 2\pi], \ \ \forall i \in \mathbb{N}
\end{equation}
Consequently,
\begin{equation}
d_i(x,x_\delta)
\le
\int_r^\delta \varphi_i(s,\theta)^{-1}\,ds
\le
C\int_0^\delta s^{-\frac{1}{2}}\,ds
=
2C \delta^{\frac{1}{2}}.
\end{equation}
The same estimate near $r=2$ gives
\begin{equation}
d_i(x,M_\delta)\le
2C\delta^{\frac{1}{2}}, \quad \forall x \in M_i.
\end{equation}
Thus
\begin{equation}\label{eqn: Hausdorff-dist-Mi}
d_H^{(M_i,d_i)}(M_i,M_\delta)
\le
\omega(\delta), \ \ \forall i \in \mathbb{N},
\qquad
\omega(\delta):=2C \delta^{\frac{1}{2}}.
\end{equation}

We also need the corresponding density statement in the limit space
$
\overline{(\mathring M,d_*)}.
$
Let \(x\in\mathring M\) with \(0<r<\delta\). Since
$
d_*\le d_{g_\infty},
$
by Proposition \ref{prop: limit-metric-space},
and since $\varphi_\infty(r,\theta)^{-1}\leq C r^\frac{1}{2}$ by Remark \ref{rmrk: estimate-varphi-infty}, the same radial
curve gives
\begin{equation}
d_*(x,x_\delta)
\le
d_{g_\infty}(x,x_\delta)
\le
\omega(\delta).
\end{equation}
The same estimate holds near $r=2$. Passing to limits of Cauchy sequences in
\((\mathring M,d_*)\), we obtain
\begin{equation}\label{eqn: Hausdorff-dist-Y}
d_H^Y\bigl(Y,\overline{M_\delta}^{\,d_*}\bigr)
\le
\omega(\delta).
\end{equation}
In particular, $Y$ is compact, since it is complete and is uniformly
approximated by the compact sets $\overline{M_\delta}^{\,d_*}$.

Fix $0<\delta< 1$. By construction of \(d_*\),
\begin{equation}
d_i\big|_{M_\delta\times M_\delta}
\to
d_*\big|_{M_\delta\times M_\delta}
\quad
\text{uniformly}.
\end{equation}
Set
\begin{equation}
\lambda_{i,\delta}:=
\sup_{x,y\in M_\delta}
\left|
d_i(x,y)-d_*(x,y)
\right|.
\end{equation}
Then
\begin{equation}\label{eqn: distortion-limit}
\lambda_{i,\delta} \to 0, \quad \text{as} \ \ i \to \infty.
\end{equation}
The identity correspondence between
\begin{equation}
(M_\delta,d_i|_{M_\delta\times M_\delta})
\quad\text{and}\quad
(M_\delta,d_*|_{M_\delta\times M_\delta})
\end{equation}
has distortion at most \(2\lambda_{i,\delta}\). Hence
\begin{equation}\label{eqn: GH-distortion}
d_{GH}\left(
(M_\delta,d_i),
(M_\delta,d_*)
\right)
\le
\lambda_{i,\delta},
\end{equation}
see, for example, \cite[Theorem 7.3.25]{BBI-book}.

Using the triangle inequality for $d_{GH}$, and viewing $M_\delta$ as a
subspace of both $M_i$ and of $Y$, we get
\begin{equation}
\begin{aligned}
d_{GH}\bigl((M_i,d_i),Y\bigr)
&\le
d_H^{(M_i,d_i)}(M_i,M_\delta)\\
&\quad+
d_{GH}\left((M_\delta,d_i),(M_\delta,d_*)\right)\\
&\quad+
d_H^Y\bigl(Y,\overline{M_\delta}^{\,d_*}\bigr).
\end{aligned}
\end{equation}
Therefore, by  \eqref{eqn: Hausdorff-dist-Mi}, \eqref{eqn: Hausdorff-dist-Y}, \eqref{eqn: distortion-limit} and \eqref{eqn: GH-distortion}, 
\begin{equation}
\limsup_{i\to\infty}
d_{GH}\bigl((M_i,d_i),Y\bigr)
\le
2\omega(\delta).
\end{equation}
Letting $\delta\to0$, we obtain
\begin{equation}
\lim_{i\to\infty}
d_{GH}\bigl((M_i,d_i),Y\bigr)=0.
\end{equation}
This completes the proof.
\end{proof}

\subsection{Intrinsic flat convergence}

Finally, we identify the intrinsic flat limit.  Since $d_*$ agrees locally with the Riemannian distance of $g_\infty$, the oriented Riemannian current associated with $g_\infty$ defines a natural integral current structure on the regular region.  We then use an exhaustion by compact submanifolds with boundary, apply the Lakzian--Sormani estimate on each fixed compact piece, and control the remaining caps by their small volume.

Recall that an integral current space $(X, d, T)$ consists of a metric space $(X, d)$ together with an integral current structure $T$. An oriented Riemannian manifold $(M^m, g)$ of finite volume can be viewed as an integral current space with its induced metric and the canonical current $T$, which
acts on smooth compactly supported differential $m$-forms $\omega$ by
\begin{equation}
  T(\omega) = \int_M \omega.
\end{equation}
The mass $\mathbf M(T)$ of an integral current space may be understood as a weighted volume, and the boundary $\partial T$ is defined so that Stokes' theorem holds. In particular, when the integral current space comes from an oriented Riemannian manifold, its mass is the Riemannian volume and its boundary is just the usual boundary. We refer to Ambrosio-Kirchheim \cite{AmbrosioKirchheim2000} for the theory of integral currents in metric spaces.

Let $Z$ be a metric space, and let $T_1$ and $T_2$ be two $m$-integral currents on $Z$. The flat distance between $T_1$ and $T_2$, in the sense of Federer--Fleming, is defined by 
\begin{equation}
    d^Z_F(T_1, T_2) = \inf \{\mathbf M(B^{m+1}) + \mathbf M(A^m) \mid T_1 - T_2 = A + \partial B\},
\end{equation}
where the infimum is take over all $m$-dimensional integral current $A$ and $(m+1)$-dimensional integral currents $B$ in $Z$.

The {\em Sormani-Wenger intrinsic flat distance} between two integral current spaces $(X_1, d_1, T_1)$ and $(X_2, d_2, T_2)$ is defined by
\begin{equation}
    d_{\mathcal F} ((X_1, d_1, T_1), (X_2, d_2, T_2)) = \inf 
    \{d^Z_{F}(\varphi_{1\#}T_1, \varphi_{2\#}T_2) \mid \varphi_i: X_i \to Z\},
\end{equation}
where the infimum is taken over all complete metric spaces $Z$ and all isometric embeddings $\varphi_i: X_i \to Z$, where $\varphi_{i\#}$ is the push-forward map on integral currents. We refer to Sormani-Wenger \cite{SormaniWenger2011} for the definition and basic properties of the intrinsic flat distance.

We now apply this framework to the limiting space obtained in Proposition \ref{prop: limit-metric-space}.  By
that proposition, the distance $d_*$ agrees locally
with the Riemannian distance induced by $g_\infty$ on $\mathring M$. Therefore
the oriented Riemannian current determined by $g_\infty$ defines an integral
current on the metric space $(\mathring M,d_*)$.  We denote this current by
\begin{equation}
T=[\![\mathring{M}]\!]_{g_\infty}.
\end{equation}
Its mass is
\begin{equation}
\mathbf M(T)=\operatorname{Vol}_{g_\infty}(\mathring{M})<\infty.
\end{equation}

To see that $T$ has no boundary in the completion, consider the exhaustion
\begin{equation}
K_m
=
\left\{
\frac1m\le r\le 2-\frac1m
\right\}
\times \Sph^1_\theta\times \Sph^1_\xi .
\end{equation}
By Proposition \ref{prop: volume-convergence}, we have
\begin{equation}\label{eqn: cap-volume-go-zero}
\Vol_{g_\infty}(\mathring M\setminus K_m)\to0,
\qquad
\Area_{g_\infty}(\partial K_m)\to0 .
\end{equation}
It follows that
\begin{equation}
[\![K_m]\!]_{g_\infty}\to T
\end{equation}
as integral currents, and the boundary masses satisfy
\begin{equation}
\mathbf M(\partial[\![K_m]\!]_{g_\infty})
=
\Area_{g_\infty}(\partial K_m)\to0.
\end{equation}
Hence $\partial T=0$.  Thus $T$ defines an integral current structure on the
metric completion of $(\mathring M,d_*)$.

Let
\begin{equation}
(X,d_X,T)
\end{equation}
denote the settled completion of this integral current space. We then obtain the following intrinsic flat convergence result.

\begin{proposition}
Let $\{ g_i \}$ be a sequence of the warped product Riemannian metrics on $ \Sph^2 \times \Sph^1$ as in \eqref{metric-form}, satisfying the condition $\Lambda$ in \eqref{eqn: condition-lambda-estimate-varphi}. Then, after passing to a subsequence,
          \begin{equation}
          (M_i,d_{g_i},[\![M_i]\!])
          \longrightarrow
          (X,d_X,T)
           \end{equation}
          in the Sormani--Wenger intrinsic flat sense.
\end{proposition}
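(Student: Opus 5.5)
The plan is to follow the strategy outlined just before the statement: exhaust $M_i$ by a fixed compact piece $K_m$ and the two thin caps, compare the fixed pieces using the Lakzian--Sormani estimate, and absorb the caps through their small volume and small boundary area. All estimates are uniform in $i$. Concretely, I would bound $d_{\mathcal F}((M_i,d_{g_i},[\![M_i]\!]),(X,d_X,T))$ by the triangle inequality through two intermediate integral current spaces. The first is $(K_m, d_{g_i}|_{K_m}, [\![K_m]\!]_{g_i})$, the compact piece carrying the restricted (extrinsic) distance of $M_i$. The second is $(K_m, d_*|_{K_m}, [\![K_m]\!]_{g_\infty})$, viewed inside $X$.

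\textbf{Step 1 (caps in $M_i$).} Take $Z=M_i$ with the identity embeddings. Then
\[
[\![M_i]\!]-[\![K_m]\!]_{g_i}=[\![M_i\setminus K_m]\!]_{g_i},
\]
and the flat distance is at most $\Vol_{g_i}(M_i\setminus K_m)\le C(\Lambda)m^{-1/2}$ by \eqref{eqn: cap-volume-estimate}. \textbf{Step 2 (caps in $X$).} In the same way, take $Z=X$. The difference $T-[\![K_m]\!]_{g_\infty}$ has mass $\Vol_{g_\infty}(\mathring M\setminus K_m)\to0$ by Proposition \ref{prop: volume-convergence}. Both bounds are independent of $i$.

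\textbf{Step 3 (the fixed piece).} This is the core step: I must show that $d_{\mathcal F}$ between $(K_m,d_{g_i},[\![K_m]\!]_{g_i})$ and $(K_m,d_*,[\![K_m]\!]_{g_\infty})$ tends to $0$ as $i\to\infty$ for fixed $m$. The tool is the Lakzian--Sormani type estimate for a fixed manifold carrying two metrics: if $|d_1-d_2|\le\lambda$ on $K\times K$ and the metric tensors are uniformly comparable, then there is a common space $Z=[-h,h]\times K$, glued so that it isometrically contains both $(K,d_1)$ and $(K,d_2)$. In this space the flat distance is bounded by
\[
C\bigl(h(\Vol_1+\Vol_2+\Area_1(\partial K)+\Area_2(\partial K))+\Vol_1(K\setminus W)+\Vol_2(K\setminus W)\bigr),
\]
where $h\sim\sqrt{\lambda\,\Diam}$. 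I apply it with $d_1=d_{g_i}|_{K_m}$, $d_2=d_*|_{K_m}$ and $\lambda=\lambda_{i,1/m}$. Proposition \ref{prop: limit-metric-space} gives $\lambda_{i,1/m}\to0$. The other quantities are controlled: volumes and boundary areas of $K_m$ converge by the uniform convergence of coefficients on $K_m$, and diameters are uniformly bounded by Proposition \ref{prop: diameter-uniform-bound}. The currents are also compatible, since the parametrized currents are both the fundamental class of $K_m$ with the same orientation, and their masses converge. Consequently the Step 3 distance is at most $C_m\sqrt{\lambda_{i,1/m}}+o(1)$.

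\textbf{Main obstacle.} The difficulty is that the distances $d_{g_i}|_{K_m}$ and $d_*|_{K_m}$ are restricted distances, not intrinsic length metrics on $K_m$, whereas the clean Lakzian--Sormani theorem is phrased for Riemannian metrics on a manifold. To get around this, I would build $Z$ directly, as in Lakzian's construction. Put on $[0,h]\times K_m$ the metric
\[
d_Z\bigl((s,x),(s',y)\bigr)=\min\bigl\{\max(d_1,d_2)(x,y)+|s-s'|,\ \dots\bigr\}.
\]
More robustly, I would use the standard fact that two metrics on the same compact set with $|d_1-d_2|\le\lambda$ embed isometrically into a common space in which points are paired at distance at most $\lambda$. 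Inside that space, the filling $B$ is the pushforward of $[0,1]\times[\![K_m]\!]$ under a Lipschitz homotopy. Its mass is $\lesssim\lambda\cdot C_m$, because $g_i\to g_\infty$ uniformly with a uniform Lipschitz bound on $K_m$, coming from the local agreement of $d_*$ with $d_{g_\infty}$. There is also the difference of the currents themselves, which is bounded by $\int_{K_m}|d\vol_{g_i}-d\vol_{g_\infty}|\to0$. Finally, $\partial B$ produces side terms over $\partial K_m$ of mass $\lesssim\lambda\,\Area(\partial K_m)$.

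Putting the three steps together gives
\[
\limsup_i d_{\mathcal F}\le C(\Lambda)m^{-1/2}+o_m(1).
\]
Letting $m\to\infty$ proves the convergence. Passing to the settled completion $X$ does not change the intrinsic flat class, because it has the same current.
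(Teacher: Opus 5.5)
Your decomposition is the paper's: the triangle inequality through $K_m^i=(K_m,d_{g_i}|_{K_m},T_i\llcorner K_m)$ and $K_m^*=(K_m,d_*|_{K_m},T\llcorner K_m)$, caps bounded by $\Vol_{g_i}(M_i\setminus K_m)$ and $\Vol_{g_\infty}(\mathring M\setminus K_m)$, then $i\to\infty$ followed by $m\to\infty$. You differ only in how you justify $d_{\mathcal F}(K_m^i,K_m^*)\to0$. The paper simply quotes \cite[Theorem 4.6]{LakzianSormani2013}, while you build the common space by hand.

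Your worry there is legitimate but slightly misplaced. Lakzian--Sormani already work with ambient distances restricted to $W$, so $d_{g_i}|_{K_m}$ causes no trouble. The real point is that the limit side is not a Riemannian manifold carrying its own Riemannian distance: $d_*$ can be strictly smaller than $d_{g_\infty}$ (Lemma \ref{lem: example-compare-distances}). So the theorem needs adapting on that side, and your argument supplies this.

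To make your route rigorous, note that no homotopy between the two embeddings exists inside the standard gluing of the two copies of $K_m$. There the copies lie at positive distance from each other, so no path joins $x_1$ to $x_2$. Instead, Kuratowski-embed the glued space into $\ell^\infty$ and use the straight-line homotopy $H$. It is $\lambda_{i,1/m}$-Lipschitz in $t$ and $L_m$-Lipschitz in $x$ with respect to $d_0$. The constant $L_m$ is independent of $i$, from $g_i\le C_m g_0$ near $K_m$ together with the uniform diameter bound.

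Also, your term $\int_{K_m}|d\vol_{g_i}-d\vol_{g_\infty}|$ is not needed. Both currents are pushforwards of the same fundamental class, so
\[
\iota_{2\#}[\![K_m]\!]-\iota_{1\#}[\![K_m]\!]=\partial H_\#\bigl([0,1]\times[\![K_m]\!]\bigr)+H_\#\bigl([0,1]\times\partial[\![K_m]\!]\bigr).
\]
The standard mass bound for straight-line homotopies in $\ell^\infty$ then gives $d_{\mathcal F}(K_m^i,K_m^*)\le C\lambda_{i,1/m}\bigl(L_m^3\Vol_{g_0}(K_m)+L_m^2\Area_{g_0}(\partial K_m)\bigr)$. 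This is linear rather than square-root in $\lambda_{i,1/m}$, and the $m$-dependence of the constants is harmless because $m$ is fixed at that stage.
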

\begin{proof}

For each $m$, let
\begin{equation}
T^m:=T\llcorner K_m.
\end{equation}
Similarly, let
\begin{equation}
T_i=[\![M_i]\!]_{g_i},
\qquad
T_i^m:=T_i\llcorner K_m.
\end{equation}
We view
\begin{equation}
K_m^i:=(K_m,d_{g_i}|_{K_m\times K_m},T_i^m)
\end{equation}
as a subspace of \(M_i\) with the restricted ambient distance, and
\begin{equation}
K_m^*:=(K_m,d_*|_{K_m\times K_m},T^m)
\end{equation}
as a subspace of $X$ with the restricted distance.

For fixed $m$, Proposition \ref{prop: limit-metric-space} gives
\begin{equation}
d_{g_i}\big|_{K_m\times K_m}\to d_*
\quad\text{uniformly on }K_m\times K_m.
\end{equation}
Equivalently,
\begin{equation}
\lambda_i := \sup_{x, y \in K_m} |d_i(x, y,) - d_*(x, y)| \to 0, \quad \text{as} \ \ i \to \infty.
\end{equation}
Also, since
\begin{equation}
g_i\to g_\infty
\quad\text{uniformly on }K_m,
\end{equation}
the corresponding mass measures on \(K_m\) converge:
\begin{equation}
\mathbf M(T_i^m)
=
\operatorname{Vol}_{g_i}(K_m)
\to
\operatorname{Vol}_{g_\infty}(K_m)
=
\mathbf M(T^m).
\end{equation}
The boundary masses are uniformly bounded for fixed $m$, because
$\partial K_m$ is a fixed smooth hypersurface and the metrics converge
uniformly there.
Hence, by the
Lakzian--Sormani bridge estimate \cite[Theorem 4.6]{LakzianSormani2013},
applied to the two copies of the same compact manifold-with-boundary
$K_m$, we obtain
\begin{equation}\label{eqn: interior-flat-distance-limit}
d_{\mathcal F}(K_m^i,K_m^*)\to0, \quad \text{as} \ \ i \to \infty,
\end{equation}
for every fixed $m$.

Since $K_m^i$ is equipped with the restricted ambient distance from
$(M_i,d_{g_i})$, the inclusion
\begin{equation}
K_m^i\hookrightarrow M_i
\end{equation}
is isometric. Hence, using the common ambient space $M_i$,
\begin{equation}
T_i-T_i^m=T_i\llcorner(M_i\setminus K_m).
\end{equation}
By the definition of the flat norm, 
\begin{equation}
d_{\mathcal F}(M_i,K_m^i)
\le
\mathbf M\bigl(T_i\llcorner(M_i\setminus K_m)\bigr)
=
\operatorname{Vol}_{g_i}(M_i\setminus K_m).
\end{equation}

Similarly, since $K_m^*$ is equipped with the restricted distance from
$X$, we have
\begin{equation}
d_{\mathcal F}(X,K_m^*)
\le
\mathbf M\left(T\llcorner(X\setminus K_m)\right)
=
\Vol_{g_\infty}(\mathring{M}\setminus K_m).
\end{equation}

By the triangle inequality,
\begin{equation}
\begin{aligned}
d_{\mathcal F}(M_i,X)
&\le
d_{\mathcal F}(M_i,K_m^i)
+
d_{\mathcal F}(K_m^i,K_m^*)
+
d_{\mathcal F}(K_m^*,X)\\
&\le
\operatorname{Vol}_{g_i}(M_i\setminus K_m)
+
d_{\mathcal F}(K_m^i,K_m^*)
+
\operatorname{Vol}_{g_\infty}(\mathring{M}\setminus K_m).
\end{aligned}
\end{equation}
Taking $\limsup\limits_{i\to\infty}$, and using \eqref{eqn: interior-flat-distance-limit},
we obtain
\begin{equation}
\limsup_{i\to\infty}d_{\mathcal F}(M_i,X)
\le
\limsup_{i\to\infty}\operatorname{Vol}_{g_i}(M_i\setminus K_m)
+
\operatorname{Vol}_{g_\infty}(\mathring{M}\setminus K_m).
\end{equation}
By the \eqref{eqn: cap-volume-go-zero}, the right-hand side tends to zero as
$m\to\infty$. Therefore
\begin{equation}
\limsup_{i\to\infty}d_{\mathcal F}(M_i,X)=0.
\end{equation}
This proves the proposition.
\end{proof}


\section{Nonnegative distributional scalar curvature of the limit metric}\label{sect: distributional-scalar}

The goal of this section is to prove that the limit metric obtained in the previous
section retains nonnegative scalar curvature in the distributional sense.  Since
$g_\infty$ may fail to be smooth, scalar curvature
cannot be interpreted pointwise everywhere.  We therefore use the distributional
formulation of Lee--LeFloch \cite{LeeLeFloch2015}.  We first establish strong
$L^p$ convergence of the metric tensors.  We then compute the Lee--LeFloch
quantities $V$ and $F$ for our warped product ansatz, prove nonnegative
distributional scalar curvature away from the singular circles, and finally pass
to the whole manifold by controlling the boundary terms near the poles.


\subsection{$L^p$ convergence of the Riemannian metrics}\label{subsect: metric-convergence}

We begin with the convergence of the metric tensors themselves.  Although the
limit metric is naturally defined only on the regular part $\mathring M$, the
estimates for $u_i$ and $\varphi_i$ obtained earlier give enough control near
the two missing circles to obtain global $L^p$ convergence with respect to a
fixed smooth background metric.

\begin{proposition}\label{prop: Lp-convergence-of-metrics}
Let $\{ g_i \}$ be a sequence of the warped product Riemannian metrics on $ M = \Sph^2 \times \Sph^1$ as in \eqref{metric-form}, satisfying the condition $\Lambda$ in \eqref{eqn: condition-lambda-estimate-varphi}. 
Then, for every
\(1\le p<\infty\),
\[
g_i\to g_\infty
\quad\text{in }L^p(M,g_0)
\]
as symmetric \((0,2)\)-tensors, where $g_\infty$ is defined in \eqref{eqn: limit-metric-g-infty}, and $g_0$ is the standard product metric on $\Sph^2 \times \Sph^1$.
\end{proposition}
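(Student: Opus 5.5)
We will compare the coefficients of $g_i$ and $g_\infty$ in a fixed frame of $g_0 = g_{\Sph^2}+d\xi^2$, where $g_{\Sph^2}=dr^2+\sin^2 r\,d\theta^2$ after the normalization $a_i=2$ (replace $\sin r$ by any fixed smooth profile with the same endpoint behaviour if needed). In the $g_0$-orthonormal coframe $\{dr,\ \sin r\,d\theta,\ d\xi\}$ the components of $g_i$ are
\[
\varphi_i^{-2},\qquad \varphi_i^{-2}\frac{u_i^2}{\sin^2 r},\qquad \varphi_i^2,
\]
with all off-diagonal terms zero. The same holds for $g_\infty$ with $u_\infty,\varphi_\infty$. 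The $g_0$-norm $|g_i-g_\infty|_{g_0}$ is therefore controlled by the sum of the absolute differences of these three functions. It suffices to show that each converges in $L^p(M,g_0)$, where $d\mathrm{vol}_{g_0}=\sin r\,dr\,d\theta\,d\xi$. The circles $\{r=0\},\{r=2\}$ have measure zero, so we work on $\mathring M$.

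\textbf{Step 1: pointwise convergence.} By Propositions \ref{prop: u_i-convergence} and \ref{prop: varphi-C01/2-estimate}, $u_i\to u_\infty$ uniformly on $[0,2]$ and $\varphi_i\to\varphi_\infty$ uniformly on each $\Omega_\epsilon$. On $\Omega_\epsilon$ the functions $\varphi_i$ are bounded above and below uniformly, and $\sin r\ge c(\epsilon)$ there. Hence all three coefficient functions converge uniformly on $M_\epsilon$, and so they converge pointwise on $\mathring M$.

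\textbf{Step 2: uniform integrability near the poles.} This is the only real point. Near $r=0$ we use Lemma \ref{lem: varphi-bounds} and its extension to $\varphi_\infty$ in Remark \ref{rmrk: estimate-varphi-infty}, together with $u_i\le r$ from Lemma \ref{lem: concave-function-properties} and $\sin r\ge cr$ on $(0,1]$. These give, uniformly in $i\le\infty$,
\[
\varphi_i^{-2}\le C r^{-2/k},\qquad \varphi_i^{-2}\frac{u_i^2}{\sin^2 r}\le C r^{-2/k},\qquad \varphi_i^{2}\le C r^{-2/k}.
\]
The symmetric bounds hold near $r=2$ with $2-r$ in place of $r$. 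Fix $p$ and choose $k$ with $2(p+1)/k<1$. Then the $(p+1)$-th powers of all coefficients are dominated by $C r^{-2(p+1)/k}$, which is integrable against $\sin r\,dr$. So the families are uniformly bounded in $L^{p+1}(M,g_0)$, and hence $p$-uniformly integrable.

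\textbf{Step 3: conclusion.} Vitali's convergence theorem, or dominated convergence using the common dominating function $C\bigl(r^{-2/k}+(2-r)^{-2/k}\bigr)^p$, which is even a pointwise bound independent of $i$, upgrades the pointwise convergence of Step 1 to $L^p(M,g_0)$ convergence of each coefficient. Summing the three components gives $\|g_i-g_\infty\|_{L^p(M,g_0)}\to0$.

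The main obstacle is Step 2, specifically the angular coefficient $u_i^2/(\varphi_i^2\sin^2 r)$. It needs the linear control $u_i\le\min\{r,2-r\}$ from concavity to cancel the $\sin^{-2}r$ degeneracy of the background, and then the sub-polynomial bounds on $\varphi_i$ to absorb the remaining factor. Both are already available, so the argument reduces to choosing $k$ large depending on $p$.
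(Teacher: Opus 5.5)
Your proposal is correct and follows essentially the same route as the paper: pointwise convergence of the three diagonal coefficients on $\mathring M$ from the local uniform convergence, then an $i$-independent dominating function of the form $C\left(r^{-2/k}+(2-r)^{-2/k}+1\right)$ built from Lemma \ref{lem: varphi-bounds}, Remark \ref{rmrk: estimate-varphi-infty} and $u_i\le\min\{r,2-r\}$, followed by dominated convergence. The only difference is that your choice $2(p+1)/k<1$ makes the bound integrable even against $dr$, so it is insensitive to how the background volume density behaves at $r=2$, which the paper's choice $k>p$ tacitly assumes vanishes linearly there.
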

\begin{remark}
{\rm
The metric $g_\infty$ is defined pointwise on $\mathring M$.  Since
$M\setminus \mathring M$ has measure zero with respect to the
background metric $g_0$ on $M$, we may also regard $g_\infty$ as a measurable
symmetric $2$-tensor on $M$ after assigning arbitrary values on
$M\setminus \mathring M$.  This convention does not affect any $L^p$-statement.
}
\end{remark}
\begin{proof}
On $\mathring M$, we have
\begin{equation}
g_i-g_\infty
=
\left(\varphi_i^{-2}-\varphi_\infty^{-2}\right)dr^2
+
\left(\varphi_i^{-2}u_i^2-\varphi_\infty^{-2}u_\infty^2\right)d\theta^2
+
\left(\varphi_i^2-\varphi_\infty^2\right)d\xi^2 .
\end{equation}
By the  convergence of \(u_i\) and \(\varphi_i\) obtained in Propositions \ref{prop: u_i-convergence} and \ref{prop: varphi-sobolev-estimate}, the metric
coefficients converge pointwise a.e. on \(\mathring M\). Hence
\begin{equation}
g_i\to g_\infty
\end{equation}
pointwise a.e. on $M$ as symmetric $2$-tensors.

Fix $1\le p<\infty$. Choose an integer $k>p$. By
Lemma \ref{lem: varphi-bounds}, for this fixed $k$ we have
\begin{equation}
\varphi_i^2\le C(\Lambda,k)^2 r^{-2/k},
\qquad
\varphi_i^{-2}\le C(\Lambda,k)^2 r^{-2/k},
\end{equation}
and the same estimates hold for $\varphi_\infty$, see Remark \ref{rmrk: estimate-varphi-infty}. Although the constant
$C(\Lambda,k)$ may depend on $k$, it is independent of $i$, which is all
that is needed for dominated convergence.

Thus the \(dr^2\)- and \(d\xi^2\)-components are dominated by
\begin{equation}
C r^{-2/k}.
\end{equation}
For the angular component, using the endpoint control
\begin{equation}
u_i(r)\le Cr,
\qquad
u_\infty(r)\le Cr
\end{equation}
and the fact that, near \(r=0\), the background metric satisfies
\begin{equation}
g_0=dr^2+u_0^2\,d\theta^2+d\xi^2,
\qquad
u_0(r)\sim r,
\end{equation}
we have
\begin{equation}
|u_i^2d\theta^2|_{g_0}\le C,
\qquad
|u_\infty^2d\theta^2|_{g_0}\le C.
\end{equation}
Therefore
\begin{equation}
\left|
\left(\varphi_i^{-2}u_i^2-\varphi_\infty^{-2}u_\infty^2\right)d\theta^2
\right|_{g_0}
\le
C r^{-2/k}.
\end{equation}
The same argument applies near \(r=2\). Away from \(r=0\) and \(r=2\), the
estimate is immediate from local convergence and local boundedness. Hence
\begin{equation}
|g_i-g_\infty|_{g_0}
\le
C\left(r^{-2/k}+(2-r)^{-2/k}+1\right),
\end{equation}
where $C$ may depend on $p$, $\Lambda$, and the chosen $k>p$, but not
on $i$.

Since \(k>p\), the right-hand side belongs to \(L^p(M,g_0)\). Indeed, near
\(r=0\),
\begin{equation}
\int_0^\varepsilon r^{-2p/k}\,r\,dr<\infty
\end{equation}
because \(p<k\), and the same calculation holds near \(r=2\). Therefore the
dominated convergence theorem gives
\begin{equation}
\int_M |g_i-g_\infty|_{g_0}^p\,d\mu_{g_0}\to0.
\end{equation}
Thus
\begin{equation}
g_i\to g_\infty
\quad\text{in }L^p(M,g_0)
\end{equation}
for every $1\le p<\infty$.
\end{proof}


\subsection{Preliminary computations on distributional scalar curvature}

We next recall the Lee--LeFloch definition of scalar curvature in the
distributional sense and specialize it to the warped product metrics considered
here.  The purpose of the following computations is to express the distribution
in terms of quantities involving only $u$, $\varphi$, and their first derivatives.
This is important because the limit metric has precisely this level of weak
regularity on the regular part.

\begin{definition}[Lee--LeFloch \cite{LeeLeFloch2015}]\label{def: Lee-LeFloch}
{\rm Let $M$ be a smooth manifold endowed with a smooth background metric $g_0$. Let $g$ be a metric tensor defined on $M$ with $L_{l o c}^{\infty} \cap W_{l o c}^{1,2}$ regularity and locally bounded inverse $g^{-1} \in L_{l o c}^{\infty}$.

The {\em scalar curvature distribution} $R_g$ is defined as a distributions in $M$ such that for every test function $v \in C_{0}^{\infty}(M)$

\begin{equation}\label{NNSC-dis}
\left\langle R_g, v\right\rangle:=\int_M\left(-V \cdot \bar{\nabla}\left(v \frac{d \mu_g}{d \mu_{0}}\right)+F v \frac{d \mu_g}{d \mu_{0}}\right) d \mu_{0}.
\end{equation}
Here the dot product is taken using the metric $g_0$, $\bar{\nabla}$ is the Levi-Civita connection of $g_0$, $d \mu_g$ and $d \mu_{0}$ are volume measure with respect to $g$ and $g_0$ respectively, the vector field $V$ and the function $F$ are given by
\begin{equation}\label{NNSC-dis-V}
V^k:=g^{i j} \Gamma_{i j}^k-g^{i k} \Gamma_{j i}^j, 
\end{equation}
and
\begin{equation}\label{NNSC-dis-F}
F:=\bar{R}-\bar{\nabla}_k g^{i j} \Gamma_{i j}^k+\bar{\nabla}_k g^{i k} \Gamma_{j i}^j+g^{i j}\left(\Gamma_{k l}^k \Gamma_{i j}^l-\Gamma_{j l}^k \Gamma_{i k}^l\right),
\end{equation}
where
\begin{equation}\label{eqn: Lee-LeFloch-Gamma}
\Gamma_{i j}^k:=\frac{1}{2} g^{k l}\left(\bar{\nabla}_i g_{j l}+\bar{\nabla}_j g_{i l}-\bar{\nabla}_l g_{i j}\right), \\
\end{equation}
and
\begin{equation}
\bar{R}:=g^{i j}\left(\partial_k \bar{\Gamma}_{i j}^k-\partial_i \bar{\Gamma}_{k j}^k+\bar{\Gamma}_{i j}^l \bar{\Gamma}_{k l}^k-\bar{\Gamma}_{k j}^l \bar{\Gamma}_{i l}^k\right).
\end{equation}
The Riemannian metric $g$ is said to have {\em nonnegative distributional scalar curvature}, if $\langle R_g, u \rangle \geq 0$ for every nonnegative test function $u$ in the integral in (\ref{NNSC-dis}).
}
\end{definition}

We now apply this definition to a metric on $\Sph^2 \times \Sph^1$ of the form
\begin{equation}\label{eqn: metric-NNSC-distr}
g=\frac{1}{\varphi^2}h+\varphi^2d\xi^2,
\qquad
h=dr^2+u^2(r)d\theta^2,
\end{equation}
where \(\varphi=\varphi(r,\theta)>0\). We take the standard metric on $\Sph^2 \times \Sph^1$,
\begin{equation}\label{eqn: background-metric}
g_0=dr^2+\sin^2r\,d\theta^2+d\xi^2.
\end{equation}
as the background metric.
Let \(\bar\nabla\) and \(\bar\Gamma^k_{ij}\) denote respectively the Levi-Civita
connection and Christoffel symbols of \(g_0\). The only nonzero Christoffel
symbols of \(g_0\) are
\begin{equation}
\bar\Gamma^r_{\theta\theta}
=
-\sin r\cos r,
\qquad
\bar\Gamma^\theta_{r\theta}
=
\bar\Gamma^\theta_{\theta r}
=
\frac{\cos r}{\sin r}.
\end{equation}
Here we use coordinate $\{r, \, \theta, \, \xi\}$ on $\Sph^2 \times \Sph^1$, where $(r, \, \theta)$ is a polar coordinate on $\Sph^2$ and $\xi$ is a coordinate on $\Sph^1$. Under the corresponding local frame $\{\partial_r, \, \partial_\theta, \, \partial_\xi\}$, both $g$ and $g_0$ are diagonal and given as
\begin{equation}\label{eqn: diagonal-g_infty}
g_\infty=\begin{pmatrix}
\frac{1}{\varphi^2} & 0 &0\\
0& \frac{u^2}{\varphi^2} & 0\\
0&0& \varphi^2\\
\end{pmatrix}.
\ \ 
\textrm{ and }
\ \ 
g_0 =\begin{pmatrix}
1 & 0 &0\\
0& \sin^2 r & 0\\
0&0& 1\\
\end{pmatrix}
\end{equation}
Moreover,
\begin{equation}
d\mu_g=\frac{u}{\varphi}\,dr\,d\theta\,d\xi,
\qquad
d\mu_0=\sin r\,dr\,d\theta\,d\xi,
\end{equation}
and hence
\begin{equation}
\frac{d\mu_g}{d\mu_0}
=
\frac{u}{\varphi\sin r}.
\end{equation}

\begin{lemma}\label{lem: Gamma-components}
For the metric in \eqref{eqn: metric-NNSC-distr} and the backgound metric in \eqref{eqn: background-metric}, the nonzero components of $\Gamma^k_{ij}$ defined in (\ref{eqn: Lee-LeFloch-Gamma}) are
\begin{equation}
\Gamma^r_{rr} = -\frac{\varphi_r}{\varphi},
\qquad
\Gamma^r_{\theta\theta}
=
\sin r\cos r-u u'
+u^2\frac{\varphi_r}{\varphi},
\qquad
\Gamma^r_{\xi\xi}
=
-\varphi^3\varphi_r,
\end{equation}
\begin{equation}
\Gamma^r_{r\theta} = \Gamma^r_{\theta r}
=
-\frac{\varphi_\theta}{\varphi},
\end{equation}
\begin{equation}
\Gamma^\theta_{rr}
=
\frac{\varphi_\theta}{u^2\varphi},
\qquad
\Gamma^\theta_{\theta\theta}
=
-\frac{\varphi_\theta}{\varphi},
\qquad
\Gamma^\theta_{\xi\xi}
=
-\frac{\varphi^3}{u^2}\varphi_\theta,
\end{equation}
\begin{equation}
\Gamma^\theta_{r\theta} = \Gamma^\theta_{\theta r}
=
\frac{u'}{u}
-
\frac{\varphi_r}{\varphi}
-
\frac{\cos r}{\sin r},
\end{equation}
and
\begin{equation}
\Gamma^\xi_{r\xi}
=
\frac{\varphi_r}{\varphi},
\qquad
\Gamma^\xi_{\theta\xi}
=
\frac{\varphi_\theta}{\varphi}.
\end{equation}
\end{lemma}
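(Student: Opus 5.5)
The plan is a direct computation from the definition \eqref{eqn: Lee-LeFloch-Gamma}. Both $g$ and $g_0$ are diagonal in the frame $\{\partial_r,\partial_\theta,\partial_\xi\}$, so
\[
\Gamma^k_{ij}=\tfrac12 g^{kk}\bigl(\bar\nabla_i g_{jk}+\bar\nabla_j g_{ik}-\bar\nabla_k g_{ij}\bigr),
\]
with no sum over $k$. It therefore suffices to compute the tensor $\bar\nabla g$. I would write
\[
\bar\nabla_i g_{jl}=\partial_i g_{jl}-\bar\Gamma^m_{ij}g_{ml}-\bar\Gamma^m_{il}g_{jm}.
\]
The only nonzero background symbols are $\bar\Gamma^r_{\theta\theta}=-\sin r\cos r$ and $\bar\Gamma^\theta_{r\theta}=\cot r$. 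With $g_{rr}=\varphi^{-2}$, $g_{\theta\theta}=u^2\varphi^{-2}$ and $g_{\xi\xi}=\varphi^2$, where $\varphi=\varphi(r,\theta)$, I would tabulate all components of $\bar\nabla g$:
\[
\bar\nabla_r g_{rr}=-2\varphi^{-3}\varphi_r,\qquad \bar\nabla_\theta g_{rr}=-2\varphi^{-3}\varphi_\theta,
\]
\[
\bar\nabla_r g_{\theta\theta}=\partial_r(u^2\varphi^{-2})-2\cot r\,u^2\varphi^{-2},\qquad \bar\nabla_\theta g_{\theta\theta}=-2u^2\varphi^{-3}\varphi_\theta,
\]
\[
\bar\nabla_\theta g_{r\theta}=-\bar\Gamma^\theta_{\theta r}g_{\theta\theta}-\bar\Gamma^r_{\theta\theta}g_{rr}=-\cot r\,u^2\varphi^{-2}+\sin r\cos r\,\varphi^{-2},
\]
\[
\bar\nabla_r g_{\xi\xi}=2\varphi\varphi_r,\qquad \bar\nabla_\theta g_{\xi\xi}=2\varphi\varphi_\theta.
\]
All components involving $\xi$ as a derivative index vanish, because nothing depends on $\xi$ and there are no background symbols involving $\xi$. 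All other mixed components vanish as well.

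Next I would assemble the symbols one upper index at a time.

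For $k=r$, multiply by $g^{rr}=\varphi^2$. This gives
\[
\Gamma^r_{rr}=\tfrac12\varphi^2\bar\nabla_r g_{rr}=-\varphi_r/\varphi,\qquad \Gamma^r_{r\theta}=\tfrac12\varphi^2\bar\nabla_\theta g_{rr}=-\varphi_\theta/\varphi,\qquad \Gamma^r_{\xi\xi}=-\tfrac12\varphi^2\bar\nabla_r g_{\xi\xi}=-\varphi^3\varphi_r.
\]
For $\Gamma^r_{\theta\theta}$ I use
\[
\Gamma^r_{\theta\theta}=\tfrac12\varphi^2\bigl(2\bar\nabla_\theta g_{\theta r}-\bar\nabla_r g_{\theta\theta}\bigr).
\]
In this combination the $\cot r\,u^2$ terms cancel, leaving
\[
\sin r\cos r-uu'+u^2\varphi_r/\varphi.
\]

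For $k=\theta$, multiply by $g^{\theta\theta}=\varphi^2/u^2$. The entries $\Gamma^\theta_{rr}$, $\Gamma^\theta_{\theta\theta}$ and $\Gamma^\theta_{\xi\xi}$ come out immediately from the table. The entry $\Gamma^\theta_{r\theta}$ involves the combination $\bar\nabla_r g_{\theta\theta}+\bar\nabla_\theta g_{r\theta}-\bar\nabla_\theta g_{r\theta}$, in which the mixed term cancels. What remains is $\tfrac12\bar\nabla_r g_{\theta\theta}$, which yields
\[
u'/u-\varphi_r/\varphi-\cot r.
\]

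For $k=\xi$, multiply by $g^{\xi\xi}=\varphi^{-2}$. The only survivors are $\Gamma^\xi_{r\xi}=\tfrac12\varphi^{-2}\bar\nabla_r g_{\xi\xi}$ and its $\theta$ analogue.

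Finally I would check that every other index combination vanishes, for example $\Gamma^r_{r\xi}$, $\Gamma^\xi_{rr}$ and $\Gamma^\theta_{r\xi}$. Each of these requires a nonzero component of $\bar\nabla g$ carrying exactly one $\xi$ index, and no such component exists.

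The main obstacle is only bookkeeping, namely the mixed component $\bar\nabla_\theta g_{r\theta}$. It is the one place where background Christoffel symbols enter nontrivially, and its sign determines the cancellation of the $\cot r$ terms in $\Gamma^r_{\theta\theta}$ and $\Gamma^\theta_{r\theta}$. I would double-check it by noting that $\Gamma-\bar\Gamma$ must equal the difference of the actual Levi-Civita symbols. For instance, $\Gamma^\theta_{r\theta}+\cot r$ should equal the true $g$-Christoffel symbol $u'/u-\varphi_r/\varphi$.
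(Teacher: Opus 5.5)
Your proposal is correct and follows the paper's route: substitute $g_{rr}=\varphi^{-2}$, $g_{\theta\theta}=u^2\varphi^{-2}$ and $g_{\xi\xi}=\varphi^2$ directly into the Lee--LeFloch formula. Your table of $\bar\nabla g$ is accurate. In particular, your values of $\bar\nabla_\theta g_{r\theta}$ and of the $\cot r$ term in $\bar\nabla_r g_{\theta\theta}$ are right, and they make explicit the $\cot r$ cancellation that the paper's one displayed computation of $\Gamma^r_{\theta\theta}$ absorbs without comment.
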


\begin{proof}
The formulas follow by substituting
\begin{equation}
g_{rr}=\varphi^{-2},
\qquad
g_{\theta\theta}=\varphi^{-2}u^2,
\qquad
g_{\xi\xi}=\varphi^2
\end{equation}
into
\begin{equation}
\Gamma^k_{ij}
=
\frac12 g^{k\ell}
\left(
\bar\nabla_i g_{j\ell}
+
\bar\nabla_j g_{i\ell}
-
\bar\nabla_\ell g_{ij}
\right).
\end{equation}
For example,
\begin{equation}
\begin{aligned}
\Gamma^r_{\theta\theta}
&=
\frac12 g^{rr}
\left(
2\bar\nabla_\theta g_{r\theta}
-
\bar\nabla_r g_{\theta\theta}
\right)  \\
&=
\frac12\varphi^2
\left[
2\sin r\cos r\,\varphi^{-2}
-
\partial_r\left(\frac{u^2}{\varphi^2}\right)
\right]  \\
&=
\sin r\cos r-u u'
+u^2\frac{\varphi_r}{\varphi}.
\end{aligned}
\end{equation}
The remaining components are obtained similarly.
\end{proof}

\begin{lemma}\label{lem:V-computation}
For the metric in \eqref{eqn: metric-NNSC-distr} and the backgound metric in \eqref{eqn: background-metric}, the nonzero components of the vector field $V$ defined in are given by
\begin{equation}
V^r
=
\frac{\sin r\cos r}{u^2}\varphi^2
+
\frac{\cos r}{\sin r}\varphi^2
-
2\frac{u'}{u}\varphi^2,
\qquad
V^\theta=0,
\qquad
V^\xi=0.
\end{equation}
\end{lemma}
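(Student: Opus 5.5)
Since both $g$ and $g_0$ are diagonal in the frame $\{\partial_r,\partial_\theta,\partial_\xi\}$, the plan is to simplify the definition \eqref{NNSC-dis-V} first and then substitute the components of Lemma \ref{lem: Gamma-components}. Because $g^{ij}$ is diagonal, with
\[
g^{rr}=\varphi^2,\qquad g^{\theta\theta}=\frac{\varphi^2}{u^2},\qquad g^{\xi\xi}=\varphi^{-2},
\]
the vector field reduces to
\[
V^k=\sum_i g^{ii}\Gamma^k_{ii}-g^{kk}\sum_j\Gamma^j_{jk}.
\]
For each $k$ I will compute the two pieces separately: the ``trace against $g^{-1}$'' term $\sum_i g^{ii}\Gamma^k_{ii}$ and the ``contracted'' term $g^{kk}\sum_j \Gamma^j_{jk}$. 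Throughout, $\Gamma^j_{ji}=\Gamma^j_{ij}$ by symmetry of the formula \eqref{eqn: Lee-LeFloch-Gamma}.

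For $k=r$, substituting from Lemma \ref{lem: Gamma-components} gives
\[
\sum_i g^{ii}\Gamma^r_{ii}=-\varphi\varphi_r+\frac{\varphi^2\sin r\cos r}{u^2}-\varphi^2\frac{u'}{u}+\varphi\varphi_r-\varphi\varphi_r .
\]
The contracted term is
\[
\sum_j\Gamma^j_{jr}=\Gamma^r_{rr}+\Gamma^\theta_{\theta r}+\Gamma^\xi_{\xi r}=-\frac{\varphi_r}{\varphi}+\frac{u'}{u}-\frac{\cos r}{\sin r}.
\]
Multiplying by $g^{rr}=\varphi^2$ and subtracting, the $\varphi\varphi_r$ terms cancel exactly. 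What remains is
\[
\frac{\sin r\cos r}{u^2}\varphi^2+\frac{\cos r}{\sin r}\varphi^2-2\frac{u'}{u}\varphi^2,
\]
which is the claimed $V^r$.

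For $k=\theta$ the same bookkeeping gives
\[
\sum_i g^{ii}\Gamma^\theta_{ii}=\frac{\varphi\varphi_\theta}{u^2}-\frac{\varphi\varphi_\theta}{u^2}-\frac{\varphi\varphi_\theta}{u^2}=-\frac{\varphi\varphi_\theta}{u^2}.
\]
The contraction is
\[
\sum_j\Gamma^j_{j\theta}=\Gamma^r_{r\theta}+\Gamma^\theta_{\theta\theta}+\Gamma^\xi_{\xi\theta}=-\frac{\varphi_\theta}{\varphi}.
\]
Multiplying by $g^{\theta\theta}=\varphi^2/u^2$ gives $-\varphi\varphi_\theta/u^2$, so $V^\theta=0$.

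For $k=\xi$, Lemma \ref{lem: Gamma-components} lists no nonzero $\Gamma^\xi_{ii}$. It also lists no nonzero $\Gamma^j_{j\xi}$, since nothing depends on $\xi$ and $g$ has no cross terms. Hence $V^\xi=0$.

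The computation is routine. The only real obstacle is bookkeeping: keeping the index placement in $g^{ik}\Gamma^j_{ji}$ straight, and keeping the background contribution $-\cot r$ inside $\Gamma^\theta_{r\theta}$ with its correct sign. I will double-check these by verifying the cancellation of the $\varphi_r$ and $\varphi_\theta$ terms, which serves as an internal consistency test.
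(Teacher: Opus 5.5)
Your proposal is correct and follows the paper's proof. Both substitute the components from Lemma \ref{lem: Gamma-components} into $V^k=g^{ij}\Gamma^k_{ij}-g^{ik}\Gamma^j_{ji}$ with the diagonal inverse metric and check that the $\varphi_r$ (respectively $\varphi_\theta$) terms cancel; the only cosmetic differences are that you keep the $g^{rr}\Gamma^r_{rr}$ terms (which the paper drops since they cancel between the two pieces) and you write out the $V^\theta$ cancellation explicitly.
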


\begin{proof}
Using Lemma \ref{lem: Gamma-components}, we compute
\begin{equation}
\begin{aligned}
V^r
&=
g^{ij}\Gamma^r_{ij}-g^{rr}\Gamma^j_{jr} \\
&=
g^{\theta\theta}\Gamma^r_{\theta\theta}
+
g^{\xi\xi}\Gamma^r_{\xi\xi}
-
g^{rr}\Gamma^\theta_{\theta r}
-
g^{rr}\Gamma^\xi_{\xi r} \\
&=
\frac{\varphi^2}{u^2}
\left(
\sin r\cos r-u u'
+u^2\frac{\varphi_r}{\varphi}
\right)
-\varphi\varphi_r \\
&\quad
-\varphi^2
\left(
\frac{u'}{u}
-
\frac{\varphi_r}{\varphi}
-
\frac{\cos r}{\sin r}
\right)
-\varphi^2\frac{\varphi_r}{\varphi}.
\end{aligned}
\end{equation}
The terms involving \(\varphi_r\) cancel. Hence
\begin{equation}
V^r
=
\frac{\sin r\cos r}{u^2}\varphi^2
+
\frac{\cos r}{\sin r}\varphi^2
-
2\frac{u'}{u}\varphi^2.
\end{equation}

Similarly,
\begin{equation}
\begin{aligned}
V^\theta
&=
g^{ij}\Gamma^\theta_{ij}
-
g^{\theta\theta}\Gamma^j_{j\theta} \\
&=
g^{rr}\Gamma^\theta_{rr}
+
g^{\theta\theta}\Gamma^\theta_{\theta\theta}
+
g^{\xi\xi}\Gamma^\theta_{\xi\xi}
-
g^{\theta\theta}
\left(
\Gamma^r_{r\theta}
+
\Gamma^\theta_{\theta\theta}
+
\Gamma^\xi_{\xi\theta}
\right)
=0.
\end{aligned}
\end{equation}
Finally, \(V^\xi=0\), because all quantities are independent of \(\xi\) and
the only nonzero $\Gamma$-components with upper index \(\xi\)
have one lower index equal to \(\xi\).
\end{proof}

Since \(V\) has only an \(r\)-component, its divergence with respect to the
background metric \(g_0\) is
\begin{equation}
\overline{\operatorname{div}}V
=
\frac1{\sin r}\partial_r\left(\sin r\,V^r\right).
\end{equation}
The previous lemma therefore gives the following formula.

\begin{lemma}\label{lem:divV}
For the metric in \eqref{eqn: metric-NNSC-distr} and the backgound metric in \eqref{eqn: background-metric}, one has
\begin{equation}
\begin{aligned}
\overline{\operatorname{div}}V
=&
-2\varphi^2\frac{u''}{u}
+
2\cos r
\left(
\sin r\frac{\varphi^2}{u^2}
+
\frac{\varphi^2}{\sin r}
\right)
\left(
\frac{\varphi_r}{\varphi}
-
\frac{u'}{u}
\right)  \\
&+
(2\cos^2 r-\sin^2 r)\frac{\varphi^2}{u^2}
+
2\frac{(u')^2}{u^2}\varphi^2
-
4\frac{u'}{u}\varphi\varphi_r
-\varphi^2 .
\end{aligned}
\end{equation}
\end{lemma}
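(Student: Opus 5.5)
Since $V$ has only the radial component $V^r$ by Lemma \ref{lem:V-computation}, I will compute the divergence from the formula stated just before the lemma,
\begin{equation*}
\overline{\operatorname{div}}V=\frac{1}{\sin r}\partial_r\bigl(\sin r\,V^r\bigr)=\partial_r V^r+\frac{\cos r}{\sin r}V^r .
\end{equation*}
I will write
\begin{equation*}
V^r=\varphi^2 A,\qquad A:=\frac{\sin r\cos r}{u^2}+\frac{\cos r}{\sin r}-2\frac{u'}{u}.
\end{equation*}
Then
\begin{equation*}
\partial_r V^r=2\varphi\varphi_r A+\varphi^2\partial_r A .
\end{equation*}
The plan is to expand every term and group the result by the structures appearing in the target formula.

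The three pieces of $\partial_r A$ are elementary:
\begin{equation*}
\partial_r\Bigl(\frac{\sin r\cos r}{u^2}\Bigr)=\frac{\cos^2r-\sin^2r}{u^2}-2\sin r\cos r\,\frac{u'}{u^3},
\end{equation*}
\begin{equation*}
\partial_r\Bigl(\frac{\cos r}{\sin r}\Bigr)=-\frac{1}{\sin^2 r},
\qquad
\partial_r\Bigl(-2\frac{u'}{u}\Bigr)=-2\frac{u''}{u}+2\frac{(u')^2}{u^2}.
\end{equation*}
The correction term is
\begin{equation*}
\frac{\cos r}{\sin r}V^r=\varphi^2\Bigl(\frac{\cos^2 r}{u^2}+\frac{\cos^2 r}{\sin^2 r}-2\frac{\cos r}{\sin r}\frac{u'}{u}\Bigr).
\end{equation*}

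I will then collect terms in this order:
\begin{itemize}
\item The $u''$ term gives $-2\varphi^2 u''/u$ directly.
\item The $(u')^2$ term gives $2\varphi^2(u')^2/u^2$.
\item The two pure-$\cos r/\sin r$ pieces combine as
\begin{equation*}
-\frac{1}{\sin^2 r}+\frac{\cos^2 r}{\sin^2 r}=-1,
\end{equation*}
producing $-\varphi^2$.
\item The $1/u^2$ pieces combine as
\begin{equation*}
(\cos^2r-\sin^2r)+\cos^2r=2\cos^2r-\sin^2r,
\end{equation*}
producing the term $(2\cos^2 r-\sin^2 r)\varphi^2/u^2$.
\end{itemize}

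Next come the $\varphi_r$ terms, from $2\varphi\varphi_r A$:
\begin{equation*}
2\varphi\varphi_r\Bigl(\frac{\sin r\cos r}{u^2}+\frac{\cos r}{\sin r}\Bigr)-4\frac{u'}{u}\varphi\varphi_r .
\end{equation*}
The first part equals
\begin{equation*}
2\cos r\Bigl(\sin r\frac{\varphi^2}{u^2}+\frac{\varphi^2}{\sin r}\Bigr)\frac{\varphi_r}{\varphi},
\end{equation*}
and the second part is the $-4\frac{u'}{u}\varphi\varphi_r$ term of the target formula.

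The remaining $u'$ terms, with no $\varphi_r$, are
\begin{equation*}
-2\sin r\cos r\,\frac{u'}{u^3}\varphi^2-2\frac{\cos r}{\sin r}\frac{u'}{u}\varphi^2
=-2\cos r\Bigl(\sin r\frac{\varphi^2}{u^2}+\frac{\varphi^2}{\sin r}\Bigr)\frac{u'}{u}.
\end{equation*}
Adding this to the first part of the $\varphi_r$ group yields exactly the bracket with $\bigl(\frac{\varphi_r}{\varphi}-\frac{u'}{u}\bigr)$ in the statement. This completes the identification of all terms.

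There is no real obstacle; the computation is purely algebraic on the set where $0<r<2$, where $u>0$ and $\sin r>0$. The only care needed is bookkeeping: the two grouped pairs, $-\frac1{\sin^2r}+\frac{\cos^2r}{\sin^2r}=-1$ and the $1/u^2$ pair, are where sign or coefficient errors would most easily creep in, so I will check those explicitly. The identity is pointwise for smooth $u,\varphi$, which is the setting of \eqref{eqn: metric-NNSC-distr}.
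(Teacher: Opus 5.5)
Your proposal is correct and is essentially the paper's argument: the paper differentiates $\sin r\,V^r$ directly and divides by $\sin r$. You instead expand the same expression as $\partial_r V^r+\frac{\cos r}{\sin r}V^r$, and your term-by-term grouping reproduces the stated formula exactly.
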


\begin{proof}
By Lemma \ref{lem:V-computation},
\begin{equation}
V^r
=
\varphi^2
\left(
\frac{\sin r\cos r}{u^2}
+
\frac{\cos r}{\sin r}
-
2\frac{u'}{u}
\right).
\end{equation}
Thus
\begin{equation}
\sin r\,V^r
=
\varphi^2
\left(
\frac{\sin^2 r\cos r}{u^2}
+
\cos r
-
2\sin r\frac{u'}{u}
\right).
\end{equation}
Taking \(\frac1{\sin r}\partial_r\) of the above expression gives the desired
identity.
\end{proof}

For smooth metrics, the Lee--LeFloch decomposition satisfies
\begin{equation}
\Scal_g=\overline{\operatorname{div}}V+F.
\end{equation}
By Lemma \ref{eqn: scalar-curvature}, the scalar curvature of a smooth metric $g$ as in \eqref{eqn: metric-NNSC-distr} is 
\begin{equation}
\Scal_g
=
-2\varphi^2\frac{u''}{u}
-
2|\nabla_h\varphi|_h^2.
\end{equation}
Combining this identity with Lemma \ref{lem:divV}, we have the following formula for $F$.
\begin{lemma}\label{lem: F-expression}
For the metric in \eqref{eqn: metric-NNSC-distr} and the backgound metric in \eqref{eqn: background-metric}, the function $F$ defined in \eqref{NNSC-dis-F} is given by
\begin{equation}
\begin{aligned}
F
=
&
-2|\nabla_h\varphi|_h^2
+\varphi^2 \\
&-
2\cos r
\left(
\sin r\frac{\varphi^2}{u^2}
+
\frac{\varphi^2}{\sin r}
\right)
\left(
\frac{\varphi_r}{\varphi}
-
\frac{u'}{u}
\right) \\
&-
(2\cos^2 r-\sin^2 r)\frac{\varphi^2}{u^2}
-
2\frac{(u')^2}{u^2}\varphi^2
+
4\frac{u'}{u}\varphi\varphi_r .
\end{aligned}
\end{equation}
\end{lemma}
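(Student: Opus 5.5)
The plan is to use the smooth identity $\Scal_g=\overline{\operatorname{div}}V+F$, which the Lee--LeFloch decomposition satisfies for smooth $g$, and solve for $F$. The alternative, expanding definition \eqref{NNSC-dis-F} term by term, would be much longer. For a smooth metric of the form \eqref{eqn: metric-NNSC-distr} with $u>0$ on $(0,2)$ and $\varphi>0$, both sides are computed in the coordinates $(r,\theta,\xi)$ on the regular region. There $g$, $g_0$ and all the quantities in Lemmas \ref{lem: Gamma-components}--\ref{lem:divV} are smooth, so the identity holds pointwise. Then
\[
F=\Scal_g-\overline{\operatorname{div}}V .
\]

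First, I take $\Scal_g$ from Lemma \ref{lem: scalar-curvature-formula}. Since $\Scal_{h}=-2u''/u$, that lemma gives
\[
\Scal_g=\varphi^2\Scal_h-2|\nabla_h\varphi|_h^2=-2\varphi^2\frac{u''}{u}-2|\nabla_h\varphi|_h^2 .
\]
Second, I take $\overline{\operatorname{div}}V$ from Lemma \ref{lem:divV}. It is the $g_0$-divergence of the purely radial field $V$ from Lemma \ref{lem:V-computation}, namely $\frac1{\sin r}\partial_r(\sin r\,V^r)$. Third, I subtract. The second-order terms $-2\varphi^2u''/u$ appear in both expressions with the same coefficient, so they cancel exactly. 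Every remaining term of $\overline{\operatorname{div}}V$ changes sign. This gives $+\varphi^2$, the cross term
\[
-2\cos r\Bigl(\sin r\frac{\varphi^2}{u^2}+\frac{\varphi^2}{\sin r}\Bigr)\Bigl(\frac{\varphi_r}{\varphi}-\frac{u'}{u}\Bigr),
\]
and the terms $-(2\cos^2r-\sin^2r)\varphi^2/u^2$, $-2(u')^2\varphi^2/u^2$ and $+4\frac{u'}{u}\varphi\varphi_r$. Together with $-2|\nabla_h\varphi|^2_h$, this is exactly the stated formula.

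The only real work is checking the divergence computation in Lemma \ref{lem:divV}, and that is where I expect any error to sit. I would recompute $\partial_r$ of
\[
\sin r\,V^r=\varphi^2\Bigl(\frac{\sin^2r\cos r}{u^2}+\cos r-2\sin r\frac{u'}{u}\Bigr).
\]
By the product rule, $\partial_r\varphi^2=2\varphi\varphi_r$ multiplies the bracket. Differentiating the bracket gives $(2\sin r\cos^2r-\sin^3r)/u^2-2\sin^2r\cos r\,u'/u^3-\sin r-2\cos r\,u'/u-2\sin r\,(u''/u-(u')^2/u^2)$. Dividing by $\sin r$, the $\varphi_r$ and $u'$ pieces should group into the displayed cross term.

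A second point is justification. The identity $R=\overline{\operatorname{div}}V+F$ is a standard fact for smooth metrics, from Lee--LeFloch, obtained by expanding the Christoffel symbols of $g$ as $\bar\Gamma+\Gamma$. If the paper needs more, one could check it directly in a simple case, for instance $\varphi\equiv1$ and $u=\sin r$, where $V=0$ and $F=2$ should equal $\Scal_{g_0}$.
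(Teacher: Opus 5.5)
Your proposal is correct and is essentially the paper's argument. The paper also invokes the smooth identity $\Scal_g=\overline{\operatorname{div}}V+F$, takes $\Scal_g=-2\varphi^2u''/u-2|\nabla_h\varphi|_h^2$ from Lemma \ref{lem: scalar-curvature-formula} and $\overline{\operatorname{div}}V$ from Lemma \ref{lem:divV}, and subtracts; the $u''$ terms cancel. Your recomputation of $\partial_r(\sin r\,V^r)$ agrees with Lemma \ref{lem:divV}, so that lemma contains no error.
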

\begin{remark}
{\rm
In the derivation above we used the identity $\Scal_g=\overline{\operatorname{div}}V+F$ for smooth metrics $g$.  However, the final expression for $F$ involves only $u$, $\varphi$, and their first derivatives.  Hence the same formula can be obtained directly from the Lee--LeFloch definition for metrics with first-order weak derivatives.  Since this direct verification is purely computational, we omit the details.
}
\end{remark}

\subsection{Nonnegative distributional scalar curvature on $M_\epsilon$}

We first prove the desired lower bound away from the two singular circles.  On
$M_\epsilon$, the functions $u_i$ and $\varphi_i$ have uniform two-sided bounds
and $\varphi_i$ has a uniform local $W^{1,2}$ estimate.  These estimates allow
us to pass to the limit in the Lee--LeFloch formula, with the only loss coming
from the weak lower semicontinuity of the quadratic gradient term.

\begin{proposition}\label{prop: NNSC-interior}
Let
\begin{equation}
g_\infty
=
\frac{1}{\varphi_\infty^2}
\left(dr^2+u_\infty^2d\theta^2\right)
+
\varphi_\infty^2d\xi^2
\end{equation}
be the limit metric on $\Sph^2 \times \Sph^1$, and
\begin{equation}
M_\epsilon := \Omega_\epsilon \times \Sph^1,
\end{equation} 
where 
\begin{equation}
    \Omega_\epsilon = \{(r, \theta) \mid \epsilon \leq r \leq 2-\epsilon, \ \ 0\leq \theta \leq 2\pi\} \subset \Sph^2.
\end{equation}
Then \(g_\infty\) has
nonnegative scalar curvature in the distributional sense on \(M_\epsilon\).
In other words, for every nonnegative \(v\in C_c^\infty(M_\epsilon)\),
\[
\left\langle R_{g_\infty},v\right\rangle\ge0.
\]
\end{proposition}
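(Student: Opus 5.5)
Fix a nonnegative $v\in C_c^\infty(M_\epsilon)$. The plan is to write $\langle R_{g_i},v\rangle$ for the smooth metrics $g_i$, use $\Scal_{g_i}\ge0$ to get $\langle R_{g_i},v\rangle=\int \Scal_{g_i}v\,d\mu_{g_i}\ge0$ (for smooth metrics the Lee--LeFloch distribution coincides with the pointwise scalar curvature, by integrating $\overline{\operatorname{div}}V$ by parts), and then pass to the limit $i\to\infty$ using the explicit formulas of Lemmas \ref{lem:V-computation} and \ref{lem: F-expression}. Concretely, with $\rho_i:=\frac{d\mu_{g_i}}{d\mu_0}=\frac{u_i}{\varphi_i\sin r}$,
\[
\langle R_{g_i},v\rangle=\int_{M_\epsilon}\Bigl(-V_i^r\,\partial_r\bigl(v\rho_i\bigr)+F_i\,v\,\rho_i\Bigr)d\mu_0 .
\]
Since $V_i^r$ has no derivatives of $\varphi_i$, I will integrate by parts in $r$ only in the term involving $u_i'$ where needed, and otherwise keep the form above.

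The key observation is the structure of the terms on $M_\epsilon$: by Propositions \ref{prop: u_i-convergence} and \ref{prop: varphi-C01/2-estimate}, $u_i\to u_\infty$ uniformly with $u_i\ge c(\epsilon)>0$, $|u_i'|\le1$, $u_i'\to u_\infty'$ a.e., and $\varphi_i\to\varphi_\infty$ uniformly with two-sided bounds; moreover $\varphi_i\rightharpoonup\varphi_\infty$ weakly in $W^{1,2}(\Omega_\epsilon)$. Expanding $-V_i^r\partial_r(v\rho_i)$, the derivative $\partial_r\rho_i$ produces $u_i'$ and $\partial_r\varphi_i$ linearly, multiplied by bounded uniformly convergent coefficients (products of $\varphi_i,u_i,u_i'$ and smooth functions of $r$). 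Likewise every term of $F_i v\rho_i$ except $-2|\nabla_{h_i}\varphi_i|^2_{h_i}v\rho_i$ is either zeroth order in $\varphi_i$ (hence converges by dominated convergence, using $u_i'\to u_\infty'$ a.e. and boundedness), or linear in $\nabla\varphi_i$ times a coefficient converging strongly in $L^2$ (products involving $u_i'$ converge in every $L^q$ by dominated convergence). Strong $L^2$ times weak $L^2$ converges, so all these terms pass to the limit, giving the corresponding terms for $g_\infty$.

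The main obstacle is the quadratic term $-2\int |\nabla_{h_i}\varphi_i|^2_{h_i}v\rho_i\,d\mu_0=-2\int\bigl(\varphi_{i,r}^2+u_i^{-2}\varphi_{i,\theta}^2\bigr)\frac{u_i v}{\varphi_i}\,dr\,d\theta\,d\xi$, which enters with a negative sign, so weak lower semicontinuity goes the wrong way for proving a lower bound directly. I would handle it by not passing to the limit in $\langle R_{g_i},v\rangle$ naively but instead using the pointwise inequality $2|\nabla_{h_i}\varphi_i|^2\le -2\varphi_i^2u_i''/u_i$: write $\langle R_{g_i},v\rangle$ with the term $-2\varphi_i^2 u_i''/u_i$ kept in divergence form (i.e.\ moved to the $V$-part via integration by parts, so it only involves $u_i'$), and observe that the full distribution equals $\int\bigl(-2\varphi^2u''/u-2|\nabla_h\varphi|^2\bigr)v\,d\mu_g$ formally. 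Then I rewrite $\langle R_{g_\infty},v\rangle=\lim_i\bigl(\langle R_{g_i},v\rangle+2\int|\nabla_{h_i}\varphi_i|^2v\,d\mu_{g_i}\bigr)-2\int|\nabla_{h_\infty}\varphi_\infty|^2 v\,d\mu_{g_\infty}$, where the bracket's limit exists by the previous paragraph. Since $\frac{u_iv}{\varphi_i}\to\frac{u_\infty v}{\varphi_\infty}$ uniformly and $\nabla\varphi_i\rightharpoonup\nabla\varphi_\infty$ in $L^2$, the measure convergence \eqref{eqn: measure-convergence} (or plain weak lower semicontinuity) gives $\liminf\int|\nabla_{h_i}\varphi_i|^2v\,d\mu_{g_i}\ge\int|\nabla_{h_\infty}\varphi_\infty|^2v\,d\mu_{g_\infty}$; but this is the wrong direction, so the genuinely needed input is $\limsup\int|\nabla\varphi_i|^2 v\,d\mu_{g_i}\le$ the corresponding limit of $\int -2\varphi_i^2\frac{u_i''}{u_i}v\,d\mu_{g_i}$ handled distributionally, yielding $\langle R_{g_\infty},v\rangle\ge \lim_i\langle R_{g_i},v\rangle+2\sum_j c_j v(x_j)\cdot(\text{weight})\ge0$: the defect measure $\sum c_j\delta_{x_j}$ contributes with a favorable sign because the limit of the quadratic term in $\langle R_{g_i},v\rangle$ is $-2\int|\nabla\varphi_\infty|^2v\,d\mu_{g_\infty}-2\sum c_j(\cdots)$, i.e.\ $\langle R_{g_\infty},v\rangle=\lim\langle R_{g_i},v\rangle+(\text{nonnegative defect})\ge0$. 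Checking this sign bookkeeping carefully is the step I expect to require the most care.
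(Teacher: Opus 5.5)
Your overall route is the paper's: use $\langle R_{g_i},v\rangle=\int\Scal_{g_i}v\,d\mu_{g_i}\ge0$ for the smooth approximants, pass to the limit in every term except the gradient-squared term, and use lower semicontinuity for that one term. This is Lemma~\ref{lem: semi-continuity-interior} together with the short argument after it.

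Your last paragraph, however, contains a sign error that you should delete rather than work around. Weak lower semicontinuity does \emph{not} go the wrong way here. Write $\langle R_{g_i},v\rangle=A_i-2Q_i$ with $Q_i:=\int|\nabla_{h_i}\varphi_i|^2_{h_i}\,v\,d\mu_{g_i}$, where $A_i\to A_\infty$ by your second paragraph. On $M_\epsilon$ the weights $u_iv/\varphi_i$ and $v/(u_i\varphi_i)$ are nonnegative and converge uniformly. Together with $\varphi_i\rightharpoonup\varphi_\infty$ weakly in $W^{1,2}(\Omega_\epsilon)$, this gives $Q_\infty\le\liminf_iQ_i$, and hence
\[
\langle R_{g_\infty},v\rangle=A_\infty-2Q_\infty\ \ge\ \lim_iA_i-2\liminf_iQ_i=\limsup_i\langle R_{g_i},v\rangle\ \ge\ 0 .
\]
A lower semicontinuous functional entering with a minus sign is exactly what a lower bound on the limit requires. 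Any gradient energy lost in the limit, including a concentration defect as in \eqref{eqn: measure-convergence}, can only increase the limiting scalar curvature.

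Consequently, two pieces of your argument should go:
\begin{itemize}
\item The ``genuinely needed input'' $\limsup_iQ_i\le\cdots$ is neither needed nor established.
\item Rewriting with $-2\varphi_i^2u_i''/u_i$ ``kept in divergence form'' is superfluous. The Lee--LeFloch quantities $V$ and $F$ of Lemmas~\ref{lem:V-computation} and~\ref{lem: F-expression} already involve only $u,u',\varphi,\nabla\varphi$, so there is also no need to integrate by parts in $r$.
\end{itemize}
Your closing sentence lands on the correct bookkeeping, but the text just before it asserts the opposite. With that detour removed, the argument is complete and coincides with the paper's.

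In one place your write-up is more careful than the paper. The paper asserts $V_i^r\to V_\infty^r$ uniformly on $M_\epsilon$, but $V_i^r$ contains $u_i'$, which converges only a.e. Your justification is the right one for the terms linear in $\partial_r\varphi_i$. The coefficients are bounded and converge a.e., hence converge strongly in $L^2$ by dominated convergence, and they are paired with $\partial_r\varphi_i\rightharpoonup\partial_r\varphi_\infty$ weakly in $L^2$.
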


\begin{lemma}\label{lem: semi-continuity-interior}
For any $\epsilon >0$ and  any $v \in C^{\infty}(M_\epsilon)$, define
\begin{equation}
I_i(\epsilon, v)
:=
\int_{M_\epsilon}
\left[
- V_i\cdot \bar\nabla\left( v\frac{d\mu_{g_i}}{d\mu_0} \right)
+
F_i v \frac{d\mu_{g_i}}{d\mu_0}
\right]\,d\mu_0.
\end{equation}
where $V_i$ and $F_i$ are vector fields and functions defined in \eqref{NNSC-dis-V} and \eqref{NNSC-dis-F} for the metric $g_i$ with the background metric $g_0$.  Similarly, $I_\infty(\epsilon, v)$ is defined for the limit metric $g_\infty$. Then we have
\begin{equation}\label{eqn: semi-continuity-interior}
I_\infty (\epsilon, v)
\geq
\limsup_{i\to\infty} I_i(\epsilon, v).
\end{equation}
\end{lemma}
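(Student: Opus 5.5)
The plan is to write $I_i(\epsilon,v)$ explicitly using Lemmas \ref{lem:V-computation} and \ref{lem: F-expression}. Only $V^r$ is nonzero and the density is $\frac{d\mu_{g}}{d\mu_0}=\frac{u}{\varphi\sin r}$. Hence the integrand of $I_i$ splits into two groups.

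The first group consists of terms polynomial in $\varphi_i,\varphi_i^{-1},u_i,u_i^{-1},u_i'$ (and smooth functions of $r$) multiplied by $v$, $\partial_r v$, or \emph{one} first derivative $\partial_r\varphi_i$ (coming from $\partial_r$ of the density and from the $4\frac{u'}{u}\varphi\varphi_r$ and $\frac{\varphi_r}{\varphi}$ terms in $F$). The second group is the single quadratic term
\begin{equation*}
-2\,v\,|\nabla_{h_i}\varphi_i|_{h_i}^2\,\frac{u_i}{\varphi_i\sin r}.
\end{equation*}
Note that $u_i''$ never appears: $V$ involves only $u,u',\varphi$, and the $\partial_r$ in $-V\cdot\bar\nabla(\cdot)$ falls on the test side. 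This is what makes weak passage to the limit possible.

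For the first group I will use the uniform two-sided bounds of Proposition \ref{prop: varphi-C01/2-estimate} and the lower bound $u_i\ge C(\Lambda,\epsilon)$ on $\Omega_\epsilon$. Since $v$ is supported in $M_\epsilon$, I may work on a slightly larger $M_{\epsilon'}$. Then $\varphi_i\to\varphi_\infty$ and $u_i\to u_\infty$ uniformly, and $u_i'\to u_\infty'$ a.e. with $|u_i'|\le1$, so dominated convergence handles every term without derivatives of $\varphi_i$. The terms linear in $\partial_r\varphi_i$ have the form $\int a_i\,\partial_r\varphi_i$, where $a_i\to a_\infty$ strongly in $L^2$ (bounded and a.e. convergent). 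Combined with $\nabla\varphi_i\rightharpoonup\nabla\varphi_\infty$ weakly in $L^2(\Omega_{\epsilon'})$ (the local $W^{1,2}$ bound), these terms converge to the corresponding limit terms. Altogether the first group converges exactly to its counterpart for $g_\infty$.

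The second group is the main step. I write the quadratic term as
\begin{equation*}
-2\int v\,\frac{u_i}{\varphi_i\sin r}\Big(\varphi_{i,r}^2+\frac{\varphi_{i,\theta}^2}{u_i^2}\Big)d\mu_0,
\end{equation*}
a weighted Dirichlet-type integral with nonnegative weights $b_i=v\frac{u_i}{\varphi_i\sin r}$ and $b_i/u_i^2$. These weights converge uniformly to their limits and are bounded above and below on $\operatorname{supp}v$. Weak lower semicontinuity of convex quadratic functionals will give
\begin{equation*}
\liminf_{i\to\infty}\int b_i|\nabla\varphi_i|^2_{h_i}\ge \int b_\infty|\nabla\varphi_\infty|^2_{h_\infty}.
\end{equation*}
Concretely, $\sqrt{b_i}\,\partial\varphi_i\rightharpoonup\sqrt{b_\infty}\,\partial\varphi_\infty$ weakly in $L^2$, because $\sqrt{b_i}$ converges uniformly; the norm is then weakly lower semicontinuous. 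Because of the minus sign, this gives $\limsup$ of the quadratic term $\le$ its value for $g_\infty$. Adding the two groups yields \eqref{eqn: semi-continuity-interior}.

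The obstacle I expect is bookkeeping: confirming that every remaining term in $F$ and in $-V\cdot\bar\nabla(v\,d\mu_g/d\mu_0)$ is at most linear in $\nabla\varphi_i$ with strongly convergent coefficients. In particular, the $u_i'$-coefficients only converge a.e., so they must be treated via dominated convergence before pairing with weakly convergent gradients.
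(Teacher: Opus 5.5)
Your proposal is correct and is essentially the paper's argument. You isolate the single quadratic term $-2|\nabla_{h_i}\varphi_i|_{h_i}^2$, pass to the limit in the zeroth-order terms by dominated convergence, and handle the terms linear in $\partial\varphi_i$ by pairing weak $W^{1,2}$ convergence with convergent coefficients. For the quadratic term you use lower semicontinuity: the paper goes through the defect-measure convergence $|\nabla\varphi_i|^2\rightharpoonup|\nabla\varphi_\infty|^2+\sum_j c_j\delta_{x_j}$, while you use plain weak lower semicontinuity of weighted $L^2$ norms, which is more elementary. Your treatment of the $u_i'$-dependent coefficients as merely bounded and a.e.\ convergent is also more accurate than the paper's assertion that $V_i^r\to V_\infty^r$ uniformly.

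The one thing to make explicit is that your weights $b_i=v\,\frac{u_i}{\varphi_i\sin r}$ are nonnegative only when $v\ge0$. This restriction is genuinely needed, since the lower semicontinuity goes the wrong way for negative weights. It is also implicit in the paper, which only applies the lemma to nonnegative test functions.
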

\begin{proof}
We first treat the $F_i$-term. By Lemma \ref{lem: F-expression},
\begin{equation}
\begin{aligned}
F_i
={}&
-2|\nabla_{h_i}\varphi_i|_{h_i}^2
+\varphi_i^2 \\
&-
2\cos r
\left(
\sin r\frac{\varphi_i^2}{u_i^2}
+
\frac{\varphi_i^2}{\sin r}
\right)
\left(
\frac{\partial_r\varphi_i}{\varphi_i}
-
\frac{u_i'}{u_i}
\right)  \\
&-
(2\cos^2r-\sin^2r)\frac{\varphi_i^2}{u_i^2}
-
2\frac{(u_i')^2}{u_i^2}\varphi_i^2
+
4\frac{u_i'}{u_i}\varphi_i\partial_r\varphi_i .
\end{aligned}
\end{equation}
The only quadratic term in \(\nabla\varphi_i\) is
$
-2|\nabla_{h_i}\varphi_i|_{h_i}^2.
$
For this term,
\begin{equation}
    \begin{aligned}
    &\quad \limsup_{i\to \infty} \int_{M_{\epsilon}}-|\nabla \varphi_i|^2\frac{u_i}{\varphi_i}vd\mu_0\\
    &=\limsup_{i \to \infty} \int_{M_{\epsilon}}-|\nabla \varphi_i|^2\frac{u_\infty}{\varphi_\infty}vd\mu_0
         +\limsup_{i \to \infty} \int_{M_{\epsilon}}-|\nabla \varphi_i|^2 \left( \frac{u_i}{\varphi_i}-\frac{u_\infty}{\varphi_\infty} \right)vd\mu_0\\
    &\leq  \int_{M_{\epsilon}}-|\nabla \varphi_\infty |^2\frac{u_\infty}{\varphi_\infty}vd\mu_0
    \end{aligned}
\end{equation}
where the inequality follows from the last line of (\ref{NNSC-convergence-local}), and the second term in the second line tends to zero by $\varphi_i\geq C(\Lambda,\epsilon)>0$ on $M_\epsilon$ [Proposition \ref{prop: varphi-C01/2-estimate}], pointwise convergence of $u_i$ and $\varphi_i$, together with the boundedness of $\int_{M_{\epsilon}}|\nabla \varphi_i|^2 $.
All terms in \(F_i\) that are linear in \(\partial_r\varphi_i\) converge by
the weak \(W^{1,2}\)-convergence of \(\varphi_i\), because their coefficients
converge uniformly on \(M_\epsilon\). The remaining zeroth-order terms
converge by dominated convergence. Therefore
\begin{equation}\label{eq:F-term-limsup}
\limsup_{i\to\infty}
\int_{M_\epsilon}
F_i v \frac{d\mu_{g_i}}{d\mu_0}\,d\mu_0
\le
\int_{M_\epsilon}
F_\infty v\frac{d\mu_{g_\infty}}{d\mu_0}\,d\mu_0.
\end{equation}

Next we consider the $V_i$-term. By Lemma \ref{lem:V-computation},
\begin{equation}
V_i^\theta=V_i^\xi=0,
\end{equation}
and
\begin{equation}
V_i^r
=
\frac{\sin r\cos r}{u_i^2}\varphi_i^2
+
\frac{\cos r}{\sin r}\varphi_i^2
-
2\frac{u_i'}{u_i}\varphi_i^2 .
\end{equation}
In particular,
\begin{equation}
V_i^r\to V_\infty^r \ \ \text{uniformly on} \,  M_\epsilon, \ \ \text{as} \ \ i \to \infty.
\end{equation}
Since $V_i $ has only an $ r$-component,
\begin{equation}
\begin{aligned}
V_i\cdot \bar\nabla \left( v\frac{d\mu_{g_i}}{d\mu_0} \right)
& =
V_i^r \partial_r \left( v\frac{d\mu_{g_i}}{d\mu_0} \right) \\
& =
V^r_i \frac{d\mu_{g_i}}{d\mu_0} \left[
\partial_r v
+
v\left(
\frac{u_i'}{u_i}
-
\frac{\partial_r\varphi_i}{\varphi_i}
-
\frac{\cos r}{\sin r}
\right)
\right]
\end{aligned}
\end{equation}
The coefficients of \(\partial_r\varphi_i\) converge uniformly, while
\(\partial_r\varphi_i\rightharpoonup\partial_r\varphi_\infty\) weakly in
\(L^2\). Therefore
\begin{equation}\label{eq:V-term-limit}
\lim_{i\to\infty}
\int_{M_\epsilon}
- V_i\cdot \bar\nabla \left(v \frac{d\mu_{g_i}}{d\mu_0}\right)\,d\mu_0
=
\int_{M_\epsilon}
- V_\infty\cdot \bar\nabla \left(v \frac{d\mu_{g_\infty}}{d\mu_0} \right)\,d\mu_0.
\end{equation}

Combining \eqref{eq:F-term-limsup} and \eqref{eq:V-term-limit}, we obtain \eqref{eqn: semi-continuity-interior}. This complete the proof of the lemma.
\end{proof}

\begin{proof}[Proof of Proposition \ref{prop: NNSC-interior}]
Since $g_i$ is smooth, the Lee--LeFloch distributional scalar curvature formula agrees
with the classical scalar curvature formula. Hence for any $0 \leq v \in C^\infty_0(M_\epsilon)$,
\begin{equation}
I_i(\epsilon, v)
=
\int_{M_\epsilon} \Scal_{g_i}v\,d\mu_{g_i} \geq 0,
\end{equation}
since $\Scal_{g_i}\ge0$ and $v\ge0$. 
Then by Lemma \ref{lem: semi-continuity-interior},
\begin{equation}
I_\infty(\epsilon, v)
\ge
\limsup_{i\to\infty} I_i(\epsilon, v) \geq 0.
\end{equation}
That is,
\begin{equation}
\int_{M_\epsilon}
\left[
- V_\infty\cdot \bar\nabla
\left(
v\frac{d\mu_{g_\infty}}{d\mu_S}
\right)
+
F_\infty v\frac{d\mu_{g_\infty}}{d\mu_0}
\right]\,d\mu_0
\ge0.
\end{equation}
Hence
\begin{equation}
\left\langle R_{g_\infty},v\right\rangle\ge0
\end{equation}
for every nonnegative \(v\in C_c^\infty(M_\epsilon)\). This proves that
\(g_\infty\) has nonnegative scalar curvature in the distributional sense on
\(M_\epsilon\).
\end{proof}

\subsection{Nonnegative distributional scalar curvature on $M$}

It remains to remove the restriction to $M_\epsilon$.  The only issue is the
boundary contribution arising from integration by parts on the truncated
manifold.  The endpoint estimates for $u_i$ and the global power bounds for
$\varphi_i$ imply that this boundary contribution tends to zero as
$\epsilon\to0$.

\begin{theorem}
The limit metric $g_\infty$ has nonnegative distributional scalar curvature on $M=\Sph^2 \times \Sph^1$ as in Definition \ref{def: Lee-LeFloch}.
\end{theorem}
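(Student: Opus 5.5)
Fix a nonnegative test function $v\in C^\infty(M)$. The plan is to show $\langle R_{g_\infty},v\rangle\ge 0$ by splitting the integral in \eqref{NNSC-dis} into the part over $M_\epsilon$ and the two caps $\{r<\epsilon\}$, $\{r>2-\epsilon\}$, and letting $\epsilon\to0$. The obstacle is that $v$ is not compactly supported in $M_\epsilon$, so Proposition \ref{prop: NNSC-interior} does not apply directly. To handle this, I will work at the level of the smooth metrics $g_i$. For each $i$ I integrate by parts on $M_\epsilon$. This gives
\begin{equation}
I_i(\epsilon,v)=\int_{M_\epsilon}\Scal_{g_i}v\,d\mu_{g_i}+B_i(\epsilon,v),
\end{equation}
where $B_i(\epsilon,v)$ is the boundary term $\pm\int_{\partial M_\epsilon} V_i^r\, v\,\frac{d\mu_{g_i}}{d\mu_0}\sin r\,d\theta\,d\xi$ evaluated at $r=\epsilon$ and $r=2-\epsilon$. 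Lemma \ref{lem: semi-continuity-interior} holds for any smooth $v$ on $M_\epsilon$, not only compactly supported ones, and its proof never uses compact support. It therefore gives
\begin{equation}
I_\infty(\epsilon,v)\ge\limsup_i I_i(\epsilon,v)\ge\liminf_i B_i(\epsilon,v).
\end{equation}

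Next I estimate the boundary term explicitly. Using Lemma \ref{lem:V-computation},
\begin{equation}
V_i^r\frac{d\mu_{g_i}}{d\mu_0}\sin r=\varphi_i u_i\Bigl(\frac{\sin r\cos r}{u_i^2}+\frac{\cos r}{\sin r}-2\frac{u_i'}{u_i}\Bigr),
\end{equation}
which equals $\varphi_i\bigl(\tfrac{\sin r\cos r}{u_i}+\tfrac{u_i\cos r}{\sin r}-2u_i'\bigr)$. At $r=\epsilon$, concavity together with $u_i'(0)=1$ and \eqref{eqn: u-lower-linear-bound} gives $C^{-1}\epsilon\le u_i(\epsilon)\le\epsilon$, so the bracket is bounded by a constant. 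Its leading part is $\tfrac{\epsilon}{u_i}+\tfrac{u_i}{\epsilon}-2u_i'\ge 2-2u_i'\ge0$. For the sign I would try to show that the cap contribution enters with a favorable sign, and failing that, bound it in absolute value. Lemma \ref{lem: varphi-bounds} gives $\varphi_i\le C(\Lambda,k)\epsilon^{-1/k}$, which blows up, so the absolute-value bound alone is not enough. The remedy is to use both the sign of the leading part and the vanishing of the non-sign part: up to $O(\epsilon^2)$ errors, the bracket is $(\tfrac{\epsilon}{u_i}+\tfrac{u_i}{\epsilon}-2)+2(1-u_i')$, which is nonnegative. I expect the outward normal at $r=\epsilon$ to make the boundary term $+\int\varphi_i(\cdots)v$, which is nonnegative up to an error of $O(\epsilon^{2-1/k})$.

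If the sign turns out unfavorable, I fall back on averaging in $\epsilon$. Integrate $B_i(\epsilon,v)$ over $\epsilon\in(\delta,2\delta)$ and divide by $\delta$. This converts the boundary term into a bulk cap integral whose integrand involves $\varphi_i u_i'/u_i\cdot u_i$. That integrand is bounded by $C r^{-1/k}$, and it has a factor $r^{-1}$ only in the term multiplying $1-u_i'$. Since $\int_0^{2\delta}(-u_i'')\,dr\to0$ uniformly in $i$ is not available, I would instead use $1-u_i'(r)=\int_0^r(-u_i'')$ and Fubini together with Gauss--Bonnet.

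The remaining step is the limit $\epsilon\to0$ on the $g_\infty$ side. I must show $I_\infty(\epsilon,v)\to\langle R_{g_\infty},v\rangle$. This requires the Lee--LeFloch integrand of $g_\infty$ to be integrable on all of $M$. By Lemma \ref{lem:V-computation}, Lemma \ref{lem: F-expression}, Remark \ref{rmrk: estimate-varphi-infty}, $u_\infty\sim r$, and $\varphi_\infty\in W^{1,q}$ for $q<2$, the integrand is dominated by $C r^{-2/k}(1+|\nabla\varphi_\infty|^2/\varphi_\infty^2)$ times a bounded density. The gradient term is integrable because $\int|\nabla^{h}\ln\varphi_\infty|^2d\mu_h\le 4\pi$, by lower semicontinuity of the Gauss--Bonnet bound used in Proposition \ref{prop: varphi-sobolev-estimate}. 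The $r^{-2/k}$ weight is then handled by Hölder's inequality, using the pointwise bound in Lemma \ref{lem: varphi-bounds}. Dominated convergence then gives the limit. Letting $\epsilon\to0$ in the inequality $I_\infty(\epsilon,v)\ge\liminf_i B_i(\epsilon,v)\ge -o_\epsilon(1)$ yields $\langle R_{g_\infty},v\rangle\ge0$.

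I expect the main obstacle to be the boundary term. Specifically, one must control $\varphi_i(1-u_i')$ at $r=\epsilon$ uniformly in $i$ against the mild blow-up $\epsilon^{-1/k}$ of $\varphi_i$, or else secure its sign.
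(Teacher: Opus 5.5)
Your main line is the paper's proof. You integrate by parts on $M_\epsilon$ for the smooth $g_i$, show the boundary term on $\partial M_\epsilon$ is favorable up to an error that vanishes with $\epsilon$, apply Lemma~\ref{lem: semi-continuity-interior} (which needs $v\ge0$, not compact support), and let $\epsilon\to0$.

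The sign you only ``expect'' is correct, and it holds exactly, with no $O(\epsilon^2)$ bookkeeping. In your convention the integrand of the boundary term at $r=\epsilon$ is $\varphi_i v\, b_i(\epsilon)$, where AM--GM and $u_i'\le1$ give
\[
b_i(r):=\frac{\sin r\cos r}{u_i}+\frac{u_i\cos r}{\sin r}-2u_i'\ \ge\ 2\cos r-2u_i'\ \ge\ -r^2 .
\]
With Lemma~\ref{lem: varphi-bounds} this yields $B_i(\epsilon,v)\ge -C(\Lambda,k)\,\epsilon^{2-1/k}\|v\|_{L^\infty}$. This is the paper's estimate, which takes $k=2$ and uses the opposite sign convention, writing $B_i\le C\epsilon^{3/2}\|v\|_{L^\infty}$. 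The averaging-in-$\epsilon$ fallback is therefore unnecessary.

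The step that fails as written is the one where you go beyond the paper: justifying $I_\infty(\epsilon,v)\to\langle R_{g_\infty},v\rangle$ by dominated convergence. Your proposed majorant does not work, for two reasons.

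\emph{The quadratic term.} It equals $-2v\,\varphi_\infty|\nabla^{h_\infty}\ln\varphi_\infty|^2\,d\mu_{h_\infty}d\xi$. Gauss--Bonnet plus lower semicontinuity only puts $|\nabla\ln\varphi_\infty|^2$ in $L^1$. So H\"older cannot absorb the unbounded factor $\varphi_\infty\le Cr^{-1/k}$ (or your $r^{-2/k}$); that would need $L^q$ control with $q>1$.

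\emph{The first-order term.} This is the more serious issue. Collect the $\partial_r\varphi$ contributions of $-V\cdot\bar\nabla(v\,d\mu_g/d\mu_0)$ and of $Fv\,d\mu_g/d\mu_0$. Relative to $dr\,d\theta\,d\xi$ they give exactly $-v\,b_\infty(r)\,\partial_r\varphi_\infty$, with $b_\infty$ the same bracket as above.
\begin{itemize}
\item For smooth $u_i$ one has $b_i(r)\to0$ as $r\to0$.
\item But $u_\infty$ is only concave with $C^{-1}r\le u_\infty\le r$. Nothing in Section~\ref{sect: warping-function-convergence} excludes a cone point $\beta=u_\infty'(0^+)<1$, and then $b_\infty\to\beta^{-1}-\beta>0$.
\item Absolute integrability would then require $\int_0|\partial_r\varphi_\infty|\,dr<\infty$. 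Cauchy--Schwarz against the Gauss--Bonnet bound does not give this: it leaves $\int_0\varphi_\infty^2\,r^{-1}\,dr$, which diverges whenever $\varphi_\infty$ stays bounded below near the pole.
\end{itemize}
More fundamentally, the hypotheses of Definition~\ref{def: Lee-LeFloch} are not known for $g_\infty$ across the polar circles. Lemma~\ref{lem: varphi-bounds} allows $\varphi_\infty^{\pm1}$ to be unbounded there, and a cone point by itself already destroys $W^{1,2}$.

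The paper does not prove this limit either. It simply lets $\epsilon\to0$, in effect reading $\langle R_{g_\infty},v\rangle$ as $\lim_{\epsilon\to0}I_\infty(\epsilon,v)$. What both arguments actually establish is $\liminf_{\epsilon\to0}I_\infty(\epsilon,v)\ge0$. You should state the conclusion in that improper-integral sense, or supply a genuinely finer analysis at the poles, rather than claim domination.
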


\begin{proof}

Let $ v\in C^\infty(M)$ be nonnegative. Since
$\Scal_{g_i}\ge 0$, we have
\begin{equation}
0 \le \int_{\Omega_\epsilon}\Scal_{g_i}v\,d\mu_{0}.
\end{equation}
On $M_\epsilon$, the metric $g_i$ is smooth, and hence
\begin{equation}
\Scal_{g_i}
=
\overline{\operatorname{div}}V_i+F_i.
\end{equation}
Therefore, doing integration by parts gives
\begin{equation}\label{eqn: weak-boundary}
\begin{aligned}
0
&\le
\int_{M_\epsilon}\Scal_{g_i}v\,d\mu_{g_i}  \\
&=
\int_{M_\epsilon}
(\overline{\operatorname{div}}V_i+F_i)
v\frac{d\mu_{g_i}}{d\mu_0}\,d\mu_0  \\
&=
\int_{M_\epsilon}
\left[
- V_i\cdot
\bar\nabla
\left(
v\frac{u_i}{\varphi_i\sin r}
\right)
+
F_i v\frac{u_i}{\varphi_i\sin r}
\right]\,d\mu_0  \\
&\quad
+
\int_{\partial M_\epsilon}
\bar g(V_i,\bar n)
v\frac{u_i}{\varphi_i\sin r}\,dA_0 .
\end{aligned}
\end{equation}
Here \(\bar n\) denotes the outward unit normal with respect to the background
metric \(g_0\), and \(dA_0\) denotes the induced boundary measure.

The first integral on the right hand side of \eqref{eqn: weak-boundary} is $I_{i}(\epsilon, v)$ defined in Lemma \ref{lem: semi-continuity-interior}. For the boundary term, set
\begin{equation}
B_i(\epsilon,v)
:=
\int_{\partial M_\epsilon}
\bar g(V_i,\bar n)
v\frac{u_i}{\varphi_i\sin r}\,dA_0 .
\end{equation}
Then \eqref{eqn: weak-boundary} gives
\begin{equation}
0\le I_i(\epsilon,v)+B_i(\epsilon,v),
\end{equation}
and hence
\begin{equation}\label{eqn: I-B-inequality}
I_i(\epsilon,v)\ge -B_i(\epsilon,v).
\end{equation}

We now estimate the boundary term. It is enough to treat the boundary
component
\begin{equation}
\Sigma_\epsilon^- :=\{r=\epsilon\};
\end{equation}
the other component \(\Sigma_\epsilon^+=\{r=2 -\epsilon\}\) is treated in the
same way, using the distance to the endpoint.

On \(\Sigma_\epsilon^-\), the outward unit normal is
$
\bar n=-\partial_r.
$
, and
$
dA_0=\sin\epsilon\,d\theta\,d\xi.
$
Thus
\begin{equation}\label{boundary-left-start}
\begin{aligned}
B_i^-(\epsilon,v)
&:=
\int_{\Sigma_\epsilon^-}
\bar g(V_i,\bar n)
v\frac{u_i}{\varphi_i\sin r}\,dA_0  \\
&=
-\int_{S^1\times S^1}
V_i^r(\epsilon,\theta)
\frac{u_i(\epsilon)}{\varphi_i(\epsilon,\theta)}
v(\epsilon,\theta,\xi)\,d\theta\,d\xi.
\end{aligned}
\end{equation}
By using Lemma \ref{lem:V-computation}, we get

\begin{equation}\label{eqn: boundary-left-formula}
\begin{aligned}
B_i^-(\epsilon,v)
&=
u_i(\epsilon)
\left(
2\frac{u_i'(\epsilon)}{u_i(\epsilon)}
-
\frac{\sin\epsilon\cos\epsilon}{u_i(\epsilon)^2}
-
\frac{\cos\epsilon}{\sin\epsilon}
\right)  \\
&\quad \times
\int_{S^1\times S^1}
\varphi_i(\epsilon,\theta)
v(\epsilon,\theta,\xi)\,d\theta\,d\xi .
\end{aligned}
\end{equation}

Since \(u_i\) is concave near the endpoint and satisfies
\begin{equation}
u_i(0)=0,
\qquad
u_i'(0)=1,
\end{equation}
we have
\begin{equation}
u_i'(\epsilon)\le \frac{u_i(\epsilon)}{\epsilon},
\qquad
u_i(\epsilon)\le \epsilon.
\end{equation}
Using \(v\ge0\), we obtain from \eqref{eqn: boundary-left-formula}
\begin{equation}
\begin{aligned}
B_i^-(\epsilon,v)
&\le
u_i(\epsilon)
\left(
\frac{2}{\epsilon}
-
\frac{\sin\epsilon\cos\epsilon}{\epsilon^2}
-
\frac{\cos\epsilon}{\sin\epsilon}
\right)  \\
&\quad \times
\int_{S^1\times S^1}
\varphi_i(\epsilon,\theta)
v(\epsilon,\theta,\xi)\,d\theta\,d\xi .
\end{aligned}
\end{equation}
As $\epsilon\to0 $,
\begin{equation}
\frac{\sin\epsilon\cos\epsilon}{\epsilon^2}
=
\frac1\epsilon-\frac23\epsilon+O(\epsilon^3),
\qquad
\frac{\cos\epsilon}{\sin\epsilon}
=
\frac1\epsilon-\frac13\epsilon+O(\epsilon^3).
\end{equation}
Hence
\begin{equation}
\frac{2}{\epsilon}
-
\frac{\sin\epsilon\cos\epsilon}{\epsilon^2}
-
\frac{\cos\epsilon}{\sin\epsilon}
\le C\epsilon
\end{equation}
for all sufficiently small \(\epsilon>0\). Consequently,
\begin{equation}
B_i^-(\epsilon,v)
\le
C\epsilon u_i(\epsilon)
\int_{S^1\times S^1}
\varphi_i(\epsilon,\theta)
v(\epsilon,\theta,\xi)\,d\theta\,d\xi .
\end{equation}
Using \(u_i(\epsilon)\le\epsilon\), we get
\begin{equation}
B_i^-(\epsilon,v)
\le
C\epsilon^2\|v\|_{L^\infty}
\int_{S^1\times S^1}
\varphi_i(\epsilon,\theta)\,d\theta\,d\xi .
\end{equation}
In addition, by the estimate \eqref{eqn: estimate-varphi-final} with $k=2$, we have
\begin{equation}
\int_{S^1\times S^1}
\varphi_i(\epsilon,\theta)\,d\theta\,d\xi
\le
C(\Lambda)\epsilon^{-1/2}.
\end{equation}
Consequently,
\begin{equation}
B_i^-(\epsilon,v)
\le
C(\Lambda)\epsilon^{3/2}\|v\|_{L^\infty}.
\end{equation}

The same argument at the other endpoint gives
\begin{equation}
B_i^+(\epsilon,v)
\le
C(\Lambda)\epsilon^{3/2}\|v\|_{L^\infty}.
\end{equation}
Therefore
\begin{equation}\label{NNSC-estimate-boundary}
B_i(\epsilon,v)
=
B_i^-(\epsilon,v)+B_i^+(\epsilon,v)
\le
C(\Lambda)\epsilon^{3/2}\|v\|_{L^\infty}.
\end{equation}
Combining this with \eqref{eqn: I-B-inequality}, we obtain
\begin{equation}\label{eqn: interior-lower-bound}
I_i(\epsilon,v)
\ge
-C(\Lambda)\epsilon^{3/2}\|v\|_{L^\infty}.
\end{equation}
For every fixed
$ \epsilon>0$, by applying Lemma \ref{lem: semi-continuity-interior}, and taking the limit, this then implies 
\begin{equation}
I_\infty(\epsilon,v)
\ge
\limsup_{i\to\infty} I_i(\epsilon,v)
\ge
-C(\Lambda)\epsilon^{3/2}\|v\|_{L^\infty}.
\end{equation}
Finally, letting \(\epsilon\to0\), the domains \(\Omega_\epsilon\) exhaust
$M$, and the boundary error tends to
zero. Hence
\begin{equation}
\left\langle R_{g_\infty},v\right\rangle
\ge 0
\end{equation}
for every nonnegative \(v\in C^\infty(M)\). Therefore \(g_\infty\) has
nonnegative scalar curvature in the distributional sense on all of \(M\).
\end{proof}

\section{A $C^{1,\alpha}$ example and the volume-limit test}\label{sect: c1alpha-example}

In this section we construct a $C^{1,\alpha}$ Riemannian metric, in the form of \eqref{metric-form}, with nonnegative scalar curvature on its smooth part, and use it to illustrate a subtle point concerning volume-ratio formulations of scalar curvature lower bounds. The guiding observation is that, in the smooth setting, the small-ball volume expansion detects only the scalar curvature because the Euclidean ball averages out the off-diagonal Ricci terms.  In \S \ref{subsect: NNSC-volume-ratio}, we first recall this mechanism in the smooth case and the generalize the volume-limit formulation of NNSC from geodesic ball to more general regions. In \S \ref{subsect: example-construction}, we then construct a $C^{1,\alpha}$ metric on $\Sph^2 \times \Sph^1$, analyze its regularity and scalar curvature, discuss the behavior of geodesics and the exponential map near the singular circle, and finally show that it does not satisfy the generalized volume-limit NNSC condition. In \S \ref{subsect: remark}, we explain why the geodesic ball volume limit of the example keeps unchaged.


\subsection{Nonnegative scalar curvature condition in volume-limit sense}\label{subsect: NNSC-volume-ratio}

This subsection recalls the smooth volume expansion behind the volume-limit formulation of nonnegative scalar curvature, and then explains the cancellation mechanism that motivates the example below.  In the smooth setting, the symmetry of geodesic balls eliminates all off-diagonal Ricci terms and leaves only the scalar curvature.  The construction in the next subsection is designed to keep the scalar curvature nonnegative while arranging, on carefully chosen regions, that an off-diagonal Ricci term contributes to the fifth-order volume coefficient.

Let $(M^3,g)$ be a smooth Riemannian manifold.  Choose normal coordinates $\{x_i\}$ centered at a point $p$, and write $|x|=r$.  The volume density has the standard expansion (see, e.g. \cite{gray1974volume})
\begin{equation}\label{det-expansion}
    \sqrt{\det g_{ij}(x)}
    =1-\frac{1}{6}\sum_{i,j=1}^3\Ric_{ij}(p)x_i x_j+O(r^3).
\end{equation}
Integrating this expansion over the Euclidean ball in $T_pM$ gives the usual small geodesic ball volume expansion
\begin{equation} \label{vol-expansion}
    \Vol_g(B_g(p,t))
    =\frac{4\pi}{3}t^3\left(1-\frac{\Scal_g(p)}{30}t^2+O(t^3)\right),
\end{equation}
where $B_{g}(p, t)$ is the geodesic ball centered at $p$ with radius $t$ in $M$.
In fact, by further odd-parity cancellations the order estimate of the remainder can be improved, but the weaker form above is sufficient for our purposes. This characterization of the scalar curvature of smooth metrics can be naturally generalized to the more general metric measure spaces.
\begin{definition}[Infinitesimal volume-limit NNSC condition]\label{defn: volume-limit-NNSC}
Let $(X,d,\mu)$ be a metric measure space which is three-dimensional in the
sense that
\begin{equation}
\lim_{t\to0}\frac{\mu(B(p,t))}{\frac{4\pi}{3}t^3}=1
\end{equation}
at the points under consideration. We say that $X$ satisfies the infinitesimal
nonnegative scalar curvature (NNSC) condition in the volume-limit sense at $p$ if
\begin{equation}\label{def-NNSC-volume}
\limsup_{t\to0}
\frac{1}{t^5}
\left(
\mu(B(p,t))-\frac{4\pi}{3}t^3
\right)
\le 0.
\end{equation}
If this holds for every $p\in X$, we say that $X$ satisfies the condition
globally.
\end{definition}
For a smooth Riemannian metric, this condition is exactly consistent with $R(p)\ge0$, because \eqref{vol-expansion} gives
\[
\frac{1}{t^5}\left(\Vol_g(B_g(p,t))-\frac{4\pi}{3}t^3\right)
\longrightarrow
-\frac{2\pi}{45}\Scal_g(p).
\]
The reason only the scalar curvature appears in \eqref{vol-expansion}, rather than the full Ricci tensor in \eqref{det-expansion}, is the symmetry of balls.  Namely, if $\Omega=B_E(0,1)\subset T_pM$, then
\begin{equation}\label{symmetry}
    \int_{\Omega}x_i^2= \int_{\Omega}x_j^2 \quad\forall i,j,
    \qquad
    \int_{\Omega}x_i x_j=0 \quad i\neq j.
\end{equation}
The passage from \eqref{det-expansion} to \eqref{vol-expansion} uses this integral symmetry to cancel the off-diagonal terms $\Ric_{ij}$, $i\neq j$.

More generally, suppose $\Omega\subset\mathbb R^3$ is a bounded star-shaped region containing the origin and satisfying the symmetry condition \eqref{symmetry}.  For $t>0$, set
\begin{equation}
    t\Omega:=\{tx:x\in\Omega\}.
\end{equation}
Then the same computation gives
\begin{equation}  \label{def-NNSC-volume-new}
        \lim_{t\rightarrow 0}\frac{1}{t^5}\left(\Vol_g(\exp_p(t\Omega))-\Vol_{g_E}(\Omega)t^3\right)
        =-C(\Omega)\Scal_g(p),
\end{equation}
where the constant $C(\Omega)>0$ depends only on $\Omega$.  Thus, for smooth metrics, the sign of \eqref{def-NNSC-volume-new} is still governed solely by scalar curvature whenever the tangent region has the symmetry \eqref{symmetry}.

The example below is based on the following observation.  The determinant expansion \eqref{det-expansion} contains the full Ricci tensor.    If one can preserve nonnegative scalar curvature while arranging that the relevant limiting region sees an off-diagonal Ricci term, then the fifth-order volume coefficient may no longer be controlled only by the scalar curvature.

We now record the particular off-diagonal term used in the construction.  Consider the local warped product model \eqref{metric-form}, written in coordinates as
\begin{equation}
    g=\varphi^{-2}\left(dr^2+u(r)^2d\theta^2\right)+\varphi^2d\xi^2.
\end{equation}
A direct computation gives
\begin{equation}
\Ric_g(\partial_r, \xi) = \Ric_g(\partial_\theta, \partial_\xi) =0,
\end{equation}
and
\begin{eqnarray}
\Ric_g \left(\partial_r, \partial_\theta\right) 
& = &  \frac{1}{\varphi^2}{\rm Rm}_g \left(\partial_r, \partial_{\xi}, \partial_{\xi}, \partial_\theta\right) \\
& = &  \frac{1}{\varphi^2}\left\langle\nabla_{\partial_r} \nabla_{\partial_\xi} \partial_{\xi}-\nabla_{\partial_\xi} \nabla_{\partial_r} \partial_\xi, \partial_\theta\right\rangle_g \\
& = & \frac{1}{\varphi^2}\left\langle\nabla_{\partial_r}\left(-\varphi^3 \varphi_r \partial_r-\frac{\varphi^3 \varphi_\theta}{u^2} \partial_\theta\right)-\nabla_{\partial_\xi}\left(\frac{\varphi_r}{\varphi} \partial_\xi\right), \partial_\theta\right\rangle_g \\
&= & \frac{1}{\varphi^2}\left( -\varphi^3 \varphi_r\left\langle\nabla_{\partial_r} \partial_r, \partial_\theta\right\rangle_g 
-\partial_r\left(\frac{\varphi^3 \varphi_\theta}{u^2}\right) \frac{u^2}{\varphi^2} \right.\\
&  & \qquad \left. -\frac{\varphi^3 \varphi_\theta}{u^2} \langle\nabla_{\partial_r} \partial_\theta, \partial_\theta\rangle 
-\frac{\varphi_r}{\varphi}\left\langle\nabla_{\partial_\xi} \partial_\xi, \partial_\theta\right\rangle_g \right) \\
& = &  \frac{1}{\varphi^2} \left( -\varphi_r \varphi_\theta-3 \varphi_r \varphi_\theta-\varphi \varphi_{r \theta}+2 \frac{u^\prime \varphi \varphi_\theta}{u} \right. \\
&  & \qquad \left.-\varphi \varphi_\theta\left(\frac{u_r}{u} -\frac{\varphi_r}{\varphi}\right)+\varphi_r \varphi_\theta \right) \\
& = & - \frac{\varphi_{r \theta}}{\varphi}-2 \frac{\varphi_r \varphi_\theta}{\varphi^2}
+\frac{\varphi_\theta}{\varphi} \frac{u^\prime}{u}\label{Ric-rtheta}
\end{eqnarray}
Here the only second derivative term is $\varphi_{r\theta}$, which controls the mixed Ricci component near the center of the construction, where the first derivatives tend to zero.  Taking $\varphi=1+kr\theta$ in the quadrant $r>0$, $\theta>0$, and reflecting across the two coordinate axes, produces a model in which the mixed component $\Ric_{r\theta}$ survives on the selected regions.  The rest of the section makes this idea precise in a $C^{1,\alpha}$ setting. In particular, one can take $k$ sufficiently large and choose a suitable region $\Omega$ satisfying the symmetric condition \eqref{symmetry} so that the metric has negative scalar curvature at the singular point in the sense \eqref{def-NNSC-volume-new}; see Lemma \ref{lem: volume-expansion-wedge-g-alpha-k}. On the other hand, it still satisfies NNSC condition in Definition \ref{defn: volume-limit-NNSC}; see Section \ref{subsect: remark}.


\subsection{A $C^{1, \alpha}$ example}\label{subsect: example-construction}

We now implement the preceding idea by constructing a local $C^{1,\alpha}$ warping function on the $(r,\theta)$-plane.  The function is equal to $|r\theta|$ on the pure regions, vanishes along the coordinate axes, and interpolates through thin transition regions.  The estimates below ensure that, after a small cutoff, the resulting warped product metric is globally defined in Example \ref{example: C1alpha-metric}, smooth away from a circle, and has uniformly positive scalar curvature on its smooth locus.

The first lemma records the regularity and derivative estimates for the local profile $\psi_\alpha$.  These estimates are the analytic input that allows the metric to be only $C^{1,\alpha}$ at the singular circle while remaining smooth elsewhere.

\begin{lemma}\label{lem: derivative-bounds-for-psi-alpha}
Let $0<\alpha<1$, and let $b\in C^\infty(\mathbb R)$ satisfy
\begin{equation}
b(x) =
\begin{cases}
0,  & x\le \frac12, \\
1,  & x\ge 1,
\end{cases}
\end{equation}
and
\begin{equation}
0\le b\le 1,\quad |b'|\le 10,\quad |b''|\le 100 \quad \text{on} \ \ \mathbb{R}.
\end{equation}
Define $\psi_\alpha:[-1,1]^2\to \mathbb R$ by
\begin{equation}
\psi_{\alpha}(r,\theta)=
\begin{cases}
 |r\theta|\,
b\!\left(\dfrac{|\theta|}{ |r|^{1/\alpha}}\right)
b\!\left(\dfrac{|r|}{ |\theta|^{1/\alpha}}\right),
& r\theta\neq 0,\\[3mm]
0,& r\theta=0.
\end{cases}
\end{equation}
Then $\psi_\alpha$ is smooth on $[-1,1]^2\setminus\{(0,0)\}$. Moreover,
there exists a constant $C=C(\alpha)>0$ such that, for all
$(r,\theta)\neq (0,0)$ sufficiently close to $(0,0)$, the following estimates
hold.

\begin{enumerate}
\item The first derivatives satisfy
\begin{equation}
|\partial_r\psi_\alpha(r,\theta)|
\le \left(11 + \frac{10}{\alpha} \right) |\theta|,
\qquad
|\partial_\theta\psi_\alpha(r,\theta)|
\le \left( 11 + \frac{10}{\alpha} \right) |r|.
\end{equation}
and
\begin{equation}\label{eqn: psi-first-derivative-estimate2}
|\partial_r\psi_\alpha(r,\theta)|
\le \left(11 + \frac{10}{\alpha} \right) |r|^\alpha,
\qquad
|\partial_\theta\psi_\alpha(r,\theta)|
\le \left( 11 + \frac{10}{\alpha} \right) |\theta|^\alpha .
\end{equation}

\item Away from the coordinate axes, the second derivatives satisfy
\begin{equation}\label{eqn: psi-second-derivative-estimate}
|\partial_{rr}\psi_\alpha(r,\theta)|
\le C |r|^{\alpha-1},
\quad
|\partial_{r\theta}\psi_\alpha(r,\theta)|
\le C,
\quad
|\partial_{\theta\theta}\psi_\alpha(r,\theta)|
\le C |\theta|^{\alpha-1}.
\end{equation}
On the coordinate axes away from the origin, $\psi_\alpha\equiv 0$ in a
neighborhood, and hence all derivatives vanish there.

\item After defining
\begin{equation}
\partial_r\psi_\alpha(0,0)
=
\partial_\theta\psi_\alpha(0,0)
=
0,
\end{equation}
one has $\psi_\alpha\in C^{1,\alpha}$ near $(0,0)$. More precisely,
\begin{equation}
|\nabla\psi_\alpha(r_1,\theta_1)
-
\nabla\psi_\alpha(r_2,\theta_2)|
\le
C |(r_1,\theta_1)-(r_2,\theta_2)|^\alpha
\end{equation}
for all $(r_1,\theta_1)$ and $(r_2,\theta_2)$ sufficiently close to $(0,0)$.
\end{enumerate}
\end{lemma}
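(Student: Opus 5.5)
By the reflection symmetry $\psi_\alpha(\pm r,\pm\theta)=\psi_\alpha(r,\theta)$, the plan is to work in the open quadrant $r,\theta>0$, where $\psi_\alpha=r\theta\,b(s)\,b(\sigma)$ with
\[
s=\theta r^{-1/\alpha},\qquad \sigma=r\theta^{-1/\alpha}.
\]
Smoothness away from the origin comes first. Off the axes the formula is a composition of smooth maps. Near a point $(r_0,0)$ with $r_0\neq0$ we have $s=|\theta|\,|r|^{-1/\alpha}<\tfrac12$ for $|\theta|$ small, so $b(s)=0$ and $\psi_\alpha\equiv0$ in a neighborhood. The same holds near $(0,\theta_0)$ via $b(\sigma)$. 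This gives smoothness on $[-1,1]^2\setminus\{(0,0)\}$ and the vanishing of all derivatives on the axes.

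Next come the support facts. On the support of $\psi_\alpha$ we have $s\ge\tfrac12$ and $\sigma\ge\tfrac12$, i.e.
\[
\theta\ge\tfrac12 r^{1/\alpha},\qquad r\ge\tfrac12\theta^{1/\alpha}.
\]
Moreover $b'(s)\neq0$ only if $s\le1$, i.e. $\theta\le r^{1/\alpha}$; in that region $r$ is much larger than $\theta$ (since $1/\alpha>1$), hence $\sigma=r\theta^{-1/\alpha}\ge r^{1-1/\alpha^2}\gg1$ and $b(\sigma)=1$. So near the origin the two transition regions are disjoint, and in each one only a single cutoff factor varies.

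For the first derivatives, differentiate
\[
\partial_r\psi_\alpha=\theta\, b(s)b(\sigma)+r\theta\bigl(b'(s)\,\partial_r s\,b(\sigma)+b(s)\,b'(\sigma)\,\partial_r\sigma\bigr),
\]
with $r\partial_r s=-s/\alpha$ and $r\partial_r\sigma=\sigma$. Since $s\le1$ and $\sigma\le1$ on the respective supports of $b'$, this gives $|\partial_r\psi_\alpha|\le\theta(1+10/\alpha+10)$, which is the first bound. For the $|r|^\alpha$ bound, use $\theta\le 2^{\alpha}r^{\alpha}\cdot(\text{const})$ on the support. Indeed $r\ge\tfrac12\theta^{1/\alpha}$ gives $\theta\le(2r)^\alpha\le 2r^\alpha$. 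The constant $11+10/\alpha$ may need an extra factor of $2$, or must be tracked more carefully; I would check whether $\theta\le r^\alpha$ holds except where $b(\sigma)$ is small. The $\theta$-derivatives are symmetric.

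For the second derivatives, differentiate once more. In $\partial_{rr}\psi_\alpha$ every term carries $\theta$ times $r^{-1}$ times bounded functions of $s,\sigma$, so $|\partial_{rr}\psi_\alpha|\le C\theta/r$. This is nonzero only where $b(\sigma)$ or $b(s)$ varies, or vanishes identically in the pure region where $\psi_\alpha=r\theta$. On the support $\theta\le 2r^\alpha$, so $|\partial_{rr}\psi_\alpha|\le Cr^{\alpha-1}$. For the mixed derivative, $\partial_{r\theta}(r\theta)=1$ and the cutoff terms are of the form $r\theta\cdot\frac{1}{r\theta}\cdot(\text{bounded})$, using $\theta\partial_\theta s=s$ and the analogous identities, so $|\partial_{r\theta}\psi_\alpha|\le C$.

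The $C^{1,\alpha}$ estimate is the step I expect to be the main obstacle. The first-derivative bounds show that $\nabla\psi_\alpha\to0$ at the origin with
\[
|\nabla\psi_\alpha(x)|\le C|x|^\alpha,
\]
which handles the case where one of the points is the origin. For two general points $x_1,x_2$ with $\delta=|x_1-x_2|$, I would split into two cases. If $\delta\ge\tfrac12\max|x_j|$, then
\[
|\nabla\psi_\alpha(x_1)-\nabla\psi_\alpha(x_2)|\le C(|x_1|^\alpha+|x_2|^\alpha)\le C\delta^\alpha .
\]
Otherwise the segment avoids a neighborhood of the origin, and I would integrate the Hessian along it. The mixed term contributes $C\delta$. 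For the $\partial_{rr}\psi_\alpha$ term the bound $|r|^{\alpha-1}$ is integrable in $r$, so moving in $r$ by $\Delta r$ costs at most $C|\Delta r|^\alpha$, using $\int|t|^{\alpha-1}\,dt\le C|\Delta|^\alpha$ over any interval of length $|\Delta|$. The $\partial_{\theta\theta}\psi_\alpha$ term is handled the same way. If the segment crosses an axis, I would go along an L-shaped path, one leg in $r$ and one leg in $\theta$. There the difficulty is the sign flips and the zero set, but since $\nabla\psi_\alpha=0$ near the axes away from the origin, and each one-variable leg is controlled by the integrable bound, the estimate $C\delta^\alpha$ should still follow.
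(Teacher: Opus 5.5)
Your proposal follows the paper's proof essentially step by step. It uses the same quadrant reduction and local vanishing near the axes, and the same chain-rule identities $r\partial_r s=-s/\alpha$, $r\partial_r\sigma=\sigma$ (the paper's $\partial_rA$, $\partial_rB$). It also uses the same support inequalities $\theta<(2r)^\alpha$, $r<(2\theta)^\alpha$ to turn $C\theta/r$ into $Cr^{\alpha-1}$, and the same integration of the integrable one-variable Hessian bounds for the $C^{1,\alpha}$ estimate. Your case split on whether $\delta\ge\frac12\max|x_j|$ is a slightly more careful version of the paper's horizontal-then-vertical path, and it works as stated. All bounds are in absolute values and $\psi_\alpha\equiv0$ near the axes away from the origin, so integrating along the straight segment already works when it crosses an axis. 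No L-shaped detour or sign bookkeeping is needed.

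The one point you left open is the explicit constant $11+\frac{10}{\alpha}$ in the $|r|^\alpha$ and $|\theta|^\alpha$ bounds. The paper does not actually justify it either: it deduces the bound from $|\theta|<(2|r|)^\alpha$, which only gives $2^\alpha\bigl(11+\frac{10}{\alpha}\bigr)$. Your own disjointness observation closes this. Work in the quadrant $r,\theta>0$ and split into regions.
\begin{itemize}
\item Where $\frac12<s<1$, one has $b(\sigma)=1$, so $\partial_r\psi_\alpha=\theta\bigl(b(s)-\frac{s}{\alpha}b'(s)\bigr)$. Since $\theta=s\,r^{1/\alpha}\le r^{1/\alpha}\le r^\alpha$, this gives $|\partial_r\psi_\alpha|\le(1+\frac{10}{\alpha})r^\alpha$.
\item Where $\frac12<\sigma<1$, one has $b(s)=1$, so $\partial_r\psi_\alpha=\theta\bigl(b(\sigma)+\sigma b'(\sigma)\bigr)$ with $\theta=\sigma^{-\alpha}r^\alpha$. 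This gives
\[
|\partial_r\psi_\alpha|\le\bigl(\sigma^{-\alpha}+10\,\sigma^{1-\alpha}\bigr)r^\alpha\le(2^\alpha+10)\,r^\alpha\le 12\,r^\alpha .
\]
\item In the pure region $|\partial_r\psi_\alpha|=\theta\le r^\alpha$, and in the zero region $\partial_r\psi_\alpha$ vanishes.
\end{itemize}
Each of these is at most $(11+\frac{10}{\alpha})r^\alpha$. The $\theta$-derivative bound follows by the symmetry $r\leftrightarrow\theta$, which swaps $s$ and $\sigma$.
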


\begin{figure}[H]
  \centering
  \includegraphics[width=0.6\textwidth]{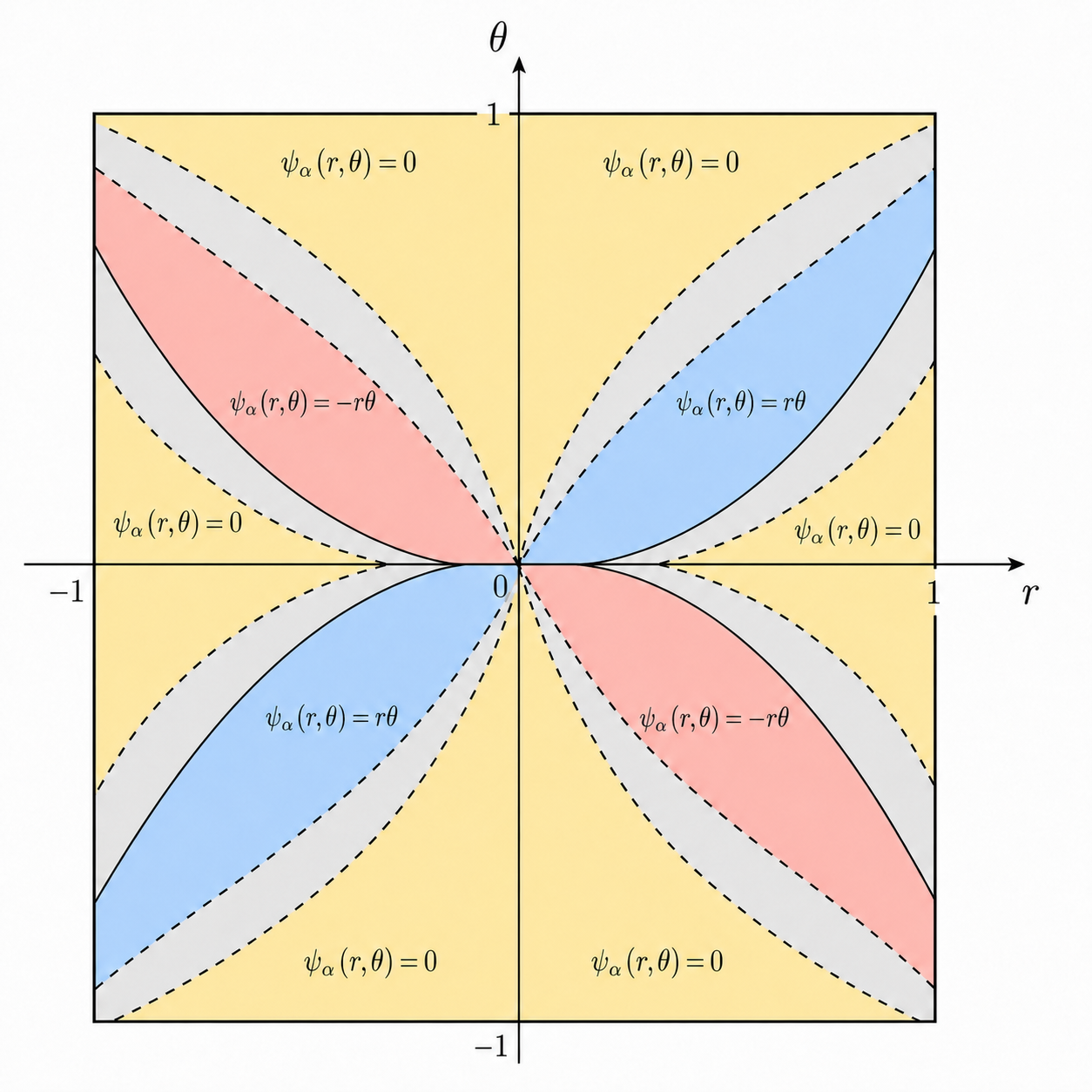}
   \caption{Schematic picture of the regions associated with $\psi_{\alpha}(r,\theta)$. The blue regions correspond to $\psi_{\alpha}(r,\theta)=r\theta$, the red regions correspond to $\psi_{\alpha}(r,\theta)=-r\theta$, and the yellow regions correspond to $\psi_{\alpha}(r,\theta)=0$. The gray regions denote the transition set. The solid curves are given by $|\theta|=|r|^{\alpha}$ and $|\theta|=|r|^{1/\alpha}$, and the dashed curves are given by $|\theta|=(2|r|)^{\alpha}$ and $|\theta|=\frac12 |r|^{1/\alpha}$.}
\end{figure}

\begin{proof}
For $r\theta\neq 0$, set
\begin{equation}
A=A(r,\theta):=\frac{|\theta|}{|r|^{1/\alpha}},
\qquad
B=B(r,\theta):=\frac{|r|}{|\theta|^{1/\alpha}}.
\end{equation}
On each open quadrant, set
\begin{equation}
\sigma:=\operatorname{sgn}(r\theta),
\end{equation}
then
$
|r\theta|=\sigma r\theta,
$
and hence, on that quadrant,
\begin{equation}\label{eqn:psi-quadrant}
\psi_\alpha(r,\theta)=\sigma r\theta\, b(A)b(B).
\end{equation}

We first show that $\psi_\alpha$ is smooth away from $(0,0)$. It is clearly
smooth in each open quadrant. If $(0,\theta_0)$ is a point on the $\theta$-axis
with $\theta_0\neq0$, then for $|r|$ sufficiently small, one has
\begin{equation}
B=\frac{|r|}{|\theta|^{1/\alpha}}\le \frac12,
\end{equation}
so $b(B)=0$. Hence $\psi_\alpha\equiv0$ in a neighborhood of $(0,\theta_0)$.
Similarly, in a neighborhood of a point $(r_0,0)$ on the $r$-axis with $r_0\neq0$,  $\psi_\alpha\equiv0$.
Thus $\psi_\alpha$ is smooth on $[-1,1]^2\setminus\{(0,0)\}$.

We now prove the derivative estimates near $(0,0)$. We work on a fixed open
quadrant and use \eqref{eqn:psi-quadrant}. The estimates are independent of
the quadrant, since only absolute values enter.

\smallskip
\noindent
\emph{Case 1. In the zero region:}
\begin{equation}
\mathcal{Z} : = \{(r, \theta) \in [-1, 1]^2 \mid A \leq  \frac{1}{2} \ \ \text{or} \ \ B \leq \frac{1}{2} \},
\end{equation}
one of the cutoff factors in the definition of $\varphi_\alpha$ vanishes, and hence
\[
\psi_\alpha\equiv0.
\]
Therefore all first and second derivatives vanish in this region, and the
desired estimates are immediate.

\smallskip
\noindent
\emph{Case 2. In the pure region}
\begin{equation}
\mathcal{P} : = \{(r, \theta) \in [-1, 1]^2 \mid A \geq  1 \ \ \text{and} \ \ B \geq 1 \},
\end{equation}
both cutoff factors are equal to $1$, and hence
\begin{equation}
\psi_\alpha=|r\theta|=\sigma r\theta.
\end{equation}
Therefore
\begin{equation}\label{eqn: pure-derivatives-psi-alpha-abs}
\partial_r\psi_\alpha=\sigma\theta,
\quad
\partial_\theta\psi_\alpha=\sigma r,
\quad
\partial_{rr}\psi_\alpha=0,
\quad
\partial_{r\theta}\psi_\alpha=\sigma,
\quad
\partial_{\theta\theta}\psi_\alpha=0.
\end{equation}
The second derivative estimates follow immediately.

For the first derivative estimates, the condition $A\ge1$ gives
\begin{equation}
|\theta|\ge |r|^{1/\alpha},
\qquad\text{hence}\qquad
|r|\le |\theta|^\alpha.
\end{equation}
The condition $B\ge1$ gives
\begin{equation}
|r|\ge |\theta|^{1/\alpha},
\qquad\text{hence}\qquad
|\theta|\le |r|^\alpha.
\end{equation}
Using \eqref{eqn: pure-derivatives-psi-alpha-abs}, we get
\begin{equation}
|\partial_r\psi_\alpha|
=
|\theta|
\le |r|^\alpha,
\end{equation}
and
\begin{equation}
|\partial_\theta\psi_\alpha|
=
|r|
\le |\theta|^\alpha.
\end{equation}
Thus the desired first derivative estimates hold in the pure region.

\smallskip
\noindent
\emph{Case 3. In the transition region:}
\begin{equation}
\mathcal{T} : = \left\{(r, \theta) \in [-1, 1]^2 \mid A > \frac{1}{2}, \ \ B > \frac{1}{2}, \ \  \text{and} \ \ \min\{A, B\} < 1\right\},
\end{equation}
both cutoff factors are nonzero, and at least one of them is non-constant.
On each fixed quadrant,
\begin{equation}
\partial_r A=-\frac1\alpha\frac{A}{r},
\qquad
\partial_\theta A=\frac{A}{\theta},
\qquad
\partial_r B=\frac{B}{r},
\qquad
\partial_\theta B=-\frac1\alpha\frac{B}{\theta}.
\end{equation}
Differentiating
\begin{equation}
\psi_\alpha=\sigma r\theta b(A)b(B),
\end{equation}
we obtain
\begin{equation}
\partial_r\psi_\alpha=\sigma \theta Q_r,
\qquad
\partial_\theta\psi_\alpha=\sigma r Q_\theta,
\end{equation}
where
\begin{equation}
Q_r
:=
b(A)b(B)
-\frac1\alpha A b'(A)b(B)
+
B b(A)b'(B),
\end{equation}
and
\begin{equation}
Q_\theta
:=
b(A)b(B)
+
A b'(A)b(B)
-\frac1\alpha B b(A)b'(B).
\end{equation}
Since $0\le b\le1$ and $|b'|\le10$, and since $b'$ is supported where the
corresponding argument lies in the interval $\left(\frac{1}{2},1\right)$, 
we have
\begin{equation}
|Q_r| \leq 11 + \frac{10}{\alpha}, \ \ \text{and} \ \  |Q_\theta|\leq 11 + \frac{10}{\alpha}.
\end{equation}
Hence
\begin{equation}\label{eqn: first-derivative-transition-psi-alpha-abs}
|\partial_r\psi_\alpha|\le \left( 11 + \frac{10}{\alpha} \right)|\theta|,
\qquad
|\partial_\theta\psi_\alpha|\le \left( 11 + \frac{10}{\alpha} \right) |r|.
\end{equation}

The assumptions $A>1/2$ and $B>1/2$ imply
\begin{equation}\label{eqn: transition-region-theta-r-1}
|\theta|>\frac12 |r|^{1/\alpha}
\qquad\Longrightarrow\qquad
|r|<(2|\theta|)^\alpha,
\end{equation}
and
\begin{equation}\label{eqn: transition-region-theta-r-2}
|r|>\frac12 |\theta|^{1/\alpha}
\qquad\Longrightarrow\qquad
|\theta|<(2|r|)^\alpha.
\end{equation}
Combining these with \eqref{eqn: first-derivative-transition-psi-alpha-abs},
we get
\begin{equation}
|\partial_r\psi_\alpha|
\le \left( 11 + \frac{10}{\alpha} \right)|r|^\alpha,
\quad
|\partial_\theta\psi_\alpha|
\le \left( 11 + \frac{10}{\alpha} \right)|\theta|^\alpha.
\end{equation}
This proves the first derivative estimates in the transition region.

We next estimate the second derivatives. Differentiating once more gives
\begin{equation}
\partial_{rr}\psi_\alpha
=
\sigma\theta
\left(
-\frac{A}{\alpha r}\partial_A Q_r
+
\frac{B}{r}\partial_B Q_r
\right),
\end{equation}
\begin{equation}
\partial_{r\theta}\psi_\alpha
=
\sigma
\left(
Q_r
+
A\partial_A Q_r
-
\frac{B}{\alpha}\partial_B Q_r
\right),
\end{equation}
and
\begin{equation}
\partial_{\theta\theta}\psi_\alpha
=
\sigma r
\left(
\frac{A}{\theta}\partial_A Q_\theta
-
\frac{B}{\alpha\theta}\partial_B Q_\theta
\right).
\end{equation}
Here
\begin{equation}
\partial_A Q_r
=
\left(1-\frac1\alpha\right)b'(A)b(B)
-
\frac{A}{\alpha}b''(A)b(B)
+
B b'(A)b'(B),
\end{equation}
\begin{equation}
\partial_B Q_r
=
2b(A)b'(B)
-
\frac{A}{\alpha}b'(A)b'(B)
+
B b(A)b''(B),
\end{equation}
\begin{equation}
\partial_A Q_\theta
=
2b'(A)b(B)
+
A b''(A)b(B)
-
\frac{B}{\alpha}b'(A)b'(B),
\end{equation}
and
\begin{equation}
\partial_B Q_\theta
=
\left(1-\frac1\alpha\right)b(A)b'(B)
+
A b'(A)b'(B)
-
\frac{B}{\alpha}b(A)b''(B).
\end{equation}
Using $|b'|\le10$ and $|b''|\le100$, we obtain
\begin{equation}
|\partial_A Q_r|
+
|\partial_B Q_r|
+
|\partial_A Q_\theta|
+
|\partial_B Q_\theta|
\le C(\alpha).
\end{equation}
Therefore
\begin{equation}\label{eqn: second-derivative-transition-psi-alpha-abs}
|\partial_{rr}\psi_\alpha|
\le C(\alpha)\frac{|\theta|}{|r|},
\qquad
|\partial_{r\theta}\psi_\alpha|
\le C(\alpha),
\qquad
|\partial_{\theta\theta}\psi_\alpha|
\le C(\alpha)\frac{|r|}{|\theta|}.
\end{equation}
Using again
(\ref{eqn: transition-region-theta-r-1}) and (\ref{eqn: transition-region-theta-r-2})
in the transition region, we obtain
\begin{equation}
|\partial_{rr}\psi_\alpha|
\le C\frac{|r|^\alpha}{|r|}
=
C|r|^{\alpha-1},
\ \ \text{and} \ \
|\partial_{\theta\theta}\psi_\alpha|
\le C\frac{|\theta|^\alpha}{|\theta|}
=
C|\theta|^{\alpha-1}.
\end{equation}
Together with the mixed derivative estimate in
\eqref{eqn: second-derivative-transition-psi-alpha-abs}, this proves the
second derivative estimates.

Combining the three cases proves the statements in items $(1)$ and $(2)$.

It remains to prove $C^{1,\alpha}$ regularity at the origin. From the first
derivative estimates \eqref{eqn: psi-first-derivative-estimate2},
we get
\begin{equation}
\partial_r\psi_\alpha(r,\theta)\to0,
\qquad
\partial_\theta\psi_\alpha(r,\theta)\to0
\qquad
\text{as }(r,\theta)\to(0,0).
\end{equation}
Thus $\nabla\psi_\alpha$ extends continuously to $(0,0)$ by setting
$
\nabla\psi_\alpha(0,0)=(0, 0).
$

Finally, the second derivative estimates in \eqref{eqn: psi-second-derivative-estimate} imply that this extension
is $\alpha$-Hölder. Indeed, joining $(r_1, \theta_1)$ to $(r_2, \theta_2)$ first by a horizontal segment and then by a vertical segment, and using the second derivative estimate in \eqref{eqn: psi-second-derivative-estimate}, one obtains
\begin{equation}
|\partial_r \psi_\alpha (r_1, \theta_1)-\partial_r \psi_\alpha(r_1, \theta_2)|
\le
C\bigl(|r_1-r_2|^\alpha+|\theta_1-\theta_2|\bigr)
\end{equation}
and
\begin{equation}
|\partial_\theta \psi_\alpha(r_1, \theta_1)- \partial_\theta \psi_\alpha(r_2, \theta_2)|
\le
C\bigl(|\theta_1-\theta_2|^\alpha+|r_1-r_2|\bigr).
\end{equation}
Since $(r_1, \theta_1)$ and $(r_2, \theta_2)$ lie in a fixed small neighborhood of the origin, the linear terms are bounded by a constant multiple of $|(r_1, \theta_1) - (r_2, \theta_2)|^\alpha$. Therefore
\begin{equation}
|\nabla \psi_\alpha(r_1, \theta_1)-\nabla \psi_\alpha(r_2, \theta_2)|\le C |(r_1, \theta_1) - (r_2, \theta_2)|^\alpha.
\end{equation}

Therefore $\psi_\alpha\in C^{1,\alpha}$ near $(0,0)$, as claimed.
\end{proof}

The next lemma localizes the profile $\psi_\alpha$ in Lemma \ref{lem: derivative-bounds-for-psi-alpha} by a cutoff.  The point is that the size of the support gives a smallness factor in the gradient estimate, which will be used to keep the scalar curvature positive.

\begin{lemma}\label{lem: cutoff-estimates}
Let $\chi\in C_c^\infty(\mathbb R^2)$ satisfy
\begin{equation}
0\le \chi\le 1,\qquad
\chi\equiv 1 \ \text{on }[-1,1]^2,\qquad
\chi\equiv 0 \ \text{outside }[-2,2]^2.
\end{equation}
For $\rho>0$, define
\begin{equation}
\chi_\rho(r,\theta):=\chi\!\left(\frac r\rho,\frac\theta\rho\right).
\end{equation}
Assume $0<\rho\le \frac14$, and let
\begin{equation}
h:=dr^2+\cos^2 r\,d\theta^2.
\end{equation}
Then, for the function $\psi_\alpha$ defined in Lemma \ref{lem: derivative-bounds-for-psi-alpha}, on the support of $\chi_\rho$,
\begin{equation}
|\chi_\rho \varphi_\psi | \leq 4\rho^2.
\end{equation}
Moreover, there exists a constant $C_{\alpha,\chi}>0$, depending only on $\alpha$ and $\chi$, such that on the smooth locus of $\chi_\rho \psi_\alpha$,
\begin{equation}
|\nabla^h(\chi_\rho \psi_\alpha)|_h\le C_{\alpha,\chi}\rho.
\end{equation}
\end{lemma}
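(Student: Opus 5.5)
Both estimates reduce to the bounds of Lemma \ref{lem: derivative-bounds-for-psi-alpha} combined with the support restriction $\operatorname{supp}\chi_\rho\subset[-2\rho,2\rho]^2$. (The first displayed quantity, written $\chi_\rho\varphi_\psi$, is read as $\chi_\rho\psi_\alpha$.)

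\textbf{The sup bound.} Since $0\le b\le 1$, we have $0\le\psi_\alpha\le|r\theta|$ everywhere. On $\operatorname{supp}\chi_\rho$ this gives
\begin{equation*}
|\chi_\rho\psi_\alpha|\le|r||\theta|\le(2\rho)(2\rho)=4\rho^2 ,
\end{equation*}
using $0\le\chi_\rho\le1$. Since $\rho\le\frac14$, the support lies in $[-\frac12,\frac12]^2$, so we stay inside the domain of $\psi_\alpha$.

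\textbf{The gradient bound.} On the smooth locus, i.e.\ away from the origin, write
\begin{equation*}
\nabla(\chi_\rho\psi_\alpha)=\psi_\alpha\nabla\chi_\rho+\chi_\rho\nabla\psi_\alpha .
\end{equation*}
For the metric $h=dr^2+\cos^2 r\,d\theta^2$, on $|r|\le\frac12$ we have $\cos r\ge\cos\frac12>\frac12$. Hence
\begin{equation*}
|\nabla^h f|_h^2=f_r^2+\cos^{-2}r\,f_\theta^2\le f_r^2+4f_\theta^2 ,
\end{equation*}
so it suffices to bound coordinate derivatives. We use two inputs.
\begin{enumerate}
\item The cutoff satisfies $|\nabla\chi_\rho|\le\|\nabla\chi\|_\infty/\rho$. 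Combined with $|\psi_\alpha|\le 4\rho^2$ on the support, this gives $|\psi_\alpha\nabla\chi_\rho|\le 4\|\nabla\chi\|_\infty\,\rho$.
\item By item (1) of Lemma \ref{lem: derivative-bounds-for-psi-alpha},
\begin{equation*}
|\partial_r\psi_\alpha|\le\Bigl(11+\frac{10}{\alpha}\Bigr)|\theta|\le\Bigl(11+\frac{10}{\alpha}\Bigr)2\rho,
\qquad
|\partial_\theta\psi_\alpha|\le\Bigl(11+\frac{10}{\alpha}\Bigr)2\rho
\end{equation*}
on the support. Here we use the linear bounds in $|\theta|$ and $|r|$, not the Hölder bounds \eqref{eqn: psi-first-derivative-estimate2}, which would only give $\rho^\alpha$.
\end{enumerate}
Summing the two contributions yields $|\nabla^h(\chi_\rho\psi_\alpha)|_h\le C_{\alpha,\chi}\rho$, where $C_{\alpha,\chi}$ depends on $\|\nabla\chi\|_\infty$ and $\alpha$.

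\textbf{Main obstacle.} The only delicate point is that item (1) of Lemma \ref{lem: derivative-bounds-for-psi-alpha} is stated only for points "sufficiently close to $(0,0)$". In fact its proof, which is a case analysis over the zero, pure and transition regions, works on all of $[-1,1]^2$. The linear bounds use only $0\le b\le1$, $|b'|\le10$, and the support of $b'$. Alternatively, we can simply shrink the admissible range of $\rho$. Either way the constant is uniform on $[-\frac12,\frac12]^2$, and the argument closes.
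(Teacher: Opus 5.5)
Your proof is correct and follows the paper's argument. The sup bound comes from $\psi_\alpha\le|r\theta|$ on $[-2\rho,2\rho]^2$, and the gradient bound comes from the product rule. That product rule combines the linear (not H\"older) first-derivative bounds of Lemma \ref{lem: derivative-bounds-for-psi-alpha}, the estimate $|\nabla\chi_\rho|\le\|\nabla\chi\|_{C^0}/\rho$ together with $|\psi_\alpha|\le4\rho^2$, and control of $\sec r$ on $|r|\le\frac12$. Your extra remark is also right and fills a point the paper passes over silently: the linear bounds hold on all of $[-1,1]^2$, since they use only $0\le b\le1$, $|b'|\le10$ and $\operatorname{supp} b'\subset[\frac12,1]$.
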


\begin{proof}
Since $\chi_\rho$ is supported in
\begin{equation}
\{|r|\le 2\rho,\ |\theta|\le 2\rho\},
\end{equation}
Lemma~\ref{lem: derivative-bounds-for-psi-alpha} gives
\begin{equation}
|\chi_\rho \psi_{\alpha}|
\le
|\psi_\alpha|
\le
|r\theta|
\le
4\rho^2.
\end{equation}

Next, by the assumption $\rho\leq \frac{1}{4}$, on the support of $\chi_\rho$ we have $|r|\le 2\rho\le \frac12$, hence
$
\sec |r|\le \sec\frac12.
$
Using Lemma~\ref{lem: derivative-bounds-for-psi-alpha}, we obtain
\begin{equation}
|\nabla^h\psi_\alpha|_h^2
=
|\partial_r\psi_\alpha|^2+\sec^2r\,|\partial_\theta\psi_\alpha|^2
\le
\Bigl(11+\frac{10}{\alpha}\Bigr)^2
\Bigl(\theta^2+\sec^2\!\Bigl(\frac12\Bigr)r^2\Bigr),
\end{equation}
and therefore, on the support of $\chi_\rho$,
\begin{equation}
|\nabla^h\psi_\alpha|_h
\le
A_\alpha \rho.
\end{equation}
where
\begin{equation}
A_\alpha
:=
2\left(11+\frac{10}{\alpha}\right)\sqrt{1+\sec^2 \left(\frac12\right)}.
\end{equation}

On the other hand, since $\chi_\rho(r,\theta)=\chi(r/\rho,\theta/\rho)$,
\begin{equation}
|\partial_r\chi_\rho|\le \frac{\|\partial_1\chi\|_{C^0}}{\rho},
\qquad
|\partial_\theta\chi_\rho|\le \frac{\|\partial_2\chi\|_{C^0}}{\rho}.
\end{equation}
Hence, on $|r|\le \frac12$, we have
\begin{equation}
|\nabla^h\chi_\rho|_h
\le
\frac{B_\chi}{\rho},
\end{equation}
where
\begin{equation}
B_\chi:=\sqrt{1+\sec^2(1/2)}\,\|\nabla\chi\|_{C^0(\mathbb R^2)}.
\end{equation}

Therefore
\begin{equation}
|\nabla^h(\chi_\rho \psi_\alpha)|_h
\le
|\nabla^h\psi_\alpha|_h+|\psi_\alpha| \cdot |\nabla^h\chi_\rho|_h
\le
A_\alpha\rho+4\rho^2\cdot \frac{B_\chi}{\rho} = (A_\alpha+4B_\chi)\rho.
\end{equation}
This completes the proof by setting
$
C_{\alpha,\chi}:=A_\alpha+4B_\chi.
$
\end{proof}

We now turn the localized profile into a global metric on $\Sph^2\times\Sph^1$.  The parameter $\rho_*$ is chosen small enough to ensure both positivity of the warping factor and a uniform scalar-curvature lower bound.

\begin{example}\label{example: C1alpha-metric}
Fix $0<\alpha<1$ and $k\in\mathbb R$. Let $p\in \mathbb S^2$, and choose local coordinates $(r,\theta)$ centered at $p$ such that
\begin{equation}
g_{\mathbb S^2}=dr^2+\cos^2 r\,d\theta^2
\end{equation}
in a neighborhood of $(0,0)$. Let
\begin{equation}
\Gamma:=\{p\}\times \mathbb S^1\subset \mathbb S^2\times \mathbb S^1.
\end{equation}
Set
\begin{equation}\label{eqn: rho*-definition}
\rho_*
:=
\min\!\left\{
\frac14,\,
\frac{1}{4(1+|k|)},\,
\frac{1}{4(1+|k|)\left(15+\frac{10}{\alpha}\right)\sqrt{1+\sec^2(1/2)}}
\right\}.
\end{equation}
Let
\begin{equation}
\varphi_{\alpha, k}
:=
1+k\,\chi_{\rho_*}\psi_\alpha,
\end{equation}
extended by $1$ outside the support of $\chi_{\rho_*}$, which is defined in Lemma \ref{lem: cutoff-estimates}, and where $\psi_\alpha$ is defined in Lemma \ref{lem: derivative-bounds-for-psi-alpha}. Then define the Riemannian metric
\begin{equation}
g_{\alpha,k}
:=
\varphi_{\alpha, k}^{-2}\,g_{\mathbb S^2}
+
\varphi_{\alpha, k}^{2}\,d\xi^2
\end{equation}
on $\Sph^2 \times \Sph^1$.
\end{example}

The following lemma summarizes the basic geometric properties of the metric constructed in Example \ref{example: C1alpha-metric}.

\begin{lemma}\label{lem: example-metric-properties}
The metrics $g_{\alpha, k}$ defined in Example \ref{example: C1alpha-metric} have the following properties:
\begin{enumerate}
\item \(g_{\alpha,k}\) is a well-defined \(C^{1,\alpha}\) Riemannian metric on \(\mathbb S^2\times \mathbb S^1\);
\item \(g_{\alpha,k}\) is smooth on \((\mathbb S^2\times \mathbb S^1)\setminus \Gamma\);
\item the scalar curvature satisfies
\begin{equation}
\Scal_{g_{\alpha,k}}\ge \frac58
\qquad
\text{on }(\mathbb S^2\times \mathbb S^1)\setminus \Gamma.
\end{equation}
\end{enumerate}
\end{lemma}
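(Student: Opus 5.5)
The plan is to check the three properties in order, working in the chart $(r,\theta)$ around $p$ where $g_{\Sph^2}=dr^2+\cos^2r\,d\theta^2$. Outside the support of $\chi_{\rho_*}$ we have $\varphi_{\alpha,k}\equiv1$, so there $g_{\alpha,k}$ is just the round product metric. All the work is therefore inside $\{|r|,|\theta|\le 2\rho_*\}$.

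\emph{Positivity and regularity.} By Lemma \ref{lem: cutoff-estimates}, $|\chi_{\rho_*}\psi_\alpha|\le 4\rho_*^2$. Since $\rho_*\le \frac{1}{4(1+|k|)}$ and $\rho_*\le\frac14$,
\[
|k\chi_{\rho_*}\psi_\alpha|\le 4|k|\rho_*^2\le \frac{|k|}{1+|k|}\rho_*\le\frac14 ,
\]
so $\frac34\le\varphi_{\alpha,k}\le\frac54$. By Lemma \ref{lem: derivative-bounds-for-psi-alpha}, $\psi_\alpha$ is $C^{1,\alpha}$ near the origin and smooth away from it, and $\chi_{\rho_*}$ is smooth. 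Hence $\varphi_{\alpha,k}$ is $C^{1,\alpha}$ on $\Sph^2$ and smooth on $\Sph^2\setminus\{p\}$. The coefficients $\varphi^{\mp2}$ are smooth functions of $\varphi$ on $[\frac34,\frac54]$, so they inherit the same regularity. Since $\varphi_{\alpha,k}$ is independent of $\xi$, the singular set of $g_{\alpha,k}$ is contained in $\Gamma=\{p\}\times\Sph^1$. This proves (1) and (2).

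\emph{Scalar curvature.} The conformal-change computation in Lemma \ref{lem: scalar-curvature-formula} never used the rotational form of $h$, so it holds for $h=g_{\Sph^2}$. Off $\Gamma$ it gives
\[
\Scal_{g_{\alpha,k}}=\varphi^2\Scal_{g_{\Sph^2}}-2|\nabla^{h}\varphi|_h^2
=2\varphi^2-2k^2|\nabla^h(\chi_{\rho_*}\psi_\alpha)|_h^2 .
\]
The first term is at least $2\cdot\frac9{16}=\frac98$. For the second term, Lemma \ref{lem: cutoff-estimates} gives $|\nabla^h(\chi_{\rho_*}\psi_\alpha)|_h\le (A_\alpha+4B_\chi)\rho_*$, where
\[
A_\alpha=2\bigl(11+\tfrac{10}{\alpha}\bigr)\sqrt{1+\sec^2(1/2)},\qquad B_\chi=\sqrt{1+\sec^2(1/2)}\,\|\nabla\chi\|_{C^0}.
\]
We want $|k|(A_\alpha+4B_\chi)\rho_*\le\frac12$. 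Then $2k^2|\nabla\varphi|^2\le\frac12$, and $\Scal\ge\frac98-\frac12=\frac58$, which is exactly the claimed constant.

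\emph{Main obstacle: the dependence on $\chi$.} The obstacle is that $\rho_*$ in \eqref{eqn: rho*-definition} involves only $\alpha$, while $B_\chi$ depends on $\chi$. I would fix this by choosing $\chi$ in Lemma \ref{lem: cutoff-estimates} once and for all with $\|\nabla\chi\|_{C^0}\le 2$. This is possible because $\chi$ goes from $1$ to $0$ across a band of width $1$. For instance, take $\chi(r,\theta)=\eta(r)\eta(\theta)$ with a one-variable cutoff $\eta$ satisfying $|\eta'|\le 1+\delta$; then $|\nabla\chi|\le\sqrt2\,(1+\delta)\le 2$. With this choice $4B_\chi\le 8\sqrt{1+\sec^2(1/2)}$, so
\[
A_\alpha+4B_\chi\le 2\bigl(15+\tfrac{10}{\alpha}\bigr)\sqrt{1+\sec^2(1/2)} .
\]
The third entry in the definition of $\rho_*$ then gives
\[
|k|(A_\alpha+4B_\chi)\rho_*\le\frac{2|k|}{4(1+|k|)}\le\frac12 ,
\]
as required. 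The remaining point to verify is that this normalization of $\chi$ is compatible with Lemma \ref{lem: cutoff-estimates}, whose hypotheses are only $0\le\chi\le1$, $\chi\equiv1$ on $[-1,1]^2$ and $\chi\equiv0$ outside $[-2,2]^2$. The product cutoff above satisfies these, so the lemma applies to it, and the bound $\Scal\ge\frac58$ then holds pointwise on the smooth locus.
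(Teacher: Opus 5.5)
Your proposal is correct and follows the paper's proof essentially step by step: positivity $\varphi_{\alpha,k}\ge\frac34$ comes from $|\chi_{\rho_*}\psi_\alpha|\le 4\rho_*^2$, regularity is inherited from Lemma~\ref{lem: derivative-bounds-for-psi-alpha}, and $\Scal_{g_{\alpha,k}}=2\varphi_{\alpha,k}^2-2|\nabla\varphi_{\alpha,k}|^2\ge\frac98-\frac12$ follows from the gradient bound of Lemma~\ref{lem: cutoff-estimates}. Your normalization $\|\nabla\chi\|_{C^0}\le 2$ makes explicit an assumption the paper uses silently: the constant $2\bigl(15+\frac{10}{\alpha}\bigr)\sqrt{1+\sec^2(1/2)}$ that the paper invokes (and builds into $\rho_*$) dominates $A_\alpha+4B_\chi$ only when $\|\nabla\chi\|_{C^0}\le 2$.
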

\begin{proof}
Since $\psi_\alpha\in C^{1,\alpha}$ near $(0,0)$ and $\chi_\rho$ is smooth, the product $\chi_\rho\psi_\alpha$ is \(C^{1,\alpha}\). Because \(\chi_\rho\) has compact support in the coordinate chart, $\varphi_{\alpha, k}$ extends globally as a \(C^{1,\alpha}\) function on \(\mathbb S^2\), equal to \(1\) outside a small neighborhood of \(p\). Hence
\begin{equation}
g_{\alpha,k}
=
\varphi_{\alpha, k}^{-2}g_{\mathbb S^2}
+
\varphi_{\alpha, k}^{2}d\xi^2
\end{equation}
is a \(C^{1,\alpha}\) metric provided $\varphi_{\alpha, k}>0$. Since $\psi_\alpha$ is smooth away from $ (0,0)$, the metric is smooth away from \(\Gamma=\{p\}\times \mathbb S^1\).

By Lemma \ref{lem: cutoff-estimates},
$
|\chi_{\rho_*} \psi_\alpha | \leq 4\rho^2_*.
$
Hence
\begin{equation}
\varphi_{\alpha, k}
=
1+k \chi_{\rho_*}\psi_\alpha
\ge
1- 4|k|\rho^2_*.
\end{equation}
Since $ \rho_* \leq  \frac{1}{4(1+|k|)} $, we have
\begin{equation}
4|k|\rho^2_*
\le
4(1+|k|)\rho^2_*
\le
\rho_*
\le
\frac14,
\end{equation}
and therefore
\begin{equation}
\varphi_{\alpha, k}\ge \frac34>0.
\end{equation}
Thus \(g_{\alpha,k,\rho}\) is a well-defined Riemannian metric.

Next, Lemma~\ref{lem: cutoff-estimates} yields
\begin{equation}
|\nabla^{g_{\mathbb S^2}} \varphi_{\alpha, k}|_{g_{\mathbb S^2}}
=
|k|\,|\nabla^h(\chi_{\rho_*} \psi_\alpha)|_h
\le
2|k|\left(15+\frac{10}{\alpha}\right)\sqrt{1+\sec^2(1/2)}\,\rho_*.
\end{equation}
Since
\begin{equation}
\rho_* \leq
\frac{1}{4(1+|k|)\left(15+\frac{10}{\alpha}\right)\sqrt{1+\sec^2(1/2)}},
\end{equation}
it follows that
\begin{equation}
|\nabla^{g_{\mathbb S^2}} \varphi_{\alpha, k}|_{g_{\mathbb S^2}}
\le
\frac{|k|}{2(1+|k|)}
\le
\frac12.
\end{equation}

Therefore, applying the scalar curvature formula in Lemma \ref{lem: scalar-curvature-formula}, we obtain on the smooth part
\begin{equation}
\Scal_{g_{\alpha,k}}
=
2 \varphi_{\alpha,k}^2
-
2|\nabla^{g_{\mathbb S^2}} \varphi_{\alpha, k}|_{g_{\mathbb S^2}}^2\ge
2\left( \frac34 \right)^2-2\left( \frac12 \right)^2
=
\frac58.
\end{equation}

Finally, outside the support of \(\chi_{\rho_*}\), one has $ \varphi_{\alpha, k}\equiv 1$, so
\begin{equation}
g_{\alpha,k}=g_{\mathbb S^2}+d\xi^2,
\end{equation}
and in particular $ R_{g_{\alpha,k}}=2 $ on that region.
\end{proof}

We next analyze geodesics emanating from a point on the singular circle.  Away from the two coordinate planes in the tangent space, such geodesics immediately enter a pure region, where the metric agrees with a smooth model; this gives uniqueness in those directions.

\begin{lemma}\label{lem: geodesics-from-origin}
Let $g_{\alpha,k}$ be the metric defined in
Example~\ref{example: C1alpha-metric}, and let
\begin{equation}
q=(0,0,0)
\end{equation}
in the local coordinates $(r,\theta,\xi)$ near $\Gamma$. After possibly
shrinking the coordinate neighborhood, assume that
\begin{equation}
|r|<\frac12\rho_*,
\qquad
|\theta|<\frac12\rho_* ,
\end{equation}
where $\rho_*$ is defined in (\ref{eqn: rho*-definition}).
Then the cutoff function $\chi_{\rho_*}\equiv 1$ in this neighborhood, and hence
\begin{equation}
g_{\alpha,k}
=
(1+k\psi_\alpha)^{-2}
\bigl(dr^2+\cos^2 r\,d\theta^2\bigr)
+
(1+k\psi_\alpha)^2\,d\xi^2 .
\end{equation}

Let
\begin{equation}
v=a\partial_r+b\partial_\theta+c\partial_\xi\in T_qM,
\end{equation}
and define
\begin{equation}
E_{r\xi}:=\operatorname{span}\{\partial_r,\partial_\xi\},
\qquad
E_{\theta\xi}:=\operatorname{span}\{\partial_\theta,\partial_\xi\}.
\end{equation}
Then the following hold.

\begin{enumerate}
\item For every $v\in T_qM$, there exists at least one local geodesic
\begin{equation}
\gamma_v:[0,\varepsilon)\to M
\end{equation}
such that
\begin{equation}
\gamma_v(0)=q,
\qquad
\dot\gamma_v(0)=v.
\end{equation}

\item If $a\neq 0$ and $b\neq 0$, then the geodesic with initial data
$(q,v)$ is unique. More precisely, if
\begin{equation}
\gamma_v(s)=(r(s),\theta(s),\xi(s)),
\end{equation}
then, for all sufficiently small $s>0$,
\begin{equation}
\operatorname{sgn}(r(s)\theta(s))=\operatorname{sgn}(ab),
\end{equation}
and $\gamma_v(s)$ lies in the corresponding pure region
\begin{equation}\label{eqn: pure-region-defn}
\mathcal{P}^\sigma
:=
\left\{
(r,\theta):
\sigma r\theta>0,\ 
\frac{|\theta|}{|r|^{1/\alpha}}>1,\ 
\frac{|r|}{|\theta|^{1/\alpha}}>1
\right\},
\qquad
\sigma=\operatorname{sgn}(ab).
\end{equation}
In this region,
\begin{equation}
\psi_\alpha(r,\theta)=|r\theta|=\sigma r\theta,
\end{equation}
and hence
\begin{equation}
g_{\alpha,k}
=
(1+\sigma k r\theta)^{-2}
\bigl(dr^2+\cos^2 r\,d\theta^2\bigr)
+
(1+\sigma k r\theta)^2\,d\xi^2 .
\end{equation}

\item Consequently, the local exponential map at $q$ is well-defined on
\begin{equation}
T_qM\setminus(E_{r\xi}\cup E_{\theta\xi})
=
\{a\partial_r+b\partial_\theta+c\partial_\xi:ab\neq 0\}.
\end{equation}
\end{enumerate}
\end{lemma}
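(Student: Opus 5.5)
The plan is to treat the three items separately. Item (1) is an existence statement at low regularity. Item (2) rests on an a priori analysis of where the curve can go for small times, combined with the explicit pure-region formula from Lemma \ref{lem: derivative-bounds-for-psi-alpha}. Item (3) then follows formally from (2).

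\emph{Item (1).} Since $g_{\alpha,k}\in C^{1,\alpha}$ by Lemma \ref{lem: example-metric-properties}, its Christoffel symbols are $C^{0,\alpha}$, hence continuous. The geodesic equation
\[
\ddot x^k+\Gamma^k_{ij}(x)\dot x^i\dot x^j=0
\]
is therefore a first-order system with continuous right-hand side. Peano's existence theorem gives a local $C^2$ solution with $\gamma_v(0)=q$ and $\dot\gamma_v(0)=v$. The first claim of the lemma, that $\chi_{\rho_*}\equiv1$ on $\{|r|,|\theta|<\tfrac12\rho_*\}$, is immediate because $\chi\equiv1$ on $[-1,1]^2$, so $\chi_{\rho_*}\equiv 1$ on $[-\rho_*,\rho_*]^2$.

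\emph{Item (2).} Let $\gamma$ be any $C^1$ solution, not necessarily the Peano one, with $ab\ne0$. Write
\[
r(s)=as+o(s),\qquad \theta(s)=bs+o(s).
\]
Then $\operatorname{sgn}(r\theta)=\operatorname{sgn}(ab)$ for small $s>0$. Moreover
\[
\frac{|\theta(s)|}{|r(s)|^{1/\alpha}}\sim \frac{|b|}{|a|^{1/\alpha}}\,s^{1-1/\alpha}\to\infty,
\]
since $1/\alpha>1$, and symmetrically $|r|/|\theta|^{1/\alpha}\to\infty$. Hence every such curve lies in $\mathcal P^\sigma$ for $s\in(0,s_0)$. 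On $\mathcal P^\sigma$ the metric is given by the explicit smooth formula with $\varphi=1+\sigma kr\theta$, which extends to a smooth metric $\tilde g^\sigma$ on a full neighborhood of $q$. Any geodesic of $g_{\alpha,k}$ with data $(q,v)$ is therefore a geodesic of $\tilde g^\sigma$ on $(0,s_0)$, and by continuity it has the same initial data at $s=0$. Uniqueness of smooth ODE solutions for $\tilde g^\sigma$ then forces any two such curves to coincide.

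The main obstacle is the subtle point in this argument: $s_0$ depends on the curve, since the $o(s)$ terms are not uniform. This is harmless, because one compares two given solutions on the smaller of their two intervals. One should also check that the $\tilde g^\sigma$-geodesic itself stays in $\mathcal P^\sigma$ for small $s$; this holds by the same asymptotics. That settles existence within the explicit model, and agreement with the $g_{\alpha,k}$-geodesic.

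\emph{Item (3).} By item (2), each $v$ with $ab\ne0$ determines a unique geodesic $\gamma_v$. Homogeneity, $\gamma_{tv}(s)=\gamma_v(ts)$, follows from uniqueness, because the rescaled curve is again a geodesic with data $(q,tv)$. So $\exp_q(v):=\gamma_v(1)$ is well defined on $T_qM\setminus(E_{r\xi}\cup E_{\theta\xi})$ for $|v|$ small, and extends by rescaling to the local star-shaped domain. For $v$ in a fixed cone $\{|a|,|b|\ge c|v|\}$, the smooth model geodesics depend smoothly on $v$, which gives regularity of $\exp_q$ there. Regularity is not strictly required by the statement, but it will be used later.
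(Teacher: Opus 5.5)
Your proposal is correct and follows the paper's proof. You get existence from Peano via the continuous Christoffel symbols, and you use the asymptotics $r=as+o(s)$, $\theta=bs+o(s)$ to force any geodesic with $ab\neq0$ into $\mathcal P^\sigma$, where the metric coincides with the smooth model and smooth ODE uniqueness applies; item (3) is then read off from item (2). Your extra remarks (the curve-dependent $s_0$, homogeneity $\gamma_{tv}(s)=\gamma_v(ts)$, and the direction-dependent radius of the domain of $\exp_q$) only make explicit what the paper leaves implicit.
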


\begin{proof}
By Lemma \ref{lem: example-metric-properties}, $g_{\alpha,k}$ is a $C^{1,\alpha}$ Riemannian metric near
$q$. Therefore its Christoffel symbols are continuous. As a result, the geodesic equation
is a second-order ODE with continuous coefficients, so local solutions exist
for every initial condition. This proves $(1)$.

We next prove uniqueness of geodesics for $ab\neq 0$. Let
\begin{equation}
\gamma_v(s)=(r(s),\theta(s),\xi(s))
\end{equation}
be any geodesic with
\begin{equation}
\gamma_v(0)=q,
\qquad
\dot\gamma_v(0)=a\partial_r+b\partial_\theta+c\partial_\xi,
\end{equation}
where $a\neq 0$ and $b\neq 0$. Since $\gamma_v$ is $C^2$, we have
\begin{equation}
r(s)=as+o(s),
\qquad
\theta(s)=bs+o(s),
\qquad
\xi(s)=cs+o(s)
\qquad
\text{as }s\to0^+.
\end{equation}
Thus, for all sufficiently small $s>0$,
\begin{equation}
\operatorname{sgn}(r(s)\theta(s))=\operatorname{sgn}(ab).
\end{equation}
Moreover,
\begin{equation}
\frac{|\theta(s)|}{|r(s)|^{1/\alpha}}
\sim
\frac{|b|}{|a|^{1/\alpha}}s^{1-1/\alpha}
\to +\infty,
\end{equation}
and
\begin{equation}
\frac{|r(s)|}{|\theta(s)|^{1/\alpha}}
\sim
\frac{|a|}{|b|^{1/\alpha}}s^{1-1/\alpha}
\to +\infty,
\end{equation}
because $0<\alpha<1$. Therefore, for sufficiently small
$\varepsilon>0$, $\gamma_v(s)$ lies in the pure region
$
\mathcal{P}^\sigma,
$
$
\sigma=\operatorname{sgn}(ab),
$
for all $0<s<\varepsilon$.

On this pure region,
\begin{equation}
\psi_\alpha=|r\theta|=\sigma r\theta.
\end{equation}
Thus $g_{\alpha,k}$ agrees there with the smooth metric
\begin{equation}
g_\sigma
=
(1+\sigma k r\theta)^{-2}
\bigl(dr^2+\cos^2 r\,d\theta^2\bigr)
+
(1+\sigma k r\theta)^2\,d\xi^2 .
\end{equation}
Hence $\gamma_v$ satisfies the smooth geodesic equation for $g_\sigma$ on
$(0,\varepsilon)$. Since $g_\sigma$ is smooth up to the point $q$, the usual uniqueness
theorem for smooth ODEs implies that there is only one such geodesic with
initial data $(q,v)$. This proves $(2)$.

Finally, $(3)$ follows directly from the uniqueness statement in $(2)$.
\end{proof}

\begin{remark}\label{rmrk: geodesic-non-uniqueness}
{\rm
Lemma \ref{lem: geodesics-from-origin} proves uniqueness of geodesics only for directions with $ab\neq 0$. Thus the only directions for which uniqueness is not settled by that argument are those in the exceptional set
\begin{equation}
E_{r\xi}\cup E_{\theta\xi}.
\end{equation}
For every $a,c\in\mathbb R$, since the restriction of the metric to the totally geodesic surface
$\{\theta=0\}$ is
\begin{equation}
dr^2+d\xi^2,
\end{equation}
the curve
\begin{equation}
\gamma_{a,c}^{(r\xi)}(s)=(as,0,cs)
\end{equation}
is a local geodesic with initial data
\begin{equation}
\gamma_{a,c}^{(r\xi)}(0)=q,
\qquad
\dot\gamma_{a,c}^{(r\xi)}(0)=a\partial_r+c\partial_\xi.
\end{equation}
Similarly, for every $b,c\in\mathbb R$, the curve
\begin{equation}
\gamma_{b,c}^{(\theta\xi)}(s)=(0,bs,cs)
\end{equation}
is a local geodesic with initial data
\begin{equation}
\gamma_{b,c}^{(\theta\xi)}(0)=q,
\qquad
\dot\gamma_{b,c}^{(\theta\xi)}(0)=b\partial_\theta+c\partial_\xi.
\end{equation}
 However, additional geodesics with these initial data are possble.

More precisely, if one assumes that a second geodesic branch enters a pure region, then in the $E_{r\xi}$-case, i.e. the initial tangent vector $v=a\partial_r + c \partial_\xi$, by using the geodesic equation $\ddot{\gamma}^i = -\Gamma^i_{j} \dot{\gamma}^j \dot{\gamma}^k$ and  $\Gamma^i_{jk} = O(\sqrt{r^2 + \theta^2})$, 
 one first derives
\begin{equation}
r(s)=as+O(s^3),\qquad \xi(s)=cs+O(s^3).
\end{equation}
Then the $\theta$-equation in the pure region yields
\begin{equation}
\begin{aligned}
\ddot{\theta}(s) 
&= - \Gamma_{ij}^{\theta} \dot{\gamma}^i \dot{\gamma}^j  \\
&= -(\Gamma_{r r}^{\theta} a^2 +2  \Gamma_{r \xi}^{\theta} ac + \Gamma_{\xi \xi}^{\theta} c^2)+O(s^2) \\
&= - \frac{1}{\sin ^2 (r+\pi/2)} \frac{\sigma k r}{1+ \sigma k r \theta} a^2  +  \frac{(1+ \sigma k r \theta)^3}{\sin ^2 (r+\pi/2)} \sigma k r c^2+O\left(s^2\right) \\
&=- ka \left(a^2 - c^2\right) s+O(s^2)\label{geo-1}
\end{aligned}
\end{equation}
Integrating twice and using $\theta(0) = \dot{\theta}(0) =0$ gives
\begin{equation}
\theta(s)
=
-\frac16\,\sigma k\,a(a^2-c^2)s^3+o(s^3).\label{geo-theta}
\end{equation}
Likewise, in the $E_{\theta\xi}$-case one derives
\begin{equation}
\theta(s)=bs+o(s^3),\qquad \xi(s)=cs+o(s^3),
\end{equation}
and then the $r$-equation in the pure region yields
\begin{equation}
r(s)
=
-\frac16\,\sigma k\,b(b^2-c^2)s^3+o(s^3).\label{geo-r}
\end{equation}

For such cubic branches, the pure-region conditions become
\begin{equation}
\frac{|\theta(s)|}{\pi |r(s)|^{1/\alpha}}
\sim C\,s^{\,3-1/\alpha},
\qquad
\frac{|r(s)|}{\pi |\theta(s)|^{1/\alpha}}
\sim C' s^{\,1-3/\alpha},
\end{equation}
or the analogous relations with $r$ and $\theta$ interchanged. These can both tend to $+\infty$ only when $\alpha\le \frac13$. Thus, for $\alpha>\frac13$, such cubic branching into a pure region cannot occur. However, this still does not prove uniqueness on $E_{r\xi}\cup E_{\theta\xi}$, since a hypothetical second branch could remain in the zero or transition regions.}
\end{remark}

For the volume computation, we now choose a wedge of tangent directions that avoids the exceptional planes and remains inside the pure regions under the exponential map for short time.

\begin{lemma}\label{lem: wedge-region-pure-for-g-alpha-k}
Let $0<\alpha<1$, and $\rho_0>0$ satisfy
\begin{equation}\label{eqn: rho0-small-new}
\rho_0
\le
\min\left\{
\frac12\rho_*,
\sqrt{1+\alpha^2}\,\alpha^{\frac{\alpha}{1-\alpha}}
\right\},
\end{equation}
where $\rho_*$ is defined in \eqref{eqn: rho*-definition}.
Take the subsets of $T_q M = \{x \partial_r + y \partial_\theta + z \partial_\xi \mid (x, y, z) \in \mathbb{R}^3\}$ as 
\[
\Omega_+
:=
\left\{
(x, y)\in\mathbb{R}^2:
\sqrt{x^2+ y^2}\le \rho_0,\ 
xy >0,\ 
\alpha |x|\le |y|\le \tfrac1\alpha |x|
\right\} \times \left( - \tfrac{\sqrt{3}}{2} \rho_0, \tfrac{\sqrt{3}}{2} \rho_0 \right),
\]
\[
\Omega_-
:=
\left\{
(x, y)\in\mathbb{R}^2:
\sqrt{x^2+ y^2}\le \rho_0,\ 
xy <0,\ 
\alpha |x|\le |y|\le \tfrac1\alpha |x|
\right\} \times \left( - \tfrac{\sqrt{3}}{2} \rho_0, \tfrac{\sqrt{3}}{2} \rho_0 \right),
\]
and set
\begin{equation}
\Omega:=\Omega_+\cup\Omega_-  \subset T_q M.
\end{equation}
Then $\Omega_\pm$ is contained in the corresponding pure cone in the
coordinate sense. More precisely, for every $(x,y,z)\in\Omega_+$,
\begin{equation}
xy>0,\qquad
\frac{|y|}{|x|^{1/\alpha}}\ge 1,
\qquad
\frac{|x|}{|y|^{1/\alpha}}\ge 1,
\end{equation}
and for every $(x,y,z)\in\Omega_-$,
\begin{equation}
xy<0,\qquad
\frac{|y|}{|x|^{1/\alpha}}\ge 1,
\qquad
\frac{|x|}{|y|^{1/\alpha}}\ge 1.
\end{equation}

Moreover, for sufficiently small $t>0$,
\begin{equation}
\exp_q^{g_{\alpha,k}}(t\Omega_+)\subset \mathcal P^+,
\qquad
\exp_q^{g_{\alpha,k}}(t\Omega_-)\subset \mathcal P^-,
\end{equation}
where
$
\mathcal P^+
$
and
$
\mathcal P^-
$
defined in \eqref{eqn: pure-region-defn}.
\end{lemma}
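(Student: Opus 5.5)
The plan has two parts: a coordinate check that $\Omega_\pm$ lies in the pure cones, and an ODE comparison for the exponential map. Throughout I use that $\Omega_\pm$ is invariant under $(x,y,z)\mapsto(sx,sy,sz)$ for $0<s\le 1$, apart from the radius cutoff.

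\emph{Step 1: the coordinate inclusion.} Let $(x,y,z)\in\Omega_+$. Then $xy>0$ holds by definition, and in particular $x\neq0\neq y$.
\begin{itemize}
\item From $|y|\ge\alpha|x|$ we get $(1+\alpha^2)x^2\le x^2+y^2\le\rho_0^2$. Hence $|x|\le\rho_0/\sqrt{1+\alpha^2}\le\alpha^{\alpha/(1-\alpha)}$ by \eqref{eqn: rho0-small-new}.
\item This gives $|x|^{1/\alpha-1}\le\alpha$, so $|x|^{1/\alpha}\le\alpha|x|\le|y|$, i.e. $|y|/|x|^{1/\alpha}\ge1$.
\item Exchanging the roles of $x$ and $y$ (using $|x|\ge\alpha|y|$) gives $|x|/|y|^{1/\alpha}\ge1$.
\end{itemize}
The case of $\Omega_-$ is identical. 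The same computation also yields a quantitative statement, which I will need in Step 3. If $(r,\theta)$ satisfies $\sigma r\theta>0$, $\tfrac{\alpha}{2}|r|\le|\theta|\le\tfrac{2}{\alpha}|r|$ and $|(r,\theta)|$ is small enough, then $(r,\theta)$ lies in the open pure region $\mathcal P^\sigma$ of \eqref{eqn: pure-region-defn}. Indeed $|r|^{1/\alpha-1}<\alpha/2$ once $|r|$ is small.

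\emph{Step 2: reduction to a smooth model.} Let $g_\sigma$ be the smooth metric $(1+\sigma kr\theta)^{-2}(dr^2+\cos^2r\,d\theta^2)+(1+\sigma kr\theta)^2d\xi^2$ near $q$. I will show that for $w\in\Omega_\sigma$ and small $t$, the $g_\sigma$-geodesic $s\mapsto\exp^{g_\sigma}_q(sw)$, $0<s\le t$, stays in $\mathcal P^\sigma$. On $\mathcal P^\sigma$ the two metrics coincide, so this curve is also a $g_{\alpha,k}$-geodesic. Since $w$ has $xy\ne0$, Lemma \ref{lem: geodesics-from-origin}(2) says it is the unique one. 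Hence $\exp_q^{g_{\alpha,k}}(tw)=\exp_q^{g_\sigma}(tw)\in\mathcal P^\sigma$, which is the claim.

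\emph{Step 3: the main obstacle, uniform control of the $(r,\theta)$-part.} The difficulty is uniformity over $\Omega_\sigma$. A vector $w=(x,y,z)$ may have $|(x,y)|$ tiny compared with $|z|$. The crude bound $\exp(tw)=tw+O(t^2|w|^2)$ therefore does not preserve the sector condition.

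To handle this I will use the structure of $g_\sigma$:
\begin{itemize}
\item Every Christoffel symbol $\Gamma^r_{ij},\Gamma^\theta_{ij}$ of $g_\sigma$ is $O(|(r,\theta)|)$ near $q$. This follows from the formulas in Lemma \ref{lem: Gamma-components} (with $u=\cos r$ and flat background): all $\varphi$-derivatives are multiples of $r$ or $\theta$, and the $\tan r$ terms are $O(r)$.
\item Consequently the planes $\{r=\theta=0\}$ are invariant.
\item Writing $\rho(s)=(r(s),\theta(s))$, the geodesic equation gives $|\ddot\rho|\le C|\rho|\,|\dot\gamma|^2\le C'|\rho|\,|w|^2$ for $s\le t$ small. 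Here I use that $|\dot\gamma|\le2|w|$ on short times and that $|w|\le 2\rho_0$ is bounded.
\item With $\rho(0)=0$ and $\dot\rho(0)=(x,y)$, a Gronwall argument applied to $|\rho(s)|$ and then to $\rho(s)-s(x,y)$ yields
\[
|\rho(s)-s(x,y)|\le C s^3|w|^2|(x,y)|.
\]
\end{itemize}
The error is thus relatively small compared with $s|(x,y)|$, with relative size $O(t^2)$, uniformly in $w\in\Omega_\sigma$. In the cone $\alpha|x|\le|y|\le\alpha^{-1}|x|$, both $|x|$ and $|y|$ are comparable to $|(x,y)|$. Hence for $t$ small the point $\rho(s)$ keeps the sign $\sigma$ and satisfies $\tfrac{\alpha}{2}|r|\le|\theta|\le\tfrac{2}{\alpha}|r|$, with $|\rho(s)|\le 2t\rho_0$. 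The quantitative version of Step 1 then places $\rho(s)$ in $\mathcal P^\sigma$ for all $0<s\le t$. This completes the argument via Step 2.
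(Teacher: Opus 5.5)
Your proposal is correct and follows essentially the same route as the paper: the identical coordinate computation for the inclusion, then passage to the smooth model $g_\sigma$ on $\mathcal P^\sigma$, a Taylor expansion of the geodesic whose $(r,\theta)$-error is small relative to $s|(x,y)|$ uniformly over $\Omega_\sigma$, and uniqueness from Lemma~\ref{lem: geodesics-from-origin}. Your Christoffel-symbol and Gronwall argument, together with your ordering (first show that the $g_\sigma$-geodesic stays in $\mathcal P^\sigma$ on a uniform time interval, then identify it with the $g_{\alpha,k}$-geodesic), supplies the justification for this uniform relative-error expansion, which the paper only asserts.
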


\begin{proof}
We first prove the tangent space pure region inclusion. Let
$(x,y,z)\in\Omega$. Then
\begin{equation}
\alpha |x|\le |y|\le \frac1\alpha |x|,
\qquad
x^2+y^2\le \rho_0^2.
\end{equation}
Hence
\begin{equation}
(1+\alpha^2)x^2\le x^2+y^2\le \rho_0^2,
\end{equation}
and similarly
\begin{equation}
(1+\alpha^2)y^2\le x^2+y^2\le \rho_0^2.
\end{equation}
Therefore
\begin{equation}
|x|\le \frac{\rho_0}{\sqrt{1+\alpha^2}},
\qquad
|y|\le \frac{\rho_0}{\sqrt{1+\alpha^2}}.
\end{equation}
By the assumption \eqref{eqn: rho0-small-new}, we get
\begin{equation}
|x|\le \alpha^{\frac{\alpha}{1-\alpha}},
\qquad
|y|\le \alpha^{\frac{\alpha}{1-\alpha}}.
\end{equation}
Since $0<\alpha<1$, this implies
\begin{equation}
|x|^{1/\alpha}\le \alpha |x|,
\qquad
|y|^{1/\alpha}\le \alpha |y|.
\end{equation}
Using again
\begin{equation}
\alpha |x|\le |y|,
\qquad
\alpha |y|\le |x|,
\end{equation}
we obtain
\begin{equation}
|x|^{1/\alpha}\le \alpha |x|\le |y|,
\qquad
|y|^{1/\alpha}\le \alpha |y|\le |x|.
\end{equation}
Hence
\begin{equation}
\frac{|y|}{|x|^{1/\alpha}}\ge1,
\qquad
\frac{|x|}{|y|^{1/\alpha}}\ge1.
\end{equation}
Thus the coordinate wedge lies in the pure region. The sign condition
distinguishes $\Omega_+$ from $\Omega_-$.

We now prove the statement for the exponential image. It is enough to prove
the result for $\Omega_+$. The proof for $\Omega_-$ is similar.

Let
\begin{equation}
v=x\partial_r+y\partial_\theta+z\partial_\xi\in\Omega_+.
\end{equation}
Then $xy>0$ and
\begin{equation}
\alpha |x|\le |y|\le \frac1\alpha |x|.
\end{equation}
Let
\begin{equation}
\gamma_v(s)=(r_v(s),\theta_v(s),\xi_v(s))
\end{equation}
be the geodesic with
\begin{equation}
\gamma_v(0)=q,
\qquad
\dot\gamma_v(0)=v.
\end{equation}
For $v\in\Omega_+$, the geodesic is unique for short time and
enters the positive pure region $\mathcal{P}^+$ by Lemma \ref{lem: geodesics-from-origin}. In the pure region the metric is
the smooth metric
\begin{equation}
g_+
=
(1+kr\theta)^{-2}
\left(dr^2+\cos^2 r\,d\theta^2\right)
+
(1+kr\theta)^2\,d\xi^2.
\end{equation}
Therefore the geodesic has the usual Taylor expansion
\begin{equation*}
r_v(s)=sx+O(s^3\rho_0^2 |x|),
\quad
\theta_v(s)=sy+O(s^3\rho_0^2 |y|),
\quad
\xi_v(s)=sz+O(s^3\rho_0^2 |z|),
\end{equation*}
uniformly for $v\in\Omega_+$ and $0\le s\le s_0$, after shrinking $s_0>0$
if necessary. In particular, for $s>0$ sufficiently small,
\begin{equation}
|r_v(s)|\sim s|x|,
\qquad
|\theta_v(s)|\sim s|y|,
\end{equation}
with constants independent of $v\in\Omega_+$. Hence
\begin{equation}
\operatorname{sgn}(r_v(s)\theta_v(s))
=
\operatorname{sgn}(xy)
=
+1.
\end{equation}

Now compute the two pure-region ratios. Since $|y|\ge \alpha |x|$, we have
\begin{equation}
\frac{|\theta_v(s)|}{|r_v(s)|^{1/\alpha}}
\ge
C^{-1}
s^{1-\frac1\alpha}
\frac{|y|}{|x|^{1/\alpha}}.
\end{equation}
Since $1-\frac1\alpha<0$, the factor
$
s^{1-\frac1\alpha}
$
tends to $+\infty$ as $s\to0^+$. Therefore, for all sufficiently small
$s>0$,
\begin{equation}
\frac{|\theta_v(s)|}{|r_v(s)|^{1/\alpha}}>1.
\end{equation}
Similarly,
\begin{equation}
\frac{|r_v(s)|}{|\theta_v(s)|^{1/\alpha}}>1
\end{equation}
for all sufficiently small $s>0$.

Taking $s=t$ gives
\begin{equation}
\exp_q^{g_{\alpha,k}}(t v)=\gamma_v(t)\in\mathcal P^+
\end{equation}
for all sufficiently small $t>0$, uniformly for $v\in\Omega_+$. Hence
\begin{equation}
\exp_q^{g_{\alpha,k}}(t\Omega_+)\subset \mathcal P^+.
\end{equation}
The proof for $\Omega_-$ is identical and gives
\begin{equation}
\exp_q^{g_{\alpha,k}}(t\Omega_-)\subset \mathcal P^-.
\end{equation}
\end{proof}

The wedge region $\Omega$ given in Lemma \ref{lem: wedge-region-pure-for-g-alpha-k} satisfies the symmetric condition in \eqref{symmetry}.

\begin{lemma}\label{lem: wedge-region-coordinate-symmetry}
Let $\Omega=\Omega_+\cup\Omega_-$ be the wedge region defined in
Lemma~\ref{lem: wedge-region-pure-for-g-alpha-k}. Set
\begin{equation}
\delta:=\arctan\frac1\alpha-\arctan\alpha.
\end{equation}
Then
\begin{equation}
\int_\Omega x^2\,dx\,dy\,dz
=
\int_\Omega y^2\,dx\,dy\,dz
=
\int_\Omega z^2\,dx\,dy\,dz
=
\frac{\sqrt3}{2}\,\delta\,\rho_0^5,
\end{equation}
and
\begin{equation}
\int_\Omega xy\,dx\,dy\,dz
=
\int_\Omega xz\,dx\,dy\,dz
=
\int_\Omega yz\,dx\,dy\,dz
=
0.
\end{equation}
\end{lemma}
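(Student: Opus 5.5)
The plan is to compute every integral explicitly. First I will use the product structure $\Omega=D\times I$, where $I=\left(-\tfrac{\sqrt3}{2}\rho_0,\tfrac{\sqrt3}{2}\rho_0\right)$ and $D=D_+\cup D_-\subset\mathbb R^2$ is the planar wedge region. In polar coordinates $x=\rho\cos\phi$, $y=\rho\sin\phi$, the conditions $\alpha|x|\le|y|\le\frac1\alpha|x|$ and $xy\neq0$ say that $\phi$ lies in one of four angular intervals, one per quadrant:
\[
[\arctan\alpha,\arctan\tfrac1\alpha],\quad
[\pi-\arctan\tfrac1\alpha,\pi-\arctan\alpha],\quad
[\pi+\arctan\alpha,\pi+\arctan\tfrac1\alpha],\quad
[2\pi-\arctan\tfrac1\alpha,2\pi-\arctan\alpha].
\]
Each interval has length $\delta$, and $\rho\in[0,\rho_0]$. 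Hence $\operatorname{Area}(D)=4\cdot\tfrac12\delta\rho_0^2=2\delta\rho_0^2$.

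The off-diagonal integrals vanish by reflection symmetries, so I treat them next. The map $z\mapsto -z$ preserves $\Omega$, and so $\int_\Omega xz=\int_\Omega yz=0$. The map $x\mapsto -x$ interchanges $\Omega_+$ and $\Omega_-$ and changes the sign of $xy$. Since this map is measure-preserving, it gives $\int_\Omega xy=0$.

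For the diagonal terms, Fubini gives
\[
\int_\Omega x^2=|I|\int_D x^2=\sqrt3\rho_0\cdot\frac{\rho_0^4}{4}\int\cos^2\phi\,d\phi .
\]
The angular integral is taken over the four intervals above. Each interval is symmetric about a line $\phi=\pi/4+m\pi/2$, and reflection in that line exchanges $\cos^2$ and $\sin^2$. Therefore $\int\cos^2\phi=\int\sin^2\phi=\tfrac12\cdot 4\delta=2\delta$. This yields $\int_\Omega x^2=\int_\Omega y^2=\frac{\sqrt3}{2}\delta\rho_0^5$.

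For the $z$-direction,
\[
\int_\Omega z^2=\operatorname{Area}(D)\int_I z^2\,dz=2\delta\rho_0^2\cdot\frac23\Big(\tfrac{\sqrt3}{2}\rho_0\Big)^3=2\delta\rho_0^2\cdot\frac{\sqrt3}{4}\rho_0^3=\frac{\sqrt3}{2}\delta\rho_0^5 .
\]
This matches the other two diagonal terms. The height $\tfrac{\sqrt3}{2}\rho_0$ in the definition of $\Omega$ was evidently chosen for exactly this equality.

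There is no real obstacle here. The only step that needs care is the angular bookkeeping: checking that all four sectors have width $\delta$, and that the symmetry about the diagonals $\phi=\pi/4+m\pi/2$ gives $\int\cos^2\phi=\int\sin^2\phi$.
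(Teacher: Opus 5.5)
Your computation is correct. The four sectors each have width $\delta$ and are symmetric about $\phi=\pi/4+m\pi/2$, the Fubini factorization over $D\times I$ is valid, and the reflections $z\mapsto -z$ and $x\mapsto -x$ (the latter swapping $\Omega_+$ and $\Omega_-$) kill the off-diagonal terms; together these give exactly $\frac{\sqrt3}{2}\delta\rho_0^5$ for each diagonal integral.

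The paper states this lemma without proof, treating it as the same direct calculation it invokes in Lemma~\ref{lem: volume-expansion-wedge-g-alpha-k}, so your argument is essentially the intended one.
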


The final lemma in this subsection computes the fifth-order volume coefficient on this wedge.  The mixed Ricci term contributes with the same sign on $\Omega_+$ and $\Omega_-$, and hence can dominate the scalar curvature contribution when $k$ is sufficiently large.

\begin{lemma}\label{lem: volume-expansion-wedge-g-alpha-k}
Let $g_{\alpha,k}$ be the metric defined in
Example~\ref{example: C1alpha-metric}. Let $\Omega=\Omega_+\cup\Omega_-$
be the wedge region defined in
Lemma~\ref{lem: wedge-region-pure-for-g-alpha-k}. Then, for all sufficiently
small $t>0$, we have
\begin{equation}\label{eqn:volume-expansion-wedge-g-alpha-k}
\begin{aligned}
&\Vol_{g_{\alpha,k}}\!\left(\exp_q^{g_{\alpha,k}}(t\Omega)\right)
-
t^3\Vol_{g_E}(\Omega)
\\
&\qquad =
\frac{\sqrt3\,\rho_0^5}{6}
\left[
k\frac{1-\alpha^2}{1+\alpha^2}
-
\left(\arctan\frac1\alpha-\arctan\alpha\right)
\right]t^5
+
O(t^6).
\end{aligned}
\end{equation}
Therefore the coefficient of the $t^5$ term is positive if and only if
\begin{equation}
k>K_*(\alpha),
\end{equation}
where
\begin{equation}
K_*(\alpha)
:=
\frac{1+\alpha^2}{1-\alpha^2}
\left(
\arctan\frac1\alpha-\arctan\alpha
\right).
\end{equation}
\end{lemma}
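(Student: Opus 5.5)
The plan is to reduce the computation to two smooth model metrics and then apply the classical normal-coordinate expansion \eqref{det-expansion} on each half of the wedge. By Lemma \ref{lem: wedge-region-pure-for-g-alpha-k}, for all small $t>0$ we have $\exp_q^{g_{\alpha,k}}(t\Omega_\pm)\subset\mathcal P^\pm$. On $\mathcal P^\sigma$ the metric $g_{\alpha,k}$ coincides with the smooth metric
\[
g_\sigma=(1+\sigma kr\theta)^{-2}\bigl(dr^2+\cos^2r\,d\theta^2\bigr)+(1+\sigma kr\theta)^2d\xi^2 .
\]
By Lemma \ref{lem: geodesics-from-origin}, the geodesics issuing from $q$ in directions of $\Omega_\sigma$ are unique and stay in $\mathcal P^\sigma$. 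Hence $\exp_q^{g_{\alpha,k}}=\exp_q^{g_\sigma}$ on $t\Omega_\sigma$. At $q$ we have $\varphi=1$ and $\cos 0=1$, so $\{\partial_r,\partial_\theta,\partial_\xi\}$ is $g_\sigma$-orthonormal and $(x,y,z)$ are genuine normal coordinates for $g_\sigma$. Applying \eqref{det-expansion} to the smooth metric $g_\sigma$ on the bounded region $t\Omega_\sigma$, with remainder $O(|x|^3)$ uniform there, gives
\[
\Vol_{g_{\alpha,k}}\bigl(\exp_q(t\Omega)\bigr)-t^3\Vol_{g_E}(\Omega)
=-\frac{t^5}{6}\sum_{\sigma=\pm}\int_{\Omega_\sigma}\Ric^{g_\sigma}_{ij}(q)\,x_ix_j\,dx\,dy\,dz+O(t^6).
\]
One should check that the sets $\exp_q(t\Omega_\pm)$ are disjoint, so the two volumes add. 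This holds because they lie in the disjoint regions $\mathcal P^\pm$.

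The next step is to compute $\Ric^{g_\sigma}(q)$. For the mixed term we use \eqref{Ric-rtheta} with $u=\cos r$. At $q$ we have $\varphi_r=\varphi_\theta=0$ and $\varphi_{r\theta}=\sigma k$, so $\Ric_{r\theta}(q)=-\sigma k$. We also have $\Ric_{r\xi}=\Ric_{\theta\xi}=0$. For the diagonal entries, the second derivatives of $\varphi=1+\sigma kr\theta$ at $q$ are purely off-diagonal ($\varphi_{rr}=\varphi_{\theta\theta}=0$) and $\nabla\varphi(q)=0$. The diagonal Ricci components should therefore agree with those of $\Sph^2\times\Sph^1$: $\Ric_{rr}=\Ric_{\theta\theta}=1$ and $\Ric_{\xi\xi}=0$. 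I would verify this by a short direct computation, and as a consistency check note that $\Scal_{g_\sigma}(q)=2$ by Lemma \ref{lem: scalar-curvature-formula}.

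Plugging in, the diagonal terms contribute $-\frac{t^5}{6}\bigl(\int_\Omega x^2+\int_\Omega y^2\bigr)$. By Lemma \ref{lem: wedge-region-coordinate-symmetry} this equals $-\frac{\sqrt3}{6}\delta\rho_0^5t^5$. The mixed terms contribute
\[
-\frac{t^5}{6}\sum_\sigma 2(-\sigma k)\int_{\Omega_\sigma}xy=\frac{k t^5}{3}\int_\Omega|xy| .
\]
Note that the full integral $\int_\Omega xy$ vanishes while $\int_\Omega|xy|$ does not; this is the whole point of the construction. In polar coordinates on each of the four planar sectors $\psi\in[\arctan\alpha,\arctan\frac1\alpha]$, and with $z$-length $\sqrt3\rho_0$, we get
\[
\int_\Omega|xy|=4\sqrt3\rho_0\cdot\frac{\rho_0^4}{4}\cdot\frac14\bigl(\cos 2\arctan\alpha-\cos 2\arctan\tfrac1\alpha\bigr)=\frac{\sqrt3\rho_0^5}{2}\,\frac{1-\alpha^2}{1+\alpha^2}.
\]
This yields \eqref{eqn:volume-expansion-wedge-g-alpha-k}, and the sign criterion $k>K_*(\alpha)$ follows immediately.

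The main obstacle is justifying the uniform $O(t^6)$ remainder. The expansion must be applied to $g_\sigma$ up to $q$, which is legitimate since $g_\sigma$ is smooth on a full neighbourhood of $q$ even though $g_{\alpha,k}$ is not. The remaining care is to check that the exponential images computed with $g_\sigma$ really coincide with those of $g_{\alpha,k}$, uniformly in $v\in\Omega_\sigma$ including directions near the wedge boundary $|y|=\alpha|x|$. This is exactly what the uniform Taylor estimates in the proof of Lemma \ref{lem: wedge-region-pure-for-g-alpha-k} provide.
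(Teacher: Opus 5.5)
Your proposal is correct and follows essentially the same route as the paper: you use Lemma \ref{lem: wedge-region-pure-for-g-alpha-k} to replace $g_{\alpha,k}$ by the smooth models $g_\pm$ on $t\Omega_\pm$, then apply the normal-coordinate Jacobian expansion, and finally integrate the quadratic forms of $\Ric_q^{(\pm)}$ (whose off-diagonal entry is $\mp k$) over $\Omega_\pm$. The only differences are bookkeeping: you combine the two mixed terms through $\int_\Omega |xy|$, and you state explicitly the normal-coordinate, disjointness and uniformity points that the paper leaves implicit.
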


\begin{proof}
By Lemma~\ref{lem: wedge-region-pure-for-g-alpha-k}, for sufficiently small $t>0$, $\exp_q(\Omega_\pm) \subset \mathcal{P}^\pm$. On these pure regions $\mathcal{P}^\pm$, the metric agrees with the smooth metrics
\[
g_+
=
(1+kr\theta)^{-2}
\bigl(dr^2+\cos^2 r\,d\theta^2\bigr)
+
(1+kr\theta)^2\,d\xi^2
\]
and
\[
g_-
=
(1-kr\theta)^{-2}
\bigl(dr^2+\cos^2 r\,d\theta^2\bigr)
+
(1-kr\theta)^2\,d\xi^2.
\]
Therefore
\[
\exp_q^{g_{\alpha,k}}(t\Omega_+)
=
\exp_q^{g_+}(t\Omega_+),
\qquad
\exp_q^{g_{\alpha,k}}(t\Omega_-)
=
\exp_q^{g_-}(t\Omega_-)
\]
for sufficiently small $t>0$.

For a smooth metric $g_\pm$, the usual Jacobian expansion of the exponential
map gives
\begin{equation}\label{eqn:smooth-expansion-pm}
\Vol_{g_\pm}\bigl(\exp_q^{g_\pm}(t\Omega_\pm)\bigr)
=
t^3\Vol_{g_E}(\Omega_\pm)
-
\frac{t^5}{6}
\int_{\Omega_\pm}
\Ric^{(\pm)}_q(v)\,dv
+
O(t^6).
\end{equation}
Here $v=(x, y, z)$, and $dv=dx\,dy\,dz $ is the Euclidean volume measure on \(T_qM\).

We now compute the Ricci tensors of $g_\pm$ at $q$. At $q$, in the basis
$
\{ \partial_r,\partial_\theta,\partial_\xi \},
$
one has
\begin{equation}
\Ric_q^{(+)}
=
\begin{pmatrix}
1 & -k & 0\\
-k & 1 & 0\\
0 & 0 & 0
\end{pmatrix},
\qquad
\Ric_q^{(-)}
=
\begin{pmatrix}
1 & k & 0\\
k & 1 & 0\\
0 & 0 & 0
\end{pmatrix}.
\end{equation}
Therefore at the point $q$ along the direction $x\partial_r + y \partial_\theta + z \partial_\xi$
\begin{equation}
\Ric_q^{(+)}(x, y, z)
=
x^2+y^2-2kxy,
\end{equation}
and
\begin{equation}
\Ric_q^{(-)}(x, y, z)
=
x^2+y^2+2kxy.
\end{equation}

A direct calculation gives
\begin{equation}
\int_{\Omega_+}(x^2+ y^2)\,dx dy dz
=
\int_{\Omega_-}(x^2+ y^2)\, dx dy dz
=
\frac{\sqrt3}{2}\,\delta\,\rho_0^5,
\end{equation}
\begin{equation}\label{eqn:rtheta-integrals-wedge}
\int_{\Omega_+}xy\,dx dy dz
=
\frac{\sqrt3\,\rho_0^5}{4}
\frac{1-\alpha^2}{1+\alpha^2},
\end{equation}
and
\begin{equation}
\int_{\Omega_-} xy \,dx dy dz
=
-
\frac{\sqrt3\,\rho_0^5}{4}
\frac{1-\alpha^2}{1+\alpha^2}.
\end{equation}
Here $\delta:=\arctan\frac1\alpha-\arctan\alpha.$.

Hence
\begin{equation}
\begin{aligned}
\int_{\Omega_+}\Ric_q^{(+)}(v,v)\,dv
&=
\int_{\Omega_+}(x^2+ y^2-2kxy)\,dx dy dz
\\
&=
\frac{\sqrt3}{2}\,\delta\,\rho_0^5
-
2k\cdot
\frac{\sqrt3\,\rho_0^5}{4}
\frac{1-\alpha^2}{1+\alpha^2}
\\
&=
\frac{\sqrt3\,\rho_0^5}{2}
\left[
\delta
-
k\frac{1-\alpha^2}{1+\alpha^2}
\right].
\end{aligned}
\end{equation}
Similarly,
\begin{equation}
\begin{aligned}
\int_{\Omega_-}\Ric_q^{(-)}(v,v)\,dv
&=
\int_{\Omega_-}(x^2+ y^2+2kxy)\,dx dy dz
\\
&=
\frac{\sqrt3\,\rho_0^5}{2}
\left[
\delta
-
k\frac{1-\alpha^2}{1+\alpha^2}
\right].
\end{aligned}
\end{equation}
Therefore
\begin{equation}
\int_{\Omega_+}\Ric_q^{(+)}(v,v)\,dv
+
\int_{\Omega_-}\Ric_q^{(-)}(v,v)\,dv
\\
=
\sqrt3\,\rho_0^5
\left[
\delta
-
k\frac{1-\alpha^2}{1+\alpha^2}
\right].
\end{equation}

Summing the two expansions in \eqref{eqn:smooth-expansion-pm}, we obtain
\begin{equation}
\begin{aligned}
\Vol_{g_{\alpha,k}}\!\left(\exp_q^{g_{\alpha,k}}(t\Omega)\right)
&=
t^3\Vol_{g_E}(\Omega)
\\
&\quad
-
\frac{t^5}{6}
\sqrt3\,\rho_0^5
\left[
\delta
-
k\frac{1-\alpha^2}{1+\alpha^2}
\right]
+
O(t^6).
\end{aligned}
\end{equation}
This completes the proof.
\end{proof}


\subsection{Cancellation in the geodesic-ball volume expansion}\label{subsect: remark}

The wedge region computation above shows that a carefully chosen tangent regions can detect the mixed Ricci term due to discontinuity of the Ricci curvature.  This does not, however, produce a counterexample to the original geodesic ball volume-limit condition \eqref{def-NNSC-volume}.  {\em For genuine geodesic balls, the boundary displacement terms cancel the mixed Ricci contribution, and the usual scalar curvature coefficient is recovered}.

More precisely, for the small geodesic ball $B_q^{g_{\alpha,k}}(t)$ centered at $q$ with radius $t$, one has
\begin{equation}\label{eqn: quadrant-volume-expansion-final}
\Vol_{g_{\alpha,k}}\left(B_q^{g_{\alpha,k}}(t)\right)
=
\frac{4\pi}{3}t^3-\frac{4\pi}{45}t^5+o(t^5), \ \ \text{as} \ \ t \to 0.
\end{equation}
We now give a detailed computation explaining this cancellation. 

By the symmetry of the metric, it suffices to carry out the computation in the region 
\begin{equation}
Q_+ := \{(r, \theta, \xi) \mid r \geq 0, \ \ \theta \geq 0\}
\end{equation}
We work in a sufficiently small coordinate neighborhood of $q$, so that
$
\chi_{\rho_*}\equiv 1.
$
Thus, near $q$,
\begin{equation}
g_{\alpha,k}
=
(1+k\psi_\alpha)^{-2}
\bigl(dr^2+\cos^2 r\,d\theta^2\bigr)
+
(1+k\psi_\alpha)^2\,d\xi^2.
\end{equation}
On a small neighborhood of $q$, we introduce the smooth comparison metric 
\begin{equation}
g_+
:=
(1+kr\theta)^{-2}
\bigl(dr^2+\cos^2 r\,d\theta^2\bigr)
+
(1+kr\theta)^2\,d\xi^2.
\end{equation}
The two metrics agree on the pure region
\begin{equation}\label{eqn: pure-region-remark}
\mathcal P_+
:=
\left\{
r>0,\ \theta>0:
\theta\ge r^{1/\alpha},\quad r\ge \theta^{1/\alpha}
\right\}.
\end{equation}
They differ only in the bad cusp region
\[
\mathcal B
:=
Q_+\setminus \mathcal P_+
\subset
\{\theta\le r^{1/\alpha}\}
\cup
\{r\le \theta^{1/\alpha}\}.
\]

We carry out the computation in two steps. First, we show that replacing
$g_{\alpha,k}$ by the smooth model $g_+$ changes the quadrant-volume expansion
only by an $o(t^5)$ error: 
\begin{equation}\label{eqn:reduce-to-smooth-model}
\Vol_{g_{\alpha,k}}\left(B_q^{g_{\alpha,k}}(t)\cap Q_+\right)
=
\Vol_{g_+}\left(B_q^{g_+}(t)\cap Q_+\right)
+
o(t^5),
\quad \text{as} \ \ t \to 0.
\end{equation}
Second, we compute the expansion of $\Vol_{g_+}\left(B_q^{g_+}(t)\cap Q_+\right)$.

{\bf Step 1.} For small $t>0$, both small balls are contained in a Euclidean coordinate
ball $B_E(0,C_0t)$ for some constant $C_0$. Hence the relevant part of the bad region is
\begin{equation}\label{eqn: bad-region-remark}
\mathcal B_t:=\mathcal B\cap B_E(0,C_0t) \subset \mathcal B_t^{(1)}\cup \mathcal B_t^{(2)},
\end{equation}
where
\begin{equation}
\mathcal B_t^{(1)}
:=
\left\{
0\le r\le C_0t,\ 
0\le \theta\le r^{1/\alpha},\ 
|z|\le C_0t
\right\},
\end{equation}
and
\begin{equation}
\mathcal B_t^{(2)}
:=
\left\{
0\le \theta\le C_0t,\ 
0\le r\le \theta^{1/\alpha},\ 
|z|\le C_0t
\right\}.
\end{equation}
Therefore
\begin{equation}
\Vol_E(\mathcal B_t^{(1)})
\le
\int_{-C_0t}^{C_0t}
\int_0^{C_0t}
\int_0^{r^{1/\alpha}}
d\theta\,dr\,dz  \\
=
2C_0t
\int_0^{C_0t} r^{1/\alpha}\,dr  \\
\le C t^{2+1/\alpha},
\end{equation}
where $\Vol_E$ denotes the Euclidean volume. The same estimate holds for $\mathcal B_t^{(2)}$, and hence
\begin{equation}\label{eqn: Euclidean-volume-bad-set}
\Vol_E(\mathcal B_t)\le C t^{2+1/\alpha}.
\end{equation}
The volume forms
are
\begin{equation}
d\vol_{g_{\alpha,k}}
=
(1+k\psi_\alpha)^{-1}\cos r\,dr\,d\theta\,d\xi,
\end{equation}
and
\begin{equation}
d\vol_{g_+}
=
(1+kr\theta)^{-1}\cos r\,dr\,d\theta\,d\xi.
\end{equation}
Moreover, on $\mathcal B_t$ one has
\begin{equation}
|\psi_\alpha-r\theta|
\le C|r\theta|
\le C t^{1+1/\alpha}.
\end{equation}
Consequently, for small $t$,
\begin{equation}\label{eqn: density-difference-control-bad-set}
\begin{aligned}
\left| 
\int_{\mathcal B_t}
d\vol_{g_{\alpha,k}}- \int_{\mathcal{B}_t}d\vol_{g_+}
\right|
& \le
C\int_{\mathcal B_t}
|\psi_\alpha-r\theta|\,dr\,d\theta\,d\xi \\
& \le
C t^{1+1/\alpha}\Vol_E(\mathcal B_t) \\
& \le
C t^{3+2/\alpha}.
\end{aligned}
\end{equation}
In the last inequality, we used the estimate (\ref{eqn: Euclidean-volume-bad-set}).

We next show that the boundary displacement of the two small balls satisfies the same bound as in \eqref{eqn: density-difference-control-bad-set}.
Indeed, the two metrics agree in the pure region described in (\ref{eqn: pure-region-remark}), and the possible boundary
mismatch is supported only in the bad angular directions described in (\ref{eqn: bad-region-remark}). In the bad cusp near the $r$-axis, we have 
\begin{equation}
0 \leq \theta \leq r^{1/\alpha}.
\end{equation}
Thus, for $r>0$, 
\begin{equation}
\frac{\theta}{r} \leq r^{\frac{1}{\alpha} -1}.
\end{equation}
Hence the angular width of this cusp is bounded by $C t^{1/\alpha-1}$. Since the area
of $\partial B_E(C_0 t)$ is $O(t^2)$, the area of this bad angular set is
bounded by
\[
C t^2\cdot t^{1/\alpha-1}
=
C t^{1+1/\alpha}.
\]
The same estimate holds for the cusp near the $\theta$-axis. Therefore the total
area of the bad angular part of $\partial B_E(0,t)$ is at most
\begin{equation}\label{eqn: bad-angular-area}
C t^{1+1/\alpha}.
\end{equation}

It remains to estimate the radial displacement of the two boundaries along
these bad directions. On these bad directions inside $B_E(0,C_0t)$ we have
\begin{equation}
|g_{\alpha,k}-g_+|_E
\le C|\psi_\alpha-r\theta|
\le C t^{1+1/\alpha}.
\end{equation}
Consequently, for a curve $\gamma$ from $q$ to a point $q^\prime$ in $B_E(C_0t)$ with Euclidean
length $O(t)$, we have
\begin{equation}
\begin{aligned}
|L_{g_{\alpha,k}}(\gamma)-L_{g_+}(\gamma)|
&\le
\int_\gamma
\bigl||\dot\gamma|_{g_{\alpha,k}}-|\dot\gamma|_{g_+}\bigr|\,ds  \\
&\le
C t^{1+1/\alpha}\int_\gamma |\dot\gamma|_E\,ds  \\
&\le
C t^{1+1/\alpha}\cdot t \\
&=
C t^{2+1/\alpha}.
\end{aligned}
\end{equation}
Taking the infimum over admissible curves gives
\begin{equation}\label{eqn: radial-displacement}
\bigl|
d_{g_{\alpha,k}}(q, q^\prime)-d_{g_+}(q, q^\prime)
\bigr|
\le C t^{2+1/\alpha}.
\end{equation}
By \eqref{eqn: bad-angular-area} and \eqref{eqn: radial-displacement}, the volume swept out by the boundary displacement is also
\begin{equation}\label{eqn: displacement-control}
O(t^{3+2/\alpha}).
\end{equation}
Since $0<\alpha<1$, we have
$
3+\frac{2}{\alpha}>5.
$
Combining this with \eqref{eqn: density-difference-control-bad-set}, we obtain \eqref{eqn:reduce-to-smooth-model}.

{\bf Step 2.}
We now derive the expansion of $\Vol_{g_+}\left(B_q^{g_+}(t)\cap Q_+\right)$. Let
\begin{equation}
U_t:=(\exp_q^{g_+})^{-1}
\bigl(B_q^{g_+}(t)\cap Q_+\bigr)\subset T_qM,
\end{equation}
and let
\begin{equation}
B_E^+(t):=B_E(0,t)\cap\{x\ge0,\ y\ge0\},
\end{equation}
where $(x,y,z)$ are the coordinates on $T_qM$ corresponding to the basis
$
\{ \partial_r, \, \partial_\theta, \, \partial_\xi\}.
$

We decompose
\begin{equation}\label{eqn: smooth-model-decomposition}
\begin{aligned}
&\Vol_{g_+}\bigl(B_q^{g_+}(t)\cap Q_+\bigr)
-
\tfrac{\pi}{3}t^3
\\
&\quad =
\left(
\Vol_{g_+}(\exp_q^{g_+}(U_t))-\Vol_E(U_t)
\right)
+
\left(
\Vol_E(U_t)- \tfrac{\pi}{3}t^3
\right).
\end{aligned}
\end{equation}

We first compute the Jacobian term. At $q$, the Ricci tensor of $g_+$ in the
basis $\{ \partial_r, \, \partial_\theta, \, \partial_\xi \}$ is
\begin{equation}
\Ric_q^{g_+}
=
\begin{pmatrix}
1 & -k & 0\\
-k & 1 & 0\\
0 & 0 & 0
\end{pmatrix}.
\end{equation}
Thus
\begin{equation}
\Ric_q^{g_+}(x,y,z)
=
x^2+y^2-2kxy.
\end{equation}
The standard Jacobian expansion for the exponential map then gives
\begin{equation}\label{eqn: jacobian-term-quadrant}
\begin{aligned}
\Vol_{g_+}(\exp_q^{g_+}(U_t))-\Vol_E(U_t)
 & =
-\frac16\int_{B_E^+(t)}
\Ric_q^{g_+}(x, y, z)\,dx dy dz
+
O(t^6) \\
&=
-\frac16
\left(
\frac{2\pi}{15}
-
\frac{4k}{15}
\right)t^5
+
O(t^6).
\end{aligned}
\end{equation}

We next compute the Euclidean boundary displacement term
$
\Vol_E(U_t)- \frac{\pi}{3}t^3.
$
This term appears because the coordinate quadrant
$
Q_+=\{r\ge0,\theta\ge0\}
$
does not pull back exactly to the Euclidean quadrant
$
\{x\ge0,y\ge0\}
$
under $\exp_q^{g_+}$.

There are two boundary faces. First consider the face $\{\theta=0\}$. Its
inverse image under $\exp_q^{g_+}$ is a graph over the $(x,z)$-plane. Writing
\begin{equation}
x=s\cos\beta,
\qquad
z=s\sin\beta,
\qquad
-\frac{\pi}{2}\le\beta\le\frac{\pi}{2},
\qquad
0\le s\le t,
\end{equation}
as discussed in Remark \ref{rmrk: geodesic-non-uniqueness}, this graph has the expansion
\begin{equation}
y
=
-\frac16 k\cos\beta
\left(\sin^2\beta-\cos^2\beta\right)s^3
+
O(s^4).
\end{equation}
Therefore its signed Euclidean volume contribution is
\begin{equation}\label{eqn:theta-face-contribution}
\begin{aligned}
V_E^*(\{\theta=0\})
&=
\int_{-\pi/2}^{\pi/2}\int_0^t
(-y)\,s\,ds\,d\beta
\\
&=
-\frac{k}{45}t^5+O(t^6).
\end{aligned}
\end{equation}
Here signed volume means that the part of $U_t\setminus B_E^+(t)$ is counted
positively, while the part of $B_E^+(t)\setminus U_t$ is counted negatively.

Similarly, the inverse image of the face $\{r=0\}$ gives
\begin{equation}\label{eqn:r-face-contribution}
V_E^*(\{r=0\})
=
-\frac{k}{45}t^5+O(t^6).
\end{equation}
Hence
\begin{equation}\label{eqn: boundary-displacement-term}
\Vol_E(U_t)- \frac{\pi}{3}t^3
=
-\frac{2k}{45}t^5+O(t^6).
\end{equation}

Combining \eqref{eqn: smooth-model-decomposition},
\eqref{eqn: jacobian-term-quadrant}, and
\eqref{eqn: boundary-displacement-term}, we obtain
\begin{equation}\label{eqn:smooth-model-final}
\begin{aligned}
\Vol_{g_+}\bigl(B_q^{g_+}(t)\cap Q_+\bigr)
-
\frac{\pi}{3}t^3
&=
-\frac16
\left(
\frac{2\pi}{15}
-
\frac{4k}{15}
\right)t^5
-\frac{2k}{45}t^5
+
O(t^6)
\\
&=
-\frac{\pi}{45}t^5+O(t^6).
\end{aligned}
\end{equation}

Finally, combining \eqref{eqn:reduce-to-smooth-model} and
\eqref{eqn:smooth-model-final}, we get \eqref{eqn: quadrant-volume-expansion-final} as claimed.\\

The $C^{1, \alpha}$ example above, together with the pulled-string example of  Basilio--Dodziuk--Sormani \cite{BasilioDodziukSormani-sewing} and the drawstring example of Kazaras--Xu in \cite{KazarasXu2023}, indicate that the volume-limit notion of scalar curvature is quite subtle. Note that in the Kazaras--Xu's example, the convergence is not $C^0$. It is therefore natural to ask the following question: is the volume-limit notion of nonegative scalar curvature preserved under stronger convergence?

More concretely, consider a metric $g$ of the form \eqref{metric-form}, with $u=\cos r$, and assume that $\log \varphi$ is a Lipschitz function with Lipschitz constant at most $1$ (or even $\log \varphi \in C^{1,\alpha}$ with $|\nabla \log \varphi|\leq 1$). Does $g$ have nonnegative scalar curvature in the volume-limit sense?


\appendix

\section{A drawstring-type example with \texorpdfstring{$d_*<d_{g_\infty}$}{d* < dg-infinity}}
\label{app:shortcut-example}

The purpose of this appendix is to illustrate that the global limiting distance
$d_*$ obtained from the sequence need not agree with the intrinsic length
distance $d_{g_\infty}$ induced by the local tensor limit $g_\infty$.  Example \ref{example-appendix} below is a drawstring type construction of
Kazaras--Xu \cite{KazarasXu2023}.  We include the details in the present
notation in order to isolate the distance comparison
$
        d_*<d_{g_\infty}
$
at some points,
and to check the hypotheses appearing in Conjecture \ref{conj: Scalar-Compactness}.

We construct a sequence of metrics on $\Sph^2 \times \Sph^1$ as
\begin{equation}
g_i := \varphi_i(r)^{-2}h_i + \varphi_i(r)^2 d\xi^2, \quad \xi\in[0, 2\pi]
\end{equation}
where $h_i$ are rotationally symmetric metrics on $\Sph^2$ as
\begin{equation}
h_i:= dr^2 + u_i(r)^2 d\theta^2, \quad r\in[0, 2], \ \ \theta\in[0, 2\pi],
\end{equation}
 in which the warping factors
\(\varphi_i\) develop a short cut region near the pole at $r=0$.  This regions has vanishing volume and is invisible in the local analytic convergence
away from the pole, but it makes the $S^1$-fiber direction extremely cheap
inside a tiny cap, thereby producing a genuine shortcut between points in the
regular region.
   
 \medskip  
 We begin with the construction of the warping functions $\varphi_i$. Let $A_i\to\infty$, and set
\begin{equation}\label{eqn: rho-defn}
\rho_i:=\frac{1}{10} e^{-A_i^3}.
\end{equation}
Let $\eta\in C^\infty(\mathbb{R})$ satisfy $0\le\eta\le1$, and
\begin{equation}
\eta(t)=
\begin{cases}
0, & t\leq 0, \\
1, & t\geq 1.
\end{cases}
\end{equation}
Set
\begin{equation}
\psi_i(r)
:=
\eta\!\left(\frac{\ln(r/\rho_i)}{\ln2}\right)
\eta\!\left(\frac{\ln(1/(10r))}{\ln2}\right).
\end{equation}
Then $0\le\psi_i\le1$, and
\begin{equation}\label{eqn: psi-value}
\psi_i(r)=
\begin{cases}
0,  & r \in (0,\rho_i]\cup[\frac{1}{10},\infty) \\
1,  & r \in [2\rho_i,1/20].
\end{cases}
\end{equation} 
Define
\begin{equation}\label{eqn: f-defn}
\lambda_i
:=
\frac{A_i}{\displaystyle\int_0^{\frac{1}{10}} \frac{\psi_i(s)}{s}\,ds},
\qquad
f_i(r)
:=
-A_i+\lambda_i\int_0^r\frac{\psi_i(s)}{s}\,ds .
\end{equation}
Then 
\begin{equation}
f_i(r) =
\begin{cases}
-A_i, & r \in [0,\rho_i] \\
0, &  r \in [\frac{1}{10},\infty),
\end{cases}
\end{equation} 
and
\begin{equation}\label{eqn: f-derivative-psi}
f_i'(r)=\lambda_i\frac{\psi_i(r)}{r} \geq 0.
\end{equation}
Moreover, by \eqref{eqn: psi-value}, we have
\begin{equation}
\int^{\frac{1}{10}}_0 \frac{\psi_i(s)}{s}ds \leq \int^{\frac{1}{10}}_{\rho_i} \frac{1}{s}ds = \ln \frac{\frac{1}{10}}{\rho_i} =A^3_i,
\end{equation}
and also
\begin{equation}
\begin{aligned}
\int_0^\frac{1}{10} \frac{\psi_i(s)}{s}\,ds
& =
\int^{2\rho_i}_{\rho_i} \frac{\psi_i(s)}{s}ds
+
\int_{2\rho_i}^{1/20}\frac{ds}{s}
+
\int^{\frac{1}{10}}_{\frac{1}{20}} \frac{\psi_i(s)}{s}ds + O(1) \\
&=
\log\frac{1}{40\rho_i} + O(1)
=
A_i^3 +O(1),
\end{aligned}
\end{equation}
since 
\begin{equation}
0\leq \int^{2\rho_i}_{\rho_i} \frac{\psi_i(s)}{s}ds \leq \ln 2 \ \ \text{and} \ \ 0\leq  \int^{\frac{1}{10}}_{\frac{1}{20}} \frac{\psi_i(s)}{s}ds \leq \ln 2.
\end{equation}
Thus, by the definition of $\lambda_i$ in \eqref{eqn: f-defn}, we obtain
\begin{equation}\label{eqn: lambda-lower-bound}
\lambda_i \geq \frac{1}{A^2_i}, \quad \forall i \in \mathbb{N},
\end{equation}
and for large $i$,
\begin{equation}\label{eqn: lambda-asymptotic}
\lambda_i = \frac{A_i}{A^3_i + O(1)} = \frac{1}{A^2_i} + O(A^{-5}_i).
\end{equation}
Combined with \eqref{eqn: f-derivative-psi}, this implies
\begin{equation}\label{eqn: f-derivative-estimate}
0\le f_i'(r)\le \frac{C}{A_i^2r},
\qquad \rho_i \leq r\le \frac{1}{10},
\end{equation}
with $C$ independent of $i$. 

Define
\begin{equation}\label{eqn: varphi-construction}
\varphi_i:=e^{f_i}.
\end{equation}
Then
\begin{equation}
\varphi_i(\rho_i)=e^{-A_i}\to0,
\end{equation}
while
\begin{equation}
\varphi_i\to1
\end{equation}
locally uniformly on \(S^2\setminus\{r=0\}\). For $0<r<\frac{1}{10}$, this can be seen by $|f_i(r)-f_i(\frac{1}{10})|=\lambda_i|\int_r^\frac{1}{10}\frac{\psi_i}{s}ds|\leq \lambda_i\log \frac{1}{10r}\rightarrow 0$ as $i\rightarrow \infty$.

\medskip
We next construct the base warping functions $u_i$ so that the resulting
metrics have nonnegative scalar curvature.

\begin{lemma}\label{lem: u-construction}
Let $f_i$ be the functions constructed above. Then after passing to large $i$, there exist smooth concave functions
\(u_i\in C^\infty([0,2])\) such that
\begin{equation}
u_i(0)=u_i(2)=0,\quad \sup u\geq \frac{1}{10},\quad u_i'(0)=1,\quad u_i'(2)=-1,
\quad
u_i>0\ \ \text{on }(0,2),
\end{equation}
\begin{equation}\label{u-ends}
\begin{cases}
u'_i(r) >0, & 0<r<\frac{1}{5}, \\
u_i(r) = r, & 0\leq r<\rho_i,\\
u_i(r) = 2-r, & \frac{9}{5}<r\leq 2,
\end{cases}
\end{equation}
and
\begin{equation}\label{eqn: u-f-differential-inq-example}
-\frac{u_i''(r)}{u_i(r)}\ge |f'_i|^2(r)
\quad\text{for }0<r<2.
\end{equation}
\end{lemma}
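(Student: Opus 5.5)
We will construct $u_i$ by prescribing its second derivative. Set
\[
u_i''(r) := -\,q_i(r)\,u_i(r),
\]
where $q_i\ge |f_i'|^2$ is a nonnegative smooth potential. This makes \eqref{eqn: u-f-differential-inq-example} automatic as long as $u_i>0$. Since $f_i'\equiv 0$ on $[0,\rho_i]\cup[\tfrac1{10},2]$, we may take $q_i=|f_i'|^2$ on $[0,\tfrac1{10}]$. For $r\le\rho_i$ the equation then reads $u''=0$, and with initial data $u_i(0)=0$, $u_i'(0)=1$ this gives $u_i(r)=r$ on $[0,\rho_i]$. We solve the linear ODE $u''=-|f_i'|^2u$ on $[0,\tfrac1{10}]$ with these data. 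Afterwards we glue in a fixed concave profile ending as $2-r$.

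The key step is to show that $u_i$ stays close to $r$ on $[0,\tfrac1{10}]$, in the sense that
\[
u_i'\ge \tfrac12 \quad\text{and}\quad u_i\le r .
\]
The bound $u_i\le r$ follows by concavity, which persists as long as $u_i>0$. For the lower bound on $u_i'$, use
\[
u_i'(r)=1-\int_0^r |f_i'|^2u_i\,ds\ \ge\ 1-\int_{\rho_i}^{1/10}\frac{C^2}{A_i^4 s^2}\, s\,ds ,
\]
where we used \eqref{eqn: f-derivative-estimate} and $u_i\le s$. The last integral is at most $C^2A_i^{-4}\ln(1/(10\rho_i))=C^2A_i^{-1}$, by the choice \eqref{eqn: rho-defn}. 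Hence $u_i'\ge 1-C^2/A_i\ge \tfrac12$ for large $i$. A continuity argument then keeps $u_i>0$ and increasing on $(0,\tfrac1{10}]$. It also gives
\[
u_i(\tfrac1{10})\in\Bigl[\tfrac1{20},\tfrac1{10}\Bigr],\qquad u_i'(\tfrac1{10})=1-\delta_i,\quad \delta_i=O(A_i^{-1}) .
\]
This quantitative smallness is the main point of the construction: the choice $\rho_i=\tfrac1{10}e^{-A_i^3}$ against $\lambda_i\sim A_i^{-2}$ is exactly what makes the total concavity used up near the pole negligible.

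On $[\tfrac1{10},2]$ no constraint remains except $u_i''\le0$, because $f_i'=0$ there. We must extend from the data $(u,u')=(a_i,1-\delta_i)$ at $\tfrac1{10}$, with $a_i\approx\tfrac1{10}$, to a smooth concave positive function. The extension should equal $2-r$ on $(\tfrac95,2]$, have $u'>0$ on $(\tfrac1{10},\tfrac15)$, and have maximum $\ge\tfrac1{10}$. The last condition is immediate since $u_i$ is increasing up to $\tfrac15$ and $a_i\ge\tfrac1{20}$; if we want $\ge\tfrac1{10}$ exactly, we continue with slope near $1$ until $r=\tfrac15$, which gives height about $\tfrac15$.

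To carry out the extension, note that the linear continuation $\ell(r)=a_i+(1-\delta_i)(r-\tfrac1{10})$ lies strictly above $2-r$ near $r=\tfrac95$. Indeed, the tangent lines have slopes $\approx 1$ and $-1$ and meet at height about $0.95>0$. We therefore take the minimum of the two lines, $\min\{\ell,\,2-r\}$, which is concave and piecewise linear, and smooth it by a concave mollification supported in a small neighborhood of the corner $r\approx 1$. Mollifying a concave function by convolution preserves concavity, and restricting the mollification to near the corner keeps the function linear elsewhere. At $\tfrac1{10}$ the glued function is then $C^\infty$, since the ODE solution has $u''=0$ on $[\tfrac1{20},\tfrac1{10}]$ ($\psi_i=0$ only beyond $\tfrac1{10}$; for safety we first run the ODE up to $\tfrac1{10}$, where $f_i'$ vanishes to infinite order, and the linear continuation then matches to all orders).

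We conclude by checking the endpoint conditions. At $r=2$ we have $u_i(2)=0$ and $u_i'(2)=-1$, and the smoothness conditions at both poles hold since $u_i=r$ and $u_i=2-r$ near them. Positivity holds on $(0,2)$ by concavity, and \eqref{u-ends} holds by construction.
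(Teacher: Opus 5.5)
Your proposal is correct and follows essentially the paper's route. You solve $u_i''=-|f_i'|^2u_i$ from the pole with $u_i(0)=0$, $u_i'(0)=1$, use $\int_{\rho_i}^{1/10}|f_i'|^2s\,ds\le C^2/A_i$ to keep $u_i'\ge\tfrac12$, and then extend concavely to $2-r$; your mollified $\min\{\ell,2-r\}$ is just an explicit realization of the paper's nonincreasing $p_i=u_i'$ with $\int_{1/5}^2p_i=-\beta_i$, and it makes that integral constraint automatic. One slip: $u_i''=0$ does not hold on $[\tfrac1{20},\tfrac1{10}]$, since $\psi_i$ decreases from $1$ to $0$ there, but your parenthetical fix (or running the ODE to $\tfrac15$ as the paper does, using $f_i'\equiv0$ on $[\tfrac1{10},2]$) gives the smooth matching.
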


\begin{proof}
We prove the existence of $u_i$ in two steps. In the first step, we prove existence on the interval $[0, \frac{1}{5}]$ by solving IVP. In the second step, we extend the solution on $[\frac{1}{5}, 2]$.

\medskip
{\bf Step 1.}
Because $|f'_i|^2\in C^\infty([0,2])$, the initial value problem
\begin{equation}\label{eqn: IVP-u}
u_i''+|f'_i|^2 u_i=0,
\qquad
u_i(0)=0,\qquad
u_i'(0)=1
\end{equation}
has a unique smooth solution on $[0,\frac{1}{5}]$. 

Moreover, by \eqref{eqn: f-derivative-estimate}, we have
\begin{equation}
\int^{1/5}_0 |f'_i|^2(r)rdr = \int^{1/10}_{\rho_i} |f'_i|^2(r)rdr \leq \frac{C^2}{A_i} \to 0, \quad \text{as} \ \ i \to \infty.
\end{equation}
For large \(i\), this implies that the solution to \eqref{eqn: IVP-u} remains positive,
strictly increasing and concave on \([0,\frac{1}{5}]\). Indeed, as long as \(u_i\ge0\), we have
\begin{equation}
u_i''=-|f'_i|^2 u_i\le0,
\end{equation}
and hence $u_i(r)\le r$. Therefore
\begin{equation}
u_i'(r)
=
1-\int_0^r |f'_i|^2(s)u_i(s)\,ds
\ge
1-\int_0^r |f'_i|^2(s)s\,ds
\ge
\frac12
\end{equation}
for all $0\le r\le \frac{1}{5}$, once $i$ is large. Hence
\begin{equation}
u_i'(r)>0
\quad\text{and}\quad
u_i(r)>0
\qquad\text{on }(0,\frac{1}{5}].
\end{equation}
Set
\begin{equation}
\beta_i:=u_i(\frac{1}{5}),
\qquad
\alpha_i:=u_i'(\frac{1}{5}).
\end{equation}
Then, by the concavity of $u_i$, we have
\begin{equation}
0<\beta_i\le \frac{1}{5},
\qquad
\frac12\le \alpha_i\le1.
\end{equation}
Since $u_i$ is concave, it follows $u'_i\geq \frac{1}{2}$ on $[0,\frac{1}{5}]$ and $u(\frac{1}{5})\geq \frac{1}{10}$. Therefore $\sup u_i\geq \frac{1}{10}$.
Since $|f'_i|^2(r)=0$ for $r\in [0,\rho_i]$, the $u_i(r)=r$ in this region.

\medskip
{\bf Step 2.}
We now extend $u_i$ on $[\frac{1}{5}, 2]$. We  may choose a smooth function
$p_i(r):[\frac{1}{5}, 2]\to\mathbb R$ such that
\begin{equation}\label{eqn: p-1}
p_i(r)=
\begin{cases}
\alpha_i, & \text{near }\frac{1}{5}, \\
-1, & \text{on }[2- \frac{1}{5}, 2],
\end{cases}
\end{equation}
\begin{equation}
p_i'(r)\le0 \ \ \text{on} \ \ [\tfrac{1}{5}, 2],
\end{equation}
and
\begin{equation}\label{eqn: integral-p}
\int_{\frac{1}{5}}^2 p_i(s)\,ds=-\beta_i,
\end{equation}

Define, for $\frac{1}{5} \leq r\le2$,
\begin{equation}
u_i(r):=\beta_i+\int_{\frac{1}{5}}^r p_i(s)\,ds.
\end{equation}
Then
\begin{equation}
u_i(\tfrac{1}{5})=\beta_i,
\qquad
u_i'(\tfrac{1}{5})=\alpha_i,
\end{equation}
and since $p_i(r)=\alpha_i$ near $\frac{1}{5}$, this extension matches the IVP
solution on $[0, \frac{1}{5}]$ smoothly at $r=\frac{1}{5}$. Also,
for $r \in [2-\tfrac{1}{5}, 2]$, we have
\begin{equation}
    u_i(r) = \beta_i + \int^r_{\tfrac{1}{5}} p_i(s)ds
    = \beta_i + \int^2_{\tfrac{1}{5}}p_i(s)ds - \int^2_r p_i(s)ds
    =  \int^2_{r}ds = 2-r.
\end{equation}
Here we used \eqref{eqn: p-1} and \eqref{eqn: integral-p}.
Furthermore,
\begin{equation}
u_i''=p_i'\le0
\quad\text{on }[\tfrac{1}{5}, 2].
\end{equation}
Thus $u_i$ is concave on $[\tfrac{1}{5}, 2]$. Since $u_i(\frac{1}{5})=\beta_i>0$ and
$ u_i(2)=0$, concavity implies
\begin{equation}
u_i(r)>0
\quad\text{for }\frac{1}{5}<r<2.
\end{equation}

Finally, on $[0, \frac{1}{5}]$ we have
\begin{equation}
-\frac{u_i''}{u_i}=|f'_i|^2,
\end{equation}
while on $[\frac{1}{5}, 2]$ we have
\begin{equation}
-\frac{u_i''}{u_i}\ge0.
\end{equation}
Since $|f'_i|^2$ is supported in $(0,\frac{1}{10})$, it follows that
\begin{equation}
-\frac{u_i''}{u_i}\ge |f'_i|^2
\quad\text{on }(0,2).
\end{equation}

\end{proof}

\begin{example}\label{example-appendix}
    Consider the sequence of warped product Riemannian metrics on $\Sph^2 \times \Sph^2$ given by 
\begin{equation}
g_i = \varphi_i(r)^{-2} (dr^2 + u^2_i(r) d\theta^2) + \varphi_i(r)^2 d\xi^2, \quad r\in[0, 2], \ \ \theta, \xi \in [0, 2\pi],
\end{equation}
where $\varphi_i$ is constructed in \eqref{eqn: varphi-construction} and $u_i$ is obtained in Lemma \ref{lem: u-construction}. 
\end{example}

\begin{lemma}\label{lem: example-scalar-curvature}
The Riemannian metrics in Example \ref{example-appendix} are smooth on $\Sph^2 \times \Sph^1$. Moreover, $g_i$ have nonnegative scalar curvature, i.e.
\begin{equation}
\Scal_{g_i} \geq 0 \ \ \text{on} \ \ \Sph^2 \times \Sph^1, \ \ \forall i \in \mathbb{N}.
\end{equation}
\end{lemma}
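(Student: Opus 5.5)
The plan is to verify the two claims, smoothness and nonnegative scalar curvature, separately. For nonnegativity, Lemma \ref{lem: scalar-curvature-formula} reduces everything to a pointwise inequality on the base.

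\emph{Smoothness.} I will check smoothness of the base metric $h_i=dr^2+u_i^2d\theta^2$ at the two poles, and smoothness of $\varphi_i$ as a function on $\Sph^2$.

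By Lemma \ref{lem: u-construction}, $u_i(r)=r$ on $[0,\rho_i)$ and $u_i(r)=2-r$ on $(9/5,2]$. Hence $h_i$ coincides with the flat metric $dr^2+r^2d\theta^2$ near $r=0$, and with $d(2-r)^2+(2-r)^2d\theta^2$ near $r=2$. Both are smooth in Cartesian coordinates, so the usual parity conditions at the poles hold trivially. On the open interval $(0,2)$, $u_i$ is smooth and positive by construction.

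For $\varphi_i=e^{f_i}$, note first that $\psi_i$ is a product of smooth functions of $r$ on $(0,\infty)$ and vanishes for $r\le\rho_i$. Therefore $f_i$ is smooth on $(0,2)$. Moreover $f_i$ is constant ($\equiv -A_i$) on $[0,\rho_i]$ and $\equiv 0$ on $[1/10,2]$. Consequently $\varphi_i$ is locally constant near both poles, so it is a smooth radial function on $\Sph^2$. It is also positive, which gives $\varphi_i^{-2}$ and $\varphi_i^2$ smooth. Together these show $g_i$ is a smooth Riemannian metric on $\Sph^2\times\Sph^1$.

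\emph{Scalar curvature.} By Lemma \ref{lem: scalar-curvature-formula}, $\Scal_{g_i}\ge 0$ is equivalent to
\begin{equation*}
2\frac{|\nabla^{h_i}\varphi_i|^2}{\varphi_i^2}\le -2\frac{u_i''}{u_i}.
\end{equation*}
Since $\varphi_i$ depends only on $r$, the left side is $2|(\ln\varphi_i)'|^2=2|f_i'|^2$. The required inequality on $(0,2)$ is therefore exactly \eqref{eqn: u-f-differential-inq-example} of Lemma \ref{lem: u-construction}.

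At the poles themselves, both sides vanish identically in a neighborhood: $f_i'=0$ there, and $u_i''=0$ because $u_i$ is linear there. Indeed $h_i$ is flat and $\varphi_i$ is constant near the poles, so $g_i$ is locally a flat product and $\Scal_{g_i}=0$ near $\{r=0\}$ and $\{r=2\}$.

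\emph{Main obstacle.} The only delicate point is the interpretation of $-u_i''/u_i$ at the poles, where $u_i=0$. It is resolved by the fact that $u_i$ is exactly linear, and $\varphi_i$ exactly constant, on neighborhoods of the poles, so no limiting argument is needed.

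A second point to check is that the two pieces of $u_i$ in Lemma \ref{lem: u-construction} glue smoothly at $r=1/5$. This holds because $p_i\equiv\alpha_i$ near $1/5$, while $|f_i'|^2\equiv 0$ on $[1/10,1/5]$. Hence the IVP solution is affine near $1/5$ with slope $\alpha_i$, and so all derivatives of the two pieces match there.
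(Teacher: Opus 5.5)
Your proof is correct and follows the same route as the paper. Smoothness comes from the local flatness of $h_i$ near the poles given by \eqref{u-ends} together with smoothness and positivity of $\varphi_i$, and Lemma \ref{lem: scalar-curvature-formula} reduces $\Scal_{g_i}\ge 0$ to \eqref{eqn: u-f-differential-inq-example}. Your extra checks — that $\varphi_i$ is constant near both poles, so $g_i$ is a flat product there, and that the two pieces of $u_i$ glue smoothly at $r=1/5$ — fill in details the paper leaves implicit.
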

\begin{proof}
By \eqref{u-ends}, the metric $dr^2 + u_i^2 d\theta^2$ is locally flat near the two poles $r = 0$ and $r = 2$. Hence it is a smooth metric on $\mathbb{S}^2$. Consequently, each $g_i$ is a smooth metric, since $\varphi_i$ are smooth positive functions on $\Sph^2$. The assertion of non‑negative scalar curvature then follows from \eqref{eqn: u-f-differential-inq-example} and Lemma \ref{lem: scalar-curvature-formula}.
\end{proof}

We first check the volume hypotheses.

\begin{lemma}[Uniform bound for volumes]\label{lem: example-volume-bound}
For the sequence of metrics constructed in Example \ref{example-appendix}, there exists a constant $V_0$ independent of $i$ such that
\begin{equation}
\Vol_{g_i}(\Sph^2 \times \Sph^1) \leq V_0, \ \ \forall i \in \mathbb{N}.
\end{equation}
\end{lemma}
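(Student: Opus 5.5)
The volume of $g_i$ is computed directly from the volume form. Since $d\vol_{g_i}=\frac{u_i}{\varphi_i}\,dr\,d\theta\,d\xi$ and $\varphi_i=e^{f_i}$ depends only on $r$, we have
\begin{equation*}
\Vol_{g_i}(\Sph^2\times\Sph^1)=4\pi^2\int_0^2 u_i(r)\,e^{-f_i(r)}\,dr .
\end{equation*}
The interval splits at $r=\frac1{10}$. On $[\frac1{10},2]$ we have $f_i\equiv0$. The function $u_i$ is concave with $u_i(0)=u_i(2)=0$ and $|u_i'|\le1$, so by Lemma \ref{lem: concave-function-properties} it satisfies $0\le u_i\le\min\{r,2-r\}\le1$. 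That part of the integral is therefore at most $2$, uniformly in $i$.

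On $[0,\frac1{10}]$ we use $u_i(r)\le r$. We split further at $\rho_i$.

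\emph{On $[0,\rho_i]$.} Here $f_i=-A_i$, so $e^{-f_i}=e^{A_i}$. The contribution is at most
\begin{equation*}
e^{A_i}\int_0^{\rho_i}r\,dr=\tfrac12 e^{A_i}\rho_i^2=\tfrac1{200}e^{A_i-2A_i^3}\to0 .
\end{equation*}
This uses the choice $\rho_i=\frac1{10}e^{-A_i^3}$ in \eqref{eqn: rho-defn}.

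\emph{On $[\rho_i,\frac1{10}]$.} Since $f_i(\frac1{10})=0$, we can write
\begin{equation*}
-f_i(r)=\lambda_i\int_r^{1/10}\frac{\psi_i(s)}{s}\,ds\le\lambda_i\ln\frac{1}{10r}.
\end{equation*}
From \eqref{eqn: lambda-asymptotic} we have $\lambda_i\le 2A_i^{-2}\le\frac12$ for large $i$. Hence $e^{-f_i(r)}\le(10r)^{-1/2}$, and the integrand obeys
\begin{equation*}
u_ie^{-f_i}\le r\,(10r)^{-1/2}\le C r^{1/2},
\end{equation*}
which is integrable with a bound independent of $i$. For the finitely many small $i$ the volumes are finite anyway, since each $g_i$ is a smooth metric on a compact manifold (Lemma \ref{lem: example-scalar-curvature}).

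Adding the three pieces gives a constant $V_0$ independent of $i$.

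The only delicate point is the cap $[0,\rho_i]$. There the density blows up like $e^{A_i}$, and the plan is to check that the area factor $\rho_i^2\sim e^{-2A_i^3}$ beats this. Because $A_i^3$ grows much faster than $A_i$, it does. No other step needs care.
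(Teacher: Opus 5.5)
Your proof is correct, and it takes a somewhat different route from the paper's in the region between the cap and $r=\tfrac1{10}$.

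\textbf{The paper's route.} It splits $[0,2]$ into four intervals $[0,2\rho_i]$, $[2\rho_i,\tfrac1{20}]$, $[\tfrac1{20},\tfrac1{10}]$ and $[\tfrac1{10},2]$. It bounds $f_i$ from below by integrating from $r=0$, where $f_i=-A_i$. For this it uses the elementary lower bound $\lambda_i\ge A_i^{-2}$, together with $\psi_i\equiv1$ on $[2\rho_i,\tfrac1{20}]$ and the identity $\ln\frac{r}{2\rho_i}=\ln(5r)+A_i^3$; these cancel the $-A_i$ and give $e^{-f_i}\le(5r)^{-1/A_i^2}$. The outer transition layer $[\tfrac1{20},\tfrac1{10}]$ is then handled separately via $f_i\ge-\ln 4/A_i^2$.

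\textbf{Your route.} You anchor at $r=\tfrac1{10}$, where $f_i=0$, and write $-f_i(r)=\lambda_i\int_r^{1/10}\psi_i/s\,ds$. In the pointwise estimate this needs only $0\le\psi_i\le1$. Both transition layers $[\rho_i,2\rho_i]$ and $[\tfrac1{20},\tfrac1{10}]$ are absorbed automatically, so three pieces suffice.

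\textbf{The trade-off.} You need an upper bound on $\lambda_i$; any $\sup_i\lambda_i<2$ would already make $r^{1-\lambda_i}$ integrable. You take this from \eqref{eqn: lambda-asymptotic}. That asymptotic rests on the lower bound $\int_0^{1/10}\psi_i/s\,ds\ge A_i^3-\ln4$, which is the same structural fact ($\psi_i\equiv1$ on $[2\rho_i,\tfrac1{20}]$) the paper uses pointwise. So the structural input enters your argument only once, through $\lambda_i$, rather than region by region.

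Your version is shorter. It is also exactly the estimate the paper itself uses just after \eqref{eqn: varphi-construction} to show $\varphi_i\to1$ locally uniformly. The paper's version, by contrast, gets by with only the one-sided bound $\lambda_i\ge A_i^{-2}$.
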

\begin{proof}
The volume form of $g_i$ is
$
d\vol_{g_i}
=
\frac{u_i(r)}{\varphi_i(r)}\,dr\,d\theta\,d\xi.
$
Therefore
\begin{equation}
\Vol_{g_i}(S^2\times S^1)
=
4\pi^2\int_0^2\frac{u_i(r)}{\varphi_i(r)}\,dr.
\end{equation}
It suffices to prove that
\begin{equation}\label{eqn: integral-volume}
\int_0^2\frac{u_i(r)}{\varphi_i(r)}\,dr
\end{equation}
is uniformly bounded.

By Lemma \ref{lem: u-construction}, the functions $u_i$ are concave on \([0,2]\), with
\begin{equation}
u_i(0)=0,
\qquad
u_i'(0)=1.
\end{equation}
Hence \(u_i'\le1\) on \([0,2]\), and therefore
\begin{equation}\label{eqn: example-u-bound}
0\le u_i(r)\le r\le2.
\end{equation}

We split the integral \eqref{eqn: integral-volume} into four regions:
\begin{equation}
[0,2\rho_i],
\qquad
[2\rho_i,1/20],
\qquad
[1/20,1/10],
\qquad
[1/10,2].
\end{equation}

First, on $[0,2\rho_i]$, we have $f_i\ge -A_i$, hence
\begin{equation}
\varphi_i=e^{f_i}\ge e^{-A_i}.
\end{equation}
Using \(u_i(r)\le r\), we get
\begin{equation}
\int_0^{2\rho_i}\frac{u_i(r)}{\varphi_i(r)}\,dr
\le
e^{A_i}\int_0^{2\rho_i}r\,dr
=
2e^{A_i}\rho_i^2  = \frac{1}{50}e^{A_i-2A_i^3}
\to0.
\end{equation}
In particular, this integral is uniformly bounded.

Next consider the logarithmic region $2\rho_i\le r\le 1/20$. Since
$\psi_i=1$ on this interval, we have
\begin{equation}
\int_0^r\frac{\psi_i(s)}s\,ds
\ge
\int_{2\rho_i}^r\frac{ds}{s}
=
\ln\frac r{2\rho_i}.
\end{equation}
Therefore
\begin{equation}
f_i(r)
=
-A_i+\lambda_i\int_0^r\frac{\psi_i(s)}s\,ds
\ge
-A_i+\lambda_i\ln\frac r{2\rho_i}.
\end{equation}
Using \eqref{eqn: lambda-lower-bound} and \eqref{eqn: rho-defn}, we get
\begin{equation}
f_i(r)
\ge
-A_i+\frac1{A_i^2}\ln\frac r{2\rho_i} = -A_i+\frac1{A_i^2}
\left(
\ln(5r)+A_i^3
\right) 
=\frac1{A_i^2}\ln(5r).
\end{equation}
Consequently,
\begin{equation}
\frac1{\varphi_i(r)}
=
e^{-f_i(r)}
\le
\left(\frac{1}{5r}\right)^{1/A_i^2}.
\end{equation}
Using again $u_i(r)\le r$, we obtain
\begin{equation}
\begin{aligned}
\int_{2\rho_i}^{1/20}\frac{u_i(r)}{\varphi_i(r)}\,dr
&\le
\int_{2\rho_i}^{1/20}
r\left(\frac{1}{5r}\right)^{1/A_i^2}\,dr \\
&=
(\frac{1}{5})^{1/A_i^2}
\int_{2\rho_i}^{1/20}
r^{1-\frac1{A_i^2}}\,dr.
\end{aligned}
\end{equation}
For all sufficiently large \(i\), \(A_i^{-2}\le1/2\). Hence
\begin{equation}
1-\frac1{A_i^2}\ge \frac12,
\end{equation}
and the last integral is uniformly bounded by a constant,
\begin{equation}
\int_{2\rho_i}^{1/20}\frac{u_i(r)}{\varphi_i(r)}\,dr
\le C.
\end{equation}

Now consider the outer transition region $[1/20,1/10]$. For $r\ge 1/20$,
we have
\begin{equation}
\int_0^r\frac{\psi_i(s)}s\,ds
\ge
\int_{2\rho_i}^{1/20}\frac{ds}{s}
=
\ln\frac 1{40\rho_i}
=
A_i^3-\ln4.
\end{equation}
Combined with \eqref{eqn: lambda-lower-bound}, this implies that, for $r\in[1/20,1/10]$,
\begin{equation}
f_i(r)
\ge
-A_i+\frac1{A_i^2}(A_i^3-\ln4)
=
-\frac{\ln4}{A_i^2}.
\end{equation}
Therefore
\begin{equation}
\frac1{\varphi_i(r)}
=
e^{-f_i(r)}
\le
e^{\frac{\ln4}{A_i^2}}
\le C.
\end{equation}
Since \(u_i(r)\le2\), it follows that
\begin{equation}
\int_{1/20}^{1/10}\frac{u_i(r)}{\varphi_i(r)}\,dr
\le C.
\end{equation}

Finally, on $[1/10,2]$, we have $f_i=0$, hence $\varphi_i=1$. Therefore
\begin{equation}
\int_{1/10}^2\frac{u_i(r)}{\varphi_i(r)}\,dr
=
\int_{1/10}^2u_i(r)\,dr
\le
2\times \frac{19}{20}.
\end{equation}
Here we used \eqref{eqn: example-u-bound}.

Combining the four estimates, complete the proof of the lemma.
\end{proof}

We next check the $\MinA$ condition.  The argument is based on mean-convex
barriers and the monotonicity formula for minimal surfaces from Colding and Minicozzi's textbook \cite{ColdingMinicozziBook}; it is in the same
spirit as the proof of the corrresponding noncollapsing estimate in Sormani--Tian--Wang \cite[Theorem 5.1]{SormaniTianWang2024} and Kazaras--Xu \cite[Theorem 1.3]{KazarasXu2023}.

\begin{lemma}[Uniform bound for MinA]\label{lem: example-MinA}
For the sequence of metrics constructed in Example \ref{example-appendix}, there exists a constant $A_0>0$, independent of $i$ , such that every closed embedded minimal
surface $\Sigma\subset (S^2\times S^1,g_i)$ satisfies
\begin{equation}
\Area_{g_i}(\Sigma)\ge A_0.
\end{equation}
Consequently,
\begin{equation}
\MinA(\Sph^2\times \Sph^1,g_i)\ge A_0>0.
\end{equation}
\end{lemma}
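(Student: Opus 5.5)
We prove the uniform area bound by contradiction, using mean-convex barriers and monotonicity. Suppose $\Sigma_i\subset(\Sph^2\times\Sph^1,g_i)$ are closed embedded minimal surfaces with $\Area_{g_i}(\Sigma_i)\to0$. The plan has three steps:
\begin{enumerate}
\item confine $\Sigma_i$ to the region where $g_i$ is uniformly controlled;
\item show that in this region $\Sigma_i$ cannot lie inside a small tube, so it has definite extent;
\item apply the monotonicity formula with uniform geometry to get a definite lower area bound, which is the contradiction.
\end{enumerate}

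\textbf{Barrier and confinement.} By Lemma \ref{lem: torus-mean-curvature}, the torus $\{r=r_0\}\times\Sph^1$ has mean curvature $\varphi_i\kappa_i$. With respect to $h_i$, the geodesic curvature of the circle $\{r=r_0\}$ is $u_i'(r_0)/u_i(r_0)$. Lemma \ref{lem: u-construction} gives $u_i'>0$ on $(0,\frac15)$, so every torus $T_{r_0}=\{r=r_0\}$ with $0<r_0<\frac15$ is strictly mean convex. Its mean curvature vector points toward the pole $r=0$, so the region $\{r\le r_0\}$ is mean concave from outside and the region $\{r\ge r_0\}$ is mean convex. The maximum principle applies at a point of $\Sigma_i$ where $r$ attains its minimum $r_{\min}$, with $\Sigma_i$ tangent to $T_{r_{\min}}$ from the side $\{r\ge r_{\min}\}$. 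If $r_{\min}<\frac15$, this contradicts strict mean convexity. Hence every closed minimal surface lies in $\{r\ge\frac15\}$.

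\textbf{Uniform geometry on the confined region.} On $\{r\ge\frac1{10}\}$ we have $\varphi_i\equiv1$, so $g_i=dr^2+u_i^2d\theta^2+d\xi^2$ there. On $[\frac15,2]$, the function $u_i$ is the explicit concave extension from Lemma \ref{lem: u-construction}, with $u_i(\frac15)=\beta_i\ge\frac1{10}$ and $u_i=2-r$ near $r=2$. We choose the profiles $p_i$ so that they converge smoothly; this is possible because $(\alpha_i,\beta_i)$ converge. Then the metrics $g_i$ on $\{r\ge\frac15\}$ have uniformly bounded curvature and injectivity radius at least some $\iota_0>0$. Near $r=2$ the metric is flat, which needs no separate treatment.

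\textbf{Extent and monotonicity.} Since $\Area_{g_i}(\Sigma_i)\to0$, we want to show $\Sigma_i$ contains a point $x_i$ with $\Sigma_i\not\subset B_{g_i}(x_i,\iota_0/2)$. For a closed minimal surface contained in a small convex ball, we would like to use the strictly convex distance spheres of radius $<\iota_0$ as barriers to rule this out. Given such $x_i$, the monotonicity formula with uniformly bounded curvature, as in \cite{ColdingMinicozziBook}, gives
\[
\Area_{g_i}\bigl(\Sigma_i\cap B_{g_i}(x_i,s)\bigr)\ge c\,s^2
\]
for $s\le\iota_0/2$, with $c$ independent of $i$. This yields $\Area_{g_i}(\Sigma_i)\ge c\,\iota_0^2/4=:A_0$, contradicting $\Area_{g_i}(\Sigma_i)\to0$.

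\textbf{Main obstacle.} The hardest step is the confinement argument: verifying the orientation and strictness of the mean-convexity of $T_{r_0}$ throughout the shortcut cap. In the cap, $\varphi_i\to0$, and without confinement the monotonicity constants would degenerate. The fact that $u_i'>0$ on $(0,\frac15)$ is exactly what the construction in Lemma \ref{lem: u-construction} provides for this purpose.
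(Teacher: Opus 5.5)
\textbf{Gap: the barrier step has the orientation reversed, and the confinement it aims at is false.}

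Since $\varphi_i=\varphi_i(r)$, the torus $T_r$ has induced area element $u_i\,d\theta\,d\xi$, so $\Area_{g_i}(T_r)=4\pi^2u_i(r)$. Because $u_i'>0$ on $(0,\frac15)$, pushing $T_{r_0}$ toward the pole decreases its area. The mean curvature vector therefore points into $\{r\le r_0\}$. So the strictly mean convex region is the solid torus $\{r\le r_0\}$, not $\{r\ge r_0\}$; this is what $H_i=\varphi_iu_i'/u_i>0$ with respect to $\varphi_i\partial_r$ means.

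At a minimum point of $r|_{\Sigma}$, the minimal surface touches $T_{r_{\min}}$ from the outside of this mean convex region. There the comparison principle only gives $0\le \varphi_iu_i'/u_i$, which is no contradiction, just as a plane may touch a round sphere from outside.

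No argument can rescue your claim that every closed minimal surface lies in $\{r\ge\frac15\}$. The totally geodesic slices $\Sph^2\times\{\xi_0\}$ are closed minimal surfaces that cross the cap and even the pole circle $\{r=0\}$. So do the tori $\Gamma\times\Sph^1$ over a closed meridian geodesic $\Gamma$ through both poles. Your Steps 2 and 3 pick an arbitrary $x_i\in\Sigma_i$ and use uniform geometry there. That is not justified, because $x_i$ could lie in the cap, where $\varphi_i\to0$.

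\textbf{Repair.} Apply the maximum principle at a maximum of $r$ instead. Suppose a component of $\Sigma$ lies in $\{r<\frac15\}$. Then $r|_\Sigma$ attains a maximum $r_0\in(0,\frac15)$. There $\Sigma$ touches $T_{r_0}$ from inside the strictly mean convex solid torus $\{r\le r_0\}$, which is a contradiction. Hence every component of $\Sigma$ meets $\{r\ge\frac15\}$, and this is all you need.

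Choose $x_i\in\Sigma_i\cap\{r\ge\frac15\}$. Since $\varphi_i\equiv1$ on $\{r\ge\frac1{10}\}$, any path from $x_i$ to $\{r=\frac1{10}\}$ has $g_i$-length at least $\frac1{10}$. So the ball $B_{g_i}(x_i,s_0)$ with $s_0<\min\{\frac1{10},\iota_0\}$ stays in the uniformly controlled region. Because $\Sigma_i$ is closed, it has no boundary in this ball. Monotonicity together with density one at $x_i$ then gives $\Area_{g_i}(\Sigma_i)\ge c\,s_0^2$, whatever $\Sigma_i$ does in the cap.

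Consequently, your ``extent'' step is superfluous. Your stated main obstacle is also misidentified: global confinement is neither true nor needed, only that each minimal surface reaches the good region. With this correction your argument becomes the paper's. Your remark that the flat end near $r=2$ needs no barrier is correct, and slightly simplifies the paper, which uses barriers at both ends.
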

\begin{proof}
By Lemma \ref{lem: u-construction}, we have
\begin{equation}\label{eqn: u-derivative-sign-cap}
\begin{cases}
u_i'(r)>0,
& \text{for} \ \ 0<r<\frac{1}{5}, \\
u_i'(r)<0,
& \text{for} \ \ \frac{9}{5}<r<2.
\end{cases}
\end{equation}

We first show that the two end regions are strict mean-convex barriers. For
$0<r<2$,   the mean
curvature of torus
\begin{equation}
T_r:=\{r=\mathrm{constant}\}\times S^1_\xi.
\end{equation}
 with respect to the unit normal vector $\nu_i=\varphi_i\partial_r$ is
\begin{equation}
H_i(r)
=
\varphi_i(r,\theta)\frac{u_i'(r)}{u_i(r)}.
\end{equation}
Hence, by \eqref{eqn: u-derivative-sign-cap}, for $0<r<\frac{1}{5}$,
$
H_i(r)>0
$
with respect to the outward normal of the region $\{s\le r\}$. 
Similarly, for \(9/5<r<2\), since $u_i'(r)<0$, the tori
$T_r$ are strictly mean-convex with respect to the outward normal of the
region $\{s\ge r\}$, namely $-\varphi_i\partial_r$.

Now let $\Sigma\subset (S^2\times S^1,g_i)$ be a closed embedded minimal
surface. We claim that $\Sigma$ must intersect the fixed compact core region
\begin{equation}
\mathcal K
:=
\{\frac{1}{5}\le r\le \frac{9}{5}\}
\times \Sph^1_\theta
\times \Sph^1_\xi .
\end{equation}
Indeed, suppose first that a connected component of $\Sigma$ were entirely
contained in the cap region
\begin{equation}
\{0<r<\frac{1}{5}\}
\times \Sph^1_\theta
\times \Sph^1_\xi
\subset \Sph^2\times\Sph^1 .
\end{equation}
Then the function $r|_\Sigma$ attains a maximum $r_0<1/5$. At a maximum
point, $\Sigma$ is tangent to $T_{r_0}$ and lies on the side
$\{r\le r_0\}$. Since $T_{r_0}$ is strictly mean-convex with respect to
the outward normal of this region, the strong maximum principle for minimal
hypersurfaces gives a contradiction. Hence no closed minimal surface
component can be contained entirely in $\{0<r<1/5\}\times S^1$.

The same argument
shows that no closed minimal surface component can be contained entirely in the other cap region
$\{9/5<r<2\}\times S^1$. Therefore every connected component of $\Sigma$ must
intersect $\mathcal K$. In particular, $\Sigma\cap\mathcal K\neq\emptyset$ as claimed.

By construction, the metrics $g_i$ have uniformly bounded geometry on the
compact region
\begin{equation}
\mathcal K'
:=
\{\frac{1}{10}\le r\le \frac{19}{10}\}\times \Sph^1_\theta\times \Sph^1_\xi
\supset \mathcal K .
\end{equation}
More precisely, there exist constants $\kappa<\infty$ and $\iota_0>0$,
independent of $i$, such that on $\mathcal K'$,
\begin{equation}
|\operatorname{Sec}_{g_i}|\le \kappa,
\qquad
\operatorname{inj}_{g_i}\ge \iota_0 .
\end{equation}
Indeed, on $\mathcal K'$ one has $\varphi_i\equiv1$, and the functions
$u_i$ have uniform positive lower bounds and uniform $C^2$-bounds.  The latter
can be arranged in the extension step above, since the endpoint data
$\beta_i=u_i(1/5)$ and $\alpha_i=u_i'(1/5)$ satisfy
\begin{equation}
0<\beta_i\le 1/5,
\qquad
\frac12\le \alpha_i\le1,
\end{equation}
so the interpolating functions $p_i$ on $[1/5,2]$ may be chosen with uniform
$C^1$-bounds, and hence $u_i$ with uniform $C^2$-bounds on $\mathcal K'$.

Take a point
\begin{equation}
x_i\in\Sigma\cap\mathcal K,
\end{equation}
and
\begin{equation}
0<\rho_0<\frac12\operatorname{dist}_{g_i}(\mathcal K,\partial\mathcal K')
\end{equation}
uniformly in $i$, decreasing $\rho_0$ if necessary so that the
monotonicity formula for minimal surfaces applies in
$B_{g_i}(x_i,\rho_0)$. Then there exists a constant $c_0>0$, independent
of \(i\), such that
\begin{equation}
\Area_{g_i}(\Sigma)
\ge
\Area_{g_i}\bigl(\Sigma\cap B_{g_i}(x_i,\rho_0)\bigr)
\ge
c_0\rho_0^2.
\end{equation}
This completes the proof by setting
$ A_0:=c_0\rho_0^2>0.$
\end{proof}

By Lemmas \ref{lem: example-scalar-curvature}, \ref{lem: example-volume-bound} and \ref{lem: example-MinA}, the sequence of metrics in Example \ref{example-appendix} satisfies the hypotheses in the condition \ref{eqn: condition-lambda-intro}. Therefore, Proposition \ref{prop: diameter-uniform-bound} implies
\begin{lemma}
    For the sequnce of metrics construced in Example \ref{example-appendix}, there exists a constant $D>0$ such that the diameters
    \begin{equation}
        \Diam_{g_i}(\Sph^2 \times \Sph^2) \leq D, \quad \forall i \in \mathbb{N}.
    \end{equation}
\end{lemma}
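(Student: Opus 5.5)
The lemma follows directly from Proposition \ref{prop: diameter-uniform-bound} once its hypotheses are checked for the sequence in Example \ref{example-appendix}. The plan is to confirm that this sequence satisfies condition $\Lambda$ in \eqref{eqn: condition-lambda-intro}, and that the data is already in the normalized form used there ($a_i=2$ for every $i$).

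The three parts of $\Lambda$ are supplied by the preceding lemmas. Lemma \ref{lem: example-scalar-curvature} gives smoothness and $\Scal_{g_i}\ge 0$. Lemma \ref{lem: example-volume-bound} gives the uniform volume bound $V_0$. Lemma \ref{lem: example-MinA} gives $\MinA\ge A_0>0$. The metrics have exactly the form \eqref{metric-form}, with $\varphi_i=e^{f_i}$ depending only on $r$. The base metric is $h_i=dr^2+u_i^2d\theta^2$ on $r\in[0,2]$, with the pole conditions $u_i(0)=u_i(2)=0$, $u_i'(0)=1$, $u_i'(2)=-1$ from Lemma \ref{lem: u-construction}. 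So no rescaling is needed, and the normalization of Remark \ref{rmrk: rescale-metric} already holds.

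Proposition \ref{prop: diameter-uniform-bound} then yields $\Diam_{g_i}\le C(\Lambda)$. The reasoning behind that proposition is as follows. Lemma \ref{lem: varphi-bounds} with $k=2$ gives $\varphi_i^{-1}\le C r^{-1/2}$ near the poles, so radial segments to the slice $\{r=1\}$ have uniformly bounded length. On the slice itself, the $\theta$- and $\xi$-circles are controlled by the two-sided bounds of Proposition \ref{prop: varphi-C01/2-estimate}. One can also see the bound directly here. Since $f_i\le 0$ and $f_i$ is increasing, we have $\varphi_i\le 1$, and $\varphi_i\equiv 1$ for $r\ge 1/10$. Near the pole, the lower bound $f_i(r)\ge A_i^{-2}\ln(5r)$ from the proof of Lemma \ref{lem: example-volume-bound} gives $\varphi_i^{-1}\le (5r)^{-1/A_i^2}$ on $[2\rho_i,1/20]$, which is integrable with a uniform bound. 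The remaining interval $[0,2\rho_i]$ contributes at most $2\rho_i e^{A_i}\to 0$.

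No step here is a genuine obstacle. The only point requiring care is confirming that the hypotheses of Proposition \ref{prop: diameter-uniform-bound} are met in the normalized form. Note also that the statement writes $\Sph^2\times\Sph^2$, which should be read as $\Sph^2\times\Sph^1$.
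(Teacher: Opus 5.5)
Your proposal is correct and matches the paper's proof. The paper likewise verifies condition $\Lambda$ via Lemmas \ref{lem: example-scalar-curvature}, \ref{lem: example-volume-bound} and \ref{lem: example-MinA}, and then invokes Proposition \ref{prop: diameter-uniform-bound}.

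Your optional direct check is also sound. It omits the interval $[1/20,1/10]$, but this is handled by the monotonicity of $f_i$, since there $f_i(r)\ge f_i(1/20)\ge -A_i^{-2}\ln 4$.
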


The next estimate shows that traveling radially down to the shortcut region
has essentially the same cost as in the limiting metric, while traveling in the
$\xi$-direction inside the shortcut region has vanishing cost.

\begin{lemma}\label{lem: radial-estimate}
For the functions $\varphi_i$ defined in \eqref{eqn: varphi-construction}, and a fixed $0< r_* < \frac{1}{40}$, we have
\begin{equation}
\int_{\rho_i}^{r_*}\frac{dr}{\varphi_i(r)}
=
r_*+o(1), 
\quad \text{as} \ \ i \to \infty.
\end{equation}
\end{lemma}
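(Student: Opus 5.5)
The plan is to squeeze the integral between a trivial lower bound and an upper bound obtained from the pointwise estimate on $f_i$ already derived in the proof of Lemma \ref{lem: example-volume-bound}. The integrand is $1/\varphi_i=e^{-f_i}$. By \eqref{eqn: f-derivative-psi}, $f_i$ is nondecreasing with $f_i(\tfrac1{10})=0$. Hence $f_i\le 0$ on $[0,\tfrac1{10}]$ and $e^{-f_i}\ge 1$ there. This gives
\begin{equation*}
\int_{\rho_i}^{r_*}\frac{dr}{\varphi_i(r)}\ \ge\ r_*-\rho_i = r_*+o(1),
\end{equation*}
since $\rho_i=\tfrac1{10}e^{-A_i^3}\to0$.

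For the upper bound, split $[\rho_i,r_*]$ into the inner transition interval $[\rho_i,2\rho_i]$ and the logarithmic interval $[2\rho_i,r_*]$. The latter lies inside $[2\rho_i,\tfrac1{20}]$ because $r_*<\tfrac1{40}$.
\begin{itemize}
\item On $[\rho_i,2\rho_i]$ we use $f_i\ge -A_i$, so $e^{-f_i}\le e^{A_i}$. The contribution is then at most $e^{A_i}\rho_i=\tfrac1{10}e^{A_i-A_i^3}\to0$.
\item On $[2\rho_i,r_*]$ we reuse the bound $f_i(r)\ge A_i^{-2}\ln(5r)$ from the proof of Lemma \ref{lem: example-volume-bound}. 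It came from $\psi_i\equiv1$ on $[2\rho_i,\tfrac1{20}]$, $\lambda_i\ge A_i^{-2}$ and the definition of $\rho_i$. It gives $e^{-f_i(r)}\le (5r)^{-1/A_i^2}$. Since $5r<1$, this bound is at least $1$.
\end{itemize}

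It remains to show
\begin{equation*}
\int_{2\rho_i}^{r_*}(5r)^{-1/A_i^2}\,dr\ \longrightarrow\ r_*.
\end{equation*}
We would do this by dominated convergence on $(0,r_*)$, extending the integrand by zero below $2\rho_i$. The integrand converges pointwise to $1$ because $A_i\to\infty$. For $i$ large we have $A_i^{-2}\le\tfrac12$, so it is dominated by $(5r)^{-1/2}$, which is integrable near $0$. One could also compute the integral explicitly as $\tfrac15(1-A_i^{-2})^{-1}\big[(5r)^{1-A_i^{-2}}\big]_{2\rho_i}^{r_*}$ and read off the limit $r_*$ directly.

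Combining the two upper contributions gives $\int_{\rho_i}^{r_*}\varphi_i^{-1}\,dr\le r_*+o(1)$, which together with the lower bound proves the lemma. The only delicate point is the tiny region near $\rho_i$, where $\varphi_i$ is exponentially small. There the crude bound $e^{A_i}$ is beaten by the super-exponentially small length $\rho_i\sim e^{-A_i^3}$, and this is exactly why $\rho_i$ was chosen as in \eqref{eqn: rho-defn}.
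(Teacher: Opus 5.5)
Your proof is correct, but on the logarithmic region $[2\rho_i,r_*]$ it takes a different route from the paper. The paper uses the same split and the same bound $e^{A_i}\rho_i\to 0$ on $[\rho_i,2\rho_i]$. On $[2\rho_i,r_*]$, however, it derives the two-sided asymptotic $f_i(r)=\lambda_i\ln(10r)+O(A_i^{-2})$. This needs the sharper expansion $\lambda_i=A_i^{-2}+O(A_i^{-5})$ from \eqref{eqn: lambda-asymptotic}, and hence two-sided control of $\int_0^{1/10}\psi_i(s)/s\,ds$. The paper then integrates $e^{O(A_i^{-2})}(10r)^{-\lambda_i}$ explicitly, which gives both inequalities at once.

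You instead squeeze the integral. The lower bound comes for free from monotonicity: $f_i\le 0$ on $[0,\tfrac1{10}]$, so $\varphi_i^{-1}\ge 1$. For the upper bound you only need the one-sided estimate $f_i\ge A_i^{-2}\ln(5r)$, which uses just $\lambda_i\ge A_i^{-2}$ and is already established in the proof of Lemma \ref{lem: example-volume-bound}. Dominated convergence (or the explicit antiderivative) then finishes the argument.

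Your route is more economical: it reuses an existing inequality and does not need the precise asymptotics of $\lambda_i$. The paper's computation gives a uniform pointwise description of the integrand, and so in principle an explicit rate for the $o(1)$ term, but the lemma does not need this.
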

\begin{proof}
We split the integral as
\begin{equation}
\int_{\rho_i}^{r_*}\frac{dr}{\varphi_i(r)}
=
\int_{\rho_i}^{2\rho_i}\frac{dr}{\varphi_i(r)}
+
\int_{2\rho_i}^{r_*}\frac{dr}{\varphi_i(r)}.
\end{equation}

First, on $[\rho_i,2\rho_i]$, we have $\varphi_i(r)\ge e^{-A_i}.$
Therefore
\begin{equation}\label{eqn: negligible}
\int_{\rho_i}^{2\rho_i}\frac{dr}{\varphi_i(r)}
\le
e^{A_i}\rho_i
=
\frac{1}{10} e^{A_i-A_i^3}
\to0.
\end{equation}

Next consider the integral on $[2\rho_i,r_*]$. Since \(r_*<1/40<1/20\),
for all sufficiently large \(i\), this interval is contained in the region
where \(\psi_i=1\), except for the already discarded transition interval.
Thus, for \(2\rho_i\le r\le r_*\),
\begin{equation}
\int_0^r\frac{\psi_i(s)}s\,ds
=
\int^{r}_{2\rho_i} \frac{\psi_i(s)}{s}ds + \int^{2\rho_i}_{\rho_i} \frac{\psi_i(s)}{s}ds
=
\ln\frac r{\rho_i}+O(1),
\end{equation}
where the $O(1)$ term comes only from the logarithmic integral over $[\rho_i, 2\rho_i]$. 
By \eqref{eqn: rho-defn},
we have
\begin{equation}
\ln\frac r{\rho_i}
=
\ln (10r)+A_i^3.
\end{equation}
Hence
\begin{equation}
\begin{aligned}
f_i(r)
&=
-A_i+\lambda_i\left(A_i^3+\ln (10r)+O(1)\right)\\
&=
\lambda_i\ln (10r)+O(A_i^{-2}).
\end{aligned}
\end{equation}
Here we used \eqref{eqn: lambda-asymptotic}.
Therefore
\begin{equation}
e^{-f_i(r)}
=
e^{O(A_i^{-2})}
\left(\frac{1}{10r}\right)^{\lambda_i}.
\end{equation}
Consequently,
\begin{equation}
\int_{2\rho_i}^{r_*}\frac{dr}{\varphi_i(r)}
=
e^{O(A_i^{-2})}
\int_{2\rho_i}^{r_*}
\left(\frac{1}{10r}\right)^{\lambda_i}\,dr.
\end{equation}
Since \(\lambda_i\to0\), we compute
\begin{equation}
\int_{2\rho_i}^{r_*}
\left(\frac{1}{10r}\right)^{\lambda_i}\,dr
=
\frac{1}{10^{\lambda_i}(1-\lambda_i)}
\left[
r_*^{1-\lambda_i}
-
(2\rho_i)^{1-\lambda_i}
\right].
\end{equation}
Now
\begin{equation}
10^{\lambda_i}\to1,
\qquad
\frac1{1-\lambda_i}\to1,
\qquad
r_*^{1-\lambda_i}\to r_*.
\end{equation}
Moreover,
\begin{equation}
(2\rho_i)^{1-\lambda_i}
\to0.
\end{equation}

Thus
\begin{equation}
\int_{2\rho_i}^{r_*}\frac{dr}{\varphi_i(r)}
=
r_*+o(1).
\end{equation}
Together with estimate in \eqref{eqn: negligible}, this completes the proof of the lemma.
\end{proof}

\begin{lemma}[Strict shortening of the limit distance]\label{lem: example-compare-distances}
For the sequence of metrics constructed in Example \ref{example-appendix}, the limiting distance $d_*$
is strictly smaller than the intrinsic length distance $d_{g_\infty}$ of
the local tensor limit $g_\infty$. More precisely, there exist two points
$p,q\in \mathring M$ such that
\begin{equation}
d_*(p,q)<d_{g_\infty}(p,q).
\end{equation}
\end{lemma}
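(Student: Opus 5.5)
We will take two points on the same radial–angular position but antipodal in the fiber: fix $0<r_*<\frac1{40}$ and set
\begin{equation}
p=(r_*,0,0),\qquad q=(r_*,0,\pi).
\end{equation}
The proof has three steps: identify the limit $g_\infty$, bound $d_{g_\infty}(p,q)$ from below, and bound $d_*(p,q)$ from above using the shortcut region near the pole.

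\textbf{Identifying the limit.} We have $\varphi_i\to1$ locally uniformly on $\{r>0\}$. By concavity and Arzel\`a--Ascoli (Proposition \ref{prop: u_i-convergence}), $u_i\to u_\infty$. Hence $g_\infty=dr^2+u_\infty^2d\theta^2+d\xi^2$ is a product metric. For this product metric, any curve in $\mathring M$ from $p$ to $q$ projects to the $\xi$-circle with length at least $\pi$. Indeed, the length element dominates $|d\xi|$, and the $\xi$-coordinates of $p$ and $q$ differ by $\pi$ on $\Sph^1$. So $d_{g_\infty}(p,q)\ge\pi$; in fact it equals $\pi$.

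\textbf{Upper bound for $d_*(p,q)$.} We estimate $d_{g_i}(p,q)$ by an explicit path, then pass to the limit using Proposition \ref{prop: limit-metric-space}: $p,q$ lie in a compact $K\Subset\mathring M$, so $d_*(p,q)=\lim d_{g_i}(p,q)$. The path has three pieces.
\begin{itemize}
\item Go radially from $(r_*,0,0)$ to $(\rho_i,0,0)$. By Lemma \ref{lem: radial-estimate} its length is $\int_{\rho_i}^{r_*}\varphi_i^{-1}dr=r_*+o(1)$.
\item Travel in $\xi$ from $0$ to $\pi$ on $\{r=\rho_i\}$. There $\varphi_i=e^{-A_i}$, so this piece has length $\pi e^{-A_i}\to0$.
\item Return radially to $q$, again at cost $r_*+o(1)$.
\end{itemize}
Thus $d_{g_i}(p,q)\le 2r_*+o(1)$, and $d_*(p,q)\le 2r_*<\frac1{20}<\pi\le d_{g_\infty}(p,q)$.

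\textbf{Main obstacle.} The main delicate point is the lower bound for $d_{g_\infty}$. It needs the fact that curves are constrained to $\mathring M$, so they cannot pass through the pole circle $\{r=0\}$. Even if they could, $g_\infty\ge d\xi^2$ wherever defined, which still forces length $\ge\pi$. So the only real content is the uniform convergence $\varphi_i\to1$ away from $r=0$ stated before Lemma \ref{lem: u-construction}, together with Lemma \ref{lem: radial-estimate}.
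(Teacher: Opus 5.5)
Your proposal is correct and follows essentially the same argument as the paper. It uses the same pair of fiber-antipodal points at radius $r_*<\frac{1}{40}$, the lower bound $d_{g_\infty}(p,q)\ge\pi$ from $L_{g_\infty}(\gamma)\ge\int|\xi'|$, and the same three-piece shortcut through $\{r=\rho_i\}$ controlled by Lemma \ref{lem: radial-estimate}; your observation that $d_*(p,q)=\lim d_{g_i}(p,q)$ because $p,q$ lie in a fixed compact subset of $\mathring M$ just makes explicit the paper's ``taking the limit'' step.
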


\begin{proof}
Recall that, away from the pole $r=0$, the metrics $g_i$ converge locally
smoothly to
\begin{equation}
g_\infty=h_\infty+d\xi^2.
\end{equation}

Choose a fixed number
\begin{equation}
0<r_*<\frac{1}{40} < \frac{\pi}{2}.
\end{equation}
Fix \(\theta_0\in S^1\), and consider the two points
\begin{equation}
p=(r_*,\theta_0,0),
\,
q=(r_*,\theta_0,\pi) \in \mathring{M}.
\end{equation}

We first compute their distance in the limit metric \(g_\infty\). For any
piecewise smooth curve $\gamma(t)=(r(t),\theta(t),\xi(t)) $
joining \(p\) to \(q\), the \(g_\infty\)-length satisfies
\begin{equation}
L_{g_\infty}(\gamma)
\ge
\int |\xi'(t)|\,dt.
\end{equation}
Since the $\xi$-coordinates of $p$ and $q$ differ by $\pi$, we have
\begin{equation}
\int |\xi'(t)|\,dt\ge \pi.
\end{equation}
Therefore
\begin{equation}
d_{g_\infty}(p,q)\ge \pi.
\end{equation}
On the other hand, the vertical fiber curve
\begin{equation}
\gamma_0(s)=(r_*,\theta_0,s),
\qquad s\in[0,\pi],
\end{equation}
has $g_\infty$-length exactly $\pi$. Hence
\begin{equation}
d_{g_\infty}(p,q)=\pi.
\end{equation}

We now estimate $d_{g_i}(p,q)$ from above by an explicit shortcut through
the thin well. Note that the well is placed near
\begin{equation}
\rho_i=\frac{1}{10}e^{-A_i^3},
\end{equation}
and that
\begin{equation}
\varphi_i(\rho_i)=e^{-A_i}\to0.
\end{equation}
Consider the piecewise smooth curve
\begin{equation}
(r_*,\theta_0,0)
\longrightarrow
(\rho_i,\theta_0,0)
\longrightarrow
(\rho_i,\theta_0,\pi)
\longrightarrow
(r_*,\theta_0,\pi).
\end{equation}
The two radial pieces have total length
\begin{equation}
2\int_{\rho_i}^{r_*}\frac{dr}{\varphi_i(r)} = 2 r_* + o(1).
\end{equation}
Here we used Lemma \ref{lem: radial-estimate} to obtain the asymptotic estimate.

The middle fiber segment lies at \(r=\rho_i\), where $\varphi_i(\rho_i)=e^{-A_i}.$
Since  $|\partial_\xi|_{g_i}=\varphi_i,$
the length of this fiber segment is
\begin{equation}
\int_0^\pi \varphi_i(\rho_i)\,d\xi
=
\pi e^{-A_i}
\to0.
\end{equation}
Therefore
\begin{equation}
d_{g_i}(p,q)
\le
2\int_{\rho_i}^{r_*}\frac{dr}{\varphi_i(r)}
+
\pi\varphi_i(\rho_i)
=
2r_*+o(1).
\end{equation}
By taking the limit, after passing to a subsequence, this implies
\begin{equation}
d_*(p,q)
\le
2r_*.
\end{equation}
Since \(2r_*<\pi\), while $d_{g_\infty}(p,q)=\pi$, 
we obtain
\begin{equation}
d_*(p,q)<d_{g_\infty}(p,q).
\end{equation}
This proves the strict inequality.
\end{proof}

\bibliographystyle{plain}
\bibliography{ref.bib}

@book {ColdingMinicozziBook,
    AUTHOR = {Colding, Tobias Holck and Minicozzi, II, William P.},
     TITLE = {A course in minimal surfaces},
    SERIES = {Graduate Studies in Mathematics},
    VOLUME = {121},
 PUBLISHER = {American Mathematical Society, Providence, RI},
      YEAR = {2011},
     PAGES = {xii+313},
      ISBN = {978-0-8218-5323-8},
   MRCLASS = {53A10 (35J93 49Q05)},
  MRNUMBER = {2780140},
MRREVIEWER = {Andrew\ Bucki},
       DOI = {10.1090/gsm/121},
       URL = {https://doi.org/10.1090/gsm/121},
}

@article{BasilioDodziukSormani-sewing,
	author = {Basilio, J. and Dodziuk, J. and Sormani, C.},
	doi = {10.1007/s12220-017-9969-y},
	fjournal = {Journal of Geometric Analysis},
	issn = {1050-6926},
	journal = {J. Geom. Anal.},
	mrclass = {53C23},
	mrnumber = {3881982},
	mrreviewer = {Sylvain Maillot},
	number = {4},
	pages = {3553--3602},
	title = {Sewing {R}iemannian manifolds with positive scalar curvature},
	url = {https://doi-org.ezproxy.gc.cuny.edu/10.1007/s12220-017-9969-y},
	volume = {28},
	year = {2018}}

@incollection{Sormani-conjectures,
	author = {Sormani, Christina},
	booktitle = {Perspectives on Scalar Curvature},
	pages = {645-722},
	publisher = {World Scientific},
	title = {Conjectures on Convergence and Scalar Curvature},
	year = {2023}}

@article{Gromov-Plateau,
	author = {Gromov, Misha},
	doi = {10.2478/s11533-013-0387-5},
	fjournal = {Central European Journal of Mathematics},
	issn = {1895-1074},
	journal = {Cent. Eur. J. Math.},
	mrclass = {53C23 (31C12 32Q28 53C20 53C40 58B20)},
	mrnumber = {3188456},
	mrreviewer = {Harish Seshadri},
	number = {7},
	pages = {923--951},
	title = {Plateau-{S}tein manifolds},
	url = {https://doi-org.ezproxy.gc.cuny.edu/10.2478/s11533-013-0387-5},
	volume = {12},
	year = {2014}}

@article{Tian2024,
	journal = {arXiv:2406.08592},
	author = {Wenchuan Tian},
	eprint = {2406.08592},
	primaryclass = {math.DG},
	title = {More Extreme Limits of Manifolds with Positive Scalar Curvature},
	year = {2024}}

@article{SormaniTianYeung2025,
	journal = {arXiv:2506.12491},
	author = {Christina Sormani and Wenchuan Tian and Wai-Ho Yeung},
	eprint = {2506.12491},
	primaryclass = {math.DG},
	title = {Geometric Convergence to an Extreme Limit Space with nonnegative scalar curvature},
	year = {2025}}

@article {AmbrosioKirchheim2000,
    AUTHOR = {Ambrosio, Luigi and Kirchheim, Bernd},
     TITLE = {Currents in metric spaces},
   JOURNAL = {Acta Math.},
  FJOURNAL = {Acta Mathematica},
    VOLUME = {185},
      YEAR = {2000},
    NUMBER = {1},
     PAGES = {1--80},
      ISSN = {0001-5962,1871-2509},
   MRCLASS = {49Q15 (49Q20)},
  MRNUMBER = {1794185},
MRREVIEWER = {Giovanni\ Bellettini},
       DOI = {10.1007/BF02392711},
       URL = {https://doi.org/10.1007/BF02392711},
}

@book{Hebey1999book,
  author    = {Hebey, Emmanuel},
  title     = {Nonlinear Analysis on Manifolds: Sobolev Spaces and Inequalities},
  series    = {Courant Lecture Notes in Mathematics},
  volume    = {5},
  publisher = {New York University, Courant Institute of Mathematical Sciences;
               American Mathematical Society},
  address   = {New York; Providence, RI},
  year      = {1999}
}

@article{Lions1985,
  author  = {Lions, Pierre-Louis},
  title   = {The concentration-compactness principle in the calculus of variations.
             The limit case, Part I},
  journal = {Revista Matemática Iberoamericana},
  volume  = {1},
  number  = {1},
  year    = {1985},
  pages   = {145--201}
}

@article {LeeNaberNeumayer2023,
    AUTHOR = {Lee, Man-Chun and Naber, Aaron and Neumayer, Robin},
     TITLE = {{$d_p$}-convergence and {$\epsilon$}-regularity theorems for
              entropy and scalar curvature lower bounds},
   JOURNAL = {Geom. Topol.},
  FJOURNAL = {Geometry \& Topology},
    VOLUME = {27},
      YEAR = {2023},
    NUMBER = {1},
     PAGES = {227--350},
      ISSN = {1465-3060,1364-0380},
   MRCLASS = {53C21},
  MRNUMBER = {4584264},
MRREVIEWER = {Ovidiu\ Munteanu},
       DOI = {10.2140/gt.2023.27.227},
       URL = {https://doi.org/10.2140/gt.2023.27.227},
}

@misc{Gromov2017Questions,
	author = {Gromov, Misha},
	note = {Available on the author's webpage},
	title = {101 Questions, Problems and Conjectures around Scalar Curvature},
	year = {2017}}

@article{ParkTianWang2018,
	author = {Park, Jiewon and Tian, Wenchuan and Wang, Changliang},
	journal = {Pure and Applied Mathematics Quarterly},
	number = {3--4},
	pages = {529--561},
	title = {A compactness theorem for rotationally symmetric {R}iemannian manifolds with positive scalar curvature},
	volume = {14},
	year = {2018}}

@article{SormaniTianWang2024,
	author = {Sormani, Christina and Tian, Wenchuan and Wang, Changliang},
	journal = {Nonlinear Analysis},
	pages = {Paper No. 113427},
	title = {An extreme limit with nonnegative scalar curvature},
	volume = {239},
	year = {2024}}

@article{TianWang2024,
	author = {Tian, Wenchuan and Wang, Changliang},
	journal = {Mathematische Annalen},
	number = {2},
	pages = {2767--2823},
	title = {Compactness of sequences of warped product circles over spheres with nonnegative scalar curvature},
	volume = {390},
	year = {2024}}

@article{Deng2021Volumic,
	author = {Deng, Jialong},
	journal = {SIGMA. Symmetry, Integrability and Geometry: Methods and Applications},
	pages = {013},
	title = {Curvature-dimension condition meets {G}romov's $n$-volumic scalar curvature},
	volume = {17},
	year = {2021}}

@article{LeFlochMardare2007,
	author = {LeFloch, Philippe G. and Mardare, Cristinel},
	journal = {Portugaliae Mathematica},
	pages = {535--573},
	title = {Definition and weak stability of spacetimes with distributional curvature},
	volume = {64},
	year = {2007}}

@article{KazarasXu2023,
	author = {Kazaras, Demetre and Xu, Kai},
	journal = {arXiv:2309.03756},
	title = {Drawstrings and flexibility in the Georch conjecture},
	year = {2023}}

@article{McFeronSzekelyhidi2012,
	author = {McFeron, Donovan and Sz{\'e}kelyhidi, G{\'a}bor},
	journal = {Communications in Mathematical Physics},
	pages = {425--443},
	title = {On the positive mass theorem for manifolds with corners},
	volume = {313},
	year = {2012}}

@article{AllenTianWang2026,
	author = {Allen, Brian and Tian, Wenchuan and Wang, Changliang},
	journal = {Calculus of Variations and Partial Differential Equations},
	pages = {Article No. 12},
	title = {On the scalar curvature compactness conjecture in the conformal case},
	volume = {65},
	year = {2026}}

@article{BurkhardtGuim2019,
	author = {Burkhardt-Guim, Paula},
	journal = {Geometric and Functional Analysis},
	number = {6},
	pages = {1703--1772},
	title = {Pointwise lower scalar curvature bounds for $C^0$ metrics via regularizing {R}icci flow},
	volume = {29},
	year = {2019}}

@article{Miao2002,
	author = {Miao, Pengzi},
	journal = {Advances in Theoretical and Mathematical Physics},
	number = {6},
	pages = {1163--1182},
	title = {Positive mass theorem on manifolds admitting corners along a hypersurface},
	volume = {6},
	year = {2002}}

@article{SormaniWenger2011,
	author = {Sormani, Christina and Wenger, Stefan},
	journal = {Journal of Differential Geometry},
	number = {1},
	pages = {117--199},
	title = {The intrinsic flat distance between {R}iemannian manifolds and other integral current spaces},
	volume = {87},
	year = {2011}}

@article{LeeLeFloch2015,
	author = {Lee, Dan A. and LeFloch, Philippe G.},
	journal = {Communications in Mathematical Physics},
	pages = {99--120},
	title = {The positive mass theorem for manifolds with distributional curvature},
	volume = {339},
	year = {2015}}

@article{JiangShengZhang2023,
	author = {Jiang, Wenshuai and Sheng, Weimin and Zhang, Huaiyu},
	journal = {Science China Mathematics},
	pages = {1141--1160},
	title = {Weak scalar curvature lower bounds along {R}icci flow},
	volume = {66},
	year = {2023}}

@book {BBI-book,
    AUTHOR = {Burago, Dmitri and Burago, Yuri and Ivanov, Sergei},
     TITLE = {A course in metric geometry},
    SERIES = {Graduate Studies in Mathematics},
    VOLUME = {33},
 PUBLISHER = {American Mathematical Society, Providence, RI},
      YEAR = {2001},
     PAGES = {xiv+415},
      ISBN = {0-8218-2129-6},
   MRCLASS = {53C23},
  MRNUMBER = {1835418},
MRREVIEWER = {Mario\ Bonk},
       DOI = {10.1090/gsm/033},
       URL = {https://doi.org/10.1090/gsm/033},
}

@article {LakzianSormani2013,
    AUTHOR = {Lakzian, Sajjad and Sormani, Christina},
     TITLE = {Smooth convergence away from singular sets},
   JOURNAL = {Comm. Anal. Geom.},
  FJOURNAL = {Communications in Analysis and Geometry},
    VOLUME = {21},
      YEAR = {2013},
    NUMBER = {1},
     PAGES = {39--104},
      ISSN = {1019-8385,1944-9992},
   MRCLASS = {53C23},
  MRNUMBER = {3046939},
MRREVIEWER = {Luca\ Granieri},
       DOI = {10.4310/CAG.2013.v21.n1.a2},
       URL = {https://doi.org/10.4310/CAG.2013.v21.n1.a2},
}

@article{gray1974volume,
  title={The volume of a small geodesic ball of a Riemannian manifold.},
  author={Gray, Alfred},
  journal={Michigan Mathematical Journal},
  volume={20},
  number={4},
  pages={329--344},
  year={1974},
  publisher={University of Michigan, Department of Mathematics}
}

\end{document}